\newtheorem*{theorem}{Main Theorem}
\newtheorem{lemma}{Lemma}[section]
\newtheorem{prop}[lemma]{Proposition}
\theoremstyle{definition}
\DeclareMathOperator{\aut}{Aut}
\DeclareMathOperator{\baut}{\textbf{Aut}}
\DeclareMathOperator{\homeo}{Homeo}
\DeclareMathOperator{\bhomeo}{\textbf{Homeo}}
\DeclareMathOperator{\diff}{Diff}
\DeclareMathOperator{\thomeo}{Homeo^{\textit{k}}}
\DeclareMathOperator{\bthomeo}{\textbf{Homeo}^{\textbf{k}}}
\DeclareMathOperator{\mcg}{MCG}
\setlist{noitemsep}
\title{Automorphisms of smooth fine curve graphs}
\author{Katherine Williams Booth}
\date{}
\begin{document}
\newcommand{\fc}[1]{\mathcal{C}^{\dagger}\!\!\left(#1\right)}
\newcommand{\fcn}[2]{\mathcal{C}^{\dagger}\!\!\left(#2; C^{#1}\right)}
\newcommand{\bfcn}[2]{\boldsymbol{\mathcal{C}}^{\boldsymbol{\dagger}}\!\!\left(#2; C^{#1}\right)}
\newcommand{\efcn}[2]{\mathcal{EC}^{\dagger}\!\!\left(#2; C^{#1}\right)}
\newcommand{\befcn}[2]{\boldsymbol{\mathcal{EC}}^{\boldsymbol{\dagger}}\!\!\left(#2; C^{#1}\right)}
\newcommand{\natp}[1]{\mathcal{N}\!\mathcal{A}_{P}\!\left(#1; C^k\right)}
\newcommand{\fnatp}[1]{\mathcal{N}\!\mathcal{A}^{\dagger}_{P}\!\left(#1; C^k\right)}
\newcommand{\kereq }{{\mathrlap{ \,\bigcirc} \sim} \,}
\maketitle

\begin{abstract}
\noindent In this paper, we consider the automorphisms of fine curve graphs restricted to continuously $k$-differentiable curves.   We show that for closed surfaces with genus at least 2,  they are induced by homeomorphisms of the surface. 
\end{abstract}

\section{Introduction} \label{sectionintro}
Fix a smooth structure on a surface $S$.  For an integer $k \geq 1$,  we say that a simple closed curve $\alpha$ on $S$ is $C^k$ if it is a properly embedded one-dimensional $C^k$ submanifold of $S$.  We define the $C^k$-curve graph $\fcn{k}{S}$ to be the graph whose vertices are essential simple closed $C^k$ curves on $S$. There are edges between vertices when the corresponding curves are disjoint.   

The group of $C^k$ diffeomorphisms of the surface, denoted $\diff^k(S)$,  is a group that naturally acts on the $C^k$-curve graph via its action on the surface.  From related work discussed in Section~\ref{introprior}, one might be tempted to guess that the automorphism group of the $C^k$-curve graph, denoted $\aut \fcn{k}{S}$,  is naturally isomorphic to $\diff^k(S)$.  But it follows from work of Le~Roux--Wolff \cite{LRW} that $\aut \fcn{k}{S}$ is strictly larger than $\diff^k(S)$.  

 Let $\homeo(S)$ denote the homeomorphism group of $S$.  We define $\thomeo(S)$ to be the subgroup of $f\in\homeo(S)$ such that both $f$ and $f^{-1}$ map $C^k$ curves to $C^k$ curves.  
We denote the closed oriented connected surface of genus~$g$ by $S_g$.  The main result of this paper is the following: 
\begin{theorem}\label{theoremautc1}
For $g \geq 2$,  the natural map $$ \textstyle \eta: \thomeo(S_g) \rightarrow \aut \fcn{k}{S_g}$$  is an isomorphism.
\end{theorem}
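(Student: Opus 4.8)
The map $\eta$ is a group homomorphism by construction, so I must show it is injective and surjective; injectivity is easy and surjectivity is the substance.

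\emph{Injectivity.} Suppose $f\in\thomeo(S_g)$ satisfies $\eta(f)=\mathrm{id}$, i.e.\ $f(\alpha)=\alpha$ as a subset of $S_g$ for every essential $C^k$ simple closed curve $\alpha$. Given $x\in S_g$, choose essential $C^k$ curves $\alpha,\beta$ meeting transversally in the single point $x$ (such pairs exist as $g\ge 2$). Then $f(x)\in f(\alpha)\cap f(\beta)=\alpha\cap\beta=\{x\}$, so $f(x)=x$; hence $f=\mathrm{id}$.

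\emph{Surjectivity.} Fix $\phi\in\aut\fcn{k}{S_g}$; the goal is a homeomorphism $f$ of $S_g$ with $\phi(\alpha)=f(\alpha)$ for every vertex $\alpha$. The strategy parallels the treatment of the full fine curve graph: one reconstructs the points and topology of $S_g$ directly from the combinatorics, since there is no obvious way to pass to the usual curve graph (automorphisms of $\fcn{k}{S_g}$ need not respect isotopy). First I would use the structural lemmas of the earlier sections to show that $\phi$ preserves the relevant topological data — it sends separating curves to separating curves, preserves the homeomorphism type of the pieces obtained by cutting along a disjoint collection of curves, and preserves the relation ``two disjoint curves cobound an embedded annulus.'' The crux is then to \emph{recognize the points of $S_g$ combinatorially}: for $x\in S_g$ let $V_x$ be the collection of $C^k$ curves through $x$, which is an independent set in $\fcn{k}{S_g}$. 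I would characterize the sets $V_x$ among all independent sets by maximality together with a local ``pencil'' condition (reflecting that curves through $x$ fill every neighborhood of $x$), show this characterization is visibly $\phi$-invariant, and show distinct points yield distinct sets — here one also checks that the supply of $C^k$ curves through, and separating, prescribed points is large enough for the argument to run. Then $\phi$ permutes these sets, and $\phi(V_x)=V_{f(x)}$ defines a bijection $f\colon S_g\to S_g$; applying the same to $\phi^{-1}$ shows $f$ is onto, and $x\in\alpha\iff\phi(\alpha)\in\phi(V_x)=V_{f(x)}\iff f(x)\in\phi(\alpha)$, so $f(\alpha)=\phi(\alpha)$ as sets for every vertex $\alpha$.

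Finally I would promote $f$ to an element of $\thomeo(S_g)$. Using that $\phi$ preserves complementary regions and the cobounding-an-annulus relation, images under $f$ of $C^k$ curves bounding small disk neighborhoods of $x$ again bound disk neighborhoods of $f(x)$, so $f$ is continuous; a continuous bijection of the compact Hausdorff space $S_g$ is a homeomorphism. And $f,f^{-1}$ carry $C^k$ curves to $C^k$ curves because $\phi,\phi^{-1}$ carry vertices to vertices, so $f\in\thomeo(S_g)$ and $\eta(f)=\phi$. The main obstacle is the combinatorial recognition of points — pinning down the exact property that singles out the families $V_x$ and ruling out collapsing distinct points — which is where the genus-$\ge2$ hypothesis and the full strength of the earlier lemmas enter; the continuity step is also delicate, as it hinges on controlling $\phi$ on nested families of curves shrinking to a point.
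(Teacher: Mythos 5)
Your injectivity argument matches the paper's (torus pairs meeting in a single point), and your overall outline --- reconstruct points combinatorially, get a bijection $f$, check continuity, check $f$ preserves $C^k$ curves --- is the right shape. But the substance of the theorem is exactly the step you leave as a black box, and the way you propose to fill it does not survive contact with the $C^k$ setting. You propose to recognize a point $x$ via the family $V_x$ of essential curves through $x$, singled out by maximality plus an unspecified ``local pencil condition.'' Maximality alone fails (a maximal pairwise-intersecting family need not have a common point --- e.g.\ extend a filling triple with no triple point), so everything rests on the pencil condition, which you never state. Worse, the natural candidate --- detecting that a family of curves accumulates at a single point by drawing auxiliary curves through sequences of intersection points, as Long--Margalit--Pham--Verberne--Yao do --- is precisely what the paper shows is \emph{impossible} for $k\ge 2$: there are sequences of points converging to the origin (e.g.\ on $y=|x|^{3/2}$) through infinitely many of which no $C^2$ curve passes, because the curvature blows up. So the ``reflecting that curves through $x$ fill every neighborhood of $x$'' condition cannot be implemented by the connect-the-dots device, and you offer no replacement.

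The paper's route is structurally different and explains why most of its length is spent elsewhere. It recovers points not from essential curves through $x$ but from \emph{nested sequences of inessential curves converging to} $x$, characterized purely combinatorially (Lemma~\ref{convseqineckgraph}) via the Jordan Curve Theorem --- sidestepping the connect-the-dots obstruction entirely. The price is that inessential curves are not vertices of $\fcn{k}{S_g}$, so one must first show that every automorphism of $\fcn{k}{S_g}$ extends canonically to the extended graph $\efcn{k}{S_g}$; this is the content of the simple $k$-smooth pair machinery (Sections~\ref{sectioncurvepairs} and~\ref{sectionc1toec1}), which replaces the bigon pairs of the topological setting (those require corners and so cannot be $C^k$). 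Your continuity step has the same hidden dependence: you invoke ``$C^k$ curves bounding small disk neighborhoods of $x$,'' but such curves are inessential and hence not vertices of the graph your automorphism acts on. In short, the proposal is not wrong in outline, but the two places where you wave your hands --- recognizing points and proving continuity --- are exactly where the genuinely new ideas of the paper (nested inessential sequences and the extension $\xi^{-1}$ via simple $k$-smooth pairs) are required, and the specific mechanism you gesture at for the first is known to fail for $k\ge 2$.
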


An equivalent statement is that every automorphism of $\fcn{k}{S_g}$ is induced by a (unique) homeomorphism of $S_g$.  This is the first example of an automorphism group of a fine curve graph that is not isomorphic to $\homeo(S_g)$.  

In addition to the above theorem,  we also give examples of two elements from $\thomeo(S)$ in Section \ref{examples}.  In Example 1, we adjust Le Roux--Wolff's example to more generally apply for $C^k$ curves.  In Example 2, we give an element of $\homeo^1(S)$ that is not in $\homeo^k(S)$, for any $k \geq 2$.  We give several more examples and an exact description for $\homeo^1(S)$ in a separate paper~\cite{BoothHomeo1}. 

The remainder of this introduction is organized as follows.  In Section~\ref{introprior}, we discuss some prior work related to automorphisms of curve graphs.   Section~\ref{introproofidea} gives an overview of our proof and outlines the remaining sections of this paper.

\subsection{Related work}\label{introprior}

This section is organized into two broad categories.  First, we discuss the foundational work of Ivanov on the classical curve graph utilizing isotopy classes of curves on a surface.  We then look at the recent advancements concerning fine curve graphs,  which remove isotopies and instead consider every curve on a surface.

\subsubsection*{Curve graphs and Ivanov's metaconjecture}

We start our survey of prior work with the more historical of the two directions.  The curve graph of a surface $S$, denoted by $\mathcal{C}(S)$,  is the graph whose vertices are isotopy classes of essential simple closed curves. There are edges between two vertices whenever there exist representatives from each isotopy class that are disjoint. The curve graph is the 1-skeleton of the curve complex first introduced by Harvey \cite{Harvey} in 1978.  He used this complex as a tool to study the boundary of the mapping class group $\mcg^\pm(S)$,  the group of homeomorphisms of $S$ up to isotopy.  

$\mcg^{\pm}(S)$ naturally acts on $\mathcal{C}(S)$ by the action of an element of the mapping class group on the surface.  Ivanov \cite{Ivanov} showed that for surface with genus at least 3, the natural map $$\mcg^{\pm}(S_g) \rightarrow \aut \mathcal{C}(S_g)$$ is an isomorphism.  
This result directly mirrors our own result, providing a combinatorial model for $\mcg^{\pm}(S_g)$.

Ivanov's result also inspired several others to find other complexes associated to surfaces that also have natural isomorphisms between their automorphism groups and the mapping class group (\cite{BM2}, \cite{Irmak}, \cite{IK}, \cite{Jyothis}, and many others).  This led Ivanov to state the following metaconjecture:

\medskip
\noindent \textit{Every object naturally associated to a surface $S$ and having a sufficiently rich structure has $\mcg^{\pm}(S)$ as its group of automorphisms. Moreover, this can be proved by a reduction to the theorem about the automorphisms of $\mathcal{C}(S)$.}
\medskip

Brendle--Margalit \cite{BM} give a survey of many of these results and a characterization for what ``sufficiently rich" entails for simplicial complexes associated to surfaces. 

\subsubsection*{Fine curve graphs}

More recently in 2012,  Farb--Margalit~\cite{BensonTalk} introduced the fine curve graph $\fc{S}$.  This graph has vertices for every essential closed curve on the surface and edges when the curves are disjoint.  Bowden--Hensel--Webb \cite{BHW} used a modified version with smooth curves and considered the action of $\diff_0(S_g)$ on this hyperbolic graph.  They show that the space of quasi-morphisms is infinite dimensional and thus $\diff_0(S_g)$ is not uniformly perfect when $g$ is at least 1.  

With the topological curve version of $\fc{S}$,   $\homeo(S)$ is the group that naturally acts on the graph.  In a similar vein as Ivanov's result for the classical curve graph,   Long--Margalit--Pham--Verberne--Yao \cite[Theorem 1.1]{LMPVY} showed that for surfaces with genus at least 2,  the natural map
$$ \homeo(S_g) \rightarrow \aut \fc{S_g} $$
 is an isomorphism.

Like the work that branches from Ivanov's seminal result, there are several variants of the fine curve graph which also serve as combinatorial models of $\homeo(S)$.  Kimura--Kuno \cite{KK} show that the argument given by Long--Margalit--Pham--Verberne--Yao can be extended to nonorientable surfaces with genus at least 4.  To expand this even further to all surfaces with genus at least 1,  Le Roux--Wolff \cite{LRW} use a graph of nonseparating curves as vertices with edges when curves are either disjoint or have exactly one topologically transverse intersection.  The author, with Minahan and Shapiro,  \cite{1fine} give a similar result on oriented surfaces for the fine 1-curve graph, which adds edges to the fine curve graph when curves intersect in a single point.  


Based on the similarity of the above results,  we ask the following: 

\medskip 

\noindent \textbf{Question: } \textit{Is there a fine version of Ivanov's metaconjecture that encompasses both the graphs associated to $\homeo(S)$ and the graphs associated to $\homeo^k(S)$?}

\medskip

Our work shows that any attempt for a fine metaconjecture will require more nuance than simply replacing $\mcg^{\pm}(S)$ in Ivanov's result by $\homeo(S)$.  This nuance is also reflected in the fact that $\mcg^{\pm}(S)$ can be defined in many ways, including as the group of diffeomorphisms of $S$ up to isotopy.

\subsection{Overview of the proof and paper outline}\label{introproofidea}
 The proof of our theorem follows the same overarching structure as the work of Long--Margalit--Pham--Verberne--Yao for $\fc{S}$~\cite{LMPVY}.  We use the \emph{extended $C^k$-curve graph}, which we denote by $\efcn{k}{S}$.  The vertices for this graph include both the essential and inessential simple closed $C^k$ curves on $S$.  As with the $C^k$-curve graph, there are edges between vertices when the corresponding curves are disjoint.   Our main argument relies on building the chain of homomorphisms shown below. 
$$\textstyle \aut \fcn{k}{S} \xrightarrow{\xi^{-1}}\aut \efcn{k}{S} \xrightarrow{\rho} \thomeo(S) \xrightarrow{\eta} \aut \fcn{k}{S}$$

\noindent We now give a brief overview of our proof and explain the constructions for the maps $\rho$ and $\xi$.  The map $\eta$ comes directly from the natural action of $\thomeo(S)$ on $\fcn{k}{S}$ and needs no further discussion.  When relevant, we also point out the key differences that arise when the $\fc{S}$ argument is applied to $C^k$ curves.

\subsubsection*{Step 1: $\mathbf{\befcn{k}{S} \boldsymbol{\xrightarrow{\boldsymbol{\rho}}} \bthomeo(S)}$} We start our proof in Section~\ref{sectionec1tohomeo1} with the homomorphism $\rho$ that reconstructs a homeomorphism of the surface from an automorphism of $\efcn{k}{S}$. In a similar fashion to the $\fc{S}$ case,  we take sequences of inessential curves that converge to a point.  These converging sequences are then used to determine exactly where each point on $S$ must be sent, defining the desired homeomorphism.

\medskip

\noindent \textit{Key difference: Connecting-the-dots.  }  Long--Margalit--Pham--Verberne--Yao~\cite{LMPVY} identify the combinatorial structure of these converging sequences of curves using other curves drawn through particular sequences of converging points.  With only a slight adjustment, a similar argument applies to the $C^1$-curve graph.  

Unfortunately,  this strategy does not work for the general $C^k$ case.  There exist sequences of converging points such that for any $k \geq 2$,  there is no $C^k$ curve that contains infinitely many of these points.  We discuss this obstruction in further detail in Section~\ref{sectionec1tohomeo1}.  To overcome this obstacle,  we give a nesting restriction to converging sequences and utilize the Jordan Curve Theorem to guarantee the existence of the desired sequence of points.  This new combinatorial characterization allows the remainder of the $\fc{S}$ argument to be applied to $C^k$ curves and extended to many other families of curves.

\subsubsection*{Step 2: $\mathbf{\baut \bfcn{k}{S} \boldsymbol{\xrightarrow{\boldsymbol{\xi}^{-1}}} \baut \befcn{k}{S}}$} Next, we work towards building the isomorphism $\xi$,  making the connection between the automorphisms of $\fcn{k}{S_g}$ and $\efcn{k}{S_g}$, when $g \geq 2$.  

In Section~\ref{sectioncurvepairs}, we identify the graph structures that are unique to a variety of topological properties of curves.  We use these structures to build up a combinatorial perspective for particular pairs of essential $C^k$ curves that determine a unique inessential $C^k$ curve.  

\medskip

\noindent \textit{Key difference: Topological curve constructions.  } 
The work of Long--Margalit--Pham--Verberne--Yao~\cite{LMPVY} accomplishes this step using a pair of homotopic curves, called a bigon pair.  This construction relies on topological curves that must have corners and so cannot be differentiable.  This argument is not easily adapted for higher regularity curves. 

\begin{figure}[h]
\centering
\includegraphics[height=.8in]{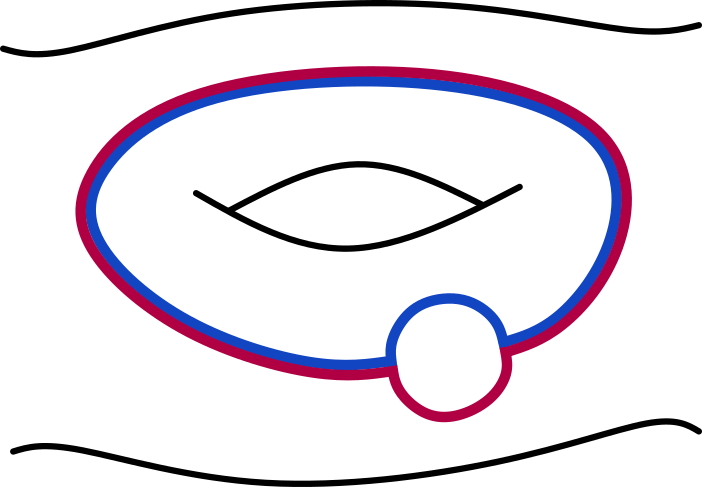} \hspace{.75in}
\includegraphics[height=.8in]{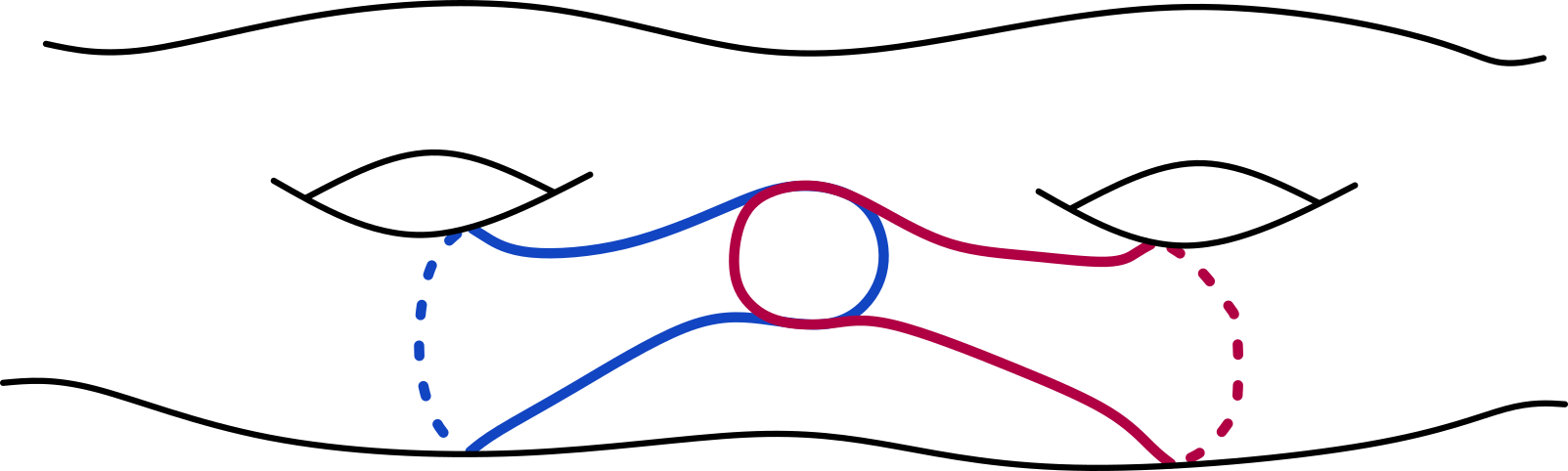}
\caption{Bigon pair (left) and $k$-smooth pair (right)}
\end{figure}

For $C^k$ curves, we have found a new construction in $\fcn{k}{S_g}$, called simple $k$-smooth pairs,  that uniquely determine inessential curves.  The graph structures related to our construction are more complicated than the structures for bigon pairs.

\medskip

\noindent In Section~\ref{sectionc1toec1},  we then utilize the simple $k$-smooth pairs to construct an isomorphism from $\aut \efcn{k}{S_g}$ to $\aut \fcn{k}{S_g}$ when $g \geq 2$.  The primary component involves building an equivalence relation on a connected arc graph to show that the inverse of the natural restriction map is well-defined.

\subsubsection*{Step 3: Completing the proof} 
In Section~\ref{sectionfinal}, we compose the above maps to get the identity, showing that the final map $\eta$ must by surjective.  A straighforward injectivity argument completes the isomorphism. 

\medskip

\subsubsection*{Example elements } 
Finally in Section~\ref{examples}, we give more details for the examples described earlier and a subtle non-example.  These specific maps explore the possible connections between the groups $\thomeo(S)$ for different values of $k$.  In particular, they lead us to ask the following:

\medskip 
\noindent \textbf{Question: } \textit{Are there any containment relationships between  $\homeo^k(S)$ and $\homeo^m(S)$ when $k \neq m$?}

\medskip
\noindent While the family of $\diff^k(S)$ is properly nested, it is not clear if a similar relationship exists for $\homeo^k(S)$.  In particular,  for larger $k$, the graph is restricted to curves that have more structure, but the restriction also implies that we have less curves to potentially control the behavior of elements in $\homeo^k(S)$.

\subsection{Acknowledgments}

We would like to thank our advisor Dan Margalit for his generous support and encouragement.  This work is considerably better than it would have been without his continual comments and critical eye.

We also thank Igor Belegradek,  Aaron Calderon,  Sierra Knavel, Thomas Koberda, Kathryn Mann, Daniel Minahan,  Alex Nolte,  Roberta Shapiro, and Yvon Verberne for helpful conversations and general encouragement for this project. We thank Zachary Himes for comments on an earlier draft.
The author was partially supported by the National Science Foundation under Grants No.  DMS-1745583 and DMS-2417920.

\section{Constructing $ \mathbf{\boldsymbol{\rho}: \baut \befcn{k}{S} \boldsymbol{\rightarrow} \bthomeo(S)}$}\label{sectionec1tohomeo1}

The goal of this section is to prove the following result,  building a map from the automorphisms of the extended $C^k$-curve graph to $\thomeo(S)$.  This is Step 1 from the proof outline given in the introduction.  

\begin{prop}\label{ec1tohomeo1}
Let $S$ be a smooth surface.  Then there is a natural map $$\rho : \aut \efcn{k}{S} \rightarrow \thomeo(S)$$ such that for any $\alpha \in \aut \efcn{k}{S}$,  $(\widehat{\eta} \circ \rho)(\alpha) = \alpha$, where $\widehat{\eta}$ is the natural map from $\homeo^k(S)$ to $\aut \efcn{k}{S}$. 
\end{prop}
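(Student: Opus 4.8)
The plan is to reconstruct, from an automorphism $\alpha$ of $\efcn{k}{S}$, a homeomorphism $\rho(\alpha)$ of $S$ by tracking how $\alpha$ moves ``points,'' where a point is encoded combinatorially by a suitable family of inessential curves shrinking to it. First I would set up the combinatorial language: call a vertex of $\efcn{k}{S}$ \emph{inessential} if the corresponding curve bounds a disk, and observe that the property of being inessential, as well as the nesting/containment relation between the disks bounded by two disjoint inessential curves, can be characterized purely in terms of the graph (disjointness plus which curves a third curve can be disjoint from, etc.), so $\alpha$ preserves these notions. Then I would define a \emph{point sequence} to be a suitable nested sequence of inessential curves $\gamma_1 \supset \gamma_2 \supset \cdots$ (bounding nested disks) whose disks have diameter tending to zero, and show that such sequences converge to a unique point, every point arises this way, and two point sequences converge to the same point exactly when they are \emph{cofinally interleaved} (each tail of one is eventually nested inside disks of the other). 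The key difference flagged in the introduction enters here: rather than the ``connecting-the-dots'' characterization of \cite{LMPVY}, I would give the nesting characterization of convergent sequences, using the Jordan Curve Theorem to produce, inside any neighborhood of a point, an inessential $C^k$ curve nested appropriately — this avoids the obstruction that a $C^k$ curve with $k\geq 2$ need not pass through a prescribed convergent set of points.

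Next I would define $\rho(\alpha): S \to S$ pointwise: given $x \in S$, pick a point sequence $(\gamma_i)$ converging to $x$; then $(\alpha(\gamma_i))$ is again a nested sequence of inessential curves with shrinking disks (shrinking because $\alpha$ preserves the combinatorial relations that force diameters to zero), so it converges to a unique point, which I declare to be $\rho(\alpha)(x)$. Well-definedness follows from the ``same point $\iff$ cofinally interleaved'' criterion, since $\alpha$ preserves interleaving. The same construction applied to $\alpha^{-1}$ gives a two-sided inverse, so $\rho(\alpha)$ is a bijection. Continuity (hence, with the inverse, homeomorphy) follows because for any inessential curve $\delta$ with $x$ in its open disk, $\alpha(\delta)$ has $\rho(\alpha)(x)$ in its open disk and $\rho(\alpha)$ sends the disk of $\delta$ into the disk of $\alpha(\delta)$ — this uses that ``$x$ lies in the disk bounded by $\delta$'' is detected by: every point sequence converging to $x$ is eventually nested inside $\delta$, which is a combinatorial condition preserved by $\alpha$. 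To see $\rho(\alpha) \in \thomeo(S)$, note that for a $C^k$ curve $c$ (essential or not), $\rho(\alpha)$ must carry $c$ to $\alpha(c)$: one shows $\rho(\alpha)(c) = \alpha(c)$ setwise by a point-chasing argument (a point $x$ lies on $c$ iff every inessential curve whose disk contains $x$ must intersect $c$, again combinatorial), and $\alpha(c)$ is by definition a $C^k$ curve; applying the same to $\alpha^{-1}$ shows $\rho(\alpha)^{-1}$ also preserves $C^k$ curves.

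Finally I would verify naturality and the identity $(\widehat\eta \circ \rho)(\alpha) = \alpha$. Since $\widehat\eta(\rho(\alpha))$ acts on a vertex $c$ of $\efcn{k}{S}$ by the set map $\rho(\alpha)$, and we have just shown $\rho(\alpha)(c) = \alpha(c)$ as a curve, the two automorphisms agree on every vertex, hence are equal; this simultaneously shows $\rho$ is a homomorphism (apply the identity with $\alpha$ replaced by $\alpha\beta$ and use that $\widehat\eta$ is injective, or directly check $\rho(\alpha\beta) = \rho(\alpha)\rho(\beta)$ on points via the point-sequence definition).

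I expect the \textbf{main obstacle} to be the step of showing that convergence of point sequences is faithfully captured by the nesting/interleaving relation in a way that survives under $\alpha$ — in particular, producing, near an arbitrary point and inside an arbitrary prescribed inessential curve, a further inessential $C^k$ curve with the right nesting (the Jordan Curve Theorem argument replacing connecting-the-dots), and ruling out that $\alpha$ could send a shrinking nested sequence to one that fails to shrink (i.e., that ``the disks exhaust no more than a point'' is genuinely combinatorial). Everything after that — bijectivity, continuity, the $\thomeo$ membership, and the compatibility identity — is point-set bookkeeping built on that combinatorial dictionary.
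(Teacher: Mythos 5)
Your overall strategy is the same as the paper's: encode points by nested sequences of curves, define $\rho(\alpha)$ by chasing limits of image sequences, verify well-definedness via interleaving, get bijectivity from $\alpha^{-1}$ and continuity from a nesting argument, and then deduce $\rho(\alpha)(c)=\alpha(c)$ and hence membership in $\thomeo(S)$. The bookkeeping steps you describe (interleaving for well-definedness and injectivity, nested disks for continuity, point-chasing for the curve action) all match the paper's Lemma~\ref{autec1tohomeo}.

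However, there is a genuine gap at exactly the point you flag as the ``main obstacle,'' and it is not mere bookkeeping: you never supply the purely combinatorial criterion for a nested sequence to converge to a single point. Your definition of a point sequence builds in the metric condition that the disks have diameter tending to zero, which is invisible to the graph, so you cannot conclude that $(\alpha(\gamma_i))$ converges to a point --- and this must be established before $\rho(\alpha)(x)$ is even defined. The paper resolves this with Lemma~\ref{convseqineckgraph}: a nested sequence $(c_i)$ converges to a point if and only if (i) some curve intersects infinitely many of the $c_i$ (``intersects the tail''), and (ii) any two curves that intersect the tail must intersect each other. Condition (i) rules out the sequence escaping every compact set; condition (ii) rules out convergence to more than one point, because two distinct limit points would admit disjoint curves, each built inside disjoint Hausdorff neighborhoods and each forced by the Jordan Curve Theorem to cross infinitely many $c_i$. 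Both conditions are expressed entirely in terms of adjacency, so they transport under $\alpha$; this is the missing idea your plan needs. A secondary remark: the paper does not restrict to inessential curves --- nested sequences of separating curves suffice, and the eventual inessentiality of a convergent tail is a consequence rather than a hypothesis --- which avoids having to combinatorially detect ``bounds a disk'' at this stage.
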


To build this homeomorphism,  we use sequences of inessential curves that converge to points on the surface.  A key component of their structure in the graph involves other curves that intersect infinitely many of the converging curves and the point that they converge to.

\medskip

\noindent \textit{Previous approach and why it does not work here.  } As mentioned in Section~\ref{sectionintro},  to accomplish this step for the fine curve graph, 
 Long--Margalit--Pham--Verberne--Yao~\cite[Lemma 4.1]{LMPVY} utilize a connect-the-dots-lemma that gives the existence of a curve through infinitely many points.  In particular, they use the classification of surfaces by Richards~\cite{Richards} to straighten the points all to the $x$-axis. The $x$-axis can then be chosen as the desired curve.  
 
Unfortunately,  Richards' result is purely topological and has no smoothness conditions.  Due to a classic result by Rosenthal~\cite[Theorem~2]{Rosenthal} a similar (but not identical) process can be achieved for $C^1$ curves, but is not possible with $C^k$ curves,  when $k \geq 2$. 
In particular, for a set of points along the curve $y=|x|^{3/2}$ that converge to 0,  no $C^2$ curve will hit infinitely many of them as the curvature approaches $\infty$ as $x \rightarrow 0$.

\medskip

\noindent \textit{Our new approach.  }To generalize this strategy to all $C^k$ curves, we give a nesting restriction to converging curves and utilize the Jordan Curve Theorem.  This ensures that there exists a curve that intersects infinitely many of the curves in the convergent sequence.  This change has complicated both the combinatorial description of these sequences and the argument to show that they are well-defined. 

We then show that the images of converging curves must also converge to a point.  This allows us to define a specific image point for every point in the surface.  To complete the homeomorphism, we show that this map is well-defined,  bijective,  and continuous.  We now give a few relevant structures and lemmas. 

\medskip

\noindent \textit{Graph structures.  }
For a collection of vertices $\{v_1, v_2, \ldots, v_n\}$ of a graph $\Gamma$, the \emph{link} of these vertices, denoted $link(v_1, v_2, \ldots, v_n)$,  is the subgraph of $\Gamma \setminus \{v_1, \ldots, v_n\}$ induced by the set of vertices $u_j$ that are adjacent to all of the $v_i$. 

A graph $\Gamma$ is a \emph{$m$-join} if there is a partition of the vertices of $\Gamma$ into $m$ sets $V_1, \ldots, V_m$, called parts,  such that each vertex in $V_i$ is adjacent to every vertex in $V_j$ when $i \neq j$. 

Note that any $m$-join induces an $j$-join for any $j < m$ by combining the sets of the partition.  So any $m$-join with $m \geq 2$ induces a 2-join. For simplicity, we use the term \emph{join} to refer to any 2-join. We use these definitions to get the following:

\begin{lemma}\label{sepcurveinextgraph}
Let $S$ be an oriented surface.  Let $a \in \efcn{k}{S}$. Then $a$ is separating if and only if $link(a)$ is a join.
\end{lemma}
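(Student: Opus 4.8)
The plan is to prove both directions of the biconditional by relating the topology of $S \setminus a$ to the combinatorial structure of $link(a)$ in the extended $C^k$-curve graph.

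\medskip

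\noindent\emph{The "only if" direction.} Suppose $a$ is separating, so $S \setminus a = R_1 \sqcup R_2$ with $R_1, R_2$ the two complementary subsurfaces (each with $a$ in its boundary). I would partition the vertices of $link(a)$ as follows: $V_1$ consists of all curves $b$ disjoint from $a$ that are contained (up to the closed region) in $R_1$, and $V_2$ consists of those contained in $R_2$. Every curve disjoint from $a$ lies entirely in one of the two components since a connected set disjoint from $a$ cannot meet both $R_1$ and $R_2$; this shows $V_1, V_2$ is genuinely a partition of the vertex set of $link(a)$. If $b \in V_1$ and $c \in V_2$, then $b \subseteq \overline{R_1}$ and $c \subseteq \overline{R_2}$, and since $b,c$ are both disjoint from $a = \overline{R_1} \cap \overline{R_2}$ they are in fact in the open regions, hence disjoint from each other, so $b$ and $c$ are adjacent in the graph. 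Thus $link(a)$ is a join. (One should check both parts are nonempty, or rather recall that a join's parts are allowed to be empty, or else note that since $a$ is essential there are always essential curves on at least one side; in any case the definition of join as stated only requires the cross-adjacency condition.)

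\medskip

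\noindent\emph{The "if" direction.} I would prove the contrapositive: if $a$ is nonseparating, then $link(a)$ is not a join. Suppose for contradiction that $link(a) = V_1 \sqcup V_2$ is a join. The idea is to produce a single curve $b$ disjoint from $a$ that "sees both sides" in a way incompatible with the join structure — concretely, a curve $b$ disjoint from $a$ together with two further curves $c_1, c_2$ both disjoint from $a$ and from $b$, where $c_1$ and $c_2$ must intersect each other. Since $a$ is nonseparating, $S \setminus a$ is connected, so I can find an arc $\delta$ in $S \setminus a$ whose closure meets $a$ on "both sides" — i.e. a curve $b$ disjoint from $a$ that is also nonseparating and that together with $a$ fills a subsurface where one can embed two small curves $c_1, c_2$ in the link of both $a$ and $b$ which necessarily intersect. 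Then $c_1, c_2 \in link(a)$ are non-adjacent, so they lie in the same part, say $V_1$; and $b \in link(a)$. If $b \in V_2$ then $b$ must be adjacent to both $c_1$ and $c_2$ — fine, that's consistent — so I need a sharper configuration: I want curves $c_1, c_2$ that are non-adjacent to each other, with $b$ adjacent to $c_1$ but not to $c_2$, forcing a contradiction regardless of where $b$, $c_1$, $c_2$ land. The cleanest realization: take $a$ nonseparating, pick a curve $b$ disjoint from $a$ such that $a \cup b$ is nonseparating with connected complement, and then inside the complement find curves witnessing connectivity so that the partition cannot be consistently made. The honest bookkeeping here is to exhibit three curves in $link(a)$ whose mutual adjacency pattern is a path $c_1 - b - c_2$ (i.e. $c_1 \not\sim c_2$ but $b \sim c_1$, $b \sim c_2$) — no wait, a path is bipartite, so that's consistent with a join. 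What actually obstructs a join is an induced odd-length structure: I need an induced path on three vertices is fine, but an induced \emph{triangle-free non-bipartite} configuration, or more simply, I should use that $link(a)$ for nonseparating $a$ contains an induced cycle of length $5$ (or just: is connected and has two non-adjacent vertices each adjacent to a common third vertex that are forced into conflict). The correct elementary obstruction: $link(a)$ is a join iff its complement graph is disconnected; so I must show the complement of $link(a)$ is connected when $a$ is nonseparating. That means: given any two curves $b, c$ disjoint from $a$, I can find a chain $b = b_0, b_1, \dots, b_n = c$ of curves disjoint from $a$ with consecutive ones intersecting. This follows from connectivity of $S\setminus a$: any two curves in a connected surface can be joined by a chain of pairwise-intersecting curves (a standard connectivity argument for the "intersection graph" of curves in a connected surface).

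\medskip

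\noindent\emph{Main obstacle.} The hard part is the "if" direction, and specifically verifying that the intersection graph of $C^k$ curves supported in the connected surface $S \setminus a$ is connected — i.e. that two disjoint-from-$a$ curves can always be connected by a chain of pairwise-intersecting such curves. Connectivity of the surface gives this topologically, but I must ensure every curve in the chain can be chosen to be $C^k$ and essential-or-inessential as allowed in $\efcn{k}{S}$; since inessential curves are permitted this is flexible, and small $C^k$ perturbations handle transversality, so the obstacle is more about careful setup than a deep difficulty. I would also need to handle low-genus edge cases (e.g. the torus, where nonseparating curves still have connected complement — an annulus — and one checks the link directly) to confirm the statement as phrased. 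I expect the proof to hinge on the lemma that in a connected surface the "curves modulo intersection" graph is connected, which I would either cite or prove by a short transversality-and-connectedness argument.
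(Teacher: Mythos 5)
Your proposal is correct and follows essentially the same route as the paper: the forward direction partitions $link(a)$ by the two complementary components, and the reverse direction uses connectivity of $S \setminus a$ to draw an arc between two curves in different putative parts and produce a curve disjoint from $a$ meeting both, which is exactly the single common non-neighbor realizing your ``complement graph is connected'' criterion. The paper's version is just the length-two instance of your chain argument (it isotopes $b$ along the arc until it hits $c$), so the only difference is that your formulation is stated more generally than it needs to be.
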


\begin{proof}
If $a$ is a separating curve, then any curve disjoint from it will be in only one component of $S \setminus a$. But then it will also be disjoint from every curve that is in the other component of $S \setminus a$. Note that neither side will be empty, since $S$ is oriented, then $a$ can be homotoped off of itself to make curves $a'$ and $a''$ on both sides of the separation. Thus the link of $a$ will be a join. 

On the other hand,  consider the case when $a$ is nonseparating. Suppose, for a contradiction, that the link of $a$ is a join.  Let $b$ and $c$ be curves in different parts of this join.  Our goal is to find another curve $d$ in the link of $a$ that intersects both $b$ and $c$.   Then this curve $d$ could not be placed in either part of the join since it does not have an edge to every vertex in the other part.  Therefore the link of $a$ cannot be a join, contradicting the initial assumption.

We will now find the curve $d$.  Since $a$ is nonseparating, then it is possible to draw an arc $e$ from $b$ to $c$ without intersecting $a$.  Since $a$ is closed,  then there is an open neighborhood of $e$ that is disjoint from $a$. So $b$ can be smoothly isotoped to a curve $d \in \efcn{k}{S}$ that stays within the neighborhood of $e$ until it intersects $c$.  
\end{proof}

\medskip

\noindent \textit{Nested sequences.  }
A sequence of curves $(c_i)$ on a surface $S$ is called \emph{nested} if each $c_i$ is separating and there exists a component $C_i$ of $S \setminus c_i$ such that $c_j \subset C_i$, for all $j > i$. 

This definition immediately implies that all the curves of a nested sequence must be disjoint from one another.  In addition, we have the following consequence:

\begin{lemma}\label{nestedcurvespreserved}
Let $S$ be a surface and $\alpha \in \aut \efcn{k}{S}$. If $(c_i) \in \efcn{k}{S}$ is a nested sequence of curves, then the sequence $(\alpha(c_i))$ is also nested. 
\end{lemma}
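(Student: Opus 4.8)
The plan is to recover the nesting condition from the combinatorics of $\efcn{k}{S}$ and then push it forward by $\alpha$. Concretely, I would verify in turn that each $\alpha(c_i)$ is separating, that the $\alpha(c_i)$ are pairwise disjoint, and that for each $i$ there is a single component of $S\setminus\alpha(c_i)$ containing every $\alpha(c_j)$ with $j>i$; together these three facts are exactly the definition of a nested sequence. The first two are immediate: an automorphism carries $link(c_i)$ isomorphically onto $link(\alpha(c_i))$ and carries joins to joins, so Lemma~\ref{sepcurveinextgraph} upgrades ``$c_i$ separating'' to ``$\alpha(c_i)$ separating'', and since the curves of a nested sequence are pairwise disjoint (hence pairwise adjacent) and automorphisms preserve adjacency, the $\alpha(c_i)$ are again pairwise disjoint.

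The real content is a combinatorial criterion for two curves disjoint from a fixed separating curve $a$ to lie on the same side of it. I would isolate the following claim: if $a\in\efcn{k}{S}$ is separating and $b,c\in\efcn{k}{S}$ are disjoint from $a$ and from each other, then $b$ and $c$ lie in the same component of $S\setminus a$ if and only if some vertex $d\in link(a)$ is adjacent to neither $b$ nor $c$ (equivalently, there is a $C^k$ curve $d$ missing $a$ but meeting both $b$ and $c$). One direction is soft point-set topology: a curve disjoint from $a$ is connected, hence lies in a single component of $S\setminus a$, and if it meets $b$ then $b$ lies in that component as well, and likewise for $c$; so the existence of such a $d$ forces $b$ and $c$ onto the same side. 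For the converse I would take $p\in b$ and $q\in c$ in the common component $C$ of $S\setminus a$, join them by a smooth embedded arc inside $C$, and let $d$ be the boundary of a small disk neighborhood of that arc, smoothed to be $C^k$; this $d$ is an (inessential) simple closed $C^k$ curve, is disjoint from $a$, and crosses both $b$ and $c$ since each of them exits the neighborhood. The point of phrasing the criterion this way is that it is visibly automorphism-invariant: $\alpha$ carries $link(a)\setminus(link(b)\cup link(c))$ bijectively onto $link(\alpha(a))\setminus(link(\alpha(b))\cup link(\alpha(c)))$.

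Granting the claim, the lemma falls out. Fix $i$ and take any $j,l>i$. By the definition of ``nested'', $c_j$ and $c_l$ both lie in $C_i$, so the claim yields $d\in link(c_i)$ adjacent to neither $c_j$ nor $c_l$; applying $\alpha$ and invoking the claim in reverse (using that $\alpha(c_i)$ is separating and that $\alpha(c_j),\alpha(c_l)$ are disjoint from it and from each other) shows $\alpha(c_j)$ and $\alpha(c_l)$ lie in the same component of $S\setminus\alpha(c_i)$. Running over all $j,l>i$ places every $\alpha(c_j)$ with $j>i$ in a single component $D_i$ of $S\setminus\alpha(c_i)$, which together with the first paragraph is precisely the nesting condition. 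The step I expect to require genuine care is the $C^k$ realization of the witness curve $d$ — one cannot lean on topological curves with corners here, as in \cite{LMPVY} — but the disk-neighborhood construction should sidestep this cleanly; everything else is routine bookkeeping with links and components.
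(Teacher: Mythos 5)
Your proof is correct and follows the same basic strategy as the paper's: both reduce the separating property to Lemma~\ref{sepcurveinextgraph} and then argue that the relation ``lies on the same side of $c_i$'' is visible in the graph. The difference lies in how that second step is justified. The paper works directly with the join decomposition of $link(c_i)$, noting that $\{c_j : j>i\}$ sits in one part of the join and asserting that each part of the join corresponds exactly to the curves contained in one of the two components of $S\setminus\alpha(c_i)$ --- an assertion that implicitly requires the bipartition of the join to be canonical, since an automorphism preserves only the property of being a join, not a chosen partition. Your witness-curve criterion --- $b$ and $c$ lie in the same component of $S\setminus a$ if and only if some $d\in link(a)$ meets both --- sidesteps this issue entirely, because it is a statement about individual vertices and adjacency and is therefore manifestly automorphism-invariant; it is essentially the same construction that appears in the nonseparating direction of the proof of Lemma~\ref{sepcurveinextgraph}, repurposed as a same-side detector. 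The one point needing care in your construction is the one you flag only implicitly: the disk neighborhood of the arc from $p$ to $q$ must be taken thin enough that neither $b$ nor $c$ is entirely contained in it (otherwise its boundary could miss one of them); this holds for all sufficiently thin neighborhoods since a simple closed curve cannot be contained in every neighborhood of an arc. With that caveat your argument is complete, and it makes rigorous a step the paper passes over quickly.
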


\begin{proof}
First, by the definition of nested, the $(c_i)$ are all separating. So by Lemma \ref{sepcurveinextgraph},  the link of $c_i$ is a join. Also, the set $\{c_j \; | \; j >i\}$ will be contained in one only part of the join.  Since $\alpha$ is an automorphism of the graph, than $\alpha(c_i)$ will also be a join with the set $\{\alpha(c_j) \; | \; j >i\}$ contained in only one part of the join.  Thus $\alpha(c_i)$ is also separating.  Moreover, each part of the join corresponds exactly with the curves contained in one of the two components of $S \setminus \alpha(c_i)$. Thus $(\alpha(c_i))$ is also a nested sequence on $S$.
\end{proof}

\medskip 

\noindent \textit{Convergent sequences.  }
A sequence of curves $(c_i)$ will be said to \emph{converge to a set $P$} if any regular neighborhood of $P$ contains all but finitely many of the $c_i$.  We say that a curve  \emph{intersects the tail} of a sequence $(c_i)$ if it intersects infinitely many of the $c_i$. 

We can now give a unique combinatorial description of convergent nested sequences.

\begin{lemma}\label{convseqineckgraph}
A sequence of nested curves $(c_i) \in \efcn{k}{S}$ converges to a point $x \in S$ if and only if both of the following hold:
\begin{enumerate}[noitemsep,topsep=0pt, label=$(\roman*)$]
\item there exists a curve $a \in \efcn{k}{S}$ that intersects the tail of $(c_i)$
\item any two curves $a, b \in \efcn{k}{S}$ that intersect the tail of $(c_i)$ must intersect each other
\end{enumerate}
\end{lemma}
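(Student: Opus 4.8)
The plan is to prove the two implications separately, splitting the forward direction into its two clauses.

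\emph{Forward direction.} Assume $(c_i)$ is nested and converges to the point $x$. Clause (ii) is the easy half and uses only convergence: if $a\in\efcn{k}{S}$ intersects the tail, pick $y_i\in a\cap c_i$ along an infinite set of indices; since the $c_i$ eventually lie inside any neighborhood of $x$ we get $y_i\to x$, and compactness of $a$ forces $x\in a$. Hence any two curves intersecting the tail both contain $x$, so they intersect each other. For clause (i), convergence to $x$ means that for large $i$ the curve $c_i$ lies in a fixed coordinate disk about $x$, and inside a round ball $B(x,\varepsilon_i)$ with $\varepsilon_i\to 0$; by Jordan--Schoenflies it bounds a Jordan disk $D_i\subseteq B(x,\varepsilon_i)$, so $\operatorname{diam}(D_i)\to 0$. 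I would then invoke the nesting hypothesis to see that, for all large $i$, the component $C_i$ of $S\setminus c_i$ carrying the tail is the disk side $\operatorname{int}(D_i)$ — the complementary side cannot house an infinite nested sequence of curves that also shrinks toward $x$ — so that $\bigcap_i\overline{D_i}=\{x\}$ and $x\in\overline{D_i}$ for all large $i$. Now take any $C^k$ simple closed curve $a$ through $x$ (e.g.\ a small round circle, which lies in $\efcn{k}{S}$). Since $\operatorname{diam}(a)>0$ while $\operatorname{diam}(D_i)\to 0$, eventually $a\not\subseteq\overline{D_i}$; combined with $x\in a\cap\overline{D_i}$, the Jordan Curve Theorem forces $a$ to meet $c_i=\partial D_i$. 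Thus $a$ intersects the tail, giving (i).

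\emph{Backward direction.} Assume (i) and (ii). Using that the $c_i$ are separating and nested, one records a decreasing sequence of closed regions $\overline{C_i}$ (the sides carrying the tail), whose intersection $K$ is a non-empty continuum containing every subsequential Hausdorff limit of $(c_i)$. If $\operatorname{diam}(\overline{C_i})\to 0$ then $\overline{C_i}$, hence $c_i$, converges to the single point $K$, and we are done. Otherwise some $\delta>0$ satisfies $\operatorname{diam}(\overline{C_i})\ge\delta$ for all $i$, and a compactness argument produces two points $p\ne q$ in $K$ that are simultaneously approached by infinitely many of the $c_i$. Choosing disjoint round coordinate circles $a$ around $p$ and $b$ around $q$, each of those $c_i$ is connected and meets both small disks, hence crosses both $a$ and $b$; so $a$ and $b$ are disjoint members of $\efcn{k}{S}$ that both intersect the tail, contradicting (ii). Therefore $(c_i)$ converges to a point.

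\emph{Main obstacle.} The crux is clause (i) of the forward direction: proving that the nested regions $C_i$ actually collapse onto $x$, equivalently that the tail is eventually trapped on the disk side of each $c_i$. This is precisely where, as the overview explains, the naive ``connect-the-dots'' construction fails for $C^k$ curves and must be replaced by the nesting restriction together with the Jordan Curve Theorem, so I expect the careful use of the nested-sequence hypothesis — and the bulk of the work — to be concentrated there. The remaining ingredients (the Hausdorff-limit bookkeeping in the backward direction, and verifying that the small circles produced are genuinely $C^k$ and lie in $\efcn{k}{S}$) should be routine.
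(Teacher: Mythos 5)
Your overall architecture coincides with the paper's: clause (ii) of the forward direction is proved exactly as in the paper (points of $a\cap c_i$ accumulate at $x$ and $a$ is closed, so $x\in a$), and clause (i) is attacked the same way, by arguing that the side of $c_i$ carrying the tail collapses onto $x$, so that a curve through $x$ must cross $c_i$ by the Jordan Curve Theorem. However, your stated reason for ``the tail sits on the disk side'' is not correct under the paper's literal definition of nested: take round circles $c_i$ of radius $4^{-i}$ centred at $x+(2^{-i+1},0)$, whose closed disks $\overline{D_i}$ are pairwise disjoint. Each $C_i$ must then be the complement of $\overline{D_i}$; the sequence is nested and converges to $x$, yet its tail lies on the non-disk side of every $c_i$, and a small circle through $x$ enclosing all of the $c_i$ meets none of them. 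So the complementary side \emph{can} house the tail, and ``any $C^k$ circle through $x$ works'' is too strong. What is actually needed (and what the paper also asserts with no more justification than you give) is that the components $C_i$ shrink into every neighbourhood of $x$; this follows if one reads ``nested'' as requiring $C_{i+1}\subseteq C_i$, which together with convergence forces the $C_i$ to eventually be the disk sides. Since you correctly identify this as the crux, you need to supply that argument rather than the one-line reason you give.

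The gap specific to your write-up is in the backward direction. You dichotomise on $\operatorname{diam}(\overline{C_i})$, but the tail-carrying side $\overline{C_i}$ can be the large complementary side of $c_i$ (as in the example above) even when the curves themselves converge to a single point; in that situation you land in your ``otherwise'' branch, where no compactness argument can produce two distinct points approached by infinitely many $c_i$, and the proof stalls without reaching either the conclusion or a contradiction. The correct dichotomy, and the one the paper uses, is on the accumulation set of the curves $c_i$ themselves: if it is empty, then since every curve of $\efcn{k}{S}$ lies in a compact subsurface, no curve intersects the tail and (i) fails (you instead assert without proof that $K=\bigcap\overline{C_i}$ is non-empty, which can fail on a non-compact $S$, the setting of the lemma); if it contains two points $p\neq q$, your disjoint-circles construction correctly violates (ii); and if it is a single point, the sequence converges there. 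With the case analysis repaired along these lines your backward direction goes through, and the disjoint-circles step is, if anything, cleaner than the paper's construction of the curves $a$ and $b$ through $x$ and $x'$.
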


\begin{proof}
In the forward direction,  let $a \in \efcn{k}{S}$ be a curve that intersects the point $x$.   Then this curve will also leave some small neighborhood of $x$. But by the definition of convergence,  all but finitely many of the $c_i$ are contained in this neighborhood.  Since the $(c_i)$ are nested, then for each $c_i$,  $x$ must be contained in the component of $S \setminus c_i$ that also contains all the $c_j$ where $j >i$ and for large enough $i$, these components will also be contained in the small neighborhood of $x$.  So by the Jordan Curve Theorem,  $a$ must intersect each of the corresponding $c_i$ whose components are contained in the neighborhood of $x$.  Thus $a$ will intersect the tail of $(c_i)$. 

Since $x$ is the limit point for any sequence of points $x_i \in c_i$, then any curve that intersects infinitely many of the $c_i$ must also contain the point $x$.  So any curves that intersect the tail of $(c_i)$ will intersect each other at the point $x$. 

In the reverse direction,  suppose that the $(c_i)$ do not converge to a single point. Either the $(c_i)$  do not converge to any points, or they converge to more than one point.  If they do not converge to any  points, then there are only finitely many of the $c_i$ in any compact subsurface of $S$.  But any curve in $\efcn{k}{S}$ is contained in a compact subsurface of $S$, so it cannot intersect the tail of the $(c_i)$.  On the other hand, if the $(c_i)$ converge to more than one point, say $x$ and $y \in S$, then we can construct disjoint curves $a, b \in \efcn{k}{S}$ such that $a$ contains the point $x$ and $b$ contains the point $y$.  By constructing these curves carefully, we can ensure that both intersect the tail of the $(c_i)$. 

First, since $S$ is Hausdorff,  there exist disjoint open neighborhoods of $x$ and $y$.  Let $N$ denote the neighborhood of $x$.  Since the $(c_i)$ converge to $x$, then all but finitely many of the $c_i$ will intersect $N$.  Considering one of the $c_i$ that intersect $N$,  it will separate $N$ into at least two components. Let $C_x$ denote the component that contains the point $x$.  Let $x'$ be a point in $N$ that is in the component of $S \setminus c_i$ that does not include $x$.  Since $N$ is an open set,  there exists a curve $a \in \efcn{k}{S}$ through $x$ and $x'$ that is contained within $N$. 

\begin{figure}[h]
\centering
\begin{tikzpicture}
\small
    \node[anchor=south west, inner sep = 0] at (0,0){\includegraphics[width=2in]{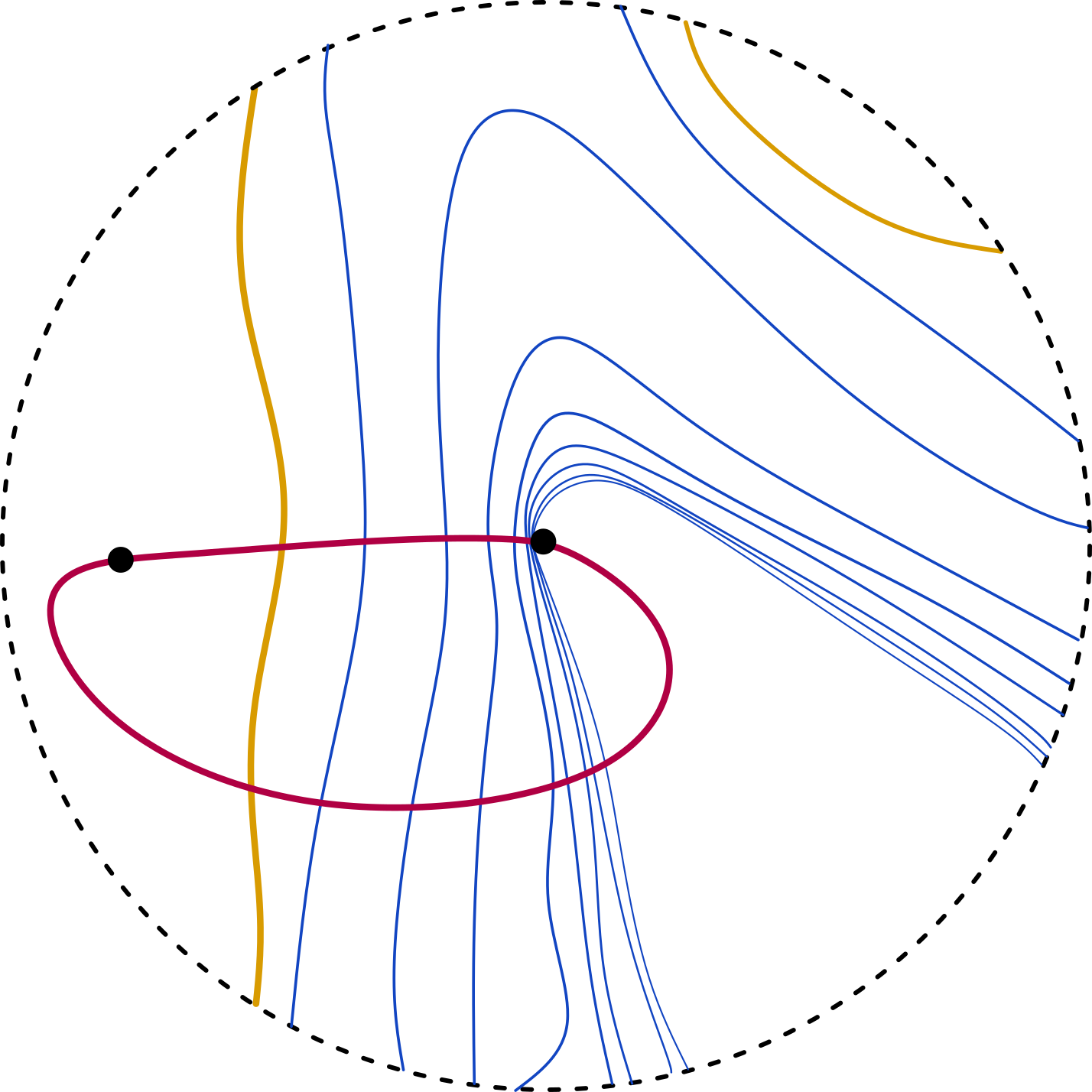}};
    \node at (.92,4) {$c_i$};
    \node at (.5,2.7) {$x'$};
    \node at (2.72,2.7) {$x$};
    \node at (3.2,1.65) {$a$};
\end{tikzpicture}
\caption{Finding a curve that intersects the tail of $(c_i)$ near the point $x$}
\end{figure}

 Moreover,   $x$ is always in the component with infinitely many $c_j$ with $j>i$.  So any curve from $x'$ to $x$ must intersect infinitely many $c_j$. 
Thus $a$ intersects the tail of the $(c_i)$. 

But this same construction can also be done within the neighborhood of $y$ to find a curve $b \in \efcn{k}{S}$ that also intersects the tail of $(c_i)$ and is disjoint from $a$. Thus condition (ii) fails to hold. 
\end{proof}

The characterization given in Lemma \ref{convseqineckgraph} utilizes only the relations that are present in the graph,  so any automorphism of $\efcn{k}{S}$ must preserve convergent sequences.

\begin{lemma} \label{autec1tohomeo}
Every $\alpha \in \aut \efcn{k}{S}$ induces a homeomorphism $f_{\alpha}$ such that the action of $f_{\alpha}$ on $\efcn{k}{S}$ is given by $\alpha$.
\end{lemma}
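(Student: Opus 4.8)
The plan is to extract, for each point $x \in S$, a distinguished nested convergent sequence whose combinatorial characterization (Lemma~\ref{convseqineckgraph}) is preserved by $\alpha$, and then let $f_\alpha(x)$ be the point to which the image sequence converges. First I would fix $x$ and build a concrete nested sequence $(c_i)$ of inessential $C^k$ curves converging to $x$ — for instance, small round circles in a coordinate chart with radii shrinking to $0$, each containing $x$ in its bounded component and each subsequent one nested inside the previous. By Lemma~\ref{nestedcurvespreserved} the sequence $(\alpha(c_i))$ is again nested, and because conditions $(i)$ and $(ii)$ of Lemma~\ref{convseqineckgraph} are phrased purely in terms of the adjacency relation, $\alpha$ preserves them; hence $(\alpha(c_i))$ converges to some point, which I define to be $f_\alpha(x)$.

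The first thing to check is that $f_\alpha$ is well-defined: the image point cannot depend on the choice of nested convergent sequence at $x$. Here I would argue that if $(c_i)$ and $(c_i')$ both converge to $x$, one can interleave them (or pass to a common nested refinement lying in the intersection of the two nested families of disks) to produce a single nested sequence converging to $x$; since any curve intersecting the tail of the combined sequence intersects the tails of both, Lemma~\ref{convseqineckgraph} forces the images to converge to the same point. A symmetric construction using $\alpha^{-1}$ produces a candidate inverse $f_{\alpha^{-1}}$, and the same interleaving argument shows $f_{\alpha^{-1}} \circ f_\alpha = \mathrm{id}$ and $f_\alpha \circ f_{\alpha^{-1}} = \mathrm{id}$, so $f_\alpha$ is a bijection. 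For continuity, I would use that nested convergent sequences are "dense" in the sense that a neighborhood basis at $f_\alpha(x)$ is detected by which curves in $\efcn{k}{S}$ contain $f_\alpha(x)$; concretely, if $x_n \to x$, choose nested sequences converging to each $x_n$ and to $x$ compatibly (the $x_n$-sequences eventually lie inside the disks of the $x$-sequence), push through $\alpha$, and conclude $f_\alpha(x_n) \to f_\alpha(x)$. Continuity of $f_{\alpha^{-1}}$ is identical, so $f_\alpha$ is a homeomorphism.

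Next I would verify that $f_\alpha$ genuinely induces $\alpha$ on $\efcn{k}{S}$, i.e. that $f_\alpha$ sends each $C^k$ curve $\gamma$ to a $C^k$ curve and that $f_\alpha(\gamma) = \alpha(\gamma)$ as vertices — which in particular shows $f_\alpha \in \thomeo(S)$. The key point is that a vertex $\gamma$ is determined by the set of points it passes through, and a point $y \in \gamma$ is detected combinatorially: $y \in \gamma$ iff $\gamma$ intersects the tail of every nested sequence converging to $y$. Since $f_\alpha$ was defined so that it matches this convergence data, $\alpha(\gamma)$ must pass through exactly the points $f_\alpha(y)$ for $y \in \gamma$, i.e. $\alpha(\gamma) = f_\alpha(\gamma)$ pointwise; and $\alpha(\gamma)$ is a vertex of $\efcn{k}{S}$, hence a $C^k$ curve. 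Running the same argument for $\alpha^{-1}$ shows $f_\alpha^{-1}$ also preserves $C^k$ curves, so $f_\alpha \in \thomeo(S)$, and the composite $(\widehat\eta \circ \rho)(\alpha)$, which by construction acts on vertices by $\gamma \mapsto f_\alpha(\gamma) = \alpha(\gamma)$, equals $\alpha$. Finally, $\alpha \mapsto f_\alpha$ is a homomorphism because the defining convergence property is functorial in $\alpha$.

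The main obstacle I expect is the well-definedness and compatibility of the limit point under different choices of nested sequences — unlike the $\fc{S}$ argument of Long--Margalit--Pham--Verberne--Yao, where one could connect arbitrary sequences of points by a single curve via Richards' classification, here the absence of such a connect-the-dots tool (as the excerpt emphasizes, a $C^k$ curve with $k \geq 2$ cannot pass through points whose curvature blows up, e.g. along $y = |x|^{3/2}$) means one must work entirely with the nesting/Jordan-curve characterization. The delicate step is producing, from two nested convergent sequences at the same point, a common nested refinement still consisting of $C^k$ curves and still converging to that point; this requires care in the coordinate chart but is feasible because round circles of shrinking radius are always available as $C^k$ (indeed smooth) inessential curves. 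Once this interleaving lemma is in hand, the bijectivity, continuity, and the identification $f_\alpha(\gamma) = \alpha(\gamma)$ all follow by repeatedly invoking Lemma~\ref{convseqineckgraph}.
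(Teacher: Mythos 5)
Your proposal is correct and follows essentially the same route as the paper: define $f_\alpha(x)$ via the image of a nested convergent sequence, prove well-definedness and bijectivity by interleaving sequences, prove continuity by a compatible (diagonal) choice of nested sequences for $x_n \to x$, and recover $\alpha(\gamma) = f_\alpha(\gamma)$ from the fact that a curve contains a point exactly when it intersects the tail of a nested sequence converging to that point. The only minor difference is that the interleaving step needs no new $C^k$ curves (one just alternates subsequences of the two given sequences), so the ``common refinement'' difficulty you flag at the end is less delicate than you anticipate.
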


\begin{proof}
Let $x \in S$ and let $(c_i)$ be a nested sequence in $\efcn{k}{S}$ that converges to $x$. By Lemma \ref{convseqineckgraph}, $(\alpha(c_i))$ will be a convergent nested sequence that converges to some point $y \in S$.  Define the function $f_{\alpha}: S \rightarrow S$ by $f_{\alpha}(x)=y$, for every point $x \in S$. 

This map is well-defined. To see this consider two nested sequences $(c_i)$ and $(d_j)$ that converge to the same point $x$. Notice that for each $c_i$, the component of $S \setminus c_i$ that contains $x$ is itself a neighborhood for $x$. Thus by the definition of convergence, all but finitely many of the $d_j$ must also be contained in this component.  So we can choose subsequences $(c_{i_n})$ and $(d_{j_n})$ such that each $d_{j_n}$ is contained in the same component of $S \setminus c_{i_n}$ as the point $x$ and each $c_{i_{n+1}}$ is contained in the same component of $S \setminus d_{j_n}$ as the point $x$.  Then the sequence of curves $$e_n = \left\{ \begin{array}{ll} c_{i_{n/2}} & n \mbox{ even} \\ d_{j_{(n+1)/2}} & n \mbox{ odd} \end{array} \right.$$ will also be a nested sequence that converges to the point $x$.  Since $(\alpha(e_n))$ converges to a single point, then both $(\alpha(c_{i_n}))$ and $(\alpha(d_{j_n}))$ converge to the same point.  But $(\alpha(c_{i_n}))$ and $(\alpha(c_i))$, and also $(\alpha(d_{j_n}))$ and $(\alpha(d_j))$,  must then all converge to the same point. Thus $f_{\alpha}$ is a well-defined map.

Since every point $y \in S$ will have some nested convergent sequence $(d_j)$ and $\alpha$ is an automorphism of the graph, then $(\alpha^{-1}(d_j))$ must also be a nested convergent sequence that converges to some point $x \in S$.  But then $f_{\alpha}(x)$ will be the point that $(\alpha(\alpha^{-1}(d_j)))= (d_j)$ converges to, which is the point $y$.  Thus $f_{\alpha}$ is surjective. 

This map is also injective.  Let $(c_i)$ and $(d_j)$ be nested sequences that converge to the points $x_1$ and $x_2$ such that $(\alpha(c_i))$ and $(\alpha(d_j))$ converge to the same point $y$.  Similar to above, we can construct a nested sequence $e_n$ that interleaves curves from $(\alpha(c_i))$ and $(\alpha(d_j))$. So $(\alpha^{-1}(e_n))$ is a convergent sequence that converges to a single point.  But applying $\alpha^{-1}$ to the chosen subsequences of $(\alpha(c_i))$ and $(\alpha(d_j))$ will again give us that the corresponding subsequences of $(c_i)$ and $(d_j)$ must converge to the same point and thus $x_1=x_2$.

Finally, we need to show that $f_{\alpha}$ and $f_{\alpha}^{-1}$ are both continuous. Since $f_{\alpha^{-1}} \circ f_{\alpha}$ will be the identity on all curves, then $f_{\alpha}^{-1} = f_{\alpha^{-1}}$. So we only need to show that $f_{\alpha}$ is continuous.  Let $x_n$  be a sequence of points in $S$ that converges to $x$.   We start by constructing nested sequences $(c_j^n)$ of curves in $\efcn{k}{S}$ that converge to each $x_n$ such that $c^n_n$ does not separate $x$ from the set $\{x_m \; | \; m \geq n\}$ and is contained in the same component of $S \setminus c^{n-1}_{n-1}$ as $x$.  These $c^n_n$ exist since $c^{n-1}_{n-1} \cup \{x\} \cup \{x_m \; | \; m \geq n\}$ is a closed set and so the components of the complement will all be open sets. Moreover, the component that contains $c^n_n$ is a disk with a countable number of points removed and the only limit point for these removed points is $x$, which is not on the boundary of the disk.  So it is possible to find a $C^k$ curve that separates $c^{n-1}_{n-1}$ from $\{x\} \cup \{x_m \; | \; m \geq n\}$.  Then by construction, the diagonal sequence $(c_n^n)$ are nested and converge to $x$.  So the sequence $(\alpha(c_n^n))$ must also converge to a point $y$ which is the same point that the sequence of points $y_n$ of $(\alpha(c_j^n))$ converge to.  Thus $f_{\alpha}$ is continuous and so $f_{\alpha}$ is a homeomorphism of $S$. 

For each point $x$ on a curve $c \in \efcn{k}{S}$, there is a nested sequence $(c_i)$ that converges to $x$. So by Lemma \ref{convseqineckgraph},  $c$ will intersect the tail of the $c_i$. Thus $\alpha(c)$ must intersect the tail of the $\alpha(c_i)$. Since $(\alpha(c_i))$ converge to the point $f_{\alpha}(x)$, then $\alpha(c)$ must contain the point $f_{\alpha}(x)$.  Since this holds for every point in $c$, then $f_{\alpha}(c) = \alpha(c)$.  Thus $f_{\alpha}$ induces the same action on $\efcn{k}{S}$ as $\alpha$. 
\end{proof}

\noindent \textit{The convergent sequences map.  }
Define the map $$\rho : \aut \efcn{k}{S} \rightarrow \homeo(S)$$ given by $\rho[\alpha] = f_{\alpha}$ from Lemma \ref{autec1tohomeo}.

\medskip

Finally, we prove the main proposition for this section, showing that all the homeomorphisms naturally induced from the extended graph are elements of $\thomeo(S)$. 

\begin{proof}[Proof of Proposition \ref{ec1tohomeo1}]
By Lemma \ref{autec1tohomeo},  the homeomorphism $\rho[\alpha]$ gives an action on $\efcn{k}{S}$ given by the automorphism $\alpha$. So both $\rho[\alpha]$ and $\rho[\alpha]^{-1} = \rho[\alpha^{-1}]$ map $C^k$ curves to $C^k$ curves. Thus $\rho[\alpha]$ is an element of $\thomeo(S)$.
\end{proof}

\section{Distinguishing pairs of curves}\label{sectioncurvepairs}

In this section, we complete the first half of Step 2 from the proof outline given in the introduction.  We are working towards defining a map between the automorphisms of the $C^k$-curve graph and the automorphisms of the extended $C^k$-curve graph.  In particular,  we find a pair of essential curves that uniquely determine an inessential curve.  

While along the same vein as the work of Long--Margalit--Pham--Verberne--Yao~\cite{LMPVY}, their construction of bigon pairs does not work when restricted to $C^k$ curves since it requires corners to exist for either curve in the bigon pair or the inessential curve associated to the pair.  So a new construction of curves, called simple $k$-smooth pairs, was needed to make the connection between these two graphs.  By the end of this section, we will prove the following proposition, showing that these pairs have a unique structure in the $C^k$-curve graphs.

\begin{prop}\label{c1preservessimplesmoothpair}
Let $S_g$ be a surface with $g \geq 2$.  Then every automorphism of $ \fcn{k}{S_g}$ preserves the set of simple $k$-smooth pairs.
\end{prop}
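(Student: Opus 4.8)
The plan is to characterize simple $k$-smooth pairs purely in terms of the combinatorics of $\fcn{k}{S_g}$, so that any graph automorphism must send such a pair to another such pair. Since the proposition refers to a construction—"simple $k$-smooth pairs"—that is only alluded to in the introduction and figure caption, my first task is to pin down its definition: presumably a pair of essential $C^k$ curves $\{a,b\}$ that are $C^k$-tangent along a single arc (or at finitely many points) in such a way that they cobound a region whose boundary is $C^k$, and this pair determines a unique inessential curve (the "double" curve running parallel and just outside the tangency, or the boundary of a smoothed neighborhood). The analogue of the bigon pair from \cite{LMPVY}, but with the corner replaced by a genuine $C^k$-smooth point of tangency so everything stays in the graph.

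The strategy, mirroring the bigon-pair argument in \cite{LMPVY}, would be a sequence of combinatorial characterizations, each invariant under $\aut \fcn{k}{S_g}$, building up to the full characterization of a simple $k$-smooth pair. First I would recall (or reprove, using joins and links as in Lemma~\ref{sepcurveinextgraph}) how to detect in $\fcn{k}{S_g}$ basic topological invariants: whether two disjoint essential curves are isotopic, whether they cobound an annulus, the genus on each side, and so on—these are standard link-theoretic arguments on curve graphs and transfer to the fine $C^k$ setting because the relevant model curves can be taken $C^k$. Next I would characterize the specific intersection pattern: that $a$ and $b$ are homotopic, meet only along the prescribed tangency locus, and that this locus is a $k$-smooth tangency rather than a transverse crossing or a corner. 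The key combinatorial handle should be the existence of a distinguished common "parallel" curve together with the nonexistence of certain curves that would be available if the tangency were transverse or if a corner were present—i.e., the smoothness constraint manifests as an asymmetry in which $C^k$ curves can be drawn disjoint from or through small neighborhoods of the tangency. I would phrase each such property as "there exists / does not exist a vertex $v$ adjacent to exactly the following configuration," making it manifestly automorphism-invariant, and then assemble these to conclude that $\{\alpha(a),\alpha(b)\}$ is again a simple $k$-smooth pair.

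The main obstacle I anticipate is precisely the step that distinguishes a $k$-smooth tangency from other local pictures using only the graph structure. In \cite{LMPVY} the corner in the bigon pair is detectable because corners constrain which topological curves can squeeze through; here the tangency is smooth, so I need a genuinely different combinatorial signature—likely involving the fact that two $C^k$ curves that are $C^k$-tangent to high order along an arc bound a region whose double is a $C^k$ curve, whereas a transverse intersection does not admit such a smoothing within the graph, and a lower-order contact fails a more delicate test. Nailing down that this signature is (a) necessary and (b) sufficient, and that it can be read off from links and adjacencies alone, is where the real work lies; I expect this is why the excerpt warns that "the graph structures related to our construction are more complicated than the structures for bigon pairs," and why Section~\ref{sectioncurvepairs} devotes itself to "identifying the graph structures unique to a variety of topological properties of curves" before the final assembly. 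The remaining steps—uniqueness of the associated inessential curve, and the final "assemble the invariant properties" argument—should be routine once the tangency is pinned down combinatorially.
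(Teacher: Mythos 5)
Your meta-strategy---express every defining property of a simple $k$-smooth pair in automorphism-invariant graph language and then assemble---is exactly the paper's (Section~\ref{sectioncurvepairs} does precisely this, layering Lemmas~\ref{autpreservesephomandhull}, \ref{autpreservessubsurfaces}, \ref{autpreservestoruspairs}, \ref{autpreservessmoothpairs} before the final proof). But the object you are characterizing is not the object in the paper, and the discrepancy is fatal to the details. A $k$-smooth pair here is a pair of \emph{non-homotopic} curves $a,b$ lying in a nonseparating pair of pants $P$, each homotopic to a different boundary component, which literally \emph{coincide} along \emph{two} arcs $c_1,c_2 \subset a\cap b$; they determine an \emph{essential} curve $d=a_1\cup b_1\cup c_1\cup c_2$ cutting off a smaller pair of pants $P_d$, and the associated inessential curve is the complementary combination $a_2\cup b_2\cup c_1\cup c_2$ (the ``kernel'' of Section~\ref{subsectionkernel}). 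Your guessed picture---two \emph{homotopic} curves tangent along a \emph{single} arc whose ``double'' bounds a disk---does not produce a vertex of the graph: at each endpoint of a single overlap arc the two complementary arcs $a_2$ and $b_2$ leave in the same direction with the same $k$-jet, so their union has a cusp and is not $C^k$. This is why the construction needs two overlap arcs and why Lemma~\ref{smoothbigonscross} shows $a$ and $b$ must actually cross at each of $c_1$ and $c_2$.

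Consequently the obstacle you identify as the heart of the matter---combinatorially detecting the \emph{order} of $C^k$ contact between $a$ and $b$---is not where the difficulty lies and is not addressed by the paper at all: since $c_1,c_2$ are genuine shared subarcs, the $C^k$-ness of $d$ is automatic from $d$ being a vertex of $\fcn{k}{S_g}$ lying in the hull of $\{a,b\}$. The real graph-theoretic work, absent from your proposal, is (a) Lemma~\ref{smoothpairingraph}, which certifies via test curves through $\gamma_3$ and homotopic push-offs $a'$, $b'$ that such a $d$ decomposes as exactly \emph{one} arc of $a$, \emph{one} arc of $b$, and two overlaps (the argument that multiple $a_i$ subarcs force a disk, via Lemma~\ref{disksinpants}); and (b) the detection of ``simpleness'' (no intersections of $a$ and $b$ outside $c_1\cup c_2$) through the dichotomy of type~1 versus type~2 degenerate torus pairs in Lemmas~\ref{type1ingraph} and~\ref{simpleonlytype2}, combined with the crossing criterion of Lemma~\ref{noncrosssmoothpairingraph}. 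You acknowledge that nailing down a necessary-and-sufficient signature ``is where the real work lies''; since none of that work is carried out and the intended signature (order of tangency) is the wrong one, the proposal as it stands does not constitute a proof.
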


This section is broken into several subsections, each giving different curve structures that are preserved by the automorphisms of the $C^k$-curve graph. The preserved structures for each section are: 

 \begin{enumerate}[label=\S \ref{sectioncurvepairs}.\arabic*:, leftmargin=2\parindent, topsep=3pt]
\item separating, homotopic and hulls of curves
\item annuli, nonseparating pants, and one holed tori 
\item torus pairs 
\item $k$-smooth pairs
\item simple $k$-smooth pairs
\end{enumerate} 

\subsection{Separating, homotopic, and hulls of curves}\label{subsectionsephomhull}

The goal of this section is to prove Lemma \ref{autpreservesephomandhull}, which shows that sets of separating, homotopic, and hulls of curves are preserved by automorphisms of the $C^k$-curve graph.  In order to do this, we recall a few relevant graph structures defined in Section~\ref{sectionec1tohomeo1}.

\medskip

Recall that the \emph{link of the vertices $\{v_1,  v_2, \ldots, v_n\}$} of a graph $\Gamma$, denoted $link(v_1, v_2, \ldots, v_n)$,  is the subgraph of $\Gamma \setminus \{v_1, \ldots, v_n\}$ induced by the set of vertices $u_j$ that are adjacent to all of the $v_i$. 

A graph $\Gamma$ is a \emph{join} if there is a partition of the vertices of $\Gamma$ into two parts,  such that each vertex in one part is adjacent to every vertex in the other part. 

\begin{lemma}\label{sepcurveingraph}
Let $S$ be an oriented surface.  Let $a \in \fcn{k}{S}$. Then $a$ is separating if and only if $link(a)$ is a join.
\end{lemma}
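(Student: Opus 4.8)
The plan is to adapt the proof of Lemma~\ref{sepcurveinextgraph} essentially verbatim, since the statement is identical except that we now work in $\fcn{k}{S}$ (only essential curves) rather than $\efcn{k}{S}$. First I would handle the forward direction: suppose $a$ is separating, so $S \setminus a$ has two components $S_1, S_2$. Any essential $C^k$ curve $b$ disjoint from $a$ lies entirely in $S_1$ or in $S_2$, and curves on opposite sides are automatically disjoint, so $link(a)$ partitions into the two parts $\{b : b \subset S_1\}$ and $\{b : b \subset S_2\}$. The one point requiring slightly more care than in the extended case is that \emph{both} parts must be nonempty: since $S$ has genus at least~$2$ (indeed, this lemma will be applied for $S_g$ with $g \geq 2$; for a general oriented surface one argues as in Lemma~\ref{sepcurveinextgraph} that each side carries an essential curve provided it is not a disk), each component $S_i$ contains an essential simple closed $C^k$ curve, so neither part of the join is empty and $link(a)$ is genuinely a join.

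Next I would do the reverse direction by contrapositive, exactly mirroring Lemma~\ref{sepcurveinextgraph}: assume $a$ is nonseparating and show $link(a)$ is not a join. Suppose for contradiction it were a join with parts $P_1, P_2$, pick $b \in P_1$ and $c \in P_2$, and construct a curve $d \in \fcn{k}{S}$ in $link(a)$ that intersects both $b$ and $c$; such a $d$ cannot lie in either part, a contradiction. To build $d$: because $a$ is nonseparating, $S \setminus a$ is connected, so there is an arc $e$ from $b$ to $c$ missing $a$; since $a$ is closed there is an open neighborhood of $e$ disjoint from $a$, and one can take a small $C^k$ push-off of $b$ and drag it along this neighborhood of $e$ until it crosses $c$, producing an essential $C^k$ curve $d$ that is disjoint from $a$ but meets both $b$ and $c$.

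The one genuinely new point compared to the extended-graph version is \emph{essentiality}: the curve $d$ we build by isotoping $b$ along an arc must itself be essential, and the push-offs $a', a''$ used to populate the two sides (if one wanted that route) must be essential. This is where I expect the only real friction. It is resolved by noting that $b$ is already essential and the isotopy dragging it along a neighborhood of $e$ is an ambient isotopy of $S$, so $d$ is isotopic to $b$ and hence essential; moreover the construction can be carried out in the $C^k$ category since we only ever perform small smooth isotopies supported near $e$. With that observation the rest is identical to Lemma~\ref{sepcurveinextgraph}.

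\begin{proof}
Suppose first that $a$ is separating, so that $S\setminus a$ has two components $S_1$ and $S_2$.  Any curve $b\in\fcn{k}{S}$ disjoint from $a$ lies in one of these two components, and any curve in $S_1$ is disjoint from any curve in $S_2$.  Since $S$ is oriented and (for the surfaces to which we apply this lemma) each $S_i$ contains an essential simple closed curve, the set $V_i=\{b\in link(a)\mid b\subset S_i\}$ is nonempty for $i=1,2$, and $\{V_1,V_2\}$ exhibits $link(a)$ as a join.

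Conversely, suppose $a$ is nonseparating and, for a contradiction, that $link(a)$ is a join with parts $P_1$ and $P_2$.  Choose $b\in P_1$ and $c\in P_2$.  Since $a$ is nonseparating, $S\setminus a$ is connected, so there is an arc $e$ in $S\setminus a$ from a point of $b$ to a point of $c$.  As $a$ is closed, $e$ has an open neighborhood $U$ disjoint from $a$.  A small $C^k$ push-off of $b$ can be isotoped within $U$ until it meets $c$; this produces a curve $d\in\fcn{k}{S}$ that is disjoint from $a$ (hence in $link(a)$), isotopic to $b$ (hence essential), and intersects both $b$ and $c$.  Then $d$ cannot lie in $P_1$, since it meets $b$, nor in $P_2$, since it meets $c$, contradicting that $\{P_1,P_2\}$ is a partition of $link(a)$.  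Hence $link(a)$ is not a join.
\end{proof}
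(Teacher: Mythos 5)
Your proposal is correct and takes essentially the same approach as the paper: the paper explicitly omits this proof, stating that it is essentially identical to that of Lemma~\ref{sepcurveinextgraph}, which is exactly the argument you reproduce. Your added attention to essentiality (the push-off $d$ is isotopic to $b$ and hence essential, and each side of a separating curve carries an essential curve) is the right minor adaptation and is consistent with what the paper intends.
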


\begin{lemma}\label{jointlyseparatingingraph}
Let $\{\gamma_1, \ldots, \gamma_m\}$ be disjoint curves in $\fcn{k}{S}$.  Then $\{\gamma_1, \ldots, \gamma_m\}$ jointly separate $S$ if and only if $link(\gamma_1, \ldots, \gamma_m)$ is a join.
\end{lemma}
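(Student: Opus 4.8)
\textbf{Proof proposal for Lemma \ref{jointlyseparatingingraph}.}

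The plan is to mimic the proof of Lemma \ref{sepcurveinextgraph} (equivalently Lemma \ref{sepcurveingraph}), which handled the single-curve case, and adapt both directions to a finite collection. Throughout, write $\Sigma = \gamma_1 \cup \cdots \cup \gamma_m$ for the union of the curves, so that ``jointly separate'' means $S \setminus \Sigma$ is disconnected. Note that a vertex $u$ lies in $link(\gamma_1,\ldots,\gamma_m)$ exactly when $u$ is a curve disjoint from every $\gamma_i$, i.e.\ $u \subset S \setminus \Sigma$.

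First I would prove the forward direction. Suppose $\Sigma$ disconnects $S$, and pick two components $U_1, U_2$ of $S \setminus \Sigma$. Any curve $u$ in the link is contained in a single component of $S\setminus\Sigma$; let $V_1$ be the set of link-vertices lying in $U_1$ and $V_2$ the set of all remaining link-vertices (those in $U_2$ and in any other components). Curves lying in different components of $S\setminus\Sigma$ are disjoint, hence adjacent, so every vertex of $V_1$ is adjacent to every vertex of $V_2$; this exhibits $link(\gamma_1,\ldots,\gamma_m)$ as a join, provided neither part is empty. Nonemptiness of $V_1$ (and similarly of $V_2$) follows as in Lemma \ref{sepcurveinextgraph}: since $S$ is oriented, a boundary-parallel push-off of one of the $\gamma_i$ bounding $U_1$ gives an (inessential, but that is allowed here since we are in $\fcn{k}{S}$... wait—this is the \emph{non}-extended graph) essential curve inside $U_1$. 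Here I must be slightly careful: in $\fcn{k}{S}$ all vertices are essential, so I instead argue that each component $U_j$ of $S\setminus\Sigma$ with $g\ge 2$-type global topology contains an essential simple closed $C^k$ curve — e.g.\ because $S$ has genus $\ge 2$, at least one complementary component carries nontrivial topology, and the push-off argument must be run on the side that is not a disk. A clean way to phrase it: partition the components of $S\setminus\Sigma$ into two nonempty groups each containing a component that is not a disk (possible because $\Sigma$ cannot cut a genus $\ge 2$ surface into all disks using finitely many disjoint curves, or if it can, reindex so that each part still contains a non-disk component); within each non-disk component there is an essential $C^k$ curve, so both parts of the partition are nonempty.

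For the converse, suppose $\Sigma$ does \emph{not} separate $S$, i.e.\ $S \setminus \Sigma$ is connected; I will show $link(\gamma_1,\ldots,\gamma_m)$ is not a join. Suppose for contradiction it were a join with parts $V_1, V_2$, and pick $b \in V_1$, $c \in V_2$. Exactly as in Lemma \ref{sepcurveinextgraph}, since $S \setminus \Sigma$ is connected and open, there is an arc $e$ from $b$ to $c$ inside $S\setminus\Sigma$; thicken it to an open neighborhood avoiding $\Sigma$ (using that $\Sigma$ is closed), and smoothly isotope $b$ within this neighborhood to a $C^k$ curve $d$ that meets $c$. Then $d$ is disjoint from all the $\gamma_i$, so $d \in link(\gamma_1,\ldots,\gamma_m)$, but $d$ intersects both $b$ and $c$, which sit in opposite parts — contradicting the join structure, since $d$ can belong to neither part. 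Hence the link is not a join, completing the contrapositive.

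I expect the main obstacle to be the nonemptiness/essentialness bookkeeping in the forward direction: because $\fcn{k}{S}$ only contains \emph{essential} curves (unlike $\efcn{k}{S}$ in Lemma \ref{sepcurveinextgraph}), I cannot simply push one $\gamma_i$ off itself and declare the result a link-vertex — I need to ensure each side of the separation actually supports an essential simple closed curve, which requires an argument that a finite disjoint family cannot cut $S_g$ (or whatever oriented $S$ is in scope) into pieces that are all disks on one side. If the lemma is only invoked for $g\ge 2$ this is straightforward; if it is stated for general oriented $S$, one either restricts the statement or observes that when one side is a union of disks the ``join'' is degenerate in a way the application tolerates. The rest of the argument — the arc-pushing construction for the converse — is essentially verbatim from the proof of Lemma \ref{sepcurveinextgraph}, with $a$ replaced by the union $\Sigma$.
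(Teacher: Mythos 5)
Your argument is correct and is exactly the adaptation the paper intends: the paper omits the proof of this lemma, stating it is ``essentially the same as the proof of Lemma~\ref{sepcurveinextgraph},'' and your two directions follow that proof with the single curve $a$ replaced by the union $\Sigma$. The essentiality worry you flag in the forward direction has a one-line resolution that you circle around without quite stating: a push-off of a boundary curve $\gamma_i$ into an adjacent component of $S\setminus\Sigma$ is homotopic to the essential curve $\gamma_i$, hence is itself an essential $C^k$ curve and a link vertex, so every complementary component (none of which can be a disk, since the $\gamma_i$ are essential) contains a vertex of the link and both parts of the join are automatically nonempty --- no counting of disk components or reindexing is needed.
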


The proofs for both of the above lemmas are omitted since they are essentially the same as the proof of Lemma \ref{sepcurveinextgraph}. 

\begin{lemma}\label{disjointisotoptograph}
Let $g \geq2$ and $a, b$ be disjoint curves in $\fcn{k}{S_g}$. Then $a$ and $b$ are homotopic if and only if $link(a,b)$ is a join where one part contains either only separating or only nonseparating curves.
\end{lemma}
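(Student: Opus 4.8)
The plan is to characterize homotopy of disjoint curves $a,b$ in $\fcn{k}{S_g}$ purely in terms of the join structure of $link(a,b)$, mirroring the separating/jointly-separating results of Lemmas~\ref{sepcurveingraph} and~\ref{jointlyseparatingingraph}. The key observation is that two disjoint simple closed curves $a,b$ on $S_g$ are homotopic precisely when they cobound an annulus, i.e. one component of $S_g \setminus (a \cup b)$ is an open annulus. I would first record this topological fact (using $g \geq 2$ so that no curve is null-homotopic and homotopic curves are indeed isotopic, hence cobound an annulus). So the statement to prove becomes: $a \cup b$ bounds an annulus component if and only if $link(a,b)$ is a join with one part all separating or all nonseparating.

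For the forward direction, suppose $a$ and $b$ are homotopic. By Lemma~\ref{jointlyseparatingingraph}, $\{a,b\}$ jointly separate $S_g$ (cutting along an annulus-cobounding pair disconnects since $g \geq 2$), so $link(a,b)$ is already a join; the content is the extra condition on the parts. Write $S_g \setminus (a\cup b) = A \sqcup X$ where $A$ is the open annulus and $X$ is the complementary piece (connected, since $\{a,b\}$ jointly separate into exactly two pieces when they are homotopic and essential). Every curve $c$ disjoint from both $a$ and $b$ lies entirely in $A$ or entirely in $X$. A curve in $A$ is inessential or homotopic to the core — but $\fcn{k}{S_g}$ contains only essential curves, so any essential curve in the annulus is homotopic to $a$ and in particular nonseparating (as $a$ is). Thus the part of the join coming from curves in $A$ consists entirely of nonseparating curves, giving the desired one-sided condition. (If instead one works with the extended graph one would phrase this differently, but here essentiality does the work.)

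For the reverse direction, I would prove the contrapositive: if $a,b$ are disjoint and \emph{not} homotopic, then either $link(a,b)$ is not a join, or it is a join but every part contains both a separating and a nonseparating curve. If $\{a,b\}$ do not jointly separate, Lemma~\ref{jointlyseparatingingraph} already gives that $link(a,b)$ is not a join, and we are done. So assume they jointly separate but are not homotopic; then $S_g \setminus (a \cup b)$ has components none of which is an annulus cobounded by $a$ and $b$. One then shows each "side" of the join (each component, or union of components on one side) is rich enough to contain both a separating and a nonseparating essential $C^k$ curve: a component that is not an annulus and meets the boundary has genus $\geq 1$ or at least three boundary components or enough room (using $g\geq 2$ to guarantee total complexity), and in such a subsurface one can draw both a nonseparating $C^k$ curve and a separating $C^k$ curve (e.g. one parallel to a boundary component, or one cutting off a handle), then push them slightly into the interior to make them disjoint from $a,b$. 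The curve-construction step is routine smoothing as in the proof of Lemma~\ref{sepcurveinextgraph}: work in a smooth chart / collar and realize the topological model by a $C^k$ curve.

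\textbf{Main obstacle.} The delicate point is the reverse direction's case analysis: I must verify that whenever $a,b$ jointly separate but are not homotopic, \emph{both} sides of the resulting join simultaneously contain a separating and a nonseparating curve — in particular ruling out degenerate small-complexity pieces where, say, a side is a one-holed torus (which contains nonseparating curves but its only "separating" curves are boundary-parallel, i.e. inessential or equal to $a$/$b$) or a pair of pants (which contains \emph{no} nonseparating curves at all). So I would need to argue that under $g \geq 2$ and the "not homotopic" hypothesis, no side can be a pair of pants or a one-holed torus \emph{on both sides at once}, or more carefully, handle these boundary cases by hand — possibly the real statement being used downstream already accounts for this, and the precise combinatorial dichotomy ("one part all separating or all nonseparating" occurs \emph{iff} homotopic) is exactly what forces the low-complexity exclusions. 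Pinning down this complexity bookkeeping, rather than any single hard idea, is where the work lies.
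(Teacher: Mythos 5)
Your overall strategy matches the paper's: the forward direction via the annulus cobounded by disjoint homotopic curves, and the reverse direction by contrapositive, splitting on whether $\{a,b\}$ jointly separate and then showing every part of any join structure on $link(a,b)$ contains both a separating and a nonseparating curve. But there are two genuine gaps. First, your forward direction implicitly assumes $a$ is nonseparating: you assert that $S_g\setminus(a\cup b)$ has exactly two pieces and that the essential curves in the annulus are ``nonseparating (as $a$ is).'' When $a$ and $b$ are disjoint homotopic \emph{separating} curves, the complement has three components and the annulus part consists entirely of separating curves --- this is exactly why the lemma says ``either only separating or only nonseparating.'' The fix is one line (essential curves in the annulus are homotopic to $a$ and so inherit its type), but as written your argument covers only half of the forward direction.

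Second, and more seriously, the reverse direction's case analysis --- which is the entire content of the paper's proof of that direction --- is left as your ``main obstacle,'' and the worry you raise there dissolves for a reason you have not articulated. A component of $S_g\setminus(a\cup b)$ that is a pair of pants or a one-holed torus is not a problem, because its boundary circles are copies of $a$ and $b$, and push-offs of $a$ and $b$ into that component are vertices of $\fcn{k}{S_g}$ that are essential in $S_g$ and carry the same separating/nonseparating type as $a$ and $b$. For instance, a pair-of-pants component can only occur when one curve, say $a$, is separating and $b$ is nonseparating; its boundary is then one copy of $a$ and two of $b$, so it already contains both a separating curve (parallel to $a$) and a nonseparating one (parallel to $b$), even though it supports no other essential curves. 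The paper's proof is precisely this push-off argument combined with a small genus count: any component that needs an ``extra'' nonseparating curve has genus at least one, since a genus-zero piece with two boundary circles would be an annulus cobounded by $a$ and $b$, forcing $a\simeq b$. Until you carry out this bookkeeping in the three cases ($a$ separating and $b$ nonseparating; both separating; both nonseparating and jointly separating), the reverse direction is not proved.
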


\begin{proof}
Suppose that $a$ and $b$ are disjoint homotopic curves.  Up to a change of coordinates \cite{Primer} they will look like one of the examples in Figure~\ref{fig-sepvnsep}.  So they will jointly separate the surface, with one component being an annulus.  Note that all the essential curves in this annulus will be homotopic to $a$ and $b$.  Thus they will either all be separating or all nonseparating, based on the whether $a$ and $b$ are separating or nonseparating.
\begin{figure}[h]
\centering
\includegraphics[width=3in]{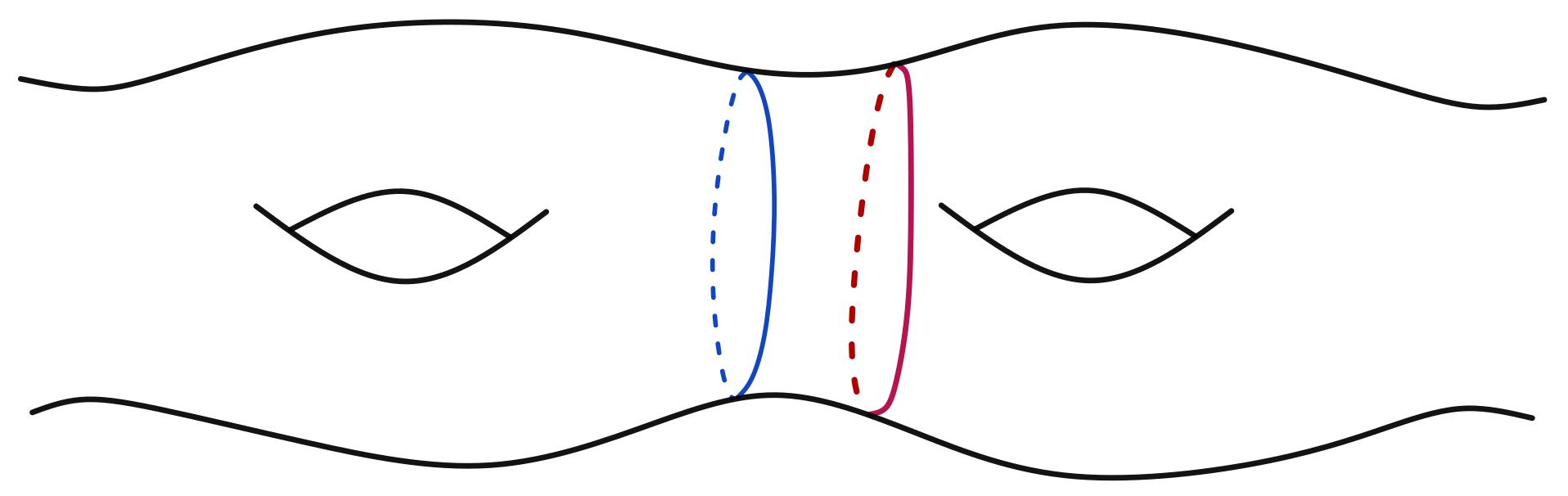} \hspace{.5in}
\includegraphics[width=1.5in]{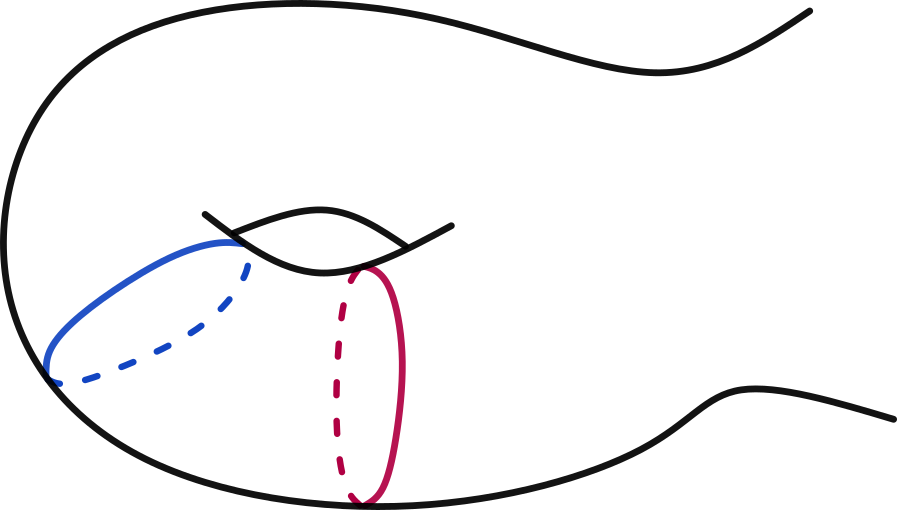}
\caption{Pairs of separating (left) and non-separating (right) homotopic curves}
\label{fig-sepvnsep}
\end{figure}

Suppose on the other hand that $a$ and $b$ are disjoint non-homotopic curves.  Suppose that $a$ is separating and $b$ is nonseparating.  By Lemma \ref{sepcurveingraph}, the link of $a$ is a join and $b$ will be in one of the components of $S_g \setminus a$.  By pushing $a$ off of itself on both sides, then there is a separating curve in both sides of this join that does not intersect $b$.  Also, a push off of $b$ will be a nonseparating curve on one side of $S_g \setminus a$ that is also disjoint from $b$.  The component of $S_g \setminus a$ that does not contain $b$ will have enough topology to contain another nonseparating curve, which must be disjoint from $a$ and $b$. Thus both parts of the join structure of $link(a,b)$ will contain both separating and nonseparating curves.

If both $a$ and $b$ are separating but not homotopic, then $link(a,b)$ will be a 3-join with each part containing enough topology to contain a nonseparating curve.  Each part will also contain separating curves by pushing off of either $a$ or $b$. 

Finally, if both $a$ and $b$ are nonseparating, but not homotopic, then they are either jointly nonseparating or jointly separating. In the first case, the $link(a,b)$ will not be a join by Lemma \ref{jointlyseparatingingraph}. In the second case, as shown below, each component of $S_g \setminus \{a,b\}$ will have two boundary components and at least one genus.  So there is enough topology to find both separating and nonseparating curves in each part of the join structure of $link(a,b)$.
\begin{figure}[h]
\centering
\includegraphics[width=3.5in]{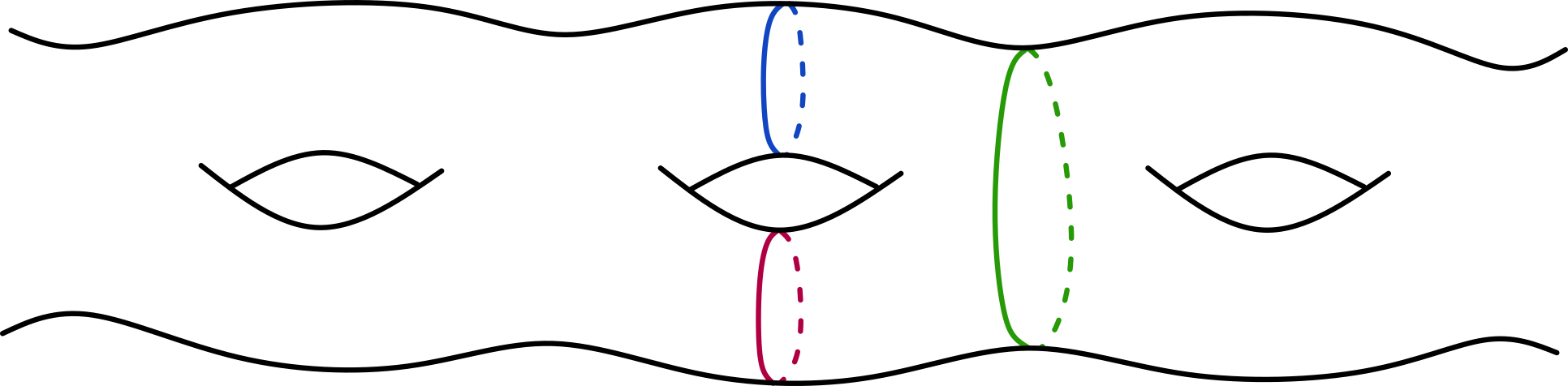}
\caption{The jointly separating case}
\end{figure}
\end{proof}

\begin{lemma}\label{homotopicingraph}
Let $a, b \in \fcn{k}{S_g}$ for $g \geq 2$. Then $a$ and $b$ are homotopic if and only if there exists a finite path of disjoint homotopic curves in $ \fcn{k}{S_g}$.
\end{lemma}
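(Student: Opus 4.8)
The plan is to prove the two directions separately, where by \emph{a path of disjoint homotopic curves from $a$ to $b$} I mean a finite sequence $a = c_0, c_1, \dots, c_n = b$ of vertices of $\fcn{k}{S_g}$ in which $c_i$ and $c_{i+1}$ are disjoint and homotopic for every $i$ (so, by transitivity, all the $c_i$ are homotopic). The reverse implication is then immediate: if such a path exists, then $a$ is homotopic to $b$; the disjointness hypothesis is not even used here.

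For the forward direction, suppose $a$ and $b$ are homotopic. First I would reduce to the transverse case: if $a \ne b$, replace $b$ by a push-off of itself inside a thin annular neighborhood, chosen transverse to $a$. This push-off is disjoint from $b$, homotopic to $b$, and a $C^k$ vertex of $\fcn{k}{S_g}$, so appending $b$ to the end of any path ending at the push-off still produces the desired kind of path. Hence it suffices to show: any two \emph{transverse} homotopic essential $C^k$ curves are joined by a path of disjoint homotopic curves, and I would prove this by induction on the now-finite number $|a \cap b|$. The base case $|a \cap b| = 0$ is handled by the length-one path $a, b$.

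For the inductive step, assume $|a \cap b| > 0$. Since $a$ and $b$ are transverse, essential, homotopic, and not disjoint, they are not in minimal position (their isotopy classes have geometric intersection number $0$), so the bigon criterion \cite{Primer} produces a bigon, and hence an innermost one: a disk $D$ with $\partial D = \alpha \cup \beta$, where $\alpha \subset a$ and $\beta \subset b$ are arcs meeting exactly at their two endpoints and $\mathrm{int}(D)$ is disjoint from $a \cup b$. Surgering $a$ across $D$ — that is, isotoping $a$ by pushing the arc $\alpha$ across $D$ and just past $\beta$ — yields a $C^k$ curve $a'$ isotopic to $a$ by an isotopy supported near $D$, with $a'$ transverse to $b$ and $|a' \cap b| = |a \cap b| - 2$; this is the standard bigon reduction (see \cite{Primer}). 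Applying the inductive hypothesis to the pair $(a', b)$ gives a path of disjoint homotopic curves from $a'$ to $b$, and it remains to splice in a short path from $a$ to $a'$.

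The one point that needs genuine care is that $a$ and $a'$ are \emph{not} disjoint — they coincide away from $D$ — so they cannot be adjacent in the path, and I must build a real bridge between them. The key observation is that both $a$ and $a'$ lie in a closed regular neighborhood $U$ of $a \cup D$, which is an annulus (because $a \cup D$ deformation retracts onto $a$ and $S_g$ is orientable) whose core is isotopic to $a$. Since $a \cup a'$ is a compact subset of $\mathrm{int}(U)$, a curve $c$ taken parallel to, and close enough to, one of the two boundary components of $U$ is disjoint from both $a$ and $a'$ and is isotopic within $U$ to the core, hence homotopic to $a$; choosing $U$ with smooth boundary makes $c$ a $C^k$ vertex of $\fcn{k}{S_g}$. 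Then $a, c, a'$ is a valid length-two segment, and concatenating it with the inductively obtained path from $a'$ to $b$ completes the induction. I expect the bookkeeping around this bridging curve — verifying that $U$ really is an annulus and that $c$ is a bona fide vertex homotopic to $a$ — to be the only subtle part; everything else is the classical innermost-bigon reduction for surfaces.
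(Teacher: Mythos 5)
Your argument is correct, but it takes a genuinely different route from the paper. The paper's proof runs a smooth isotopy $H$ from $a$ to $b$, uses compactness of $[0,1]$ to subdivide it into finitely many steps $c_i, c_{i+1}$ each contained in the interior of an annulus, and then inserts a $C^k$ boundary curve $d_i$ of that annulus between each consecutive pair; the path is $\{a, d_0, c_1, d_1, \ldots, b\}$. You instead reduce to transverse representatives and induct on $|a\cap b|$ via the innermost-bigon reduction, bridging $a$ and its surgered copy $a'$ by a boundary-parallel curve of a regular neighborhood of $a\cup D$. You correctly identify the one delicate point in your scheme (that $a$ and $a'$ are not disjoint, so a bridge is needed), and your annulus argument for the bridge is sound since $a\cup D$ deformation retracts to $a$ and the surface is orientable. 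The trade-off: the paper's approach leans on the existence of a $C^k$ ambient isotopy between homotopic essential simple closed curves and gets a path almost immediately from compactness, whereas yours only needs the bigon criterion and $i(a,b)=0$ for homotopic curves, at the cost of the transversality reduction, the smoothing of the surgered curve to keep it $C^k$, and the extra bridging step. Both are acceptable; yours is more combinatorial and arguably more self-contained, while the paper's is shorter given the isotopy input it already cites from \cite{Primer}.
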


\begin{proof}
For the forward direction, let $a$ and $b$ be homotopic curves in $\fcn{k}{S_g}$.  So there is a smooth isotopy $H$ between $a$ and $b$ that takes $t\in [0,1]$ to a curve $c_t \in \fcn{k}{S_g}$.   For each $t \in [0,1]$, there is an open interval $(t_0, t_1)$ such that for any $s \in (t_0,t_1)$ the curves $c_t$ and $c_s$ are contained in the interior of some annulus.  Since $[0,1]$ is compact, then finitely many of these intervals cover the interval.  So there is a finite sequence $\{c_0 = a, c_1,  \ldots, c_{n-1}, c_n = b\}$ such that each pair $c_i$ and $c_{i+1}$ are contained in the interior of an annulus. 

Moreover, since $c_i$ and $c_{i+1}$ are closed subsets in $S_g$, then $S_g \setminus\{c_i, c_{i+1}\}$ is open.  So the boundary curves of this annulus can be made $C^k$ and not intersect $c_i$ or $c_{i+1}$.  Choose $d_i \in \fcn{k}{S_g}$ to be one of the boundary curves of the annulus containing $c_i$ and $c_{i+1}$ in its interior. Then $d_i$ is disjoint and homotopic to both $c_i$ and $c_{i+1}$.  Thus the sequence $\{c_0, d_0, c_1, d_1, \ldots, c_{n-1}, d_{n-1}, c_n\}$ is a path in $\fcn{k}{S_g}$ between $a$ and $b$ of disjoint homotopic curves.

For the reverse direction,  If there is a finite path of disjoint homotopic curves in $\fcn{k}{S_g}$, then there is a homotopy between each adjacent pair of curves in the path.  So all these homotopies can be reparameterized and done sequentially to get a single homotopy from $a$ to $b$. Thus $a$ and $b$ must be homotopic. 
\end{proof}

\medskip

\noindent \textit{Hulls of curves.  } The \emph{hull} of a collection of curves $\{\gamma_1, \ldots, \gamma_m\}$ in a closed surface $S$ is subset of $S$ consisting of the union of the $\gamma_i$ with all the connected components of $S{\setminus}\{\gamma_1, \ldots, \gamma_m\}$ which are topological disks.

\begin{lemma}\label{curvesinhull}
Let $\{\gamma_1, \cdots, \gamma_m\}$ be a collection of curves in $\fcn{k}{S}$.  Then a curve $c \in \fcn{k}{S}$ is contained in the hull of $\{\gamma_1, \ldots, \gamma_m\}$ if and only if every curve $b$ in $\fcn{k}{S}$ that is disjoint from every $\gamma_i$  is also disjoint from $c$. 
\end{lemma}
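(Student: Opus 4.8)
The plan is to prove both directions by relating the combinatorial condition (being disjoint from all curves disjoint from the $\gamma_i$) to the topological condition (being contained in the hull).

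For the forward direction, suppose $c$ is contained in the hull of $\{\gamma_1, \ldots, \gamma_m\}$, and let $b \in \fcn{k}{S}$ be disjoint from every $\gamma_i$. Since $b$ misses each $\gamma_i$, it lies entirely in one component $R$ of $S \setminus \{\gamma_1, \ldots, \gamma_m\}$. If $R$ is a topological disk, then $b$ — being an essential (or at least a simple closed) curve — cannot be embedded in a disk as an essential curve; more carefully, an embedded simple closed curve in a disk bounds a disk, and such curves are inessential, but even an inessential curve in the hull must be handled: the point is that $c$ lies in the union of $\gamma_i$'s and the disk components, while $b$ lies in a non-disk component $R'$ (since a curve embedded in a disk component would be null-homotopic there, and we should track whether such curves are vertices of the graph — if the graph under discussion is $\fcn{k}{S}$ with only essential curves, $b$ cannot live in a disk component at all). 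So $b$ lies in a non-disk component $R'$, which is disjoint from every disk component and from interiors along the $\gamma_i$; hence $R'$ is disjoint from the hull, and in particular disjoint from $c$. Thus $b$ and $c$ are disjoint.

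For the reverse direction, I argue the contrapositive: if $c$ is \emph{not} contained in the hull, I must produce a curve $b$ disjoint from every $\gamma_i$ but intersecting $c$. Since $c \not\subset \mathrm{hull}(\gamma_1,\ldots,\gamma_m)$, the curve $c$ must enter some component $R$ of $S \setminus \{\gamma_1,\ldots,\gamma_m\}$ that is not a topological disk (it cannot be entirely in the union of the $\gamma_i$, since a simple closed curve contained in a union of disjoint simple closed curves equals one of them, which would be in the hull). A non-disk component of the complement of a finite collection of curves in a closed surface has nontrivial topology: it is a surface with boundary that either has positive genus or at least two boundary components (or is an annulus/once-punctured torus, etc.), and in all such cases it contains an essential simple closed curve $b_0$ that intersects $c \cap R$. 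Concretely, $c \cap R$ is a nonempty union of properly embedded arcs (and possibly closed components) in $R$; pick an arc $\delta$ of $c \cap R$, and since $R$ is not a disk one can find a simple closed curve in $R$ meeting $\delta$ transversely in an odd number of points (e.g., using that $\delta$ is non-separating in $R$ or by a change-of-coordinates argument as in \cite{Primer}), then smooth it to a $C^k$ curve $b$ inside $R$ using that $R$ is open and $c$ is closed. Then $b$ is disjoint from every $\gamma_i$ (as $b \subset R$) but intersects $c$, completing the contrapositive.

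The main obstacle is the reverse direction, specifically the careful case analysis showing that a non-disk complementary region $R$ always contains a $C^k$ curve hitting a prescribed arc of $c$ — one must handle degenerate cases (annular regions, regions where $c \cap R$ consists of boundary-parallel arcs only) and confirm that in each case the required intersecting curve exists and can be taken $C^k$; the smoothing is routine given $R$ is open, but the topological existence claim (that some arc of $c$ in $R$ is "hit" by an essential curve of $R$) needs the observation that if \emph{every} arc of $c\cap R$ were boundary-parallel and $R$ were an annulus, then $c$ together with $\partial R$ would bound disks, forcing $c$ into the hull — a contradiction — so either $R$ is not an annulus or some arc is essential, and in both situations the intersecting curve is produced.
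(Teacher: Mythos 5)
Your proposal is correct and follows essentially the same route as the paper: in the forward direction, an essential curve $b$ disjoint from all $\gamma_i$ must lie in a non-disk complementary component and hence misses the hull; in the reverse direction, a point of $c$ in a non-disk component is hit by an essential $C^k$ curve contained in that component. The only difference is that your reverse direction is more elaborate than necessary --- since the lemma concerns set-theoretic disjointness, there is no need for the curve $b$ to meet an arc of $c$ transversely or in an odd number of points (so the case analysis about separating versus non-separating arcs is superfluous); it suffices, as the paper does, to take any essential curve in the non-disk component passing through a single point $x \in c$.
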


\begin{proof}
Let $c \in \fcn{k}{S}$ be in the hull of $\{\gamma_1, \ldots, \gamma_m\}$ and let $b$ be a curve in $\fcn{k}{S}$ that is disjoint from every $\gamma_i$.  Then $b$ must be contained in a component of the subsurface $S \setminus \{\cup \gamma_i\}$.  In particular, since $b$ is essential, it must be in a non-disk component. But then $b$ does not intersect the hull, and so it cannot intersect $c$.  

On the other hand, suppose that a curve $c \in \fcn{k}{S}$ is not in the hull of $\{\gamma_1, \ldots, \gamma_m\}$. Then there is some point $x \in c \cap (S\setminus\{\cup \gamma_i\})$ that is not in a disk component of $S\setminus\{\cup \gamma_i\}$.  So $x$ must be in an open non-disk component that contains a curve $b \in \fcn{k}{S}$ that contains $x$.  So $b$ is disjoint from the $\gamma_i$'s, but intersects $c$ at $x$.
\end{proof}

\begin{lemma}\label{autpreservesephomandhull}
Let $g \geq 2$ and $\alpha \in \aut\fcn{k}{S_g}$.  Then $\alpha$ preserves the set of separating curves, sets of jointly separating curves,  sets of homotopic curves and hulls of curves.
\end{lemma}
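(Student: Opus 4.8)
The plan is to observe that each of the four types of structure named in the statement has already been given a characterization purely in terms of the abstract graph $\fcn{k}{S_g}$ — via links and the join property — so that an automorphism, which by definition preserves adjacency and therefore preserves links and the property of being a join, automatically preserves all of them. I would present the argument in the order: separating curves, jointly separating sets, hulls, and finally homotopic curves, since the homotopic case depends on having first handled the separating case.

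First, separating curves. By Lemma \ref{sepcurveingraph}, a vertex $a$ is separating exactly when $link(a)$ is a join. An automorphism $\alpha$ carries $link(a)$ isomorphically onto $link(\alpha(a))$, and ``being a join'' is an isomorphism-invariant property of an abstract graph, so $link(\alpha(a))$ is a join iff $link(a)$ is; hence $\alpha(a)$ is separating iff $a$ is. The identical reasoning applied to Lemma \ref{jointlyseparatingingraph} shows that $\alpha$ sends a jointly separating collection of disjoint curves to a jointly separating collection of disjoint curves. For hulls, Lemma \ref{curvesinhull} says that $c$ lies in the hull of $\{\gamma_1,\dots,\gamma_m\}$ iff every vertex disjoint from all of the $\gamma_i$ is also disjoint from $c$ — again a condition phrased entirely in terms of the adjacency relation — so $\alpha$ maps the set of curves in the hull of $\{\gamma_1,\dots,\gamma_m\}$ bijectively onto the set of curves in the hull of $\{\alpha(\gamma_1),\dots,\alpha(\gamma_m)\}$.

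The homotopic case takes two steps. For a \emph{disjoint} pair $a,b$, Lemma \ref{disjointisotoptograph} characterizes homotopy by the condition that $link(a,b)$ be a join one of whose parts consists of only separating or only nonseparating curves; since $\alpha$ preserves links, preserves joins, and — by the first step — preserves the partition of the vertex set into separating and nonseparating curves, it follows that $\alpha$ preserves disjoint homotopic pairs. For an arbitrary homotopic pair $a,b$, Lemma \ref{homotopicingraph} says $a$ and $b$ are homotopic iff they are joined by a finite path of disjoint homotopic curves in $\fcn{k}{S_g}$; applying $\alpha$ to such a path and invoking the disjoint case on each edge produces a finite path of disjoint homotopic curves from $\alpha(a)$ to $\alpha(b)$, so $\alpha(a)$ and $\alpha(b)$ are homotopic. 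Running the same argument with $\alpha^{-1}$ gives the converse, so $\alpha$ preserves the set of all homotopic pairs.

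I do not anticipate a serious obstacle: the content of the lemma is entirely front-loaded into the combinatorial characterizations of the preceding subsection, and what remains is the bookkeeping point that an automorphism preserves anything definable from adjacency alone. The only step needing a little care is the ordering — the homotopic case genuinely uses that ``separating'' has already been shown to be automorphism-invariant, and the passage from disjoint homotopic pairs to arbitrary homotopic pairs must route through Lemma \ref{homotopicingraph} rather than being immediate.
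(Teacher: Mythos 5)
Your proposal is correct and follows essentially the same route as the paper: it invokes the same four characterization lemmas (\ref{sepcurveingraph}, \ref{jointlyseparatingingraph}, \ref{disjointisotoptograph} together with \ref{homotopicingraph}, and \ref{curvesinhull}) and reduces everything to the fact that an automorphism preserves adjacency-definable properties. The only differences from the paper's proof are cosmetic (the order in which the cases are treated, and your explicit remark about running the argument with $\alpha^{-1}$).
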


\begin{proof}
By Lemma \ref{sepcurveingraph},  the link of any separating curve $a \in \fcn{k}{S_g}$ is a join. Since $\alpha$ is a graph automorphism, then $link(\alpha(a))$ is also a join.  So $\alpha(a)$ is a separating curve.  The sets of jointly separating curves are preserved by Lemma \ref{jointlyseparatingingraph}.

Similarly by Lemmas \ref{disjointisotoptograph} and \ref{homotopicingraph}, any homotopic curves $a, b \in \fcn{k}{S_g}$ has a path of disjoint curves between them where the link of each pair is a join with one part containing only separating or nonseparating curves.  Thus $\alpha(a)$ and $\alpha(b)$ has a path of disjoint curves with this same structure.  So $\alpha(a)$ and $\alpha(b)$ are homotopic. 

Finally,  if $c \in \fcn{k}{S_g}$ is in the hull of $\{\gamma_1, \cdots, \gamma_m\} \subset \fcn{k}{S_g}$, then by Lemma \ref{curvesinhull}, any curves $b$ disjoint from the $\gamma_i$ are also disjoint from $c$.  But for any curve $d$ disjoint from $\alpha(\gamma_i)$, the curve $\alpha^{-1}(d)$ is disjoint from $\gamma_i$ and thus also disjoint from $c$.  So $d = \alpha(\alpha^{-1}(d))$ is disjoint from $\alpha(c)$.  So $\alpha(c)$ is contained in the hull of $\{\alpha(\gamma_1), \cdots, \alpha(\gamma_m)\}$. 
\end{proof}

Note that since any automorphism preserves the set of all essential curves, then this Lemma also implies that the sets of nonseparating curves and non-homotopic curves are also preserved by any automorphism of $\fcn{k}{S_g}$. 

\subsection{Recognizing subsurfaces with $\mathbf{\bfcn{k}{S}}$}\label{subsectionsubsurfaces}

The goal of this section is to prove Lemma \ref{autpreservessubsurfaces},  which shows that sets of curves bounding annuli, nonseparating pairs of pants, and one-holed tori are preserved.  We start with identifying the graph structures for each of these three different situations. 

\begin{lemma}\label{containedinannulus}
Let $A$ be an annulus in $S_g$, $g \geq2$,  with boundaries $\gamma_1$ and $\gamma_2 \in \fcn{k}{S_g}$. Then $c \in \fcn{k}{S_g}$ is contained in $A$ if and only if for any $e \in \fcn{k}{S_g}$ not homotopic to $\gamma_1$, if $e$ intersects $c$, then $e$ must intersect $\gamma_1$ or $\gamma_2$.
\end{lemma}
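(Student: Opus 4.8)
The plan is to prove both implications by unwinding what it means, combinatorially, for a curve to leave the annulus $A$. Throughout, $\gamma_1,\gamma_2$ are the boundary curves of $A$, and by the change-of-coordinates principle any curve homotopic to $\gamma_1$ (equivalently to $\gamma_2$) is, in particular, one of the ``parallel'' curves isotopic to the core of $A$. The key geometric fact I would use is: a $C^k$ curve $e$ is disjoint from both $\gamma_1$ and $\gamma_2$ precisely when $e$ lies entirely in one component of $S_g\setminus\{\gamma_1,\gamma_2\}$; one of those components is the open annulus $\mathrm{int}(A)$, and the essential curves in that component are exactly the curves homotopic to $\gamma_1$.

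For the forward direction, suppose $c\subset A$ and let $e\in\fcn{k}{S_g}$ be not homotopic to $\gamma_1$ with $e\cap c\neq\varnothing$. If $e$ were disjoint from both $\gamma_1$ and $\gamma_2$, then (as above) $e$ would be confined to a single component of $S_g\setminus\{\gamma_1,\gamma_2\}$. Since $e$ meets $c\subset A$, that component must be $\mathrm{int}(A)$, forcing $e$ to be an essential curve inside the annulus and hence homotopic to $\gamma_1$ — a contradiction. So $e$ must meet $\gamma_1$ or $\gamma_2$.

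For the reverse direction I would argue the contrapositive: if $c$ is \emph{not} contained in $A$, I produce a curve $e\in\fcn{k}{S_g}$ that is not homotopic to $\gamma_1$, meets $c$, and is disjoint from both $\gamma_1$ and $\gamma_2$. Since $c\not\subset A$, there is a point $x\in c$ lying in the open surface $S_g\setminus\{\gamma_1,\gamma_2\}$ but \emph{not} in $\mathrm{int}(A)$ — here I should handle the case $x\in\gamma_1\cup\gamma_2$ separately: if $c$ meets $\gamma_1$ or $\gamma_2$ then the hypothesis on $e$ already fails for, say, a small push-off, but more carefully, since $c$ is essential and $c\not\subset A$, I can always slide to a point $x$ of $c$ lying in a non-annular component $U$ of $S_g\setminus\{\gamma_1,\gamma_2\}$. (If $c$ is actually disjoint from $\gamma_1\cup\gamma_2$ this is immediate; if $c$ crosses $\gamma_1\cup\gamma_2$, take an interior point of an arc of $c$ in such a $U$.) Because $g\geq 2$, the component $U$ containing $x$ is a surface with boundary that is not an annulus, so it contains an essential simple closed $C^k$ curve $e$ through $x$: indeed $U$ is open, $x\in U$, and $U$ carries essential curves through any prescribed point since it is not a disk or annulus — construct $e$ as a small $C^k$ perturbation of an essential curve through $x$, keeping it inside $U$. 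Then $e\cap c\ni x$, $e$ is disjoint from $\gamma_1$ and $\gamma_2$, and $e$ is essential but lies outside $\mathrm{int}(A)$, so $e$ is not homotopic to $\gamma_1$. This is exactly the failure of the right-hand condition, completing the contrapositive.

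The main obstacle is the bookkeeping in the reverse direction: ensuring that when $c$ crosses $\gamma_1\cup\gamma_2$ one can still find an interior point of $c$ in a \emph{non-annular} complementary component, and that the auxiliary curve $e$ can be taken $C^k$, essential, passing through the chosen point, and strictly inside the chosen component. The $C^k$ smoothing is routine (the component is open, so there is room to perturb), and the essentialness is guaranteed by $g\geq 2$ forcing the relevant component to have enough topology; the only real care is the case analysis on how $c$ sits relative to $\gamma_1\cup\gamma_2$, which is where I would spend the most words.
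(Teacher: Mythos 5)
Your proof is correct and follows essentially the same route as the paper: the forward direction uses that essential curves disjoint from both boundaries and meeting $A$ must lie in $\mathrm{int}(A)$ and hence be homotopic to $\gamma_1$, and the reverse direction finds a point of $c$ in a non-annular complementary component and runs an essential curve of $U$ through it, using $g\geq 2$ to guarantee enough topology. The extra case analysis you flag (points of $c$ on $\gamma_1\cup\gamma_2$) is handled implicitly in the paper by simply choosing $x\in c\setminus A$, which is the same resolution you arrive at.
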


\begin{proof}
Let $c \in \fcn{k}{S_g}$ be a curve contained in $A$ and let $e \in \fcn{k}{S_g}$ be a curve not homotopic to $\gamma_1$. Since any essential curves in an annulus must be homotopic to the boundary curves, then $e$ cannot be contained in the interior of $A$.  But since $c$ is contained in $A$, then if $e$ intersects $c$,  it must enter $A$ through one of the boundary curves.  So $e$ must intersect either $\gamma_1$ or $\gamma_2$. 

On the other hand, suppose that $c$ is not contained in $A$. Then there is some point $x \in c$ that is not contained in $A$.  If $A$ is nonseparating,  then $S_g \setminus A$ is a connected subsurface with genus $g-1$ and two boundary components.  Since $g \geq 2$,  then this subsurface will contain essential curves.  Similarly, if $A$ is separating, then both components of $S_g \setminus A$ will contain at least one genus and so both also contain essential curves. So is either case,  there exists $e \in \fcn{k}{S_g}$ that is not homotopic to $\gamma_1$ and do not intersect $\gamma_1$ or $\gamma_2$, but intersects $c$ at the point $x$. 
\end{proof}

\begin{lemma}\label{pantsingraph}
Let $S_g$ with $g \geq 2$. Let $\gamma_1, \gamma_2, \gamma_3 \in \fcn{k}{S_g}$ be disjoint non-homotopic nonseparating curves. Then the $\gamma_i$ bound a pair of pants if and only if $\{\gamma_1, \gamma_2, \gamma_3\}$ jointly separate $S_g$ with a component that only contains nonseparating curves. 
\end{lemma}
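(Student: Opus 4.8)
The plan is to prove both implications by examining the components of $S_g\setminus\{\gamma_1,\gamma_2,\gamma_3\}$, using the same ``recognize a separating curve inside a subsurface'' technique that drives Lemmas~\ref{sepcurveinextgraph} and~\ref{sepcurveingraph}.

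For the forward implication I would suppose the $\gamma_i$ bound an embedded pair of pants $P$. Its interior $P^{\circ}$ is connected, open, and has frontier exactly $\gamma_1\cup\gamma_2\cup\gamma_3$, so it is a single component of $S_g\setminus\{\gamma_1,\gamma_2,\gamma_3\}$; since $S_g$ is closed while $P$ has boundary, $P\neq S_g$ and the complement has another component, so the $\gamma_i$ jointly separate. I would then quote the standard fact (see \cite{Primer}) that every essential simple closed curve in a pair of pants is isotopic to a boundary curve: any $c\in\fcn{k}{S_g}$ contained in $P^{\circ}$ is homotopic to some $\gamma_i$ and hence nonseparating. So $P^{\circ}$ is a component containing only nonseparating curves, as needed.

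For the reverse implication, let $R$ be a component of $S_g\setminus\{\gamma_1,\gamma_2,\gamma_3\}$ that contains only nonseparating curves; then $\overline R$ is a compact connected surface with nonempty boundary, each boundary circle a copy of some $\gamma_i$. The goal is to show $\overline R$ is a pair of pants with boundary $\gamma_1\cup\gamma_2\cup\gamma_3$, which I would establish by producing an essential separating $C^k$ curve inside $R$ whenever $\overline R$ is anything else. First, if $\overline R$ has positive genus, then the boundary of a regular neighborhood inside $R$ of a nonseparating curve together with a dual curve is an embedded one-holed torus; its boundary curve $c$ lies in $R$, does not bound a disk (otherwise $S_g$ would be a torus), and cuts off the one-holed torus, so $c$ is essential and separating --- a contradiction. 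Hence $\overline R$ is planar. Next, $\overline R$ is not a disk (its single boundary circle is some $\gamma_i$, which is essential) and not an annulus (two cuffs that are distinct $\gamma_i$ would then be homotopic, while two cuffs that are the same $\gamma_i$ would glue up to make $S_g$ a torus), and it is not closed; so $\overline R$ is planar with at least three boundary circles. Finally, no two boundary circles of $\overline R$ can be copies of the same $\gamma_i$: if $\delta_1,\delta_2$ were both copies of $\gamma_i$, I would take a curve $c\subset R$ cutting off a pair of pants in $\overline R$ with cuffs $\delta_1,\delta_2,c$, and observe that re-gluing $\delta_1$ to $\delta_2$ the way they sit along $\gamma_i$ in $S_g$ turns this pair of pants into an embedded one-holed torus with boundary $c$, so once more $c$ is essential and separating inside $R$ --- a contradiction. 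Since there are only three curves $\gamma_i$, it follows that $\overline R$ has exactly three boundary circles, one on each of $\gamma_1,\gamma_2,\gamma_3$; that is, the $\gamma_i$ bound the pair of pants $\overline R$.

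I expect the last step of the reverse implication to be the main obstacle: one must notice that a complementary component can abut a single $\gamma_i$ from both sides --- which is exactly the mechanism that creates an extra boundary circle --- and then verify carefully that the re-glued pair of pants genuinely embeds in $S_g$ as a one-holed torus, so that the extracted curve $c$ is honestly essential, separating, and contained in $R$. The positive-genus case is a routine variant of the same construction, and the disk and annulus cases are elementary. As in the other lemmas of this section, I will take for granted that the regular-neighborhood boundary curves involved can be chosen to be $C^k$, since $R$ is open.
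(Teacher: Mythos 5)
Your proof is correct and follows the same overall strategy as the paper's: in the forward direction both arguments use that every essential curve in a pair of pants is boundary-parallel (hence nonseparating here), and in the reverse direction both classify the distinguished complementary component by ruling out genus and extra boundary circles via the existence of an essential separating curve. You are in fact more careful than the paper at one point: the published proof asserts that the component has ``2 or 3 of the $\gamma_i$ as boundary curves'' and never addresses the possibility that it abuts a single $\gamma_i$ from both sides (which would produce four or more boundary circles on a planar component), whereas your re-gluing argument --- producing an embedded one-holed torus whose boundary is an essential separating curve inside $R$ --- is exactly what is needed to exclude that configuration.
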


\begin{proof}
Suppose that the $\gamma_i$ bound a pair of pants.  So they jointly separate the surface into at least two components. Moreover, any essential curve in a pair of pants will be homotopic to one of the boundary components. Since the $\gamma_i$ are all nonseparating, then all the other essential curves contained in the pair of pants must also be nonseparating. 

On the other hand, suppose that there are disjoint non-homotopic nonseparating $\gamma_1, \gamma_2, \gamma_3 \in \fcn{k}{S_g}$ such that they jointly separate $S_g$ with a component that only containes nonseparating curves.  Call the subsurface corresponding to the side containing only nonseparating curves $P$.  Since each of the $\gamma_i$ is nonseparating, then each component of $P$ must have either 2 or 3 of the $\gamma_i$ as boundary curves.  Moreover,  no components of $P$ can have any genus since otherwise there would be a curve separating the genus from the boundaries in that component.  But any component with only 2 boundaries would then be an annulus, which contradicts the assumption that the $\gamma_i$ are non-homotopic.  Thus $P$ must be a pair of pants.  
\end{proof}

Let $S_{g}^b$ denote the surface of genus $g$ with $b$ disjoint open disks removed such that each boundary is a $C^k$ curve. 

\begin{lemma}\label{oneholedtorusingraph}
Let $S_g$ with $g \geq 2$ and let $\gamma \in \fcn{k}{S_g}$ be a separating curve.  Then $\gamma$ bounds an $S_1^1$ subsurface if and only if one component of $S_g \setminus \gamma$ only contains curves that are either homotopic to $\gamma$ or nonseparating. 
\end{lemma}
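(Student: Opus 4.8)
The plan is to prove both implications by identifying the component of $S_g\setminus\gamma$ in question with a compact surface-with-boundary and then analysing which simple closed curves it can contain. Since $\gamma$ is essential and separating and $g\geq 2$, each component $R$ of $S_g\setminus\gamma$ has closure a compact surface whose only boundary circle is $\gamma$ and whose genus is positive; write $\overline R\cong S_h^1$ with $h\geq 1$. The assertion then reduces to showing: one of the two components $R$ has $h=1$ if and only if that $R$ contains no curve of $\fcn{k}{S_g}$ that is separating in $S_g$ and not homotopic to $\gamma$.

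For the forward direction, suppose $\overline R\cong S_1^1$ and let $c\in\fcn{k}{S_g}$ lie in $R$. As $c$ is essential in $S_g$ it cannot bound a disk in $R$, so it is essential in the one-holed torus $R$. I would then invoke the classification of essential simple closed curves on $S_1^1$ (see \cite{Primer}): each such curve is, up to isotopy in $R$, either boundary-parallel or nonseparating in $R$ (cutting $R$ along it produces a pair of pants). In the first case $c$ is homotopic to $\gamma$. In the second, $R\setminus c$ is connected, and gluing it along $\gamma$ to the other (connected) component of $S_g\setminus\gamma$ shows $S_g\setminus c$ is connected, so $c$ is nonseparating in $S_g$. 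Either way, $R$ contains only the allowed curves.

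For the reverse direction I argue by contradiction: suppose $R$ contains only curves homotopic to $\gamma$ or nonseparating in $S_g$, but $\overline R\cong S_h^1$ with $h\geq 2$. Then $\overline R$ carries an essential simple closed curve $c$ that separates $\overline R$ into $S_1^1\sqcup S_{h-1}^2$; this $c$ is essential and not boundary-parallel in $\overline R$, since $h-1\geq 1$ forces neither side to be a disk or an annulus. Because $S_g\setminus\gamma$ is open I may take $c$ smooth, hence a vertex of $\fcn{k}{S_g}$; this is the only place the $C^k$ restriction enters, and, just as in the proof of Lemma~\ref{sepcurveinextgraph}, it causes no difficulty. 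It remains to check that $c$ is separating in $S_g$ (one component of $\overline R\setminus c$ does not meet $\gamma$, so $S_g\setminus c$ is disconnected) and not homotopic to $\gamma$ (if $c$ were homotopic to $\gamma$ in $S_g$ they would cobound an embedded annulus, whose interior, being connected and disjoint from $\gamma$ and accumulating onto $c\subset\mathring R$, must lie in $R$, making $c$ boundary-parallel in $\overline R$ — a contradiction). This contradicts the hypothesis, so $h=1$ and $\gamma$ bounds an $S_1^1$.

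The routine but slightly delicate point throughout — and where I expect to spend the most care — is transporting the separating/nonseparating and boundary-parallel/essential dichotomies back and forth between the subsurface $\overline R$ and the closed surface $S_g$. The topological inputs (classification of curves on a one-holed torus, and the fact that disjoint homotopic simple closed curves cobound an annulus) are standard, so I do not anticipate a genuine obstacle.
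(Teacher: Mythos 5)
Your proposal is correct and follows essentially the same route as the paper: the forward direction uses the classification of essential simple closed curves on a one-holed torus (boundary-parallel or nonseparating), and the reverse direction produces, in a component of genus at least two, a separating curve splitting off one handle, which is separating in $S_g$ and not homotopic to $\gamma$. Your version simply makes explicit the bookkeeping (smoothability of $c$, and the annulus argument ruling out homotopy to $\gamma$) that the paper leaves implicit.
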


\begin{proof}
Let $\gamma$ be a separating curve that bounds an $S_1^1$ subsurface.  Since a torus only contains nonseparating essential curves, then the only separating curves on a $S_1^1$ subsurface that are essential in $S_g$ must be homotopic to the boundary curve. $\gamma$. 

Now suppose that $\gamma$ is a separating curve that does not bound a $S_1^1$ subsurface. Then there will be at least two genus in each of the two components of $S_g \setminus \gamma$.  So each side contains a separating curve disjoint from $\gamma$ that separates one genus from the rest. Thus this separating curve is not homotopic to $\gamma$. 
\end{proof}

Now, we can combine the previous three results to get the final lemma for this section. 

\begin{lemma} \label{autpreservessubsurfaces}
Let $g \geq 2$ and $\alpha \in \aut\fcn{k}{S_g}$.  Then $\alpha$ preserves sets of curves that bound annuli, nonseparating pairs of pants,  and one-holed tori in $S_g$.
\end{lemma}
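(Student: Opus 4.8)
The plan is to deduce Lemma~\ref{autpreservessubsurfaces} directly from the three graph-theoretic characterizations just established (Lemmas~\ref{containedinannulus}, \ref{pantsingraph}, and~\ref{oneholedtorusingraph}), together with the structures already shown to be preserved in Lemma~\ref{autpreservesephomandhull}. The point is that each of ``bounds an annulus'', ``bounds a nonseparating pair of pants'', and ``bounds a one-holed torus'' has been translated into a condition phrased purely in terms of the adjacency relation, joins, links, separation, and homotopy --- all of which an automorphism respects. So the proof is essentially a matter of transporting each characterization across $\alpha$.

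First I would handle the annulus case. Suppose $\gamma_1, \gamma_2 \in \fcn{k}{S_g}$ bound an annulus $A$. I want to show $\alpha(\gamma_1)$ and $\alpha(\gamma_2)$ also bound an annulus. By Lemma~\ref{autpreservesephomandhull}, $\alpha(\gamma_1)$ and $\alpha(\gamma_2)$ are homotopic (since $\gamma_1, \gamma_2$ are homotopic, being the two boundary curves of an annulus), disjoint, and non-homotopic pairs go to non-homotopic pairs; a pair of disjoint homotopic curves cobounds an annulus, so this case is already immediate from the preservation of homotopy. Alternatively, and more in the spirit of the section, one checks the condition of Lemma~\ref{containedinannulus} is preserved: for any $c$ in $A$ and any $e$ not homotopic to $\gamma_1$ meeting $c$, $e$ meets $\gamma_1$ or $\gamma_2$; applying $\alpha$ and using that homotopy, disjointness, and intersection are all preserved gives the same statement for $\alpha(c)$, $\alpha(e)$, $\alpha(\gamma_i)$, so the set of curves contained in $A$ maps to the set contained in the annulus bounded by $\alpha(\gamma_1), \alpha(\gamma_2)$.

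Next, the pair-of-pants case. Let $\gamma_1, \gamma_2, \gamma_3$ be disjoint non-homotopic nonseparating curves bounding a nonseparating pair of pants. By Lemma~\ref{pantsingraph} this is equivalent to: $\{\gamma_1, \gamma_2, \gamma_3\}$ jointly separate $S_g$ and one complementary component contains only nonseparating curves. Apply $\alpha$: by Lemma~\ref{autpreservesephomandhull} the images are disjoint, non-homotopic, nonseparating, and jointly separating; and since $\alpha$ preserves the set of nonseparating curves and (via Lemma~\ref{jointlyseparatingingraph}, or Lemma~\ref{curvesinhull}-style reasoning) sends the curves in a given complementary component to the curves in a complementary component of the image, the ``only nonseparating curves'' condition on one side transfers. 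Here the one subtlety worth spelling out is that $\alpha$ must induce a bijection between the complementary components of $\{\gamma_1,\gamma_2,\gamma_3\}$ and those of $\{\alpha(\gamma_1),\alpha(\gamma_2),\alpha(\gamma_3)\}$ compatible with which curves lie in which component --- this follows because ``lies in a fixed complementary component'' can be detected combinatorially (two curves lie in the same component of $S_g \setminus \{\gamma_i\}$ iff they can be joined by a path of curves each disjoint from all the $\gamma_i$), and such combinatorial conditions are preserved. The one-holed torus case is entirely parallel: Lemma~\ref{oneholedtorusingraph} characterizes $\gamma$ bounding an $S_1^1$ by $\gamma$ being separating and one component of $S_g \setminus \gamma$ containing only curves homotopic to $\gamma$ or nonseparating; separating-ness, homotopy class, and nonseparating-ness are all preserved by $\alpha$ (Lemma~\ref{autpreservesephomandhull}), and again the two sides of a separating curve are combinatorially distinguished, so the condition transfers.

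I do not expect a genuine obstacle here --- all the real work was done in building the three preceding characterization lemmas. The only place that needs a moment's care, and which I would state as a small preliminary observation before the three cases, is the claim that an automorphism $\alpha$ respects the partition of $\fcn{k}{S_g} \setminus \{\gamma_1,\dots,\gamma_m\}$ into ``which complementary component of $\bigcup \gamma_i$ does this curve live in'': this is needed to make sense of ``one component contains only nonseparating curves'' on the image side. As noted, this follows from the fact that two curves disjoint from all the $\gamma_i$ lie in the same complementary component if and only if they are connected by an edge-path in $\fcn{k}{S_g}$ all of whose vertices are disjoint from every $\gamma_i$, which is manifestly preserved by $\alpha$. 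With that observation in hand, each of the three cases is a one-line application of the relevant lemma plus Lemma~\ref{autpreservesephomandhull}.
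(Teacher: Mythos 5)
Your overall strategy is exactly the paper's: its proof of this lemma is a short observation that the characterizations in Lemmas~\ref{containedinannulus}, \ref{pantsingraph}, and~\ref{oneholedtorusingraph} are phrased entirely in terms of intersection, separation, homotopy, and joint separation, all of which are preserved by Lemma~\ref{autpreservesephomandhull} (plus preservation of adjacency). Your three case analyses match that argument, and you are right that the only point deserving extra care is how $\alpha$ matches up the complementary components of $\bigcup\gamma_i$ with those of $\bigcup\alpha(\gamma_i)$, which the paper leaves implicit.

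Unfortunately, the criterion you propose for that very point is stated backwards and fails as written. You claim two curves disjoint from all the $\gamma_i$ lie in the same complementary component of $S_g\setminus\bigcup\gamma_i$ if and only if they are joined by an edge-path in $\fcn{k}{S_g}$ all of whose vertices are disjoint from every $\gamma_i$. The ``if'' direction is false: two curves lying in \emph{different} complementary components are automatically disjoint from one another, hence adjacent in the graph, hence joined by a path of length one. Indeed, when the $\gamma_i$ jointly separate, $link(\gamma_1,\ldots,\gamma_m)$ is a join by Lemma~\ref{jointlyseparatingingraph}, so every pair of vertices in the link is connected within the link, and your criterion would place all such curves in a single component. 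The correct detection is the complementary one, which is what the join structure already encodes: the complementary components correspond to the parts of the finest join decomposition of $link(\gamma_1,\ldots,\gamma_m)$, i.e., two curves in the link lie in the same component if and only if they are joined by a chain of curves in the link in which consecutive curves \emph{intersect} (curves in different components never intersect, while the intersection graph on the essential curves inside a fixed component is connected). Since this is a graph-theoretic invariant, $\alpha$ does induce the required component-wise bijection; with that one correction your argument goes through and coincides with the paper's.
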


\begin{proof}
Note that in Lemmas \ref{containedinannulus},  \ref{pantsingraph}, and \ref{oneholedtorusingraph}, the boundary curves are characterized by some combination of whether curves are separating, homotopic,  or intersecting.  Any $\alpha \in \aut \fcn{k}{S_g}$ preserves whether curves intersect and by using Lemma \ref{autpreservesephomandhull},  we know that sets of separating and homotopic curves are also preserved by $\alpha$.  Thus the image of these boundary curves will have the exact same characteristics, and so they must also bound the same type of subsurface. 
\end{proof}

\subsection{Torus pairs}\label{subsectiontorus}

Two curves $a$ and $b$ are called a \emph{torus pair} if they intersect one another in a single topologically transverse intersection. This intersection is called \emph{degenerate} if it consists of a single point and \emph{nondegenerate} otherwise. In this section, we prove Lemma \ref{autpreservestoruspairs}, showing that the automorphisms of the $C^k$-curve graph preserve the sets of degenerate and nondegenerate torus pairs.

Note that any torus pair will necessarily consist of non-homotopic, nonseparating curves. The name is suggestive since any torus pair fills an $S_1^1$ subsurface.

\begin{figure}[h]
\centering
\includegraphics[width=2in]{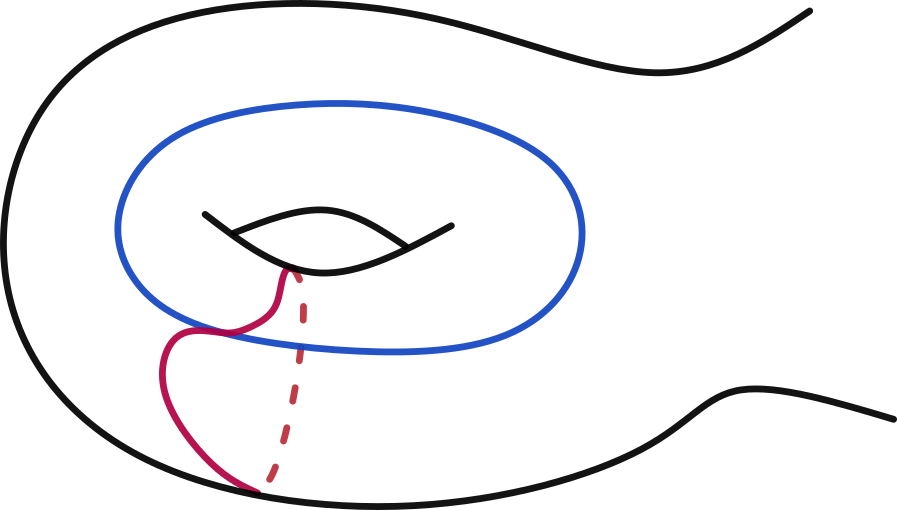}
\caption{A torus pair}
\end{figure}

While there is a similar result in the proof of Long--Margalit--Pham--Verberne--Yao \cite[Lemma 2.5]{LMPVY},  their graph structure does not exist in the $C^k$-curve graph.  Specifically,  they utilize the existence of the symmetric difference as a third curve to distinguish between nondegenerate and degenerate torus pairs.  But the symmetric difference cannot be a $C^k$ curve (not even $C^1$), and so it is not a curve in our graph.  Instead, we can find a pair of $C^k$ curves in the neighborhood of the symmetric difference to distinguish between nondegnerate and degenerate torus pairs.  

We start this section with the following two lemmas that are necessary to characterize torus pairs in Lemma \ref{crossintersectioningraph}.

\begin{lemma}\label{disksinpants}
Let $P$ be a pair of pants with boundary curves $\gamma_1, \gamma_2,$ and $\gamma_3$.  Let $A = \{\alpha_i\}$ be a disjoint collection of arcs with endpoints in $\gamma_1$ and $\gamma_2$ that have interiors disjoint from the $\gamma_i$.  Then there is a topological disk in $P$ bounded by some subcollection of subarcs of $\gamma_1, \gamma_2$ and the $\alpha_i$'s unless $A$ is empty or consists of only one arc, which either goes between $\gamma_1$ and $\gamma_2$ or separates two of the $\gamma_i$'s.
\end{lemma}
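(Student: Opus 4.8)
The plan is to reduce everything to a single Euler-characteristic count plus one elementary remark about $\gamma_3$. Let $\widehat{P}$ be the surface obtained by cutting $P$ along all of the arcs $\alpha_i$ (equivalently, take the metric completions of the components of $P\setminus\bigcup_i\alpha_i$). Since the $\alpha_i$ are disjoint, properly embedded arcs, cutting along each one raises the Euler characteristic by exactly $1$, so $\chi(\widehat{P})=\chi(P)+|A|=|A|-1$. Every component of $\widehat{P}$ is a compact connected subsurface of the planar surface $P$ with nonempty boundary, hence has Euler characteristic at most $1$, with equality exactly when it is a disk. Moreover, if some component of $\widehat{P}$ is a disk, then its boundary circle is built only out of subarcs of $\gamma_1$, $\gamma_2$, and the $\alpha_i$: no $\alpha_i$ meets $\gamma_3$, so $\gamma_3$ survives the cutting as an entire boundary circle, and the connected boundary of a disk component cannot be the essential curve $\gamma_3$. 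The corresponding component of $P\setminus\bigcup_i\alpha_i$ is then an open disk whose frontier consists of subarcs of $\gamma_1$, $\gamma_2$, and the $\alpha_i$ — exactly the topological disk asserted by the lemma. So it suffices to decide, in each situation, whether $\widehat{P}$ has a disk component.

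If $|A|\geq 2$, then $\chi(\widehat{P})=|A|-1\geq 1$; since each component contributes at most $1$, at least one component has Euler characteristic $1$ and is therefore a disk, and we are done with nothing to exclude. If $A=\varnothing$, then $\widehat{P}=P$ is a pair of pants, which has no disk component; this is the first exceptional case in the statement.

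It remains to handle $A=\{\alpha\}$, where $\chi(\widehat{P})=0$ and both endpoints of $\alpha$ lie on $\gamma_1\cup\gamma_2$. If $\alpha$ is inessential — i.e. together with a subarc of $\gamma_1$ or of $\gamma_2$ it bounds a disk in $P$ — then that disk is the required region, so this subcase is not an exception. If $\alpha$ is essential with both endpoints on one curve, say $\gamma_1$ (the $\gamma_2$ case being symmetric), then $\alpha$ separates $P$ into two planar pieces with Euler characteristics summing to $0$; the split cannot be $(1,-1)$, since a disk piece would make $\alpha$ inessential, so it is $(0,0)$ and both pieces are annuli. A boundary-circle count shows each annulus contains exactly one of $\gamma_2,\gamma_3$, so $\alpha$ separates two of the $\gamma_i$ and $\widehat{P}$ has no disk component — the second exceptional case. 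Finally, if $\alpha$ runs from $\gamma_1$ to $\gamma_2$, cutting along $\alpha$ merges those boundary circles, so $\widehat{P}$ is connected, planar, has Euler characteristic $0$, and has two boundary circles, hence is an annulus with no disk component — the ``goes between $\gamma_1$ and $\gamma_2$'' exceptional case. Combining all cases, $\widehat{P}$ fails to have a disk component precisely when $A$ is empty or $A$ is a single essential arc of one of the two named types, and in every other case the disk component yields the desired disk in $P$. The only part needing care is this single-arc classification; everything else is forced by the Euler-characteristic count.
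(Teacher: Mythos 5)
Your proof is correct, and it takes a genuinely different route from the paper. The paper argues by hand: it enumerates configurations of the arcs (all endpoints on one boundary curve; every arc separating two of the $\gamma_i$; a second arc added to one running between $\gamma_1$ and $\gamma_2$), cuts along one arc, and locates the disk explicitly in each picture. You instead make a single Euler-characteristic count: $\chi(\widehat{P}) = |A| - 1$, each component of the cut surface has $\chi \leq 1$ with equality exactly for disks, so $|A| \geq 2$ forces a disk component with no case analysis at all, and the only remaining work is the classification of a single arc (inessential, essential with both endpoints on one curve, or joining $\gamma_1$ to $\gamma_2$). The key supporting observations are all in place: the additivity of $\chi$ under cutting, the fact that the component meeting $\gamma_3$ cannot be a disk since $\gamma_3$ is essential in $P$ (so the disk you find is bounded only by subarcs of $\gamma_1$, $\gamma_2$, and the $\alpha_i$), and the boundary-circle count ruling out the $(1,-1)$ split for an essential arc. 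What your approach buys is robustness and brevity --- the count certifies that no arc configuration has been overlooked, which is the weakest point of the paper's enumeration, and the argument generalizes verbatim to planar surfaces with more boundary components. What the paper's approach buys is constructiveness: it exhibits the disk and its boundary arcs concretely, which is the form in which the lemma is actually reused later (e.g.\ in Lemma~\ref{crossintersectioningraph} and in Step~3 of the proof of Lemma~\ref{smoothpairingraph}).
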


\begin{proof}
The structure of this proof involves considering several possible collections of arcs and finding the topological disk in all cases except the two cases given in the statement of the lemma.

Now suppose $A$ only contains arcs which have endpoints in the same boundary component. Since $P$ is an oriented surface with no genus, any of these arcs, say $\alpha$ with endpoints in $\gamma_2$,  will separate $P$ into two subsurfaces.  If one of these subsurfaces contains both $\gamma_1$ and $\gamma_3$, then there is a subsurface bounded by $\alpha$ and a subarc of $\gamma_2$ that contains no other boundary components. Since $P$ has no genus, then this subsurface must be a disk. 

\begin{figure}[h]
\centering
\begin{tikzpicture}
\small
    \node[anchor=south west, inner sep = 0] at (0,0){\includegraphics[width=3in]{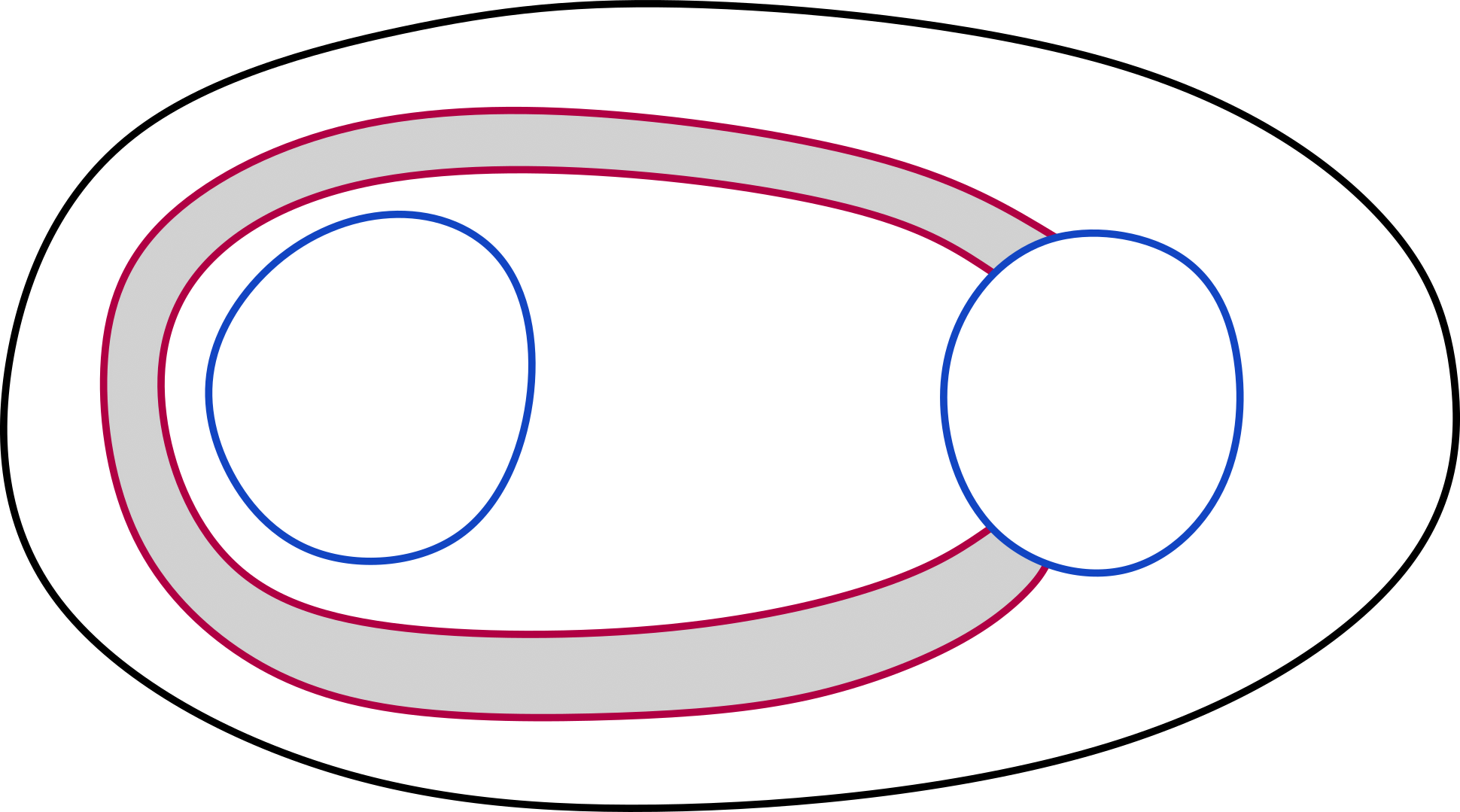}};
    \node at (1.6,2.5) {$\gamma_1$};
    \node at (6.1,2.5) {$\gamma_2$};
    \node at (7.25,3.5) {$\gamma_3$};
\end{tikzpicture}
\caption{The disk created when arcs have endpoints in the same boundary}
\end{figure}

On the other hand,  suppose that every $\alpha \in A$ separates $\gamma_1$ and $\gamma_3$. Cutting along $\alpha$ results in two annuli,  each with one boundary from $\alpha$ and a subarc of $\gamma_2$ and the other boundary from one of the $\gamma_i$.  Thus if $A$ contains another arc $\beta$, then it must cut one of these annuli into two subsurfaces where only one can contain the boundary $\gamma_i$. So the other subsurface will be a disk. 

\begin{figure}[h]
\centering
\begin{tikzpicture}
\small
    \node[anchor=south west, inner sep = 0] at (0,0){\includegraphics[width=3in]{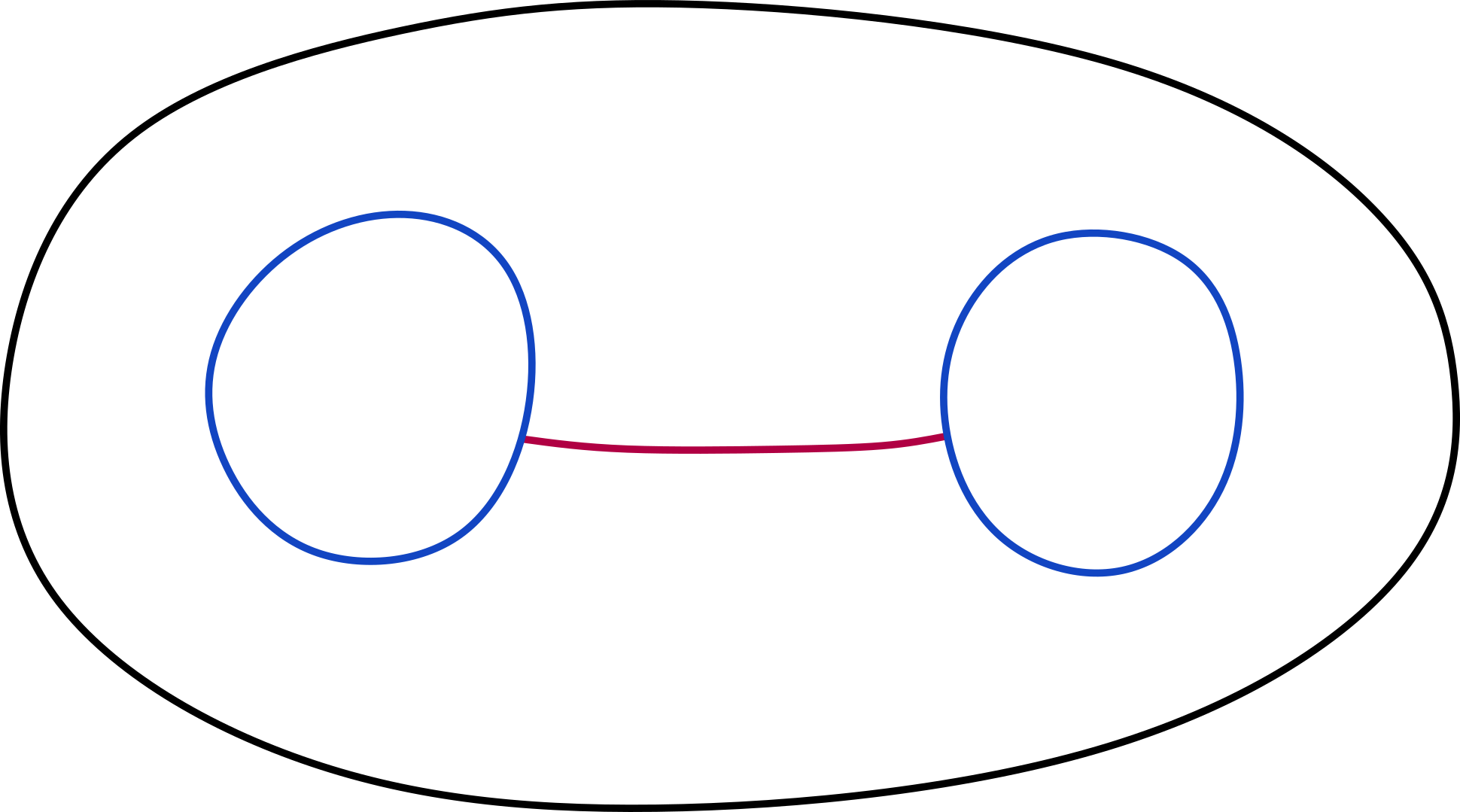} };
    \node at (1.5,2) {$\gamma_1$};
    \node at (6.1,2) {$\gamma_2$};
    \node at (7.15,3.5) {$\gamma_3$};
    \node at (4.1, 2.1) {$\alpha$};
    \node[anchor=south west, inner sep = 0] at (10, -.5) {\includegraphics[width=2in]{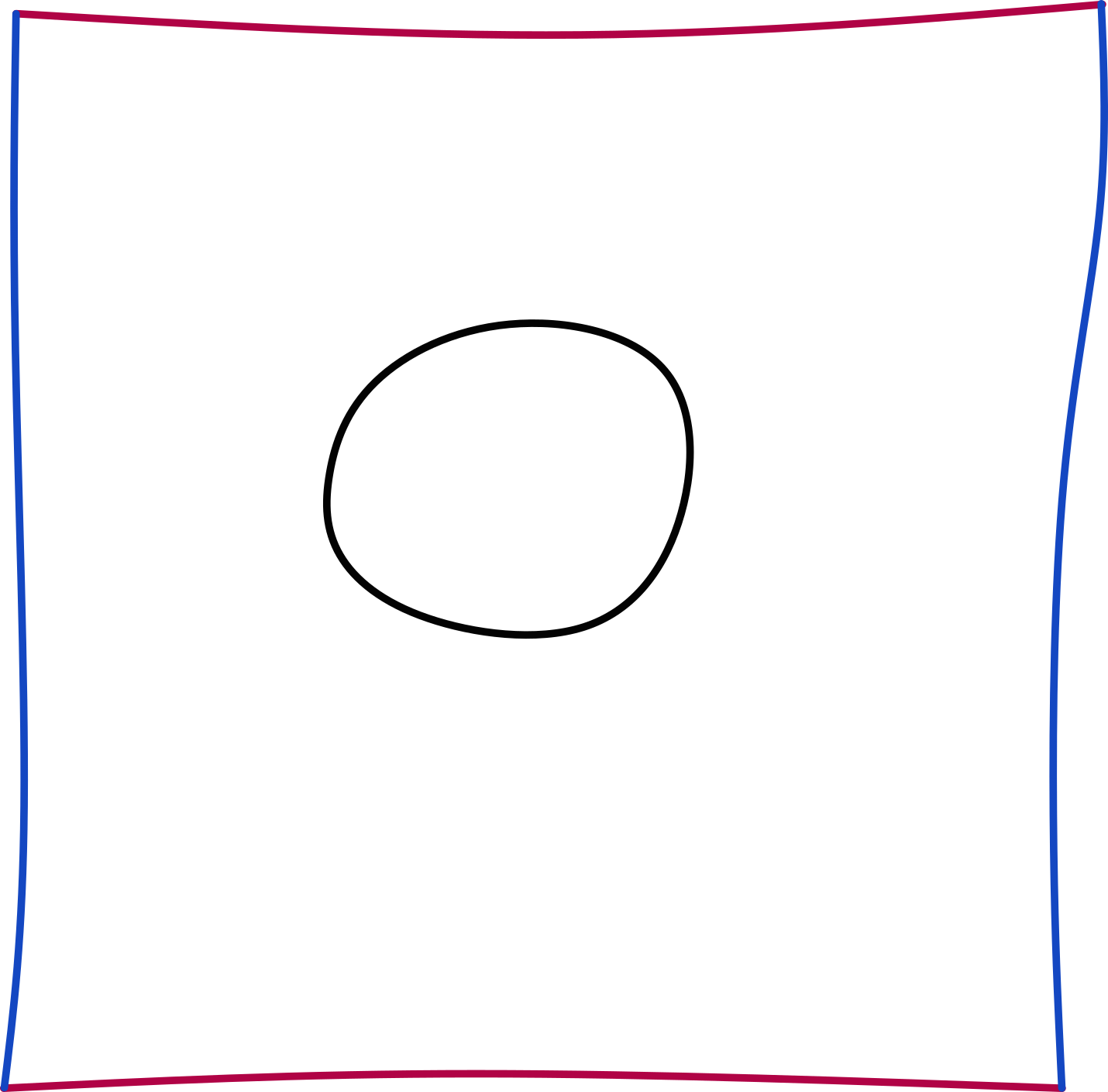}};
    \node at (9.8,3) {$\gamma_1$};
    \node at (15.1,1.5) {$\gamma_2$};
    \node at (12,2.5) {$\gamma_3$};
    \node at (12, -.2) {$\alpha$};
    \node at (13, 4.6) {$\alpha$};
\end{tikzpicture}
\caption{The first step of the two arc case, before (left) and after (right) cutting along $\alpha$}
\end{figure}

Finally, consider a collection $A$ with at least two arcs, one of which goes between $\gamma_1$ and $\gamma_2$.  By cutting along $\alpha$, the resulting surface will be a square with the boundary curve $\gamma_3$ inside. But then any other arc must separate this square into two subsurfaces, one of which will not contain $\gamma_3$, and so must be a disk. 
\end{proof}

\begin{lemma}\label{2^ncurvesinunionsameorient}
If two $C^k$ curves $\alpha$ and $\beta$ have $n\geq1$ points or intervals where there exists $C^k$ parameterizations of $\alpha$ and $\beta$ with nonzero tangent vectors that agree up to the $k$th derivative and all the intersections occur in the same order and with consistent orientations, then there are $2^n$ distinct $C^k$ curves in $\alpha \cup \beta$.
\end{lemma}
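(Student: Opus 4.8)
\textbf{Proof proposal for Lemma \ref{2^ncurvesinunionsameorient}.}

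The plan is to build the $2^n$ curves explicitly as "switching" curves and then argue that each is $C^k$ and that distinct switching patterns yield distinct curves. First I would set up notation: at each of the $n$ tangency loci (whether a single point or an interval), the two curves $\alpha$ and $\beta$ agree to order $k$ and cross (or coincide) with consistent orientation, so there are $C^k$ parameterizations $\alpha:S^1\to S$, $\beta:S^1\to S$ whose images meet exactly along these loci. Traversing $\alpha$ (say) we encounter the loci $L_1, L_2, \ldots, L_n$ in cyclic order; by the consistent-order hypothesis, traversing $\beta$ we encounter the same loci in the same cyclic order. This lets me cut each of $\alpha$ and $\beta$ into $n$ arcs, $\alpha = a_1 a_2 \cdots a_n$ and $\beta = b_1 b_2 \cdots b_n$, where $a_i$ and $b_i$ share both endpoints (the loci $L_i$ and $L_{i+1}$), and where at each shared endpoint the arcs $a_i, b_i$ (and $a_{i-1}, b_{i-1}$) have equal $k$-jets.

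Next I would define, for each subset $T \subseteq \{1,\dots,n\}$, the curve $\gamma_T$ obtained by concatenating $c_1 c_2 \cdots c_n$ where $c_i = a_i$ if $i\notin T$ and $c_i = b_i$ if $i\in T$. The key point is that $\gamma_T$ is a well-defined simple closed $C^k$ curve: it is a closed loop because consecutive arcs share the appropriate endpoint locus, it is simple because both $\alpha$ and $\beta$ are simple and they only meet along the $L_i$ (so any self-intersection of $\gamma_T$ would have to occur at some $L_i$, but the two arcs incident to $L_i$ in $\gamma_T$ are the germs of $\alpha$ or $\beta$ there, which only pass through transversally-or-tangentially once in the manner dictated by the hypothesis — here I would need the hypothesis that the curves are genuinely simple and meet "cleanly"), and it is $C^k$ at each junction $L_i$ precisely because the outgoing $k$-jet of $c_{i-1}$ and the incoming $k$-jet of $c_i$ agree, regardless of whether each of those arcs comes from $\alpha$ or from $\beta$ — this is exactly what "agree up to the $k$th derivative with consistent orientations" buys us. (For the loci that are intervals rather than points, I would note that along such an interval $\alpha$ and $\beta$ literally coincide as $C^k$ maps up to reparameterization, so the concatenation is $C^k$ there as well.) Then $T = \emptyset$ recovers $\alpha$ and $T = \{1,\dots,n\}$ recovers $\beta$, and since there are $2^n$ subsets $T$ we obtain $2^n$ curves.

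It remains to see that distinct subsets give distinct curves: if $T \neq T'$, pick $i$ in the symmetric difference; then $\gamma_T$ and $\gamma_{T'}$ contain different arcs across the $i$-th segment (one contains $a_i$, the other $b_i$), and since $a_i \neq b_i$ as point sets (the interiors of $a_i$ and $b_i$ are disjoint away from any tangency intervals, which do not affect the argument since those arcs are shared), the curves $\gamma_T$ and $\gamma_{T'}$ differ. Hence the $2^n$ curves are pairwise distinct.

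I expect the main obstacle to be the \emph{simplicity} claim for $\gamma_T$ rather than the $C^k$ or distinctness claims. The $C^k$ regularity is essentially immediate from the jet-matching hypothesis, and distinctness is a counting argument, but to argue $\gamma_T$ has no self-intersections I must carefully use that $\alpha$ and $\beta$ are each embedded and that their union has no crossings \emph{other} than the prescribed tangency loci — and I must track what the local picture at each $L_i$ looks like (a single tangency point, or a shared interval with the two curves peeling apart cleanly on each side) to confirm that switching from an $\alpha$-arc to a $\beta$-arc at $L_i$ does not create a new crossing there or elsewhere. This is where the phrase "all the intersections occur in the same order and with consistent orientations" does real work: it guarantees that the cyclic combinatorial structure of the switch is compatible, so no two of the chosen arcs can cross.
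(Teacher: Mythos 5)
Your construction of the $2^n$ switching curves $\gamma_T$ is the same as the paper's: label the tangency loci in order, cut $\alpha$ and $\beta$ into matching arcs, and at each locus choose which curve to follow, with the jet-matching hypothesis guaranteeing $C^k$ regularity at each junction and the consistent ordering/orientation guaranteeing the concatenation closes up. Your distinctness argument also matches. However, there is a genuine gap: you only prove the lower bound, that \emph{at least} $2^n$ such curves exist, whereas the lemma asserts (and the paper proves) that there are \emph{exactly} $2^n$ closed $C^k$ curves contained in $\alpha \cup \beta$. The exactness is the half that actually gets used: in the proof of Lemma~\ref{crossintersectioningraph} this lemma is invoked to conclude that when $n=1$ the hull of a torus pair contains \emph{only} the two original curves, which is an upper-bound statement. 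The paper closes this with a short second argument that you omit entirely: any closed curve in $\alpha\cup\beta$ that passes through one locus $x_i$ must pass through all of them (so it is completely determined by its sequence of switching choices and hence is one of the $\gamma_T$), and any closed curve missing every $x_i$ would be contained in a single open arc of $\alpha$ or of $\beta$ and so could not be a closed curve at all. You should add this.

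A smaller remark: you identify simplicity of the $\gamma_T$ as the main obstacle, but the paper does not address simplicity at all, and for the way the lemma is applied it is not needed --- an upper bound on all closed curves in the union a fortiori bounds the simple ones, and in the $n=1$ case the two curves produced are just $\alpha$ and $\beta$ themselves. Your instinct that simplicity could fail for a general switched curve when $n\geq 2$ is reasonable, but it is orthogonal to the part of the lemma that does work elsewhere in the paper; the missing upper-bound argument is the substantive omission.
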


\begin{proof}
Pick one of the intersections or intervals and name it $x_1$.  Following an orientation of $\alpha$, continue naming the intersections and overlapping intervals $x_2, \ldots, x_n$.  Starting at $x_1$, there are two possible choices that a $C^k$ curve could make, either to follow $\alpha$ or to follow $\beta$, to get to $x_2$.  Both of these choices will result in a $C^k$ curve since there are parameterizations of $\alpha$ and $\beta$ that agree up to the $k$th derivative at $x_1$.  Similarly at $x_2$, the curve could make two choices again, each being distinct. This process can be continued until $x_n$, where the final choice will bring the curve back to $x_1$ with the correct orientation.  So there are 2 choices at each of the $n$ intersections or overlapping intervals, with each choice giving a different curve. Thus there at least $2^n$ distinct curves in $\alpha \cup \beta$. 

Note that as soon as a curve in $\alpha \cup \beta$ intersects one of the $x_i$, it must intersect all of them.  Thus there are no further curves in the union that intersect some of the $x_i$, but not $x_1$.  Moreover, there are no curves in the union that do not intersect any of the $x_i$,  since otherwise it is contained in an arc of either $\alpha$ or $\beta$. Thus there are exactly $2^n$ distinct curves in $\alpha \cup \beta$. 
\end{proof}

We can now state a combinatorial characterization for torus pairs.

\begin{lemma}\label{crossintersectioningraph}
Let $a$ and $b$ be non-homotopic nonseparating curves in $\fcn{k}{S_g}$, $g \geq 2$. Then $(a,b)$ is a torus pair if and only if the hull of $\{a, b\}$ contains no other vertices of $\fcn{k}{S_g}$ and there is a separating curve $\gamma \in \fcn{k}{S_g}$, disjoint from $a$ and $b$,  such that one side of $\gamma$ is an $S_1^1$ subsurface that contains $a$ and $b$. 
\end{lemma}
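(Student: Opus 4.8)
The plan is to prove the two implications separately; write $N$ for a regular neighborhood of $a\cup b$ throughout.

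\emph{Forward direction.} Suppose $(a,b)$ is a torus pair. By the remarks preceding the lemma, $a\cup b$ is a figure-eight graph (degenerate case) or a theta graph (nondegenerate case), and it fills a one-holed torus $T$ whose regular neighborhood $N$ is a copy of $S_1^1$ with connected boundary. To produce $\gamma$: the set $S_g\setminus(a\cup b)$ is open, and its unique component meeting $S_g\setminus T$ is a genuine open subsurface, so it contains a $C^k$ curve $\gamma$ isotopic to $\partial N$; this $\gamma$ is disjoint from $a$ and $b$, separates $S_g$ into a copy of $S_1^1$ containing $a$ and $b$ and a copy of $S_{g-1}^1$, and is essential since $g\geq 2$. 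This is the second condition. For the first, an Euler characteristic count shows the complement of $a\cup b$ in $S_g$ has no disk components when $g\geq 2$ (its only component is the boundary collar of $N$ glued to $S_g\setminus T$, which has negative Euler characteristic), so the hull of $\{a,b\}$ equals $a\cup b$; and a local argument shows the only $C^1$ curves contained in a figure-eight or theta graph are the two constituent curves --- in the theta case, $a$ and $b$ being $C^k$ forces the two non-overlapping arcs to leave each vertex tangent to the overlap arc, so the ``third circuit'' has a cusp at each vertex and is not $C^1$. Hence the hull contains no vertices of $\fcn{k}{S_g}$ other than $a$ and $b$.

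\emph{Reverse direction.} Assume $\gamma$ exists as stated and the hull condition holds; let $T\cong S_1^1$ be the indicated side of $\gamma$. Since $a$ and $b$ are nonseparating and non-homotopic in $S_g$, the same is true inside $T$; and since disjoint nonseparating simple closed curves in $S_1^1$ are homotopic, $a$ and $b$ must intersect. The crucial step is a \emph{finger-push} argument excluding disk components of $S_g\setminus(a\cup b)$: if $D$ were such a component, then $\partial D$ contains a subarc $\sigma$ of $a$ or $b$ with nonempty interior (a region bounded entirely by one curve would make that curve inessential), and pushing the middle of $\sigma$ slightly into $D$ in a $C^k$ way produces a curve in $\fcn{k}{S_g}$ contained in $a\cup b\cup D\subseteq\mathrm{hull}(\{a,b\})$, isotopic to $a$ or $b$ hence essential, and distinct from both --- contradicting the hull condition. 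With no complementary disks, the identity $\chi(T\setminus N)=m-1$ (where $m$ is the number of connected components of $a\cap b$) together with $\chi(T\setminus N)\leq 0$ forces $m=1$. Thus $a\cup b$ is a figure-eight or theta graph and $N$ is a copy of $S_0^3$ or of $S_1^1$; since $N\subseteq T\cong S_1^1$ with $\chi(N)=-1=\chi(T)$, the complement $T\setminus N$ is a union of annuli whose boundary circles (freely homotopic to $a$, $b$, the third circuit, and $\gamma$) must be paired by these annuli, and a computation in $\pi_1(S_1^1)\cong F_2$ rules out every pairing except the one forcing $N\cong S_1^1$ (the $S_0^3$ options would force $a$ homotopic to $b$). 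Hence $N\cong S_1^1$, the unique intersection is topologically transverse, and $(a,b)$ is a torus pair.

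I expect the main obstacle to be the last part of the reverse direction: after reducing to a single intersection component with no complementary disk, one must show this intersection is \emph{topologically transverse} rather than a tangency (for a point) or a non-crossing coincidence arc. Making this precise via the ribbon structure of $a\cup b$ --- pinning down the free homotopy classes of the boundary circles of $N$ in $S_1^1$ and eliminating every pairing by annuli except the one giving $N\cong S_1^1$ --- is the delicate computation. Verifying the $C^k$ finger-push carefully (embeddedness, $C^k$-regularity where the pushed arc departs from and returns to the original curve, and that it stays inside $D$) is a smaller point that still needs attention.
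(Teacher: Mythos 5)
Your forward direction is essentially the paper's: the boundary of a regular neighborhood of $a\cup b$ supplies $\gamma$, and the only $C^k$ circles contained in the figure-eight or theta graph $a\cup b$ are $a$ and $b$ themselves (the paper handles the ``third circuit'' via Lemma~\ref{2^ncurvesinunionsameorient} rather than your cusp argument, but the content is the same). Your reverse direction is genuinely different: the paper cuts $T$ along $a$, views $b\setminus a$ as a collection of arcs in the resulting pair of pants, and invokes Lemma~\ref{disksinpants} to show that two or more such arcs force a complementary disk; you instead run an Euler characteristic count on $\overline{T\setminus N}$ and then classify the possible annulus pairings of the boundary circles of $N$ inside $S_1^1$.

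There is a genuine gap in that count. The identity $\chi\bigl(\overline{T\setminus N}\bigr)=m-1$, with $m$ the number of components of $a\cap b$, presupposes that $m$ is finite --- equivalently, that $a\cup b$ is a finite graph admitting a compact regular neighborhood $N$ with $\chi(N)=-m$. Nothing in the hypotheses guarantees this: vertices of $\fcn{k}{S_g}$ are not in general position, and two $C^k$ curves can meet in infinitely many components, even a Cantor set. Your finger-push correctly rules out complementary disks, but it does not bound $m$; and when $m$ is infinite there is no compact neighborhood deformation retracting onto $a\cup b$ (small neighborhoods have unboundedly many boundary components and unboundedly negative Euler characteristic), so the inequality $\chi\le 0$ never gets off the ground. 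To close the gap you must first reduce to finitely many intersection components, and the natural way to do so is exactly the paper's step: if $b\setminus a$ has at least two arcs in the pair of pants $\overline{T\setminus a}$ --- which is automatic whenever $m\ge 2$, finite or not --- then Lemma~\ref{disksinpants} produces a disk bounded by subarcs of $a$ and $b$, contradicting the hull condition. Once $m=1$ is established, your ribbon-graph analysis distinguishing $N\cong S_1^1$ from $N\cong S_0^3$ (and hence a topologically transverse intersection from a tangential one, with the $S_0^3$ pairings forcing $a$ homotopic to $b$ or to $\gamma$) is a correct alternative to the paper's case analysis of the single arc of $b\setminus a$.
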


\begin{proof}
If $a$ and $b$ intersect only at one topologically transverse intersection, then $a$ and $b$ cannot bound any bigons.  Thus the hull of $\{a, b\}$ is exactly $a \cup b$. Even if the intersection agrees up to the $k$th derivative,  by Lemma \ref{2^ncurvesinunionsameorient}  there are only 2 curves in $a \cup b$, namely $a$ and $b$.  If the intersection does not agree up to the $k$th derivative,  then there is no way for a $C^k$ curve in $a \cup b$ to switch from traveling on an arc in $a$ to an arc in $b$.   So the only $C^k$ curves in the hull will be the original curves $a$ and $b$.  The boundary of a regular neighborhood of $a \cup b$ can be smoothed to make a separating curve $\gamma \in \fcn{k}{S_g}$, which will separate an $S_1^1$ subsurface containing $a$ and $b$ from the rest of $S_g$.

Now suppose that $a$ and $b$ are non-homotopic, nonseparating curves on an $S_1^1$ subsurface $T$ bounded by a $C^k$ curve $\gamma$ and the hull of $\{a, b\}$ contains only the vertices $a$ and $b$.  Any two non-homotopic nonseparating curves on a torus must cross each other, so $a$ and $b$ must have at least one topologically transverse intersection.  Since the hull only contains two vertices, then $a$ and $b$ cannot mutually bound any disks  in $S_g$.  Since $a$ is nonseparating, then $\overline{T \setminus \{a\}}$ is a pair of pants with arcs of $b$ going between the two boundary curves that correspond to $a$.  So Lemma \ref{disksinpants} will imply that there must be at most one subarc of $b$ with non-empty interior in $T\setminus \{a\}$.  This subarc either goes from one copy of $a$ to the other copy, or has endpoints in the same copy of $a$ and separates the other copy and $\gamma$.  But in the latter case, $b$ would not have any topologically transverse intersections with $a$.  In fact, it would be homotopic to either $a$ or $\gamma$. But this contradicts our initial assumptions. Thus $b\setminus a$ will consist of a single arc from one side of $a$ to the other side. So $a$ and $b$ are a torus pair.  
\end{proof}

We now give the graph structure that is unique to nondegenerate torus pairs.

\begin{lemma}\label{nondegenerateintervalingraph}
Let $g \geq 2$ and let $(a,b)$ be a torus pair in $\fcn{k}{S_g}$ whose intersection arc is denoted by $c$. Then $(a,b)$ is nondegenerate ($c$ is not a single point) if and only if there exist disjoint homotopic curves $d_1$ and $d_2$ in $\fcn{k}{S_g}$ such that $(a,d_1), (a, d_2), (b,d_1), $ and $(b,d_2)$ are all torus pairs and for any curve $e \in \fcn{k}{S_g}$ disjoint and non-homotopic to $d_1$ and $d_2$, then $e$ intersects $a$ if and only if $e$ intersects $b$.
\end{lemma}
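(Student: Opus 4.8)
The statement of Lemma~\ref{nondegenerateintervalingraph} characterizes nondegenerate torus pairs via the existence of a pair of homotopic curves $d_1,d_2$ that form torus pairs with both $a$ and $b$ and that "see" $a$ and $b$ the same way. The right heuristic is that $d_1$ and $d_2$ are $C^k$ curves living in a neighborhood of the symmetric difference $a \triangle b$: when the intersection arc $c$ is nondegenerate, $a\triangle b$ is a legitimate topological curve crossing each of $a$ and $b$ once, and one can push it off itself on either side to get the homotopic pair $d_1,d_2$; when $c$ is a single point, $a\triangle b$ degenerates and no such pair exists. So my proof would have two directions, each built from the subsurface picture inside the one-holed torus $T$ furnished by Lemma~\ref{crossintersectioningraph}.

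\emph{Forward direction (nondegenerate $\Rightarrow$ the pair exists).} Assume $(a,b)$ is a nondegenerate torus pair, so $b\setminus a$ is a single arc $c$ with nonempty interior running from one side of $a$ to the other. Inside $T$, the union $a\cup b$ has a well-defined "symmetric difference" curve $\delta$ (the boundary of a regular neighborhood of $a\cup b$ intersected appropriately, or more concretely the curve that follows $a$ outside $c$ and $b$ along $c$ with the two long sides swapped) — but $\delta$ will have corners at the two endpoints of $c$, so it is not $C^k$. I would instead take a regular neighborhood $N$ of $\delta$ that is disjoint from the crossing point of $a$ and $b$ except along $c$, and smooth the two boundary components of $N\cap T$ to genuine $C^k$ curves $d_1,d_2$; these are disjoint and homotopic (both isotopic to $\delta$ in $T$). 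One then checks: each $d_i$ crosses $a$ exactly once and $b$ exactly once, transversely, so $(a,d_i)$ and $(b,d_i)$ are torus pairs — here Lemma~\ref{crossintersectioningraph} is invoked to certify "torus pair" combinatorially, using that $d_i$ together with $a$ (resp. $b$) lies on a one-holed torus bounded by a separating curve and has trivial hull. Finally, any $e$ disjoint and non-homotopic to $d_1,d_2$ is confined (by Lemma~\ref{disjointisotoptograph}-type reasoning / the annulus $d_1,d_2$ cobound, cf. Lemma~\ref{containedinannulus}) to the complement of that annulus, which meets $T$ in a region where $a$ and $b$ are "parallel": $a\setminus(\text{annulus})$ and $b\setminus(\text{annulus})$ are isotopic arcs, so $e$ meets one iff it meets the other. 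This last point is the crux and I would spell it out via the explicit arc picture in $T\setminus(d_1\cup d_2)$.

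\emph{Reverse direction (pair exists $\Rightarrow$ nondegenerate).} I argue the contrapositive: if $(a,b)$ is a \emph{degenerate} torus pair, meeting in a single point $p$, then no such $d_1,d_2$ can exist. Suppose toward a contradiction they do. Being torus pairs with $a$, the curves $d_1,d_2$ each cross $a$ once; being homotopic and disjoint they cobound an annulus $B$, and any essential curve in $B$ is isotopic to $d_1$. The "$e$ meets $a$ iff $e$ meets $b$" condition, applied to suitably chosen test curves, forces $a$ and $b$ to be "non-separated" by the annulus $B$ — i.e., outside $B$ the curves $a$ and $b$ must run parallel. But $a$ and $b$ meet only at the single point $p$: I would show that after removing a small neighborhood of $B$, the remaining arcs of $a$ and $b$ still meet at $p$ (or can be made to by the transversality of the crossing), and that to be "parallel outside $B$" while crossing exactly once at $p$ forces the crossing to persist as an actual transverse arc — contradicting degeneracy — or else forces $a$ and $b$ to be disjoint or homotopic, contradicting that they form a torus pair. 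Concretely, the point is that a genuine interval $c$ is exactly the locus where $a$ and $b$ fail to be "locally the same curve," and the $d_i$ can only be placed to exploit such an interval; with only a point there is no room. I'd make this rigorous by examining the cyclic order in which $a,b,d_1,d_2$ cross each other on $T$, using Lemma~\ref{disksinpants} (cut along $a$ to get a pair of pants and analyze the arcs of $b,d_1,d_2$).

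\emph{Main obstacle.} The genuinely delicate step is the "$e$ intersects $a$ iff $e$ intersects $b$" clause — both producing curves $d_1,d_2$ with exactly this property in the nondegenerate case, and extracting a contradiction from it in the degenerate case. It is essentially a statement that $d_1,d_2$ are positioned so that $a$ and $b$ become isotopic rel the annulus they cobound, and making this precise requires a careful arc-by-arc analysis in $T$ cut along one of the curves, leaning on Lemma~\ref{disksinpants}. Everything else (smoothing neighborhoods to $C^k$ curves, recognizing torus pairs via Lemma~\ref{crossintersectioningraph}, recognizing the homotopic pair via earlier lemmas) is routine given the machinery already developed in this section.
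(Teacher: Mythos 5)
Your forward direction is essentially the paper's: take $d_1,d_2$ to be the two boundary components of a regular neighborhood of the symmetric difference $a\,\Delta\, b$, so that the middle of the overlap arc $c$ pokes out of the annulus they cobound, and each $d_i$ crosses $c$ (hence both $a$ and $b$) exactly once. One correction to your justification of the test condition: it is not enough that $a\setminus(\text{annulus})$ and $b\setminus(\text{annulus})$ are \emph{isotopic} arcs --- two isotopic arcs can perfectly well be hit by a curve missing the other. What actually holds is that they are the \emph{same} arc, namely the portion of $c$ outside the annulus, since everything in $a\cup b$ other than $c$ lies inside the annulus; a curve $e$ disjoint from and non-homotopic to $d_1,d_2$ cannot enter the open annulus, so it can only meet $a\cup b$ along that common subarc, where meeting $a$ and meeting $b$ are literally the same event. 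This is exactly the argument in the paper, so the forward direction is fine once that slip is repaired.

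The reverse direction is where the genuine gap is. You propose to show that the ``$e$ meets $a$ iff $e$ meets $b$'' condition forces $a$ and $b$ to be ``parallel outside $B$,'' that the crossing point $p$ survives outside a neighborhood of $B$, and then to contradict degeneracy via the cyclic order of $a,b,d_1,d_2$ on $T$ using Lemma~\ref{disksinpants}. Two concrete problems. First, the hypothesized $d_1,d_2$ are arbitrary curves in $S_g$ forming torus pairs with $a$ and $b$; nothing confines them to the one-holed torus $T$, so cutting $T$ along $a$ and analyzing arcs there does not control them. Second, there is no reason the single intersection point $p$ of $a$ and $b$ should lie outside the annulus $B$, so the step ``the remaining arcs of $a$ and $b$ still meet at $p$'' can simply fail, and with it your intended contradiction. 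What is missing is the direct construction of a violating test curve, which is how the paper argues: $d_1,d_2$ are disjoint, homotopic and nonseparating, so they cobound an annulus $A$ with $S_g\setminus A$ connected of genus $g-1\ge 1$ and two boundary circles; the four torus-pair hypotheses force $a$ and $b$ to cross each $d_i$ exactly once, so $a\setminus A$ and $b\setminus A$ are each a single arc running from one boundary circle to the other, meeting each other in at most one point; cutting further along the arc of $a$ leaves a connected surface with one boundary and genus at least $1$ whose interior still contains a subarc of $b$ with nonempty interior; an essential, non-peripheral curve $e$ in that piece through such a subarc meets $b$ but not $a$, is disjoint from and non-homotopic to $d_1,d_2$, and so violates the iff condition. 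Without some version of this construction your reverse direction does not close.
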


\begin{proof}
For the forward direction, consider a regular neighborhood of the symmetric difference $a \Delta b$ that does not intersect itself. Then it is possible to find two disjoint homotopic $C^1$ curves $d_1$ and $d_2$ in this neighborhood such that the annulus defined by $d_1$ and $d_2$ still contains $a \Delta b$.   Since both $d_1$ and $d_2$ are closed in $S_g$ and do not intersect, then there is an open component of $c$ outside of the annulus bounded by $d_1$ and $d_2$.   Let $e$ be an essential curve that is disjoint from both $d_1$ and $d_2$,  and not homotopic to either $d_1$ and $d_2$.  Thus $e$ cannot intersect anything in the interior of this annulus.  And so the only point that $e$ could intersect either $a$ or $b$ would be along the arc $c$ where it must intersect both $a$ and $b$ at the same time.

\begin{figure}[h]
\centering
\begin{tikzpicture}
\small
    \node[anchor=south west, inner sep = 0] at (0,0){\includegraphics[width=2.25in]{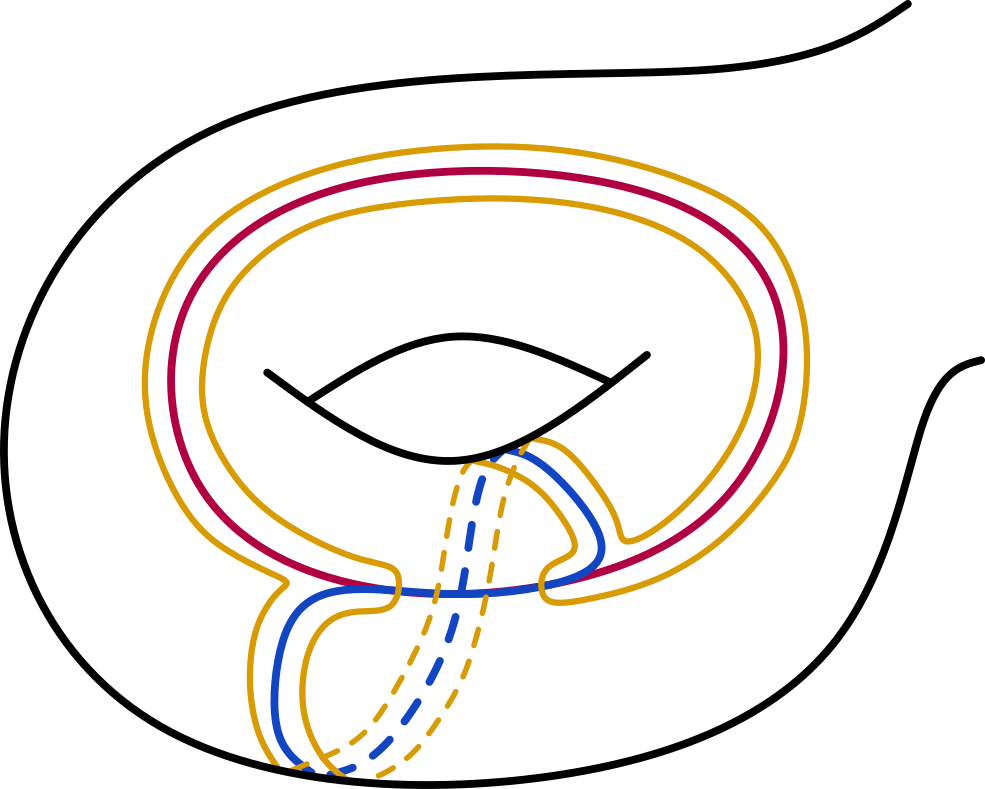} };
    \node at (.7,2.2) {\color{purple}$a$};
    \node at (1.3,.8) {\color{blue}$b$};
    \node at (3.95,3) {\color{brown} $d_1$};
    \node at (5, 2.5) {\color{brown} $d_2$};
\end{tikzpicture}
\caption{A nondegenerate torus pair and curves bounding the symmetric difference $a \Delta b$. }
\end{figure}

To prove the reverse direction, suppose that the arc $c$ is a single point and let $d_1$ and $d_2$ be  disjoint homotopic curves in $\fcn{k}{S_g}$ that each intersect $a$ and $b$ in a single topologically transverse interval.  First, note that $d_1$ and $d_2$ must be nonseparating curves.  Let $A$ denote the annulus bounded by the $d_i$. Thus $S_g \setminus A$ will be a single connected surface with two boundaries and at least one genus.  Moreover,  since $(a,d_1),(a,d_2),(b,d_1)$, and $(b, d_2)$ are all torus pairs and the $d_i$ are disjoint, then the components of $a$ and $b$ in $S_g \setminus A$ will each be a single arc with nonempty interior going from one of the boundaries to the other, intersecting in at most one point.  But then $\overline{S_g \setminus (A \cup a)}$ will still be one connected surface with one boundary and at least one genus. The interior of this subsurface will contain either one or two subarcs of $b$ with nonempty interior.  So there exist $e \in \fcn{k}{S_g}$ that live on this subsurface, disjoint from $a, d_1$, and $d_2$ that will intersect $b$. 
\end{proof}

\begin{lemma}\label{autpreservestoruspairs}
Let $g \geq 2$ and $\alpha \in \aut\fcn{k} {S_g}$.  Then $\alpha$ preserves the sets of degenerate and nondegenerate torus pairs.
\end{lemma}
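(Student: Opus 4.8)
The plan is to invoke the two combinatorial characterizations proved earlier in this subsection---Lemma~\ref{crossintersectioningraph} for torus pairs and Lemma~\ref{nondegenerateintervalingraph} for nondegenerate ones---and to observe that every notion appearing in them is already known to be preserved by $\alpha$.

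First I would handle torus pairs themselves. By Lemma~\ref{crossintersectioningraph}, a pair of non-homotopic nonseparating curves $(a,b)$ is a torus pair exactly when (i) the hull of $\{a,b\}$ contains no vertex of $\fcn{k}{S_g}$ besides $a$ and $b$, and (ii) there is a separating curve $\gamma \in \fcn{k}{S_g}$, disjoint from both $a$ and $b$, one side of which is an $S_1^1$ subsurface containing $a$ and $b$. Condition (i) is preserved because $\alpha$ carries vertices bijectively to vertices and preserves hulls of curves (Lemma~\ref{autpreservesephomandhull}); condition (ii) is preserved because $\alpha$ preserves separating curves, adjacency (hence disjointness), and the property of bounding a one-holed torus (Lemmas~\ref{autpreservesephomandhull} and~\ref{autpreservessubsurfaces}). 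The standing hypotheses ``non-homotopic'' and ``nonseparating'' are likewise preserved by Lemma~\ref{autpreservesephomandhull}. Hence $(a,b)$ is a torus pair if and only if $(\alpha(a),\alpha(b))$ is, so $\alpha$ preserves the set of torus pairs.

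Next I would argue the nondegenerate case the same way, using Lemma~\ref{nondegenerateintervalingraph}: a torus pair $(a,b)$ is nondegenerate if and only if there exist disjoint homotopic curves $d_1,d_2 \in \fcn{k}{S_g}$ with $(a,d_1),(a,d_2),(b,d_1),(b,d_2)$ all torus pairs, and such that every curve $e$ disjoint from and non-homotopic to both $d_i$ intersects $a$ if and only if it intersects $b$. Each clause here---disjointness, homotopy, being a torus pair (just established), and the biconditional on intersections---is $\alpha$-invariant, so applying $\alpha$ to a set of witnesses $d_1,d_2$ (and $\alpha^{-1}$ to any test curve $e$) shows $(\alpha(a),\alpha(b))$ is nondegenerate precisely when $(a,b)$ is. Therefore $\alpha$ preserves the set of nondegenerate torus pairs, and since it preserves all torus pairs, it preserves the degenerate ones---the torus pairs that fail to be nondegenerate---as well.

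I do not expect a genuine obstacle here: the substantive work was already carried out in proving the two characterization lemmas and the preservation statements of~\S\ref{subsectionsephomhull}--\S\ref{subsectionsubsurfaces}. The only point needing a little care is to confirm that each characterization is phrased using solely $\alpha$-invariant data---intersection/disjointness, separating versus nonseparating, homotopic versus not, hulls, and the subsurface types of Lemma~\ref{autpreservessubsurfaces}---and that the quantifiers over curves are transported correctly by replacing each witness or test curve with its $\alpha$-image or $\alpha$-preimage, which is routine.
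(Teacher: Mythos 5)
Your proposal is correct and follows essentially the same route as the paper's proof: apply the combinatorial characterizations of Lemmas~\ref{crossintersectioningraph} and~\ref{nondegenerateintervalingraph}, observe that every ingredient (disjointness, hulls, homotopy, separating curves, the $S_1^1$ subsurface) is preserved by Lemmas~\ref{autpreservesephomandhull} and~\ref{autpreservessubsurfaces}, and obtain the degenerate case as the complement of the nondegenerate one inside the set of all torus pairs. No substantive differences from the paper's argument.
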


\begin{proof}
Lemma \ref{crossintersectioningraph} gives a characterization of any torus pair $(a,b)$ using disjointness, the hull,  homotopies,  separating curves,  and the subsurface $S_1^1$.  For any $\alpha \in \aut \fcn{k}{S_g}$,  the disjointness will be preserved; the hull, homotopies, and separating curves will all be preserved by Lemma \ref{autpreservesephomandhull}; and the subsurface $S_1^1$ will be preserved by Lemma \ref{autpreservessubsurfaces}.  Thus $(\alpha(a), \alpha(b))$ will also be a torus pair. So $\alpha$ will preserve the set of torus pairs. 

By Lemma \ref{nondegenerateintervalingraph},  any nondegenerate torus pair can be characterized similarly using torus pairs, homotopies, and disjointness.  Thus the set of nondegenerate torus pairs will also be preserved by $\alpha$.  

Since the set of degenerate torus pairs is exactly the set of torus pairs without the nondegenerate ones, and both of these sets are preserved by $\alpha$,  then the set of degenerate torus pairs is preserved by $\alpha$. 
\end{proof}

\subsection{$\mathbf{k}$-Smooth pairs}\label{subsectionsmoothpair}

In this section, we define $k$-smooth pairs and show that they are preserved by automorphisms of the $C^k$-curve graph.

Let $P$ be a nonseparating pair of pants in $S$. Two non-homotopic curves $a,  b \in \fcn{k}{S}$ form a \emph{$k$-smooth pair in $P$} if $a \cup b \subset P$ and there exists a curve $d\in \fcn{k}{S}$, with the following properties:
\begin{enumerate}[label=(\roman*)]
\item $d$ is not homotopic to either $a$ or $b$
\item $d= a_1 \cup b_1 \cup c_1 \cup c_2$ such that $a_1 \subset a$,  $b_1 \subset b$, and $c_1 \cup c_2 \subset  a \cap b$ are all arcs
\item the interior of $a_1$ does not intersect $b$ and the interior of $b_1$ does not intersect $a$
\item $a \cup b$ is contained in a pair of pants $P_d \subset P$ which has $d$ as one of its boundary components.
\end{enumerate}

\begin{figure}[h]
\centering
\begin{tikzpicture}
\small
    \node[anchor=south west, inner sep = 0] at (0,0){\includegraphics[width=4in]{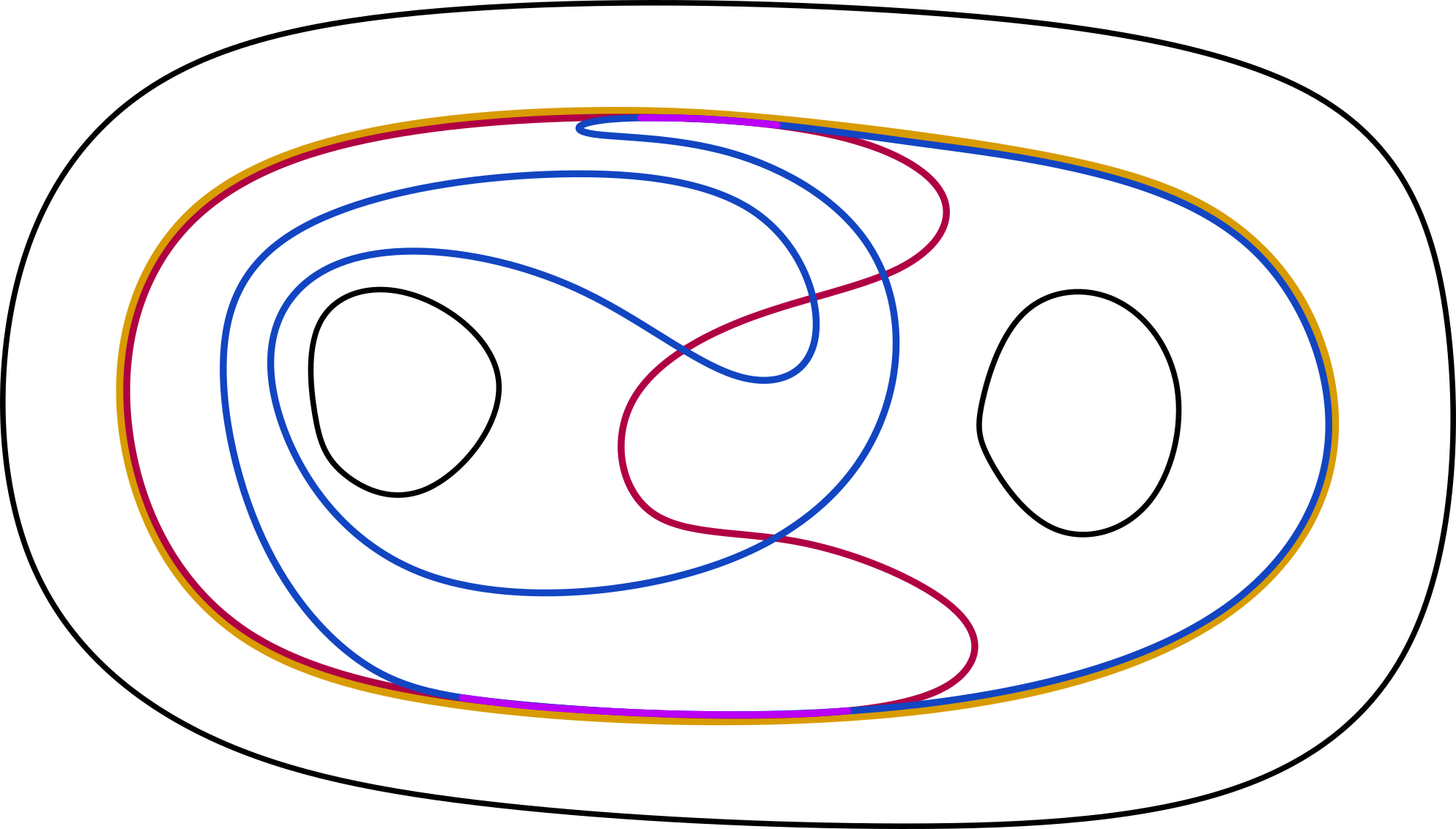} };
    \node at (1.32,2.2) {\color{purple}$a_1$};
    \node at (8.85,2.2) {\color{blue}$b_1$};
    \node at (4.85,5.2) {\color{Mulberry}$c_1$};
    \node at (4.2, 1) {\color{Mulberry}$c_2$};
    \node at (.8, 5.5) {$P$};
    \node at (8, 4.8) {\color{brown} $d$};
\end{tikzpicture}
\caption{A $k$-smooth pair}
\end{figure}

We say that $d$ is \emph{determined} by $(a, b)$.
The following lemma gives the unique structure that identifies $k$-smooth pairs in our graph.

\begin{lemma}\label{smoothpairingraph}
Let $g \geq 2$, $a, b \in \fcn{k}{S_g}$ be non-homotopic curves in a nonseparating pair of pants $P$ with boundaries $\gamma_1, \gamma_2$ and $\gamma_3 \in \fcn{k}{S_g}$.  Suppose that $d \in \fcn{k}{S_g}$ is in the hull of $\{a,b\}$ such that $d$ is not homotopic to $a$ or $b$ and $a \cup b$ is contained in the pair of pants $P_d$ with boundaries $\gamma_1, \gamma_2, $ and $d$.  Then $(a,b)$ form a $k$-smooth pair that determines $d$ if and only if any $e \in \fcn{k}{S_g}$ that intersects $\gamma_3$ and $d$, but not $\gamma_1$ or $\gamma_2$ satisfies one of the following:
\begin{enumerate}[noitemsep,topsep=0pt, label=$(\roman*)$]
\item $e$ intersects $a$ but not $b$ $\Rightarrow$ there is an $a' \in \fcn{k}{S_g}$ contained in $P_d$,  homotopic to $a$,  such that $a'$ intersects $e$, but not $b$
\item $e$ intersects $b$ but not $a$ $\Rightarrow$ there is an $b'\in \fcn{k}{S_g}$ contained in $P_d$,  homotopic to $b$,  such that $b'$ intersects $e$, but not $a$
\item $e$ intersects both $a$ and $b$
\end{enumerate}
\end{lemma}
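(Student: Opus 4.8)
The plan is to prove both directions of the biconditional by combining the combinatorial characterizations of the previous subsections with a careful analysis of how a ``test curve'' $e$ — one that pierces $\gamma_3$ and $d$ but avoids $\gamma_1,\gamma_2$ — can meet $a$ and $b$ inside the smaller pair of pants $P_d$. The key geometric picture: cutting $P_d$ open along $d$, the curves $a$ and $b$ become collections of arcs between the copies of $\gamma_1$ and $\gamma_2$ (or the $d$-boundary), and $e\cap P_d$ is a single essential arc of $P_d$ that separates $\gamma_1$ from $\gamma_2$ (since $e$ is not homotopic to $\gamma_1$ and must run from $d$ to $\gamma_3$, it passes through $P_d$ as an arc crossing the ``waist'').

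For the forward direction, I would assume $(a,b)$ is a $k$-smooth pair determining $d$, so $d = a_1\cup b_1\cup c_1\cup c_2$ as in the definition, with the arcs $a_1\subset a$, $b_1\subset b$ having interiors disjoint from the other curve. Take a test curve $e$ crossing $\gamma_3$ and $d$ but not $\gamma_1,\gamma_2$. If $e$ meets $a$ but not $b$, I want to produce $a'\simeq a$ inside $P_d$ with $a'\cap e\neq\emptyset$ and $a'\cap b=\emptyset$. The idea is that since the interior of $a_1$ misses $b$, there is a ``free'' portion of $a$ near $a_1\subset d$ along which I can push a parallel copy of $a$; because $e$ enters $P_d$ through $d$ and $d$ contains the arc $a_1$, one can arrange (using that $S_g\setminus\{b,d,\gamma_1,\gamma_2\}$ is open, hence there is room) the homotoped copy $a'$ to follow $a$ except near $a_1$, where it veers into the collar of $d$ and catches $e$. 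The symmetric statement handles $e$ meeting $b$ but not $a$, using $b_1$. The case where $e$ meets both $a$ and $b$ needs no construction. So all test curves satisfy one of (i)–(iii).

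For the reverse direction, I would assume the test-curve condition holds and show $(a,b)$ is a $k$-smooth pair. Since $d$ is already in the hull of $\{a,b\}$, not homotopic to $a$ or $b$, and $a\cup b\subset P_d$, it remains to exhibit the arc decomposition (ii)–(iii) of the definition; property (iv) is the hypothesis. The strategy is contrapositive: if $(a,b)$ is \emph{not} a $k$-smooth pair, then $d$ cannot be built as $a_1\cup b_1\cup c_1\cup c_2$ with the interior conditions, and I want to produce a bad test curve $e$ — one that, say, meets $a$ but not $b$, yet every homotopic copy $a'\subset P_d$ of $a$ that misses $b$ also misses $e$. Here I would again cut along $d$ and run a Lemma~\ref{disksinpants}-style case analysis on the arc patterns of $a$ and $b$ in the resulting (twice-cut) pants: if the arcs of $a$ and $b$ along $d$ fail to interleave in the pattern dictated by a $k$-smooth pair (a single $a$-arc and a single $b$-arc forming $d$ together with two overlap arcs), there is a region of $P_d$ adjacent to $\gamma_3$ and $d$ that is ``shielded'' from $a$ by $b$ (or vice versa), and a curve $e$ placed there — crossing $\gamma_3$ and $d$, avoiding $\gamma_1,\gamma_2$ — detects the asymmetry and violates (i)–(iii).

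The main obstacle will be the reverse direction's case analysis: enumerating the possible arc configurations of $a\cup b$ relative to $d$ inside the nonseparating pair of pants and verifying, in each non-$k$-smooth configuration, that a violating test curve exists and cannot be ``fixed'' by re-choosing a homotopic representative $a'$ or $b'$ within $P_d$. This requires being careful that $a'$ ranges over \emph{all} essential curves in $P_d$ homotopic to $a$, not just small pushoffs, so the obstruction argument must genuinely be topological (about which complementary region of $P_d\setminus(a\cup b\cup e)$ contains what) rather than about a particular representative. I expect to lean on Lemma~\ref{disksinpants} to rule out extraneous disk regions and on the fact that in a pair of pants every essential simple arc or curve is determined up to isotopy by its endpoint data, so the finitely many combinatorial types can be checked exhaustively.
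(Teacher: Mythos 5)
Your forward direction is close to the paper's in spirit but contains a real error in the construction of $a'$. You propose that $a'$ ``follow $a$ except near $a_1$, where it veers into the collar of $d$ and catches $e$.'' The lemma is about $k$-smooth pairs, not \emph{simple} ones, so the complementary arc $a_2 = a\setminus(a_1\cup c_1\cup c_2)$ is allowed to intersect $b$ arbitrarily; a parallel copy of $a$ therefore cannot in general be made disjoint from $b$. The paper instead pushes off the curve $a_1\cup b_2$ (where $b_2=b\setminus(b_1\cup c_1\cup c_2)$), which cobounds an annulus $A_1$ with $\gamma_1$ whose interior is guaranteed to contain no part of $b$; the pushoff lives in $A_1$ and touches $e$ at the point where $e$ meets $d$. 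You should also make explicit why that meeting point lies in $\operatorname{int}(a_1)$: since $d\setminus\operatorname{int}(a_1)\subset b$ and $e$ misses $b$, every intersection of $e$ with $d$ is forced into $\operatorname{int}(a_1)$. Finally, your framing of $e\cap P_d$ as ``a single essential arc separating $\gamma_1$ from $\gamma_2$'' is unjustified and false for the test curves that matter most: $e$ need only \emph{intersect} $d$, so it may touch $d$ tangentially at one point without entering $P_d$ at all.

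The reverse direction is where the main gap lies. Your contrapositive plan requires, for some bad configuration, exhibiting a test curve $e$ meeting $a$ but not $b$ for which \emph{no} $a'\subset P_d$ homotopic to $a$ and disjoint from $b$ meets $e$ --- a non-existence statement quantified over all homotopic representatives. Your ``shielded region'' heuristic does not supply this: for a single test curve tangent to $d$ at a point of one free subarc of $a$, a valid $a'$ may well exist even when $d$ contains several such subarcs. The paper sidesteps the non-existence claim entirely by using the hypothesis positively: for each maximal subarc $a_i$ of $a$ in $d$ it builds a tangential test curve $e_i$, obtains from condition $(i)$ a curve $a_i'$ forced through the tangency point, smooths $a_i'\cup e_i$ into a curve $f_i$ meeting $\gamma_1$, $\gamma_3$, $d$ once each and missing $b$, and then shows (by a Lemma~\ref{disksinpants}-style disk argument applied to $f_1$, $f_2$, $\gamma_1$) that two distinct subarcs $a_1$, $a_2$ would have to be joined inside that disk by an arc of $a$ disjoint from $b$, so they were not distinct maximal subarcs after all. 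Your proposed exhaustive check of ``finitely many combinatorial types'' is also not available here: in the fine setting $a$ and $b$ are specific curves that may be tangent or overlap in intervals, so the configurations of $a\cup b$ relative to $d$ do not reduce to endpoint data as they would for isotopy classes of arcs in a pair of pants.
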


Before proving the Lemma~\ref{smoothpairingraph},  we will state and prove two technical lemmas related to the existence of $C^k$ curves and arcs in subsurfaces. 

\begin{lemma}\label{c1curvestoboundaries}
Let $L$ be a connected subsurface of $S$ with at least 3 boundaries $\gamma_i \in \fcn{k}{S}$.  Let $a \in \fcn{k}{S}$ be in the interior of $L$ and homotopic to $\gamma_1$.  If $b \in \fcn{k}{S}$ is in the interior $L$ and is not homotopic to $\gamma_1$, then for each $i \neq 1$, there is an $e_i \in \fcn{k}{S}$ such that $e_i$ intersects $b$ and $\gamma_i$, but does not intersect $a$. 
\end{lemma}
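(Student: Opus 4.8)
\textbf{Proof proposal for Lemma~\ref{c1curvestoboundaries}.}

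The plan is to build $e_i$ explicitly as the boundary of a thin regular neighborhood of a carefully chosen arc, after first locating a useful arc of $b$ inside $L$. Since $b$ lies in the interior of $L$ and is not homotopic to $\gamma_1$, cutting $L$ along $b$ (if $b$ is nonseparating in $L$) or passing to a component (if $b$ separates $L$) must leave $\gamma_1$ and $\gamma_i$ on subsurfaces where $b$ actually ``sees'' the relevant boundary; more precisely, the key geometric fact I would extract is that $b$ is not contained in an annular neighborhood of $\gamma_1$, so there is a point $x \in b$ and an embedded arc $\delta$ from $x$ to $\gamma_i$ whose interior misses $a$. To arrange that $\delta$ misses $a$, I would use that $a$ is homotopic to $\gamma_1$ and hence bounds, together with $\gamma_1$, an annulus $A \subset L$; the arc $\delta$ can be pushed off $A$ (staying on the $\gamma_i$-side) because $b$ is not homotopic to $\gamma_1$ and therefore has points outside every such annulus, and because $\gamma_i$ with $i \neq 1$ is not a boundary component of $A$.

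Next I would upgrade $\delta$ to a $C^k$ curve $e_i$ in the graph. Take a regular neighborhood $N$ of $b \cup \delta$ that is small enough to avoid $a$ — this is possible since $a$ is closed and disjoint from the compact set $b \cup \delta$, so it sits at positive distance. The boundary component of $N$ that ``runs alongside'' $b$ once and travels out to near $\gamma_i$ is an essential simple closed curve: it is essential because it is homotopic to $b$ band-summed with a push-off of $\gamma_i$, and one checks it is not null-homotopic nor boundary-parallel using that $b$ is essential and $\delta$ reaches a distinct boundary component $\gamma_i$. Its corners can be smoothed to a $C^k$ curve since $S_g \setminus \{a\}$ (and a fortiori $S_g \setminus(a \cup \gamma_1 \cup \dots)$) is open — the same smoothing argument used throughout Section~\ref{sectioncurvepairs}. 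By construction $e_i$ intersects $b$ (it was built to run parallel to an arc of $b$ and hence must cross $b$ transversally at least once, since $e_i \neq b$ in the graph and $e_i$ is forced off $b$ near $\delta$) and intersects $\gamma_i$ (it enters the collar of $\gamma_i$ and must cross it to close up), while remaining disjoint from $a$.

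The main obstacle I anticipate is verifying that $e_i$ genuinely \emph{intersects} $b$ and $\gamma_i$ rather than merely being parallel to one of them — the boundary of a neighborhood of $b \cup \delta$ could, if $\delta$ is chosen badly, be homotopic to $b$ or disjoint from $\gamma_i$. The fix is to choose $\delta$ so that it has exactly one endpoint on $\gamma_i$ and meets $b$ in exactly one point in its interior-free endpoint, and to take the neighborhood thin enough that the relevant boundary curve of $N$ is forced to cross both $b$ and $\gamma_i$ transversally; a clean way to package this is to note that $e_i$ is isotopic to the curve obtained by resolving the intersection of $b$ with a push-off of $\gamma_i$ along $\delta$, which necessarily has geometric intersection number $\geq 1$ with each of $b$ and $\gamma_i$ because $b$ and $\gamma_i$ are non-homotopic essential curves in $L$. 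I would also handle the degenerate possibility that $b$ is separating in $L$ by first replacing $b$ with the observation that we only need one arc of $b$ lying in the component of $L \setminus b$ containing $\gamma_i$, and such an arc exists precisely because $b$ is not boundary-parallel to $\gamma_1$. The remaining verifications — essentiality, the smoothing to $C^k$, and disjointness from $a$ — are routine and parallel to earlier arguments in the section.
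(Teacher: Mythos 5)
Your first half is fine and matches the paper: you use the annulus $A$ between $a$ and $\gamma_1$, observe that $b$ (being non-homotopic to $\gamma_1$) must have a point $x$ outside $\bar{A}$, and connect $x$ to $\gamma_i$ by an arc $\delta$ in $L \setminus \bar{A}$. The gap is in how you produce $e_i$. A boundary component of a regular neighborhood $N$ of $b \cup \delta$ is, by construction, \emph{disjoint} from $b \cup \delta$ -- it is a push-off -- so the curve you build does not intersect $b$ at all, which is the opposite of what the lemma demands. Your two attempted repairs do not close this: the assertion that $e_i$ ``must cross $b$ transversally at least once since $e_i \neq b$'' is false (distinct disjoint curves are ubiquitous), and the assertion that the band sum of $b$ with a push-off of $\gamma_i$ along $\delta$ has geometric intersection number at least $1$ with each of $b$ and $\gamma_i$ is also false: the band sum of two disjoint simple closed curves along an embedded arc is realized disjointly from both of them (it is exactly a boundary component of a neighborhood of their union with the arc), and non-homotopy of $b$ and $\gamma_i$ gives you nothing here.

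The paper's construction avoids this entirely: rather than taking a neighborhood of $b \cup \delta$, it isotopes $\gamma_i$ itself along the arc $c_i$ (a finger move supported in a small neighborhood of $c_i$ that stays off $\bar{A}$), so that the tip of the finger passes transversally through the point $x \in b$. The resulting $e_i$ is isotopic to $\gamma_i$ -- hence automatically essential and a vertex of $\fcn{k}{S}$, which your band-sum curve is not obviously -- and it genuinely crosses $b$ at $x$ and meets $\gamma_i$, while the neighborhood of $c_i$ being disjoint from $\bar{A} \supset a$ guarantees disjointness from $a$. To salvage your write-up you would need to replace the regular-neighborhood step with this finger push (or otherwise force a transverse crossing of $b$ by hand); the surrounding setup can stay as is.
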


\begin{proof}
First, note that $a$ and $\gamma_1$ bound an open annulus in $L$. Call this annulus $A$.  Then $L\setminus A$ is a connected subsurface with the same number of boundaries as $L$.  Any curves that are contained in $\bar{A}$ will either be homotopic to $\gamma_1$ or inessential. Thus if $b \in \fcn{k}{S}$ contained in $L$ and not homotopic to $\gamma_1$,  then it cannot be contained in $\bar{A}$, and so there is some point $x \in b$ such that $x \in L \setminus \bar{A}$.  

Since $L \setminus \bar{A}$ is connected, then for any $i \neq 1$, there is an arc $c_i$ from $\gamma_i$ to $x$ that does not intersect $a$.  Also,  since $\bar{A}$ is a closed set in $L$, there is an open neighborhood of $c_i$ that does not intersect $\bar{A}$.  So there is enough space to smoothly isotope $\gamma_i$ along $c_i$ to make $e_i \in \fcn{k}{S}$ that intersects both $b$ and $\gamma_i$ without intersecting $a$.  
\end{proof}

\begin{lemma}\label{smoothingarcsatboundary}
Let $M$ and $L$ be two subsurfaces of $S$ with disjoint interiors $\mathring{M}$ and $\mathring{L}$.  Let $m$ be a boundary component of $M$ and $l$ be a boundary component of $L$ that intersect at a point $x$ and are both $C^k$ in a neighborhood of $x$.  Then any arc with interior in $\mathring{M} \cup \mathring{L} \cup \{x\}$ can be isotoped in $M \cup L$ to a $C^k$ arc with the same endpoints.
\end{lemma}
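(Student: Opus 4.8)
The plan is to reduce the statement to a purely local smoothing problem near the single intersection point $x$, and then invoke a standard collar/normal-form argument. First I would set up coordinates: since $m$ and $l$ are each $C^k$ near $x$ and meet only at $x$ (within a small enough neighborhood — this should be arranged by shrinking, using that the arc's interior avoids $m\cup l$ except at $x$), I would pick a small closed disk neighborhood $D$ of $x$ in $S$ on which both $m$ and $l$ are $C^k$ embedded arcs through the center. Let $\beta$ be the given arc with interior in $\mathring M\cup\mathring L\cup\{x\}$. Outside $D$, $\beta$ already lies in $\mathring M\cup\mathring L$ and touches neither boundary, so nothing needs to be done there; the entire problem is confined to $D\cap\beta$, which consists of (at most) two arcs running from $\partial D$ into $x$, one inside $M$ and one inside $L$.

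Next I would straighten the picture inside $D$. Because $M$ and $L$ have disjoint interiors and share the boundary arcs $m$ and $l$ near $x$, the union $m\cup l$ locally separates $D$, and $\mathring M\cap D$ and $\mathring L\cap D$ are two of the complementary wedge-shaped regions. The two pieces of $\beta$ in $D$ — call them $\beta_M\subset \overline{\mathring M}$ and $\beta_L\subset\overline{\mathring L}$ — together form an arc through $x$. I would describe how to replace $\beta_M\cup\beta_L$ inside $D$ by a $C^k$ arc with the same endpoints on $\partial D$: shrink $\beta_M$ and $\beta_L$ slightly away from $x$ so that they terminate at nearby points $p\in\mathring M$, $q\in\mathring L$, connect $p$ to a point on $m$ and $q$ to a point on $l$ by short $C^k$ arcs transverse to the boundary (using a $C^k$ collar of $m$ in $M$ and of $l$ in $L$, which exists since these boundary components are $C^k$ near $x$), and then join the two boundary points along $m$ then along $l$ — but this concatenation has corners at $x$ and at the transition points. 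To remove them, I would instead run the arc through the $C^k$ collar regions and use that $m$ and $l$ are themselves $C^k$ curves meeting at $x$: push the arc off $m\cup l$ into a single side so that it follows a $C^k$ curve $C^k$-close to $m\cup l$. The cleanest way is: the boundary arcs $m$ and $l$, being $C^k$, each bound a one-sided $C^k$ collar inside $M$ resp. $L$; the union of these collars is a $C^k$ half-neighborhood of $m\cup l$ on the $M\cup L$ side, and I would route $\beta$ through this collar, where a $C^k$ reparametrization of "go along the $m$-collar, then along the $l$-collar" can be made genuinely $C^k$ at $x$ because we are free to choose the speed of traversal to vanish to high order at $x$.

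The key steps in order: (1) localize — everything happens in a disk $D$ around $x$, and $\beta$ is already $C^k$ (indeed fixed) outside $D$; (2) produce $C^k$ collars of $m$ in $M$ and of $l$ in $L$ near $x$, relying only on the hypothesis that these boundary components are $C^k$ in a neighborhood of $x$; (3) homotope $\beta\cap D$, rel its endpoints on $\partial D$, into the union of these collars, which is possible since the collars form a neighborhood of $m\cup l$ on the correct side and $\mathring M\cup\mathring L\cup\{x\}$ deformation retracts appropriately; (4) reparametrize the resulting arc as a $C^k$ arc by arranging, via a $C^k$ bump/slowdown near $x$, that all one-sided derivatives up to order $k$ at $x$ match from the two sides; (5) check that the new arc agrees with $\beta$ on $\partial D$ and is isotopic to $\beta$ in $M\cup L$, hence globally the modification is an isotopy in $M\cup L$ with the same endpoints.

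The main obstacle I expect is Step (4): making the spliced arc genuinely $C^k$ at the junction point $x$. One has to be careful that the two boundary curves $m$ and $l$ can meet at $x$ with different tangent directions (they only "agree" as topological curves, not as $C^k$ submanifolds), so one cannot simply concatenate along $m$ and $l$ — the tangent line would jump. The fix is that our arc need not lie on $m$ or $l$ at all; it lies in the interiors and only passes through $x$, so we have the freedom to choose a $C^k$ parametrization whose velocity and all higher derivatives up to order $k$ vanish at the parameter value hitting $x$ (a flat-type slowdown), which lets the arc turn the "corner" while remaining $C^k$. Verifying this carefully — that a $C^k$ arc with a prescribed high-order-flat stop at $x$ can be fitted between the two collar pieces — is the technical heart; the rest is standard collar-neighborhood and isotopy-extension bookkeeping.
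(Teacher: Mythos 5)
Your proposal has a genuine gap at exactly the point you flag as ``the technical heart.'' Your fix for the junction at $x$ --- choosing a $C^k$ parametrization whose velocity and all derivatives up to order $k$ vanish at the parameter value hitting $x$, so the arc can ``turn the corner'' --- does not produce a $C^k$ arc in the sense this paper uses. Throughout the paper a $C^k$ curve or arc is a $C^k$ embedded one-dimensional submanifold, i.e.\ it must admit a \emph{regular} $C^k$ parametrization with nonzero tangent vector (see Lemma~\ref{2^ncurvesinunionsameorient}, which explicitly requires nonzero tangent vectors, and the discussion in Example~1). A flat-stop parametrization through a genuine corner is $C^k$ as a map, but its image is not even a $C^1$ submanifold at the corner point, so the resulting arc would not qualify as a vertex or subarc of a vertex of $\fcn{k}{S}$, which is the whole purpose of this lemma in the later constructions.

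The missing idea is geometric, and it is the crux of the paper's proof: because $\mathring{M}$ and $\mathring{L}$ are disjoint and $m$, $l$ are both $C^k$ near $x$, the two boundary curves \emph{cannot} meet transversally at $x$ --- a transverse crossing would divide a small disk around $x$ into four sectors and force $\mathring{M}$ and $\mathring{L}$ to share one of them. Hence $m$ and $l$ meet in a one-sided tangency and have the \emph{same} tangent line at $x$, contrary to your worry that their tangent directions might differ. Any direction transverse to that common tangent line points into $\mathring{M}$ on one side of $x$ and into $\mathring{L}$ on the other, so the arc can be isotoped within $M$ and $L$ to approach $x$ from both sides along such a transverse direction with matching one-sided derivatives up to order $k$; there is then no corner to smooth and the arc is genuinely regular at $x$. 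With that observation in place, your localization and collar bookkeeping in steps (1)--(3) and (5) is unobjectionable (and somewhat heavier than the paper's chart-by-chart argument), but without it step (4) cannot be completed as written.
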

\begin{proof}
If the interior of an arc $e$ is contained completely within the interior of $M$, then there is an open neighborhood of the interior of $e$ contained with $M$. So any portions of $e$ that are not $C^k$ can be smoothed out within this open neighborhood to make an $C^k$ arc isotopic relative boundary to $e$. 

Now suppose that the interior of $e$ contains the point $x$.  If the $e \setminus\{x\}$ is contained in $M$, then consider the boundary $m$ near the point $x$.  Since it is $C^k$, then in a local chart near $x$,  there is a $C^k$ diffeomorphism that takes $k$ to a straight line with the interior of $M$ on one side. So $e$ can be smoothly isotoped in the chart near $x$ to match the curve $k$ near $x$ and then smoothly veer back into the interior of $M$. The resulting arc will be $C^k$ while staying completely within $M$.  

So any arc whose interior is contained in $\mathring{M} \cup \{x\}$ can be isotoped to a $C^k$ arc in $M$ with the same endpoints.  By symmetry, the same will hold for any arc whose interior is contained in $\mathring{L} \cup \{x\}$.

Now consider an arc $e$ that passes between $M$ and $L$ at $x$.  Since both boundaries $m$ and $l$ are $C^k$ near $x$ and the interiors of $M$ and $L$ are disjoint, then $m$ and $l$ must meet at a one-sided tangent intersection. But then they will have the same tangent direction at $x$. So any other tangent direction at $x$ will be transverse to both $m$ and $l$.  So $e$ can be isotoped within $M$ and $L$ so that the tangent lines are the same as the arc approaches $x$ from both sides.  Moreover, these arcs can have any other higher derivatives and still be contained in $M \cup L$, so we can isotoped these arcs so that the resulting arc will be $C^k$ at $x$. 
\end{proof}

We are now ready to complete the proof of Lemma \ref{smoothpairingraph}.

\begin{proof}
We start with the forward direction.  Let $(a,b)$ form a $k$-smooth pair that determines $d$. Then $d = a_1 \cup b_1 \cup c_1 \cup c_2$ such that $a_1$ is an arc of $a$ whose interior does not intersect $b$,  $b_1$ is an arc of $b$ whose interior does not intersect $a$ and $c_1, c_2$ are (possibly degenerate) arcs in $a \cap b$.  Thus $b_2 = b \setminus b_1 \cup c_1 \cup c_2$ is an arc with the same endpoints as $a_1$, but cannot intersect the interior of $a_1$.  Moreover, this arc must be contained in $P_d$.  So it is either homotopic to $a_1$, homotopic to $b_1 \cup c_1 \cup c_2$, or separates $\gamma_1$ and $\gamma_2$ in $P_d$. Since $b$ is not homotopic to $d$, then $b_2$ cannot be homotopic to $a_1$. Since $b$ is essential in $P$, then $b_2$ cannot be homotopic to $b_1 \cup c_1 \cup c_2$.  Thus $b_2$ separates $\gamma_1$ and $\gamma_2$ in $P_d$. So the surface bounded by $a_1 \cup b_2$ and $\gamma_1$ must be an annulus.  Denote this subsurface by $A_1$. 

\begin{figure}[h]
\centering
\begin{tikzpicture}
\small
    \node[anchor=south west, inner sep = 0] at (0,0){\includegraphics[width=4in]{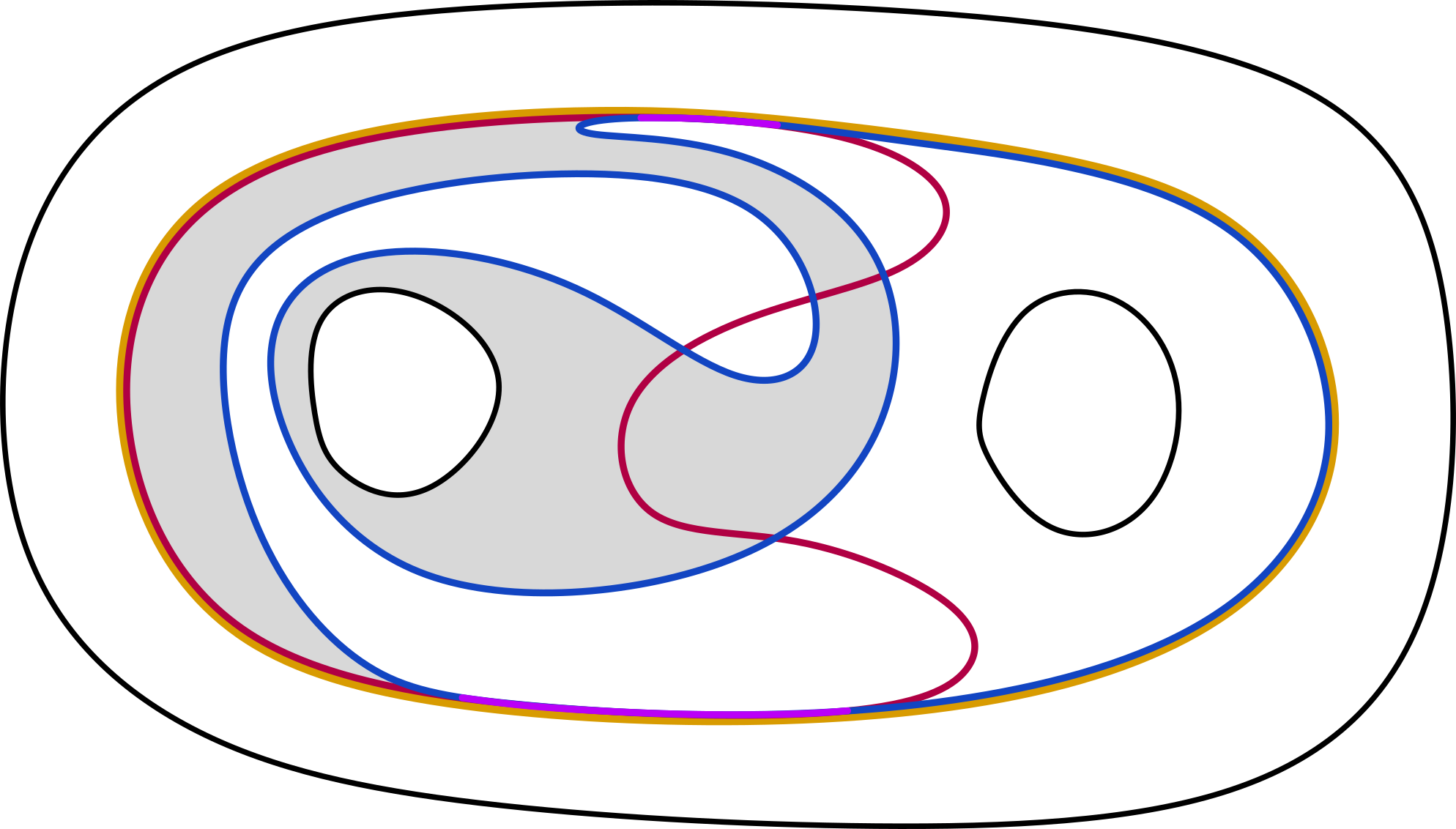} };
    \node at (.8,2.2) {\color{purple}$a_1$};
    \node at (6.3,2.5) {\color{blue}$b_2$};
    \node at (3.7,2.3) {$A_1$};
    \node at (.8, 5.5) {$P$};
    \node at (8, 4.8) { \color{brown} $d$};
\end{tikzpicture}
\caption{The annulus $A_1$ between $a_1$ and $b_2$}
\end{figure}

 Note that $A_1 \subset P_d$ and no part of $b$ is in the interior of $A_1$ since $b$ cannot self-intersect, nor intersect the interior of $a_1$. Since $P_d$ is a pair of pants containing $a$ but not $\gamma_3$ with boundaries $d$,  $\gamma_1$,  and $\gamma_2$,  then any $e$ that intersects $a$ and $\gamma_3$,  but not $\gamma_1$ or $\gamma_2$ must then intersect $d$.  If $e$ does not intersect $b$,  then it must intersect $d$ somewhere in the interior of $a_1$, say at the point $x$.  Since $b$ is a closed set in $P_d$,  there exists an open neighborhood in $P_d$ of $x$ that does not contain $b$.  Moreover, this neighborhood will also be contained in $A_1$.  Since $a_1$ is a $C^k$ arc, then $a_1 \cup b_2$ can be isotoped by a push off to be an essential $C^k$ curve $a'$ contained in $A_1$ and intersecting $a_1 \cup b_2$ only at the point $x$.  Since $a'$ is an essential curve in an annulus with a boundary $\gamma_1$, then $a'$ is homotopic to $\gamma_1$ and thus homotopic to $a$.  

By symmetry, the same reasoning can be used to show that any curve that intersects $b$ but not $a$ will also have the desired property.  

\medskip

\noindent The proof of the reverse direction will be broken down into the following steps: 
\begin{enumerate}[label=\arabic*)]
\item  The curve $d$ is a union of subarcs of $a$, $b$, and $a \cap b$. 
\item For each subarc of $a$, there exists a curve from $\gamma_3$  to $\gamma_1$ that only intersects $d$ along this arc. 
\item These curves bound regions that cannot contain $b$, and so there can only be one subarc of $a$ in $d$. 
\item There is also only one subarc for $b$, and thus $(a,b)$ must be a $k$-smooth pair that determine $d$.
\end{enumerate}

\noindent \textit{Step 1:  } Our first goal is to show that $d$ is the union of subarcs from $a$, $b$, and $a \cap b$.  Notice that if $d$ is in the hull of $\{a,b\}$ and $a \cup b$ is in the pair of pants bounded by $\gamma_1, \gamma_2$, and $d$, then $d$ will be contained in $a \cup b$.  Thus $d$ will be comprised of collections of arcs $\{a_i\}$, $\{b_j\}$ and $\{c_l\}$,  where the $a_i$ are subarcs of $a$ whose nonempty interiors do not intersect $b$, the $b_j$ are subarcs of $b$ whose nonempty interiors do not intersect $a$ and $c_l$ are subarcs of $a \cap b$, possibly degenerate. 

\medskip

\noindent \textit{Step 2:  } For each $a_i$, we construct curves $f_i$ from $\gamma_1$ to $\gamma_3$ that intersects $d$ only along $a_i$.  We start by constructing a curve $e_i \in \fcn{k}{S}$ in the annulus bounded by $\gamma_3$ and $d$ such that $e_i$ intersects $d$ only at an interior point $x \in a_i$.  In addition, $e_i$ can also be made to intersect $\gamma_3$.  Thus by condition $(i)$,  there exists an $a_i' \in \fcn{k}{S_g}$ contained in $P_d$ and homotopic to $a$ that intersects $e_i$, but does not intersect $b$.  Since $e_i$ only intersects $P_d$ at the point $x$, then $a_i'$ must also contain the point $x$.  

\begin{figure}[h]
\centering
\begin{tikzpicture}
\small
    \node[anchor=south west, inner sep = 0] at (0,0){\includegraphics[width=4in]{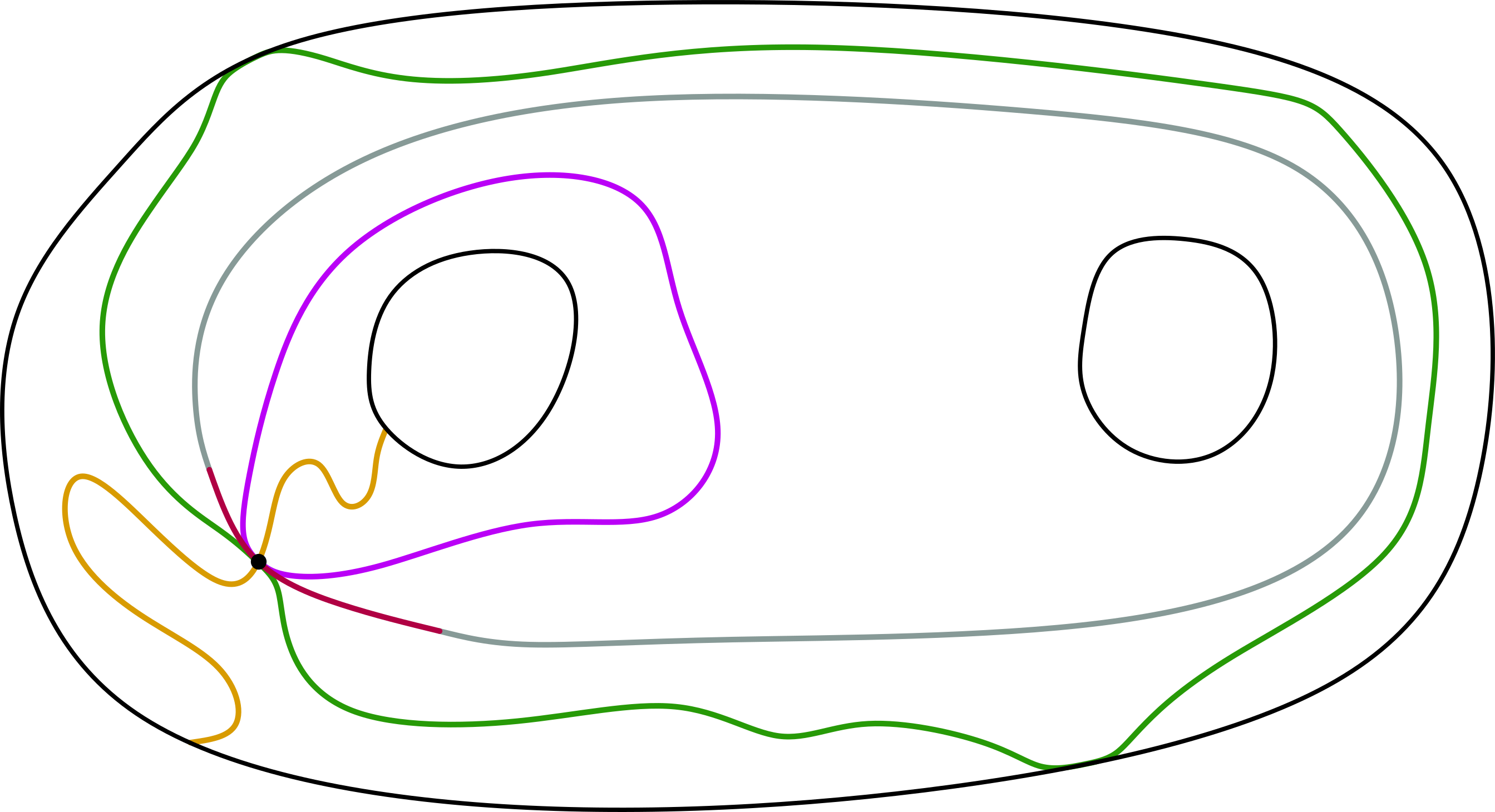} };
    \node at (2,5) {\color{OliveGreen} $e_i$};
    \node at (7,4.5) {\color{gray} $d$};
    \node at (5.05,3) {\color{violet} $a'_i$};
    \node at (.8, 1.8) {\color{brown} $f_i$};
    \node at (1.98, 1.83) {$x$};
    \node at (2.85, 1.1) {\color{purple} $a_i$};
\end{tikzpicture}
\caption{The construction of the curves $e_i$ and $f_i$.}
\end{figure}

Since neither $e_i$ and $a_i'$ cross $d$ at the point $x$, then they must have a one-sided tangent intersection at $x$.  Moreover, $e_i$ and $\gamma_3$ form a pinched annulus and $a_i'$ and $\gamma_1$ form an annulus.  So there exists $C^k$ arcs from the point $x$ to both $\gamma_1$ and $\gamma_3$.  By Lemma \ref{smoothingarcsatboundary}, this can be smoothed to a single $C^k$ arc with the same endpoints on $\gamma_1$ and $\gamma_3$ that is still disjoint from $b$.  Since $S_g \setminus P$ is connected, then this arc can be extended to a curve $f_i \in \fcn{k}{S_g}$ that forms degenerate torus pairs with $\gamma_1, \gamma_3,$ and $d$, but does not intersect~$b$.

\medskip

\noindent \textit{Step 3:  } We now show that there can only be one $a_i$ arc.  Suppose that there are at least two of the $a_i$. Then there will be curves $f_1, f_2 \in \fcn{k}{S_g}$ that are torus pairs with $\gamma_1, \gamma_3$ and $d$, but do not intersect $b$.

\begin{figure}[h]
\centering
\begin{tikzpicture}
\small
    \node[anchor=south west, inner sep = 0] at (0,0){\includegraphics[width=4in]{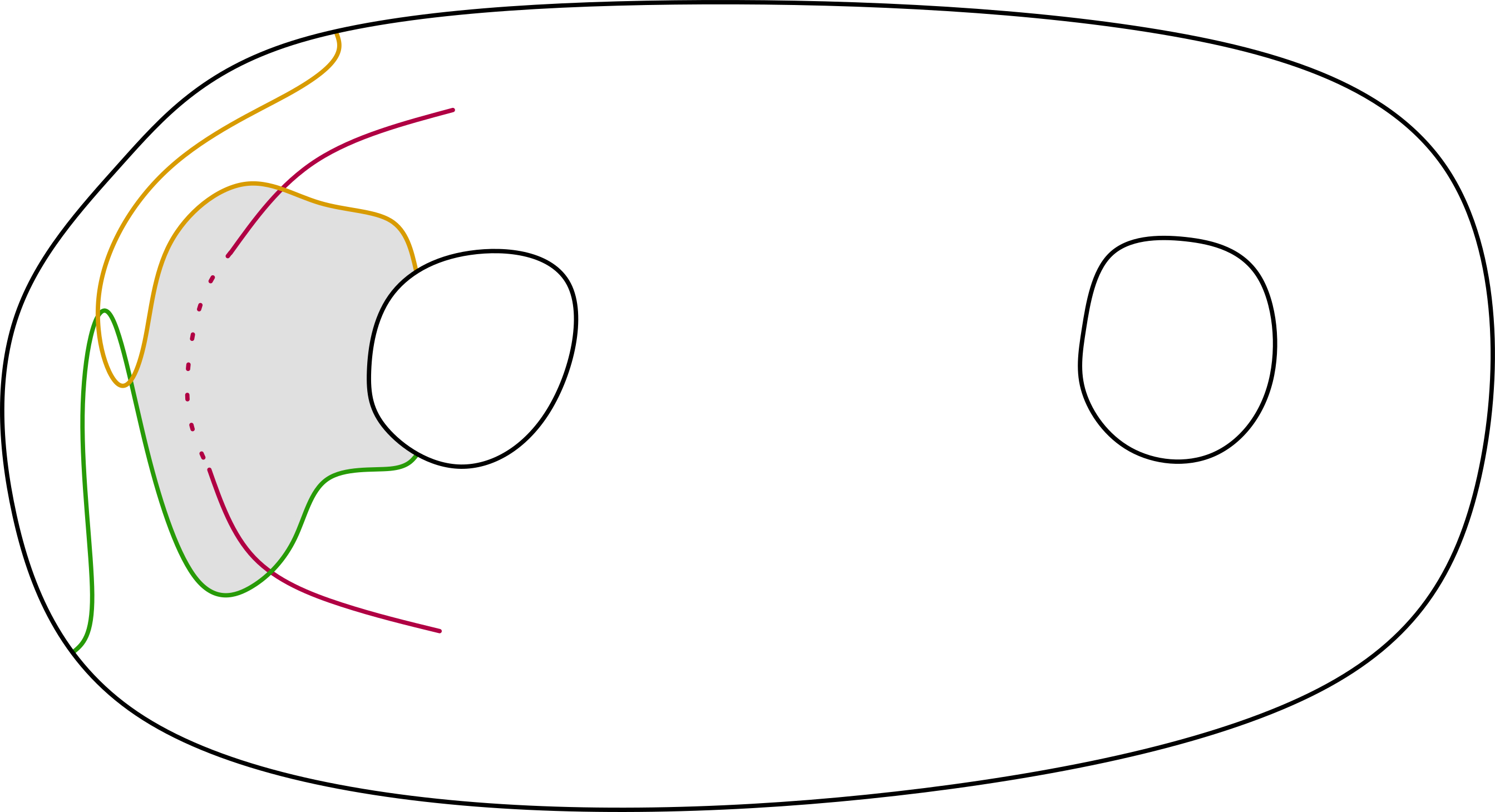} };
    \node at (2.5,5.1) {\color{brown} $f_1$};
    \node at (.9,1.5) {\color{OliveGreen} $f_2$};
    \node at (3,4.5) {\color{purple} $a_1$};
    \node at (2.7, 1.5) {\color{purple} $a_2$};
\end{tikzpicture}
\caption{The disc created by $f_1$, $f_2$, and $\gamma_1$.}
\end{figure}

Similar to Lemma \ref{disksinpants}, there is some combination of arcs from $f_1$, $f_2$, $\gamma_1$ and $\gamma_3$ that bound a disk that contains one endpoint from each of $a_1$ and $a_2$.  In particular, $f_1$ and $f_2$ can intersect, but this will cause $P \setminus (f_1 \cup f_2)$ to have more components where still only one component can be an annulus.  

Since $d$ only intersects $f_1$ and $f_2$ once and does not intersect $\gamma_1$ or $\gamma_3$,  then the curve $d$ cannot leave this disk except where it intersects $f_1$ and $f_2$.  Nor can any part of $b$ be in this disk. Thus $d$ must connect $a_1$ and $a_2$ by an arc of $a$.  But then $d$ must connect every $a_i$ by another arc of $a$.  So all of the $a_i$ can be represented by a single subarc $a_1$ of $a$. 

\medskip

\noindent \textit{Step 4:  } By symmetry, this argument can also be used to show that all the $b_j$ can be  represented by a single subarc $b_1$ of $b$.  Since there are only two subarcs and thus four endpoints of $a_1$ and $b_1$ that $d$ must connect, then there are only two $c_l$, each connecting one endpoint of $a_1$ to one endpoint of $b_1$. Thus $(a,b)$ is a $k$-smooth pair that determines $d$. 
\end{proof}

\begin{lemma}\label{autpreservessmoothpairs}
Let $g \geq 2$ and $\alpha \in \aut\fcn{k}{S_g}$.  Then $\alpha$ preserves the set of $k$-smooth pairs.
\end{lemma}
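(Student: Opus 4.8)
The plan is to reduce everything to the combinatorial characterization already established in Lemma~\ref{smoothpairingraph}, exactly in the spirit of the proofs of Lemmas~\ref{autpreservesephomandhull}, \ref{autpreservessubsurfaces}, and \ref{autpreservestoruspairs}. First I would observe that $k$-smooth pairs were \emph{defined} via the data of a nonseparating pair of pants $P$ containing $a\cup b$, a determined curve $d$ in the hull of $\{a,b\}$, and a distinguished sub-pair-of-pants $P_d$; and that Lemma~\ref{smoothpairingraph} translates precisely this situation into statements about disjointness, hulls, homotopy, and (implicitly, via $\gamma_1,\gamma_2,\gamma_3$) which curves bound a nonseparating pair of pants. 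So the strategy is: take $\alpha\in\aut\fcn{k}{S_g}$ and a $k$-smooth pair $(a,b)$ in $P$ determining $d$, and check that $(\alpha(a),\alpha(b))$ satisfies every clause of the characterizing condition, with witnessing curve $\alpha(d)$ and witnessing pair of pants $\alpha(P)$.

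The key steps in order: (1) Record the ``input data'' that Lemma~\ref{smoothpairingraph} takes: $a,b$ non-homotopic, a nonseparating pair of pants $P$ with boundary $\gamma_1,\gamma_2,\gamma_3$, the curve $d$ in the hull of $\{a,b\}$, $d$ not homotopic to $a$ or $b$, and the sub-pair-of-pants $P_d$ with boundary $\gamma_1,\gamma_2,d$ containing $a\cup b$. (2) Apply $\alpha$: by Lemma~\ref{autpreservessubsurfaces} the curves $\alpha(\gamma_1),\alpha(\gamma_2),\alpha(\gamma_3)$ bound a nonseparating pair of pants, and $\alpha(\gamma_1),\alpha(\gamma_2),\alpha(d)$ likewise bound a pair of pants; by Lemma~\ref{autpreservesephomandhull} $\alpha(d)$ lies in the hull of $\{\alpha(a),\alpha(b)\}$ and is non-homotopic to $\alpha(a)$ and $\alpha(b)$, while $\alpha(a),\alpha(b)$ remain non-homotopic; containment $\alpha(a)\cup\alpha(b)\subset\alpha(P_d)\subset\alpha(P)$ follows from the hull characterization (Lemma~\ref{curvesinhull}) since automorphisms preserve hulls and disjointness, and one checks $\alpha(P_d)$ is again the pair of pants with the right three boundary curves. (3) Verify the biconditional clause: take any $e\in\fcn{k}{S_g}$ that intersects $\alpha(\gamma_3)$ and $\alpha(d)$ but not $\alpha(\gamma_1)$ or $\alpha(\gamma_2)$; then $\alpha^{-1}(e)$ intersects $\gamma_3$ and $d$ but not $\gamma_1$ or $\gamma_2$, so by Lemma~\ref{smoothpairingraph} applied to the original pair it satisfies (i), (ii), or (iii), and pushing forward by $\alpha$ — using preservation of homotopy type and of the containments in $P_d$ for the witnessing curves $a'$, $b'$ — gives the corresponding clause for $e$. (4) Conclude by Lemma~\ref{smoothpairingraph} (reverse direction) that $(\alpha(a),\alpha(b))$ is a $k$-smooth pair determining $\alpha(d)$.

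The main obstacle I anticipate is bookkeeping around the pair-of-pants data rather than any genuine difficulty: Lemma~\ref{smoothpairingraph} is phrased with the pair of pants $P$, its boundaries, the determined curve $d$, and $P_d$ all fixed in advance, so to invoke it for the image pair I must first exhibit the correct image configuration and argue it really is of the required form — in particular that $\alpha(P_d)$ is exactly the pair of pants bounded by $\alpha(\gamma_1),\alpha(\gamma_2),\alpha(d)$ and that $\alpha(a)\cup\alpha(b)$ sits inside it. This is where I would lean hardest on Lemma~\ref{autpreservessubsurfaces} (to know the relevant triples still bound nonseparating pairs of pants) together with the hull lemma (to locate curves inside subsurfaces combinatorially); once that scaffolding is in place, the verification of clauses (i)--(iii) is a symmetric, essentially formal transport along $\alpha$ and $\alpha^{-1}$, just as in the proof of Lemma~\ref{autpreservestoruspairs}.

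\begin{proof}
Every piece of the characterization of $k$-smooth pairs in Lemma~\ref{smoothpairingraph} is phrased in terms of disjointness of curves, hulls, homotopy, and which triples of curves bound a nonseparating pair of pants or which curves lie inside such a pair of pants. By Lemma~\ref{autpreservesephomandhull}, any $\alpha\in\aut\fcn{k}{S_g}$ preserves disjointness, separating curves, homotopic curves, and hulls; by Lemma~\ref{autpreservessubsurfaces}, $\alpha$ preserves the sets of curves bounding nonseparating pairs of pants; and by Lemma~\ref{curvesinhull} together with preservation of hulls and disjointness, $\alpha$ preserves the relation ``$c$ is contained in the subsurface cut off by a given collection of curves.''

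Now suppose $(a,b)$ is a $k$-smooth pair in a nonseparating pair of pants $P$ with boundary $\gamma_1,\gamma_2,\gamma_3$, determining the curve $d$, and let $P_d\subset P$ be the pair of pants with boundary $\gamma_1,\gamma_2,d$ containing $a\cup b$. Applying $\alpha$: the curves $\alpha(\gamma_1),\alpha(\gamma_2),\alpha(\gamma_3)$ bound a nonseparating pair of pants $\alpha(P)$, the curves $\alpha(\gamma_1),\alpha(\gamma_2),\alpha(d)$ bound a pair of pants $\alpha(P_d)$, the curve $\alpha(d)$ lies in the hull of $\{\alpha(a),\alpha(b)\}$ and is non-homotopic to both $\alpha(a)$ and $\alpha(b)$, the curves $\alpha(a),\alpha(b)$ remain non-homotopic, and $\alpha(a)\cup\alpha(b)$ is contained in $\alpha(P_d)$. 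Thus the image configuration is exactly of the form to which Lemma~\ref{smoothpairingraph} applies.

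It remains to verify the biconditional clause for $(\alpha(a),\alpha(b))$. Let $e\in\fcn{k}{S_g}$ intersect $\alpha(\gamma_3)$ and $\alpha(d)$ but not $\alpha(\gamma_1)$ or $\alpha(\gamma_2)$. Then $\alpha^{-1}(e)$ intersects $\gamma_3$ and $d$ but not $\gamma_1$ or $\gamma_2$, so by Lemma~\ref{smoothpairingraph} applied to $(a,b)$, exactly one of the following holds for $\alpha^{-1}(e)$: it meets $a$ but not $b$ and there is $a'\subset P_d$ homotopic to $a$ with $a'$ meeting $\alpha^{-1}(e)$ but not $b$; it meets $b$ but not $a$ and there is a symmetric witness $b'$; or it meets both $a$ and $b$. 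Pushing forward by $\alpha$ and using that $\alpha$ preserves intersection, homotopy type, and containment in $\alpha(P_d)$, the curve $e$ satisfies the corresponding clause (i), (ii), or (iii) with witness $\alpha(a')$ or $\alpha(b')$. The converse implication is identical with $\alpha$ and $\alpha^{-1}$ interchanged. Hence the characterizing condition of Lemma~\ref{smoothpairingraph} holds for $(\alpha(a),\alpha(b))$, so $(\alpha(a),\alpha(b))$ is a $k$-smooth pair determining $\alpha(d)$. Therefore $\alpha$ preserves the set of $k$-smooth pairs.
\end{proof}
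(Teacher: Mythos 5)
Your proof is correct and follows essentially the same route as the paper's: both reduce to the combinatorial characterization of Lemma~\ref{smoothpairingraph} and then invoke Lemmas~\ref{autpreservesephomandhull} and \ref{autpreservessubsurfaces} to transport all the defining data (hulls, homotopies, intersections, containment in the pairs of pants $P$ and $P_d$) through $\alpha$. The paper states this in three sentences, whereas you spell out the bookkeeping for the image configuration and the back-and-forth with $\alpha^{-1}$ on the clause involving $e$; the extra detail is sound and matches what the paper leaves implicit.
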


\begin{proof}
By Lemma \ref{smoothpairingraph}, the set of $k$-smooth pairs $(a,b)$ is characterized using homotopies, hulls, and intersections in the nonseparating pairs of pants $P_d$ and $P$.  For any $\alpha \in \aut \fcn{k}{S_g}$,  intersections will be preserved and hulls and homotopies are also preserved by Lemma \ref{autpreservesephomandhull}.  Moreover,  containment in nonseparating pairs of pants is preserved by Lemma \ref{autpreservessubsurfaces}.  Thus $(\alpha(a),\alpha(b))$ will still have the same characterization as $(a,b)$ in the graph and so it is also a $k$-smooth pair that determines $\alpha(d)$. 
\end{proof}

\subsection{Simple $\mathbf{k}$-smooth pairs}\label{subsectionsimplesmooth}

In this section, we will define what it means for a $k$-smooth pair to be simple and show that simple $k$-smooth pairs are preserved by automorphisms.  This notion of simpleness is concerned with the intersections between the two curves of the $k$-smooth pair.

\medskip

A $k$-smooth pair $(a,b) \in \fcn{k}{S_g}$ in a nonseparating pair of pants $P$ that determines $d = a_1 \cup b_1 \cup c_1 \cup c_2$ is called \emph{simple} if $a$ and $b$ only intersect along $c_1$ and $c_2$.

\begin{figure}[h]
\centering
\begin{tikzpicture}
\small
    \node[anchor=south west, inner sep = 0] at (0,0){\includegraphics[width=4in]{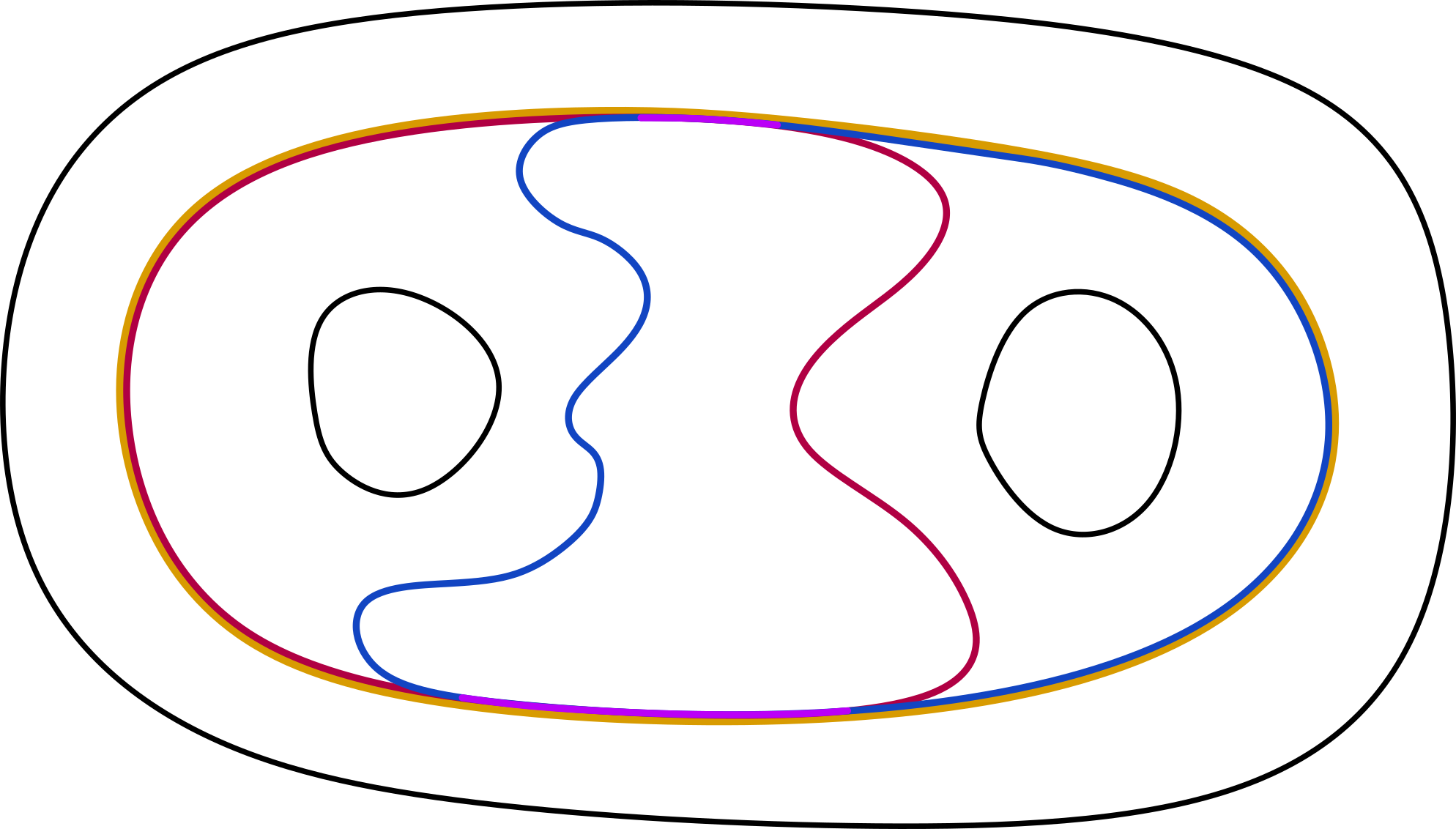}};
    \node at (1.32,2.2) {\color{purple}$a_1$};
    \node at (8.85,2.2) {\color{blue}$b_1$};
    \node at (4.85,5.2) {\color{violet}$c_1$};
    \node at (4.2, 1) {\color{violet}$c_2$};
    \node at (.8, 5.5) {$P$};
    \node at (8, 4.8) {\color{brown} $d$};
\end{tikzpicture}
\caption{A simple $k$-smooth pair}
\end{figure}

\begin{lemma}\label{smoothbigonscross}
Let $P$ be a nonseparating pair of pants in $S_g$ and $a,b \in \fcn{k}{S_g}$ be a $k$-smooth pair contained in $P$ that determine the curve $d = a_1 \cup b_1 \cup c_1 \cup c_2$.  Then $a$ and $b$ must cross each other at both $c_1$ and $c_2$.
\end{lemma}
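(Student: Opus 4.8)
The plan is to analyze the behavior of $a$ and $b$ at the arcs $c_1$ and $c_2$ using the structure of $d$ together with the defining properties of a $k$-smooth pair, and to derive a contradiction if the curves fail to cross at one of them. Suppose, for contradiction, that $a$ and $b$ do not cross at (say) $c_1$, i.e.\ near $c_1$ they have a one-sided tangent intersection rather than a topologically transverse one. I would first record the combinatorial consequences of the $k$-smooth pair definition: $d = a_1 \cup b_1 \cup c_1 \cup c_2$ is an embedded $C^k$ curve, $a_1 \subset a$ and $b_1 \subset b$ are arcs with interiors disjoint from the other curve, and $a \cup b$ sits inside a pair of pants $P_d \subset P$ having $d$ as a boundary component. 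As in the proof of Lemma~\ref{smoothpairingraph}, the complementary arc $b_2 = \overline{b \setminus (b_1 \cup c_1 \cup c_2)}$ shares its endpoints with $a_1$, is disjoint from the interior of $a_1$, and must separate $\gamma_1$ from $\gamma_2$ inside $P_d$; similarly $a_2 = \overline{a \setminus (a_1 \cup c_1 \cup c_2)}$ separates the two boundaries. So $a$ and $b$ each consist of the arc lying on $d$ plus a complementary arc cutting across $P_d$.

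Next I would extract the local picture at $c_1$. At each endpoint $p$ of $c_1$, two of the four arc-germs ($a_1$-side, $a_2$-side, $b_1$-side, $b_2$-side) emanate, and since $a_1 \cup b_1 \subset \partial P_d$ while $a_2, b_2$ go into the interior, the cyclic order of these germs around $p$ is constrained. The key observation is that "$a$ and $b$ cross at $c_1$" is equivalent to saying that $a_2$ and $b_2$ emanate from the two endpoints of $c_1$ on \emph{opposite} sides of the curve $d$ locally — equivalently, that $a_2$ and $b_2$ lie on opposite sides of $a$ (or of $d$) near $c_1$. If they do not cross, then near $c_1$ the arcs $a$ and $b$ are tangent and stay on the same side, so $a_2$ and $b_2$ peel off from $d$ into the interior of $P_d$ "in parallel" at $c_1$.

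Now comes the main step, which I expect to be the crux: showing that this parallel, non-crossing configuration at $c_1$ is incompatible with $a$ and $b$ both being essential (non-homotopic) in the pair of pants while bounding no bigons. I would argue as follows. Consider the surface $P_d$ cut along $a$. Since $a$ is essential in the pair of pants $P_d$ and nonseparating-type (it is one of the curves $\gamma_i$ up to homotopy... actually $a$ is homotopic to $\gamma_1$ or not — here $a$ is a boundary-parallel essential curve in $P_d$), cutting along $a$ yields an annulus $A_1$ (between $a_1 \cup b_2$ and $\gamma_1$, as in Figure~7) and a pair of pants. The arc $b_2$ lies in this picture, and the failure of crossing at $c_1$ would force $b_2$ to return to the \emph{same} side of $a$ that it left from at the $c_1$-endpoint, i.e.\ both endpoint-germs of $b_2$ at $c_1$ and the behavior forces $b \setminus a$ to have a component that is an arc with both endpoints on the same side of $a$ (the same copy of $a$ after cutting). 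By Lemma~\ref{disksinpants} applied in the pair of pants $\overline{P_d \setminus a}$ with the arcs of $b$, such a returning arc either bounds a disk with a subarc of $a$ — contradicting that $(a,b)$ bound no bigons (the hull of $\{a,b\}$ contains only $a,b$, as in the $k$-smooth pair setup) — or it is inessential / homotopic to $a$, contradicting that $b$ is essential and not homotopic to $a$. Thus $a$ and $b$ must cross at $c_1$, and by the symmetry between $c_1$ and $c_2$ the same holds at $c_2$.

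\textbf{Main obstacle.} The delicate point is making precise the local statement "non-crossing at $c_1$ forces $b_2$ to leave and return on the same side of $a$," and then correctly matching this to the arc configuration hypothesis of Lemma~\ref{disksinpants} (which is stated for arcs with endpoints on the boundary components $\gamma_1, \gamma_2$ of a pair of pants). I would need to verify that after cutting $P_d$ along $a$ the two copies of $a$ play the role of two boundary components, that $b_2$ (possibly together with the degenerate arcs $c_1, c_2$) becomes a single such arc, and that the "returning arc" case of that lemma genuinely produces either a bigon between $a$ and $b$ or a homotopy $b \simeq a$ — ruling out the remaining exceptional case in Lemma~\ref{disksinpants} (the arc separating two boundary components), which here would correspond to $b$ being homotopic to $\gamma$ or to $a$ and is likewise excluded by the $k$-smooth pair hypotheses. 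Handling the possibly-degenerate arcs $c_1, c_2$ (single points versus honest intervals) uniformly is a minor but necessary bookkeeping issue.
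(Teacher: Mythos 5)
Your proposal breaks at its central ``key observation,'' and the contradiction you aim for afterwards invokes a hypothesis the lemma does not have. The claim that $a$ and $b$ cross at $c_1$ if and only if $a_2$ and $b_2$ leave the two endpoints of $c_1$ on \emph{opposite} sides of $d$ is backwards. Put $d$ locally on the $x$-axis with $c_1=[p,q]$, $a_1$ leaving $p$ leftward along $d$ and $b_1$ leaving $q$ rightward along $d$; then $b_2$ attaches at $p$ and $a_2$ at $q$. If $a_2$ and $b_2$ both veer to the \emph{same} side of $d$ (up, say, into $P_d$), then near $p$ the curve $b$ (namely $b_2$) lies above $a$ (which there coincides with $d$), while near $q$ the curve $b$ (namely $b_1$, which runs along $d$) lies \emph{below} $a$ (which there is $a_2$, curving up off of $d$): so $b$ passes from one side of $a$ to the other along $c_1$, i.e., they cross. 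The opposite-sides configuration is the non-crossing one, and it is exactly what condition (iv) of the definition forbids, since $a\cup b\subset P_d$ forces both $a_2$ and $b_2$ into the same complementary side of $d$. Consequently the configuration you set out to rule out by a global argument (``$a_2$ and $b_2$ peel off into the interior of $P_d$ in parallel'') is not the non-crossing case at all: it is the crossing case, and it is the one that always occurs. Correcting the sign essentially finishes the proof without any of the machinery you propose.

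Even setting that aside, the contradiction you reach for is unavailable. A $k$-smooth pair is not assumed to have hull equal to $\{a,b\}$ or to bound no bigons; on the contrary, the kernel curve $a_2\cup b_2\cup c_1\cup c_2$ bounds a disk, so $a$ and $b$ do cobound a disk by design, and (absent simplicity) $a_2$ and $b_2$ may intersect each other, so the arc system you would feed into Lemma~\ref{disksinpants} after ``cutting $P_d$ along $a$'' is not controlled --- nor is that cut clean, since $a$ meets the boundary component $d$ along $a_1\cup c_1\cup c_2$. The paper's proof is instead purely local and uses smoothness in an essential way: after straightening $d$ in a chart, the $C^k$ condition forces $a_2$ and $b_2$ to leave the endpoints of $c_1$ tangent to $d$; $a_2$ cannot veer out of $P_d$ (it would cross $d$), veering into $P_d$ produces the crossing, and doubling back over $c_1$ would give $a$ a cusp, impossible for a regular $C^k$ curve. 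No connectivity of arc systems or disk-counting in the pair of pants is needed.
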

\begin{proof}
Set $d = a_1 \cup b_1 \cup c_1 \cup c_2$, with these arcs given the definition of the $k$-smooth pair $(a,b)$.  Denote the complementary arcs of $a$ and $b$ by $a_2 = a \setminus (a_1 \cup c_1 \cup c_2)$ and $b_2 = b \setminus (b_1 \cup c_1 \cup c_2)$.
Since $d$ is a $C^k$ curve, then $d$ can be seen as a straight line in a local chart after applying a $C^k$ diffeomorphism.  Similarly,  since $a$ and $b$ are also $C^k$, then both $a_1$ and $b_2$ must have the same tangent line as $d$ at their shared endpoint that meets $c_1$.  Moreover, since $a_1$ and $b_1$ will approach $c_1$ from opposite directions along $d$, then $b_2$ must leave $c_1$ in exactly the same direction as $a_1$.  But if $a$ does not cross $b$ or $d$, then this would force $a_2$ to leave $c_1$ in the same direction and between $a_1$ and $b_2$. But then $a$ would have a cusp at $c_1$, which is not possible since $a$ is a $C^k$ curve. Thus $a$ and $b$ must cross at $c_1$.  By symmetry, $a$ and $b$ will also cross at $c_2$.  \end{proof}

\begin{figure}[h]
\centering
\begin{tikzpicture}
\small
    \node[anchor=south west, inner sep = 0] at (0,0){\includegraphics[width=6in]{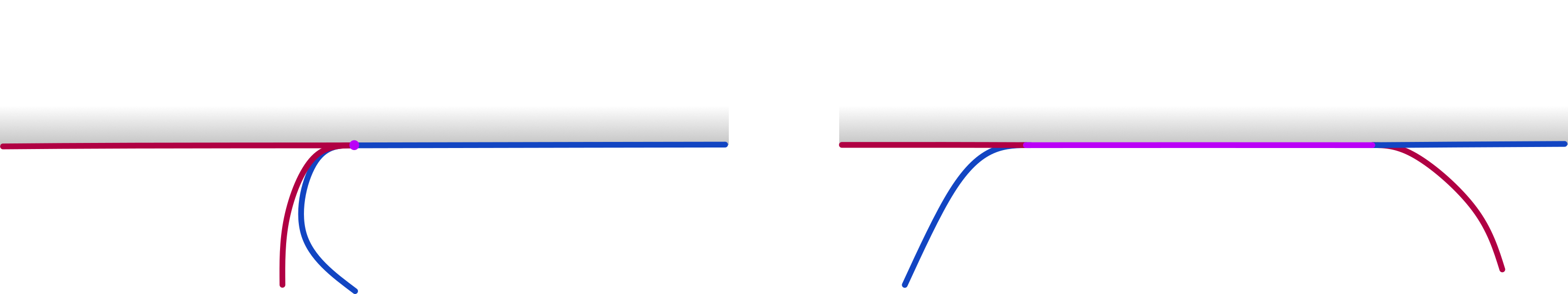}};
    \node at (1,1.2) {\color{purple}$a_1$};
        \node at (8.5,1.2) {\color{purple}$a_1$};
            \node at (2.5,.5) {\color{purple}$a_2$};
                \node at (14.25,.4) {\color{purple}$a_2$};
    \node at (5.5,1.2) {\color{blue}$b_1$};
    \node at (15,1.2) {\color{blue}$b_1$};
    \node at (3.5,.3) {\color{blue}$b_2$};
    \node at (9.3,.5) {\color{blue}$b_2$};
    \node at (3.5,1.2) {\color{violet}$c_1$};
    \node at (11.5, 1.2) {\color{violet}$c_1$};
    \node at (3.5, 2.2) {\color{gray} Outside of $P'$};
    \node at (11.5, 2.2) {\color{gray} Outside of $P'$};
\end{tikzpicture}
\caption{The noncrossing (left) and crossing (right) cases for the arc $c_1$.  }
\end{figure}

\begin{lemma}\label{noncrosssmoothpairingraph}
Let $P$ be a nonseparating pair of pants in $S_g$ with boundary curves $\gamma_1, \gamma_2, $ and $\gamma_3 \in \fcn{k}{S_g}$. Let $a,b \in \fcn{k}{S_g}$ be a $k$-smooth pair in $P$ with $a$ homotopic to $\gamma_1$ and $b$ homotopic to $\gamma_2$ that determine the curve $d = a_1 \cup b_1 \cup c_1 \cup c_2$.  Then the following are equivalent:
\begin{enumerate}[noitemsep,topsep=0pt, label=$(\alph{*})$]
\item $a$ and $b$ only cross at $c_1$ and $c_2$
\item If $e \in \fcn{k}{S_g}$ is contained in $P$, homotopic to $a$ and intersects $a$,  then $e$ also intersects $b$ or $d$.
\item If $e \in \fcn{k}{S_g}$ is contained in $P$, homotopic to $b$ and intersects $b$,  then $e$ also intersects $a$ or $d$.
 \end{enumerate}
\end{lemma}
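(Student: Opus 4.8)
The plan is to prove the cycle of implications $(a)\Rightarrow(b)\Rightarrow(a)$ and then $(a)\Leftrightarrow(c)$ by the symmetry between $a$ and $b$ (swapping $\gamma_1\leftrightarrow\gamma_2$, $a_1\leftrightarrow b_1$, and re-orienting). So the real content is the equivalence $(a)\Leftrightarrow(b)$, and I would organize it around the geometry of the two pairs of pants $P_d\subset P$ with $d$ a common boundary.

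For $(a)\Rightarrow(b)$: assume $a$ and $b$ cross only along $c_1$ and $c_2$, so that $(a,b)$ is a \emph{simple} $k$-smooth pair. Then $a\cup b$ consists of $a_1,b_1$ (the arcs making up $d$, whose interiors miss the other curve) together with the complementary arcs $a_2=a\setminus(a_1\cup c_1\cup c_2)$ and $b_2=b\setminus(b_1\cup c_1\cup c_2)$; since they cross at $c_1,c_2$ and nowhere else, $a_1\cup b_2$ and $a_2\cup b_1$ are the only other simple closed curves in $a\cup b$, and $a$ itself equals $a_1\cup c_1\cup a_2\cup c_2$ with $a_2$ disjoint from the interior of $b_1$. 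Now take $e$ homotopic to $a$, contained in $P$, meeting $a$. Since $a$ is homotopic to $\gamma_1$ and $d$ separates $P_d$ (which contains $\gamma_1$) from the rest of $P$, any essential curve homotopic to $\gamma_1$ that stays in $P$ must either lie in $P_d$ or cross $d$. If $e$ crosses $d$ we are done; if $e\subset P_d$ and $e$ is homotopic to $a$ (hence to $\gamma_1$, which is a boundary of $P_d$), I want to argue $e$ must meet the arc $b_1\subset d\subset$ (boundary of $P_d$) or meet $b$. The key point is that inside $P_d$, removing $a$ leaves an annular piece adjacent to $\gamma_1$ bounded by $a_1\cup b_2$ together with a boundary arc — so a curve homotopic to $\gamma_1$ sitting in $P_d$ and disjoint from $b$ is trapped in that annulus, forced to be disjoint from $a$ as well, a contradiction. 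Making this precise is the main work: I would cut $P_d$ along $a$ to get a pair of pants, locate where $b_2$ and the arcs of $b$ sit, and read off which subsurface contains curves homotopic to $\gamma_1$ that are disjoint from $a$, then check they are also disjoint from $b$.

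For the contrapositive $\lnot(a)\Rightarrow\lnot(b)$: suppose $a$ and $b$ have an extra intersection point (beyond $c_1,c_2$), which by Lemma~\ref{smoothbigonscross} (crossing at $c_1,c_2$ is automatic) must be a topologically transverse crossing away from $c_1,c_2$. The presence of that extra crossing produces a bigon or at least an innermost subsurface bounded by a subarc of $a$ and a subarc of $b$; I would use this to push $a$ off itself to a curve $e$, homotopic to $a$ and contained in $P$, that uses the extra crossing to ``hop over'' $b$ and thereby meet $a$ while missing both $b$ and $d$. Concretely, an extra crossing between $a_2$ and $b_2$ (the arcs complementary to $d$) lets one build a push-off of $a$ supported near $a$ that dodges $b$ along the relevant bigon while still intersecting $a$; since this $e$ lives in the complement of $b\cup d$ within a regular neighborhood of $a$, condition $(b)$ fails. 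This direction is where I expect the real subtlety, because the extra intersections could be one-sided tangencies rather than transverse crossings in general, but Lemma~\ref{2^ncurvesinunionsameorient} and the $C^k$-smoothing lemmas (Lemma~\ref{smoothingarcsatboundary}) let me treat the tangential case by first producing the extra curves in $a\cup b$ and then running the same push-off argument; I would handle transverse and tangential extra intersections as two cases with a shared conclusion.

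The main obstacle I anticipate is the $P_d$-annulus analysis in $(a)\Rightarrow(b)$: I must rule out a curve homotopic to $\gamma_1$, inside $P_d$, that meets $a$ but is disjoint from both $b$ and $d$ — and this genuinely uses simpleness, since without it the extra $a$–$b$ crossings enlarge the region available to such an $e$. I would isolate this as a small combinatorial lemma about pairs of pants cut along a nonseparating arc, proved by the same kind of case analysis as Lemma~\ref{disksinpants}, and then the rest of the argument is bookkeeping with the already-established smoothing lemmas.
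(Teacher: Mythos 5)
Your proposal follows essentially the same route as the paper: both directions of $(a)\Leftrightarrow(b)$ reduce to the annulus $A_1$ bounded by $\gamma_1$ and $a_1\cup b_2$, in which a curve $e$ homotopic to $a$ and disjoint from $b\cup d$ is trapped and can only meet $a$ if the complementary arc $a_2$ enters $A_1$ by crossing $b_2$; statement $(c)$ then follows by symmetry, exactly as you say. One simplification over your sketch of $\lnot(a)\Rightarrow\lnot(b)$: no bigon, push-off, or tangency analysis is needed, since condition $(a)$ concerns crossings only, so $\lnot(a)$ supplies a genuine crossing of $a$ with $b_2$, hence a point of $a$ in the open annulus $A_1$, and any essential $C^k$ curve in $A_1$ through that point already violates $(b)$.
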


\begin{figure}[h]
\centering
\begin{tikzpicture}
\small
    \node[anchor=south west, inner sep = 0] at (0,0){\includegraphics[width=4in]{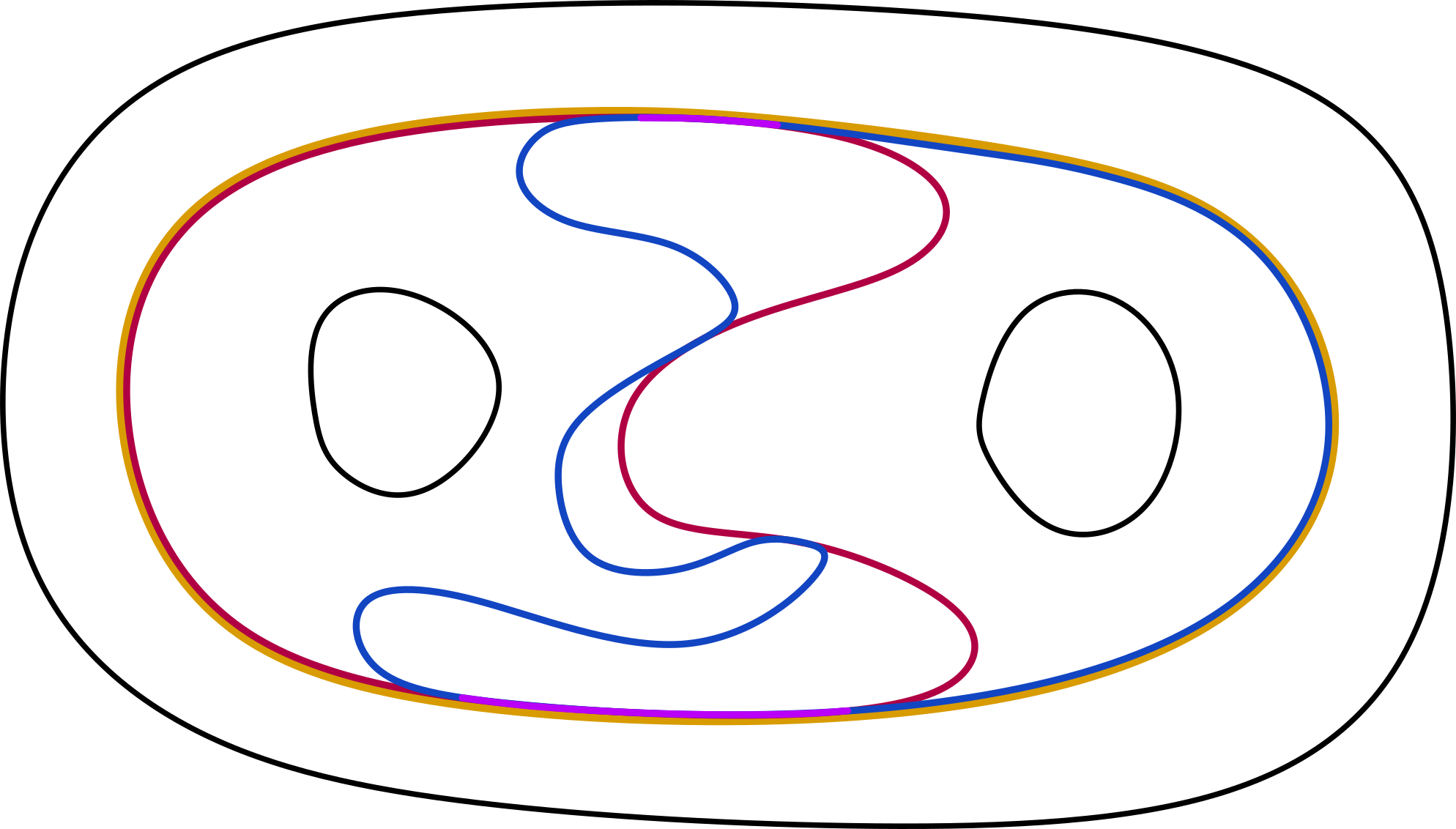}};
    \node at (6.1,3.6) {\color{purple}$a_2$};
    \node at (2.5,1.9) {\color{blue}$b_2$};
    \node at (4.85,5.2) {\color{violet}$c_1$};
    \node at (4.2, .6) {\color{violet}$c_2$};
    \node at (.8, 5.5) {$P$};
    \node at (8, 4.8) {\color{brown} $d$};
\end{tikzpicture}
\caption{A $k$-smooth pair that only cross at $c_1$ and $c_2$}
\end{figure}

\begin{proof}
By symmetry of the statements, if (a) and (b) are equivalent, then so are (a) and (c). 

\medskip

\noindent $(a) \Rightarrow (b)$.  \hspace{.01in}
Suppose that there is a curve $e \in \fcn{k}{S_g}$ contained in $P$, homotopic to $a$,  that intersects $a$, but not $b$ or $d$.  Since $e$ is not homotopic to $d$ nor intersects $d$,  then $e$ must be contained in the pair of pants bounded by $\gamma_1, \gamma_2$, and $d$.  Even more,  $e$ will be contained in the open annulus $A_1$ bounded by $\gamma_1, a_1$, and $b_2 = b \setminus (b_1 \cup c_1 \cup c_2)$.  Since $a_1 \subset d$, then $e$ cannot intersect $a$ along the arc $a_1$.  Also, $e$ does not intersect $a$ along $c_1$ or $c_2$ since $e$ does not intersect $b$. Thus $e$ must intersect $a$ along $a_2 = a \setminus (a_1 \cup c_1 \cup c_2)$.  Since $a$ cannot intersect itself, then some portion of $a_2$ must enter $A_1$ by crossing $b_2$. Thus $a$ and $b$ must cross at points that are not in $c_1$ and $c_2$. 

\medskip

\noindent $(b) \Rightarrow (a)$. \hspace{.01in}
Suppose that $a$ crosses $b$ at a point that is not in $c_1$ or $c_2$.  Since $a$ cannot cross $d$, then $a$ must cross $b$ along the arc $b_2$. Thus there will be a point $x$ of $a$ that is in the open annulus $A_1$ bounded by $\gamma_1, a_1$, and $b_2$.  Since $A_1$ is open, then there exists a curve $e \in \fcn{k}{S_g}$ that is contained in $A_1$ and contains the point $x$. Thus $e$ is contained in $P$, homotopic to $a$ and intersects $a$ at $x$, but does not intersect $b$ or $d$. 
\end{proof}

\begin{lemma}\label{notmanyannulicomponents}
Let $P$ be a nonseparating pair of pants in $S_g$ with boundary curves $\gamma_1, \gamma_2,$ and $\gamma_3$.  Let $a,b \in \fcn{k}{S_g}$ be a $k$-smooth pair contained in $P$ that determine the curve $d$ homotopic to $\gamma_3$.  Let $P_d \subset P$ denote the pair of pants with boundaries $\gamma_1$, $\gamma_2$, and $d$. Then the only annular components of $P_d \setminus \{a \cup b \}$ have either $\gamma_1$ or $\gamma_2$ as a boundary.  All other components are disks. 
\end{lemma}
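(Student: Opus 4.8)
The plan is to understand $P_d\setminus(a\cup b)$ by cutting first along $a$ alone and then analyzing the two resulting pieces. By condition $(i)$ of the definition of a $k$-smooth pair, $d$ is homotopic to neither $a$ nor $b$; since $a$ and $b$ are essential in the pair of pants $P$ they are each homotopic to a boundary component, so, as $a\cup b\subset P_d$ and $\partial P_d=\gamma_1\cup\gamma_2\cup d$, after relabeling we may assume $a$ is homotopic to $\gamma_1$ and $b$ to $\gamma_2$. Writing $d=a_1\cup b_1\cup c_1\cup c_2$ as in the definition, conditions $(ii)$–$(iii)$ give that $a$ meets $\partial P_d$ only along $a_1\cup c_1\cup c_2\subset d$ and $b$ only along $b_1\cup c_1\cup c_2\subset d$; hence $a\cap\mathring P_d$ is the interior of the single properly embedded arc $a_2:=\overline{a\setminus d}$, whose two endpoints lie on $d$, and likewise for $b_2:=\overline{b\setminus d}$. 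Consequently cutting $P_d$ along $a$ is the same as cutting it along the arc $a_2$; and since $a\simeq\gamma_1\not\simeq d$ the arc $a_2$ is not $\partial$-parallel, so this is an essential cut and it splits the pair of pants $P_d$ into two annuli: $\tilde A_1$, bounded by $\gamma_1$ and $a$, and $\tilde A_2$, bounded by $\gamma_2$ and the circle $a_2\cup b_1$.

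Next I would localize each complementary component. If $R$ is a component of $P_d\setminus(a\cup b)$, then $R$ is disjoint from $a$ and so lies in one of the two components of $P_d\setminus a$; being also disjoint from $b$, and in particular from $b_1\subset d$, it follows that $R\subset\tilde A_1$ or $R\subset\tilde A_2$. It remains to see which components can occur in each annulus. Inside $\tilde A_1$, the only part of $a\cup b$ meeting $\mathring{\tilde A_1}$ is a disjoint union of subarcs of $b_2$, each with both endpoints on the boundary circle $a$ (the curve $b$ is disjoint from $\gamma_1$); every such arc cuts a disk off the annulus $\tilde A_1$, so cutting $\tilde A_1$ along all of them produces one annulus, the one containing $\gamma_1$, together with disks. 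The same argument in $\tilde A_2$, now using that $(a\cup b)\cap\mathring{\tilde A_2}$ is a disjoint union of subarcs of $b_2$ with both endpoints on the $a_2$-portion of the boundary circle $a_2\cup b_1$, shows that $\tilde A_2$ decomposes into one annulus containing $\gamma_2$ together with disks. Combining the two steps: every component of $P_d\setminus(a\cup b)$ is a disk except for the one containing $\gamma_1$ and the one containing $\gamma_2$—which are distinct, since $\tilde A_1\cap\tilde A_2=a_2$ is one-dimensional—and those two are annuli with, respectively, $\gamma_1$ and $\gamma_2$ on their boundary. This is exactly the claim.

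The main obstacle I expect is bookkeeping rather than a single hard idea: a $k$-smooth pair need not be simple, so $a$ and $b$ may intersect in many arcs beyond $c_1$ and $c_2$, and, moreover, $a$ and $b$ are not contained in the interior of $P_d$ but run partly along the boundary circle $d$. Both difficulties are handled by cutting along $a$ first: because $a$ shares the arcs $a_1,c_1,c_2$ with $d$, this cut is really a cut along the single essential arc $a_2$ and therefore yields two annuli (rather than an annulus and a pair of pants), after which, inside each annulus, every remaining trace of $a\cup b$ is an arc with both endpoints on one boundary circle and can only cut off disks. Checking carefully that $a\cap\mathring P_d$ and $b\cap\mathring P_d$ are indeed single arcs, and that the $\gamma_2$-piece $\tilde A_2$ is an annulus and not a pair of pants, are the two places where one must be attentive to the non-interior position of the curves.
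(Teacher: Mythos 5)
Your argument is correct and reaches the stated conclusion, but by a genuinely different route from the paper. The paper reasons directly about what a complementary component could be: an annular component avoiding $\gamma_1$ and $\gamma_2$ would have both of its boundary circles built from arcs of $a$ and $b$, and would therefore separate pieces of the connected, mutually intersecting curves $a$ and $b$, which is impossible; and no component can be a pair of pants because $a$ already separates $\gamma_1$ from $\gamma_2$ in $P_d$. You instead give a constructive decomposition: using conditions (ii)--(iii) of the definition you identify $a\cap \mathrm{int}(P_d)$ with the single arc $a_2$, check that $a_2$ is essential with both endpoints on $d$ (so cutting along it yields the two annuli $\tilde A_1$ and $\tilde A_2$ rather than a disk and a pair of pants), and then observe that what remains of $a\cup b$ in each annulus is a family of arcs of $b_2$ with both endpoints on a single boundary circle, each of which can only cut off a disk. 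Your version buys an explicit picture of the decomposition, pins down exactly where the two annular components sit (slightly more than the lemma asks), and isolates the one place where the non-simpleness of the pair enters; the paper's version is shorter and avoids tracking where $a_1,b_1,c_1,c_2$ lie on $d$. One soft spot, which your write-up shares with the paper's: since the pair need not be simple, $a_2$ and $b_2$ may intersect in an infinite closed set, so $b_2$ can leave infinitely many arcs in $\tilde A_1$ and $\tilde A_2$, and "cutting along all of them" is then not a finite induction on arcs each cutting off a disk. The clean repair, available to both arguments, is to cap off the boundary circle $d$ of $P_d$ with a disk $D$ inside a sphere and note that $a\cup b\cup D$ is compact and connected, so by Jordan--Schoenflies every component of its complement is an open disk; removing the caps at $\gamma_1$ and $\gamma_2$ (which lie in distinct components, since $a$ separates them) turns exactly two of these disks into the two annuli and leaves every other component a disk.
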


\begin{proof}
If any annular component of $P_d \setminus \{a \cup b \}$ does not contain $\gamma_1$ or $\gamma_2$, then both boundary components will consist of arcs of $a$ and $b$ that would be separated in $P_d$ by this annular component.  Since $a$ and $b$ are each a single connected curve, then subarcs of either of them cannot be separated in $P_d$.  So all annular components of $P_d \setminus \{a \cup b \}$ must have either $\gamma_1$ or $\gamma_2$ as boundaries.  

Since $P_d$ is a pair of pants, then any subsurfaces will either be pairs of pants, annuli, or disks. Since $a$ separates $\gamma_1$ and $\gamma_2$ and intersects $d$, then no subsurface of $P_d \setminus \{a \cup b \}$ can be a pair of pants. Thus all nonannular components of $P_d \setminus \{a \cup b \}$ are disks.
\end{proof}

\medskip

\noindent\textit{Curves of type 1 and type 2. }
Let $P$ be a nonseparating pair of pants on $S_g$ and $(a,b) \in \fcn{k}{S_g}$ be a $k$-smooth pair that determine $d$.  Suppose that $\gamma_1, \gamma_2$ are the boundaries of $P$ homotopic to $a$ and $b$. Let $e \in \fcn{k}{S_g}$ be a curve disjoint from $d$ that forms degenerate torus pairs with $\gamma_1, \gamma_2, a$ and $b$. Then $e$ is called \emph{type 1} if it intersects $a$ and $b$ at the same point.  Otherwise, it is called \emph{type 2}. 

\begin{figure}[h]
\centering
\begin{tikzpicture}
\small
    \node[anchor=south west, inner sep = 0] at (0,0){\includegraphics[width=4in]{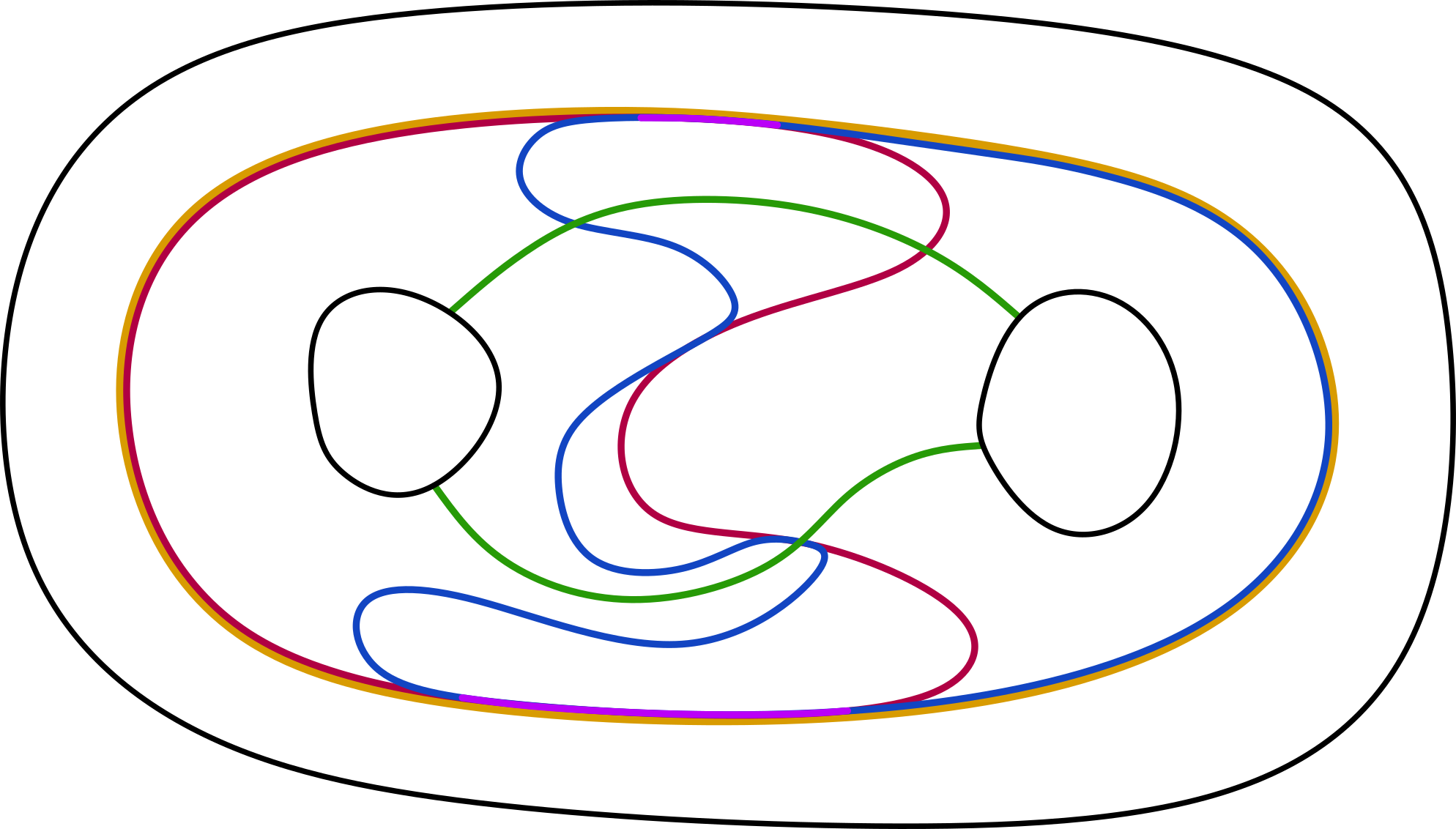}};
    \node at (6.4,2.1) {\color{OliveGreen} type 1};
    \node at (5,4.6) {\color{OliveGreen} type 2};
    \node at (.8, 5.5) {$P$};
    \node at (8, 4.8) {\color{brown} $d$};
\end{tikzpicture}
\caption{Type 1 and type 2 curves}
\end{figure}

\begin{lemma}\label{type1ingraph}
Let $P$ be a nonseparating pair of pants on $S_g$ with boundary curves $\gamma_1, \gamma_2$,  and $\gamma_3 \in \fcn{k}{S_g}$.  Let $(a,b) \in \fcn{k}{S_g}$ be a $k$-smooth pair that determine a curve $d$ homotopic to $\gamma_3$.  Suppose that $e \in \fcn{k}{S_g}$ is disjoint from $d$ and forms degenerate torus pairs with $\gamma_1, \gamma_2, a$ and $b$.   Then $e$ is type 1 if and only if there are no curves homotopic to $e$ in the hull of $\{a, b, e\}$ except $e$ itself.
\end{lemma}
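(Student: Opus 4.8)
The plan is to translate the hull condition into a statement about the complementary regions of $a\cup b\cup e$, and then read those regions off from the annulus structure of the $k$-smooth pair. By (the proof of) Lemma~\ref{smoothpairingraph}, the curves $a$ and $b$ cut the pair of pants $P_d$ with boundary $\gamma_1,\gamma_2,d$ into an annulus $A_1$ with $\partial A_1=\gamma_1\sqcup(a_1\cup b_2)$, an annulus $A_2$ with $\partial A_2=\gamma_2\sqcup(b_1\cup a_2)$, and topological disks (Lemma~\ref{notmanyannulicomponents}), where $a_2=a\setminus(a_1\cup c_1\cup c_2)$ and $b_2=b\setminus(b_1\cup c_1\cup c_2)$. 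Since $e$ is disjoint from $d\supseteq a_1\cup b_1\cup c_1\cup c_2$ and meets each of $a$ and $b$ in a single transverse point, it meets $b$ at a point $q\in\mathrm{int}(b_2)$ and $a$ at a point $p\in\mathrm{int}(a_2)$, entering $A_1$ across $\gamma_1$, leaving $A_1$ across $b_2$ at $q$, entering $A_2$ across $a_2$ at $p$, and leaving across $\gamma_2$; by definition $e$ is type~1 exactly when $p=q$ and type~2 exactly when $p\neq q$. I will use the following observation throughout: if some complementary disk component $R$ of $S_g\setminus(a\cup b\cup e)$ has an arc $\alpha$ of $e$ on its boundary, then replacing $\alpha$ in $e$ by a push-off of $\overline{\partial R\setminus\alpha}$ into $\mathrm{int}(R)$ (smoothing the corners there, which stays inside the hull since $R$ is a disk of the hull) produces a $C^k$ curve isotopic to $e$, distinct from $e$, and contained in the hull of $\{a,b,e\}$; conversely, if no complementary disk component has an arc of $e$ on its boundary, then a $C^k$ curve $e'$ in the hull cannot switch between an arc of $e$ and an arc of $a\cup b$ (which meet only at the transverse points $p,q$), so $e'$ is either $e$ itself or is contained in the hull of $\{a,b\}$.

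If $e$ is type~2, then $e\cap(a\cup b)=\{p,q\}$, so $e\setminus\{p,q\}$ has a component $e_{qp}$ whose interior is disjoint from $a\cup b$; this arc lies in a single complementary disk component $D$ of $S_g\setminus(a\cup b)$ (a disk by Lemma~\ref{notmanyannulicomponents}, after checking that $D$ is not adjacent to $\gamma_1$ or $\gamma_2$) and separates $D$ into two disk components of $S_g\setminus(a\cup b\cup e)$, each carrying an arc of $e$ on its boundary. By the observation, the hull of $\{a,b,e\}$ then contains a curve homotopic to $e$ other than $e$ itself, so the right-hand condition of the lemma fails.

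If $e$ is type~1, then $e\cap(a\cup b)$ is the single point $p$ and $e$ passes directly from $A_1$ into $A_2$ there. The complementary regions of $S_g\setminus(a\cup b\cup e)$ adjacent to the arc $e\setminus\{p\}$ are built from the disks $A_1\setminus e$ and $A_2\setminus e$ by gluing, across subarcs of $\gamma_1$ and $\gamma_2$, to the region of $S_g\setminus P$ cut off by the essential arc $e\setminus P$ (a surface of genus $g-2$ when $g\geq 3$ and an annulus when $g=2$); an Euler characteristic count then shows none of these regions is a disk, so no complementary disk component has an arc of $e$ on its boundary. By the observation, a $C^k$ curve $e'$ in the hull of $\{a,b,e\}$ is either $e$ or lies in the hull of $\{a,b\}$; the latter is contained in $P_d$ with the open peripheral annuli $\mathring A_1,\mathring A_2$ removed, which is again a pair of pants with boundary isotopic to $\gamma_1,\gamma_2,\gamma_3$, so it contains no curve homotopic to $e$. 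Hence $e'=e$, and since every $e$ as in the statement is of exactly one of the two types the two cases combine to give the claimed equivalence. The main difficulty I anticipate is exactly this type~1 region count — carefully matching the pieces of $S_g\setminus(a\cup b\cup e)$ inside $P_d$ with those outside $P$ across $\gamma_1$ and $\gamma_2$, and treating the local picture at $p$ separately according to whether $a$ and $b$ cross transversally or meet tangentially there.
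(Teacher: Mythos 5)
Your proof is correct and follows essentially the same route as the paper's: the type~2 direction is the paper's argument (the middle arc of $e$ lies in a disk component of $P_d\setminus(a\cup b)$ by Lemma~\ref{notmanyannulicomponents} and can be pushed inside that disk to produce new curves homotopic to $e$ in the hull), while your type~1 direction --- decomposing $S_g\setminus(a\cup b\cup e)$ explicitly to show no complementary disk touches $e$, so the hull is just $e$ together with the hull of $\{a,b\}$, which lies in $P_d$ and hence contains nothing homotopic to $e$ --- is a more explicit rendering of the paper's observation that the single complementary arc of $e$ closes up to the essential curve $e$ and therefore bounds no disk. The details you flag as remaining (that the disk $D$ is not one of the two peripheral annuli, and the Euler characteristic count, which gives $\chi=4-2g\le 0$ for the single region adjacent to $e\setminus\{p\}$) do check out.
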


\begin{proof} Let $P_d \subset P$ denote the pair of pants with boundary curves $\gamma_1$, $\gamma_2$, and $d$. 

We start with the forward direction.  Suppose $e$ is a curve of type 1.  Then $e$ will only intersect each $a$ and $b$ once at the exact same point. So $e \setminus (a \cup b)$ must be a single interval.  Since $e$ is essential and $e \cup (a \cup b)$ is a single point,  then the closure of this interval is $e$ itself and cannot bound a disk.  So the hull of $\{a,b,e\}$ will consist exactly of the curves in the hull of $\{a, b\}$ and the curve $e$ itself.  Also, any curve in the hull of $\{a,b\}$ must live on $P_d$ and is thus homotopic to $a$, $b$ or $d$.  But $e$ forms torus pairs with $a$ and $b$. Since $S_g$ is orientable,  then $e$ cannot be homotopic to $a$ or $b$ and every curve homotopic to $e$ must also intersect $a$ and $b$. So $e$ cannot be isotopic to $d$.  Thus any curve in the hull of $\{a, b\}$ is not homotopic to $e$. So $e$ is the only curve in the hull of $\{a,b,e\}$ that is homotopic to $e$. 

We now complete the reverse implication.
Suppose that $e$ is type 2. Then $e$ must intersect $a$ and $b$ at two distinct points.  So $e \setminus a \cup b$ consists of two nonempty open subarcs $e_1$ and $e_2$. Since $e$ also forms torus pairs with $\gamma_1$ and $\gamma_2$, then one of $e_1$ or $e_2$ must include the entire arc of $e \setminus P$. So the other interval must be completely contained within $P_d$.  Without loss of generality, let $e_1$ be the interval contained in $P_d$.   By Lemma \ref{notmanyannulicomponents},  since $e_1$ is not in a component of $P_d \setminus \{a \cup b \}$ that contains $\gamma_1$ or $\gamma_2$, then this component must be a disk.  Since this is an open interval in an open disk, then $e$ can be smoothly isotoped within this disk to make a family of new $e' \in \fcn{k}{S_g}$ homotopic to $e$ and in the hull of $\{a, b, e\}$.
\end{proof}

\begin{lemma}\label{simpleonlytype2}
Let $(a,b) \in \fcn{k}{S_g}$ be a $k$-smooth pair in a nonseparating pair of pants $P$ that determines a curve $d = a_1 \cup b_1 \cup c_1 \cup c_2$, where $a$ and $b$ only cross at $c_1$ and $c_2$.  Then $(a,b)$ is simple if and only if every $e$ in $\fcn{k}{S_g}$ disjoint from $d$ which forms degenerate torus pairs with $\gamma_1, \gamma_2, a$ and $b$ must be type 2. 
\end{lemma}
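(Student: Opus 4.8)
The plan is to prove the two implications separately. The forward implication is immediate: if $(a,b)$ is simple then $a\cap b = c_1\cup c_2$, and since $c_1,c_2\subset d$, any $e\in\fcn{k}{S_g}$ disjoint from $d$ is disjoint from $a\cap b$. Such an $e$ therefore cannot meet $a$ and $b$ at a common point, since that point would lie in $a\cap b$; hence every such $e$ is type $2$.

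For the reverse implication I would prove the contrapositive: if $(a,b)$ is not simple, I will exhibit a type $1$ curve $e$ (disjoint from $d$ and forming degenerate torus pairs with $\gamma_1,\gamma_2,a,b$). Write $a = a_1\cup c_1\cup a_2\cup c_2$ and $b = b_1\cup c_1\cup b_2\cup c_2$, where $a_2 = a\setminus(a_1\cup c_1\cup c_2)$ and $b_2 = b\setminus(b_1\cup c_1\cup c_2)$. Using condition (iii) in the definition of a $k$-smooth pair (the interiors of $a_1$ and $b_1$ miss $b$ and $a$ respectively), one sees that $a\cap b = c_1\cup c_2\cup(a_2\cap b_2)$, so non-simplicity produces a point $x$ in the interiors of the arcs $a_2$ and $b_2$ with $x\notin c_1\cup c_2$; in particular $x\notin d$. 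Since $a$ and $b$ cross only along $c_1$ and $c_2$ (Lemma \ref{smoothbigonscross}), near $x$ the arcs $a_2$ and $b_2$ are tangent or locally coincide, so they share a tangent line at $x$.

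I then construct $e$ as follows. Pick a short $C^k$ arc $\delta$ through $x$ transverse to $a_2$ (hence to $b_2$) at $x$, short enough that $\delta\cap(a\cup b)=\{x\}$ and $\delta$ misses $d,\gamma_1,\gamma_2,\gamma_3$; its endpoints $p,q$ then lie in the two outer local complementary regions of $a\cup b$ at $x$. One checks (this is the crux, addressed in the next paragraph) that one of these regions lies in the interior of the annulus $A_1\subset P_d$ cobounded by $\gamma_1$ and $a_1\cup b_2$ from Lemma \ref{noncrosssmoothpairingraph}, and the other in the interior of the analogous annulus $A_2\subset P_d$ cobounded by $\gamma_2$ and $a_2\cup b_1$; by Lemma \ref{notmanyannulicomponents} these are the annular components of $P_d\setminus(a\cup b)$ meeting $\gamma_1$ and $\gamma_2$, and their interiors avoid $a\cup b$ and $d$. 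Say $q$ lies in the interior of $A_1$ and $p$ in the interior of $A_2$. Join $q$ to a point of $\gamma_1$ by a $C^k$ arc in the interior of $A_1$, cross $\gamma_1$ transversally into $S_g\setminus P$ (connected, since $P$ is a nonseparating pair of pants and $g\geq 2$), route to a point of $\gamma_2$, cross $\gamma_2$ transversally into the interior of $A_2$, and join to $p$ by a $C^k$ arc there. Concatenating with $\delta$ and smoothing the finitely many corners (immediate away from $a\cup b$, and routine at the two transverse crossings of $\gamma_1,\gamma_2$) yields $e\in\fcn{k}{S_g}$. By construction $e$ is essential (it crosses $\gamma_1$ transversally once), disjoint from $d$, meets each of $\gamma_1,\gamma_2$ in one transverse point, and meets both $a$ and $b$ in the single transverse point $x$; hence $e$ forms degenerate torus pairs with $\gamma_1,\gamma_2,a,b$ and is type $1$, as desired.

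The main obstacle is the geometric claim used above. Because $a$ and $b$ do not cross away from $c_1\cup c_2$, the arc $b_2$ must stay on one fixed side of $a_2$ throughout their tangential or overlapping contact — otherwise $a_2$ and $b_2$ would be forced to cross — and this is exactly what determines, at $x$, which outer region points toward $\gamma_1$ and which toward $\gamma_2$, placing $p$ and $q$ in the interiors of $A_2$ and $A_1$. The same noncrossing property, together with Lemma \ref{notmanyannulicomponents}, shows that no strand of $a\cup b$ intrudes into the interiors of $A_1$ or $A_2$, so the arcs used to route $e$ genuinely stay clear of $a$, $b$, and $d$.
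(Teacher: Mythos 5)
Your proposal is correct and follows essentially the same route as the paper: the forward direction is the same observation that a type~1 intersection point would be a point of $a\cap b$ outside $c_1\cup c_2$, and the reverse direction likewise locates a tangency point $x\in a_2\cap b_2$, uses that $x$ lies on the boundary of both annular components adjacent to $\gamma_1$ and $\gamma_2$, and routes a curve through $S_g\setminus P$ to produce a type~1 curve. The only cosmetic difference is that you cross $x$ with a short transverse arc $\delta$ and connect its endpoints outward, whereas the paper runs arcs from $\gamma_1$ and $\gamma_2$ into $x$ and smooths their tangential meeting there via Lemma~\ref{smoothingarcsatboundary}; both yield the same curve.
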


\begin{proof}

If any curve $e \in \fcn{k}{S_g}$ disjoint from $d$ is type 1, then the point where $e$ intersects $a$ and $b$ is also a point not in $c_1$ or $c_2$ where $a$ and $b$ intersect. Thus $a$ and $b$ will not be simple. 

Suppose that $(a,b)$ is not simple.  Then there is some point $x$ not in $c_1$ or $c_2$ where $a$ and $b$ intersect.  By assumption, $a$ and $b$ cannot cross at $x$.  Thus they will be tangent at $x$. Moreover, this point $x$ will be on the arc $b_2$ in the boundary of the annulus $A_1$ described in the proof of Lemma \ref{noncrosssmoothpairingraph}. By symmetry, it will also be on the arc $a_2$ in the boundary of the corresponding annulus $B_1$.  So there are arcs $e_1$ and $e_2$ from $\gamma_1$ and $\gamma_2$ to $x$ that do not intersect $a$ or $b$ except at $x$.  By Lemma \ref{smoothingarcsatboundary},  these arcs can be smoothed and extended in $S_g \setminus P$ to a curve $e \in \fcn{k}{S_g}$ that is type 1. 
\end{proof}

Finally, we prove that any automorphism of $\fcn{k}{S_g}$ will preserve the set of simple $k$-smooth pairs.

\begin{proof}[Proof of Proposition \ref{c1preservessimplesmoothpair}]
Let $\alpha \in \aut \fcn{k}{S_g}$ and $(a,b)$ be a simple $k$-smooth pair that determines $d$.  By Lemma~\ref{autpreservessmoothpairs}, $(\alpha(a), \alpha(b))$ is also a $k$-smooth pair that determines $\alpha(d)$. 

Since $(a,b)$ is simple, then $a$ and $b$ only intersect along $c_1$ and $c_2$, then they must only cross at $c_1$ and $c_2$ since any other crossing would be cause $a$ and $b$ to have more intersections.   By Lemma~\ref{noncrosssmoothpairingraph},  only crossing at $c_1$ and $c_2$ can be characterized using containment in a nonseparating pair of pants, homotopy and intersections.  But Lemmas~\ref{autpreservesephomandhull} and \ref{autpreservessubsurfaces} show that these are all preserved by $\alpha$.  So $\alpha(a)$ and $\alpha(b)$ will also only cross at their overlapping intervals in $\alpha(d)$.  

From Lemma~\ref{type1ingraph},  any curve $e \in \fcn{k}{S_g}$ that is type 1 in relation to $(a,b)$ can be characterized with degenerate torus pairs, homotopies,  hulls, and intersections.  These are preserved by $\alpha$ using Lemmas~\ref{autpreservesephomandhull} and \ref{autpreservestoruspairs}. Thus $\alpha(e)$ must be type 1 in relation to $(\alpha(a), \alpha(b))$.  But by Lemma~\ref{simpleonlytype2}, $(a,b)$ will only have type 2 curves, and thus $(\alpha(a),\alpha(b))$ will also have only type 2 curves, and thus must be a simple $k$-smooth pair.
\end{proof}

\section{Constructing $\mathbf{ \boldsymbol{\xi}^{-1}: \baut \bfcn{k}{S} \boldsymbol{\rightarrow} \baut \befcn{k}{S}}$}\label{sectionc1toec1}

The goal of this section is to use the simple $k$-smooth pair construction defined in Section~\ref{sectioncurvepairs} to prove the following proposition giving an isomorphism between the automorphisms of the $C^k$-curve graph and the automorphisms of the extended $C^k$-curve graph.  This completes Step~2 of the proof outline from the introduction.

\begin{prop}\label{ec1toc1iso}
Let $S_g$ be a compact surface with $g \geq 2$. Then the natural restriction 
$$\xi: \aut \efcn{k}{S_g} \rightarrow \aut \fcn{k}{S_g}$$ is an isomorphism.
\end{prop}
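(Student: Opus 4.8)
\emph{Strategy.} The plan is to prove that $\xi$ is a bijective homomorphism by writing down its inverse explicitly in terms of the simple $k$-smooth pairs of Section~\ref{sectioncurvepairs}. The homomorphism property is automatic once $\xi$ is known to be defined, and $\xi$ is defined because every $\alpha\in\aut\efcn{k}{S_g}$ carries essential curves to essential curves: by Lemma~\ref{sepcurveinextgraph} the separating curves are the ones whose link is a join, and when we cut $S_g$ along a separating curve each complementary piece is a once-holed surface, so the curve is \emph{inessential} precisely when one of the two parts of its link contains no nonseparating curve (the disk side). This is a purely combinatorial condition, so essentiality is preserved, and restriction to the essential subgraph gives the homomorphism $\xi$.

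\emph{From pairs to inessential curves, and injectivity.} Recall that a simple $k$-smooth pair $(a,b)$ in a nonseparating pair of pants, with determined curve $d=a_1\cup b_1\cup c_1\cup c_2$, also singles out the ``complementary'' curve $\overline d = a_2\cup b_2\cup c_1\cup c_2\subset a\cup b$; this $\overline d$ is a $C^k$ curve, it is null-homologous in the pair of pants bounded by $\gamma_1,\gamma_2,d$, hence bounds a disk there, hence is inessential in $S_g$. Conversely, by a direct construction every inessential $C^k$ curve is the $\overline d$ of some simple $k$-smooth pair. The point is that the relation ``$e=\overline d$ for the simple $k$-smooth pair $(a,b)$'' is expressible using only disjointness, hulls, homotopy, essentiality, and the notion of simple $k$-smooth pair, all of which are preserved by automorphisms of $\efcn{k}{S_g}$ and of $\fcn{k}{S_g}$ (Sections~\ref{subsectionsephomhull}--\ref{subsectionsimplesmooth}, Proposition~\ref{c1preservessimplesmoothpair}). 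Injectivity of $\xi$ is then immediate: if $\alpha\in\aut\efcn{k}{S_g}$ restricts to the identity on $\fcn{k}{S_g}$ and $e$ is inessential, choose a simple $k$-smooth pair $(a,b)$ with $\overline d=e$; then $\alpha$ fixes $a,b,d$ and preserves the relation above, so $\alpha(e)=\overline{\alpha(d)}=\overline d=e$.

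\emph{Surjectivity and the main obstacle.} Given $\beta\in\aut\fcn{k}{S_g}$, I would define $\widetilde\beta$ on $\efcn{k}{S_g}$ to agree with $\beta$ on essential curves and to send an inessential curve $e=\overline d$, for a simple $k$-smooth pair $(a,b)$, to the inessential curve $\overline{d'}$ associated to $(\beta a,\beta b)$ (with its determined curve $d'$), which is again a simple $k$-smooth pair by Proposition~\ref{c1preservessimplesmoothpair}. The substantive point, and the step I expect to be the hardest, is that $\widetilde\beta(e)$ does not depend on the chosen pair. I would prove this by a connectivity argument: fix $e$, and build a graph whose vertices are the simple $k$-smooth pairs determining $e$ (equivalently, the configurations of arcs they cut off in a pair-of-pants neighbourhood of $e$), with edges given by elementary moves between such pairs that manifestly do not change $\overline d$; then show this arc graph is connected and check that $\beta$ takes each elementary move to another one, so the $\overline{\,\cdot\,}$-value of the $\beta$-image is constant along every edge, hence globally constant. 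This is the ``equivalence relation on a connected arc graph'' mentioned in the introduction, and establishing connectivity together with the move-by-move verification is the technical heart of the argument.

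\emph{Finishing.} Granting well-definedness, it remains to check that $\widetilde\beta$ is a graph automorphism and that $\beta\mapsto\widetilde\beta$ inverts $\xi$. Adjacency of two essential curves is preserved since $\beta$ is an automorphism; adjacency between an inessential curve $e$ and an essential curve $f$ with $e\cap f=\emptyset$ is preserved by choosing the witnessing simple $k$-smooth pair for $e$ inside a region disjoint from $f$ (possible because $e$ is inessential and $f$ avoids it), so that $\widetilde\beta(e)=\overline{\beta(d)}$ lies in $\beta$ of a region disjoint from $f$, hence disjoint from $\beta(f)$; adjacency of two inessential curves is handled the same way, and the reverse implications come from running the argument for $\beta^{-1}$, which simultaneously shows $\widetilde{\beta^{-1}}=(\widetilde\beta)^{-1}$. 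Thus $\widetilde\beta\in\aut\efcn{k}{S_g}$ and $\xi(\widetilde\beta)=\beta$ by construction, so $\xi$ is surjective; combined with injectivity, $\xi$ is an isomorphism.
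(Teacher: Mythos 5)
Your overall architecture is the same as the paper's: restrict to essential curves using the combinatorial detection of inessentiality, deduce injectivity because every inessential curve is the kernel (your $\overline d$) of some simple $k$-smooth pair of essential curves, and obtain surjectivity by extending $\beta$ to inessential curves via kernels of pairs, with well-definedness reduced to connectivity of a graph of pairs sharing a kernel --- this is exactly the paper's kernel-equivalence relation together with the connectivity of (the line graph of) the fine paired $k$-smooth nonseparating arc graph. Up to the last step your proposal is a faithful outline of the actual argument.

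The genuine gap is in your verification that $\widetilde\beta$ preserves edges incident to inessential curves. You claim that if $e$ is inessential, $f$ is essential, and $e\cap f=\emptyset$, one can choose the witnessing simple $k$-smooth pair for $e$ inside a region disjoint from $f$. This fails in general: take $g=2$, let $f$ be a separating curve cutting $S_2$ into two one-holed tori, and let $e$ lie in the one-holed torus $T$. The two curves of a simple $k$-smooth pair are non-homotopic nonseparating curves homotopic to the disjoint boundary curves $\gamma_1,\gamma_2$ of the pair of pants, so their algebraic intersection number is $0$; but any two non-homotopic nonseparating simple closed curves contained in a one-holed torus have nonzero algebraic intersection number. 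Hence no simple $k$-smooth pair at all is contained in $T$, and your proposed witness for the edge $\{e,f\}$ does not exist. The paper's Lemma~\ref{inessentialandessentialintersect} repairs precisely this: it asks only that \emph{one} curve $a$ of the pair, together with a parallel pushoff $a'$ disjoint from $a$, $b$, $e$, and $f$, avoid $f$; then $e$ is trapped in the annulus between $a$ and $a'$, which the essential curve $f$ (disjoint from $a\cup a'$ and not homotopic to $a$) cannot enter, while $b$ is free to cross $f$. The case of two disjoint inessential curves, which you propose to handle ``the same way,'' has the same subtlety: the paper needs separate characterizations for the nested and unnested configurations (Lemmas~\ref{twoinessentialintersectnest} and~\ref{twoinessentialintersectnotnest}), and in the nested case fully disjoint witnessing pairs again need not exist. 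So the skeleton of your proof is right, but the edge-preservation step requires these more careful combinatorial characterizations rather than the ``choose everything disjoint from $f$'' shortcut.
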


In order to prove this proposition,  we build a connected arc graph in Section~\ref{subsectionpairedarcgraph} on a subsurface with edges that correspond to the simple $k$-smooth pairs.  In Section~\ref{subsectionkernel},  we leverage the connectedness of this arc graph and its line graph to build an equivalence relation on simple $k$-smooth pairs.  We then show that inessential curves determine by the simple $k$-smooth pairs are preserved by $\aut \fcn{k}{S_g}$ and well-defined.
Next, in Section~\ref{subsectioninessential} we show that there are unique structures in $\efcn{k}{S_g}$ for both inessential and essential curves and any combination of edges between them.  This implies that any automorphism of the extended graph will automatically give an automorphism of the subgraph $\fcn{k}{S_g}$.  Finally,  in Section~\ref{subsectionpropauts} we utilize our simple $k$-smooth pair construction to build an inverse map that recovers the image of each inessential curve based only on the images of the essential curves. 

\subsection{Paired $k$-smooth arc graphs are connected}\label{subsectionpairedarcgraph}

In this section, we will be considering arc graphs that correspond to simple $k$-smooth pairs where a complementary disk has been removed. The main result is Lemma \ref{finenonseppairedarcconnected}, showing that the fine version this arc graph is connected.  To get to this result, we first show that the corresponding non-fine arc graph is connected.  We then complete the argument by showing that each isotopy class is also connected. Recall that we denote by $S_{g}^b$ the surface of genus $g$ with $b$ disjoint open disks removed such that each boundary is a $C^k$ curve. 

\medskip

\noindent \textit{A non-fine arc graph.  }
We now define the \emph{paired $k$-smooth nonseparating arc graph}, denoted $\natp{S_{g}^1}$. The vertices of this graph consist of isotopy classes of arcs in $S_{g}^1$,  relative to the boundary and the tangent spaces on the boundary,  with the following properties:
\begin{enumerate}[label=(\alph*)]
\item the arcs are essential and nonseparating
\item the endpoints of each arc are distinct
\item the interior of the arcs are disjoint from any boundaries
\item there is an extension of each arc by a subarc of the boundary such that this extension is a $C^k$ curve on $S_{g}^1$
\end{enumerate}
There are edges between vertices when the corresponding isotopy classes have representative arcs with the following properties:
\begin{enumerate}[label=(\roman*)]
\item the interiors of the arcs are disjoint
\item the arcs are jointly nonseparating
\item each extension given in property (e) contains both endpoints of the other arc
\end{enumerate}
The arcs remaining from a simple $k$-smooth pair once the disk in their complement has been removed have the desired properties given above.

\begin{lemma}\label{nonseppairedarcgraphconnected}
Let $S_{g}^1$ for $g \geq 2$.  Then $\natp{S_{g}^1}$ is connected.
\end{lemma}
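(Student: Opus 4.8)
The plan is to prove connectedness of $\natp{S_{g}^1}$ by comparing it to a standard arc graph whose connectivity is already known, then checking that the extra conditions (b)--(e) and (i)--(iii) do not disconnect it. Concretely, I would proceed as follows.

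First I would recall (or cite, e.g. from \cite{Primer} or the standard arc-complex literature) that the ordinary nonseparating arc graph of $S_g^1$ --- vertices are isotopy classes of essential nonseparating arcs with endpoints on the single boundary circle, edges when arcs are disjoint --- is connected when $g\geq 2$. The strategy is to run a standard surgery/change-of-coordinates argument to upgrade a path in this coarser graph to a path in $\natp{S_{g}^1}$. Given two vertices $\alpha,\beta$ of $\natp{S_{g}^1}$, realize them as arcs in $S_g^1$; by the connectivity of the ordinary arc graph there is a sequence $\alpha=\alpha_0,\alpha_1,\dots,\alpha_n=\beta$ of essential nonseparating arcs with consecutive ones disjoint. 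The work is to replace each $\alpha_i$ by a genuine vertex of $\natp{S_{g}^1}$ (distinct endpoints, $C^k$-extension by a boundary subarc exists, etc.) and to interpolate between consecutive ones so that the adjacency conditions (i) jointly nonseparating and (iii) each $C^k$-extension contains both endpoints of the other arc are met --- possibly inserting one or two auxiliary vertices between each $\alpha_i,\alpha_{i+1}$, just as the proof of Lemma \ref{homotopicingraph} inserts a curve $d_i$.

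The key local moves are: (1) given any essential nonseparating arc, a small perturbation rel endpoints makes its endpoints distinct and, since one may prescribe the tangent directions at the endpoints to agree with the boundary tangent line, guarantees a $C^k$ extension along a boundary subarc exists --- this is exactly the kind of smoothing done in Lemma \ref{smoothingarcsatboundary}; (2) given two disjoint such arcs that happen to be jointly separating, one uses that $g\geq2$ to find a third nonseparating arc disjoint from both lying in a handle away from them, restoring the joint-nonseparating condition (analogous to the argument in Lemma \ref{sepcurveinextgraph} producing the curve $d$); (3) to force condition (iii), one chooses the boundary subarcs in the extensions of $\alpha_i$ and $\alpha_{i+1}$ to sweep over all four endpoints --- since the two arcs have disjoint interiors, their endpoints lie on the boundary circle and one can slide the endpoints (through the isotopy that is allowed rel tangent spaces) so that each extension's boundary piece is long enough to contain the other's endpoints; when a direct slide is obstructed, insert an intermediate arc realizing a ``long'' extension. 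Putting these together yields a path in $\natp{S_{g}^1}$ from $\alpha$ to $\beta$.

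I expect the main obstacle to be condition (iii): unlike disjointness and jointly-nonseparating, which are robust and standard to arrange, ``each $C^k$-extension contains both endpoints of the other arc'' is a genuinely new constraint coupling the two arcs through the choice of boundary subarcs, and one must check it can always be achieved simultaneously with (i) and (ii), possibly at the cost of subdividing the path. The cleanest way to handle this is probably to fix, once and for all, a ``reference'' collection of nonseparating arcs in $S_g^1$ whose extensions use (nearly) the whole boundary circle, show every vertex of $\natp{S_{g}^1}$ is connected by an edge (or a short path) to one of these reference vertices, and then show the reference vertices are pairwise connected --- reducing the whole problem to a finite, explicit check in a fixed handle decomposition of $S_g^1$.
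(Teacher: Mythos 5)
Your overall strategy (import connectivity from the ordinary nonseparating arc graph and repair the resulting path) is genuinely different from the paper's. The paper instead quotients $\natp{S_{g}^1}$ by the relation identifying the two extension choices for a given pair of endpoints, proves the quotient is connected via Putman's trick applied to the $\mcg(S_{g}^1)$-action (transitivity on vertices with fixed endpoints by change of coordinates, a three-case analysis of how two endpoint pairs can sit on $\partial S_{g}^1$ -- paired, overlapped, or linked -- and an explicit check on the Humphries generators), and then connects the two-element fibers of the quotient by an explicit short path. Your route could in principle be made to work, but as written it has a genuine gap that the paper's architecture is specifically designed to avoid.

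The gap: the vertices of $\natp{S_{g}^1}$ are isotopy classes \emph{rel the boundary and the tangent spaces on the boundary}, so the endpoints of an arc are part of the data of the vertex and cannot be moved within a vertex. Your step (3) proposes to ``slide the endpoints (through the isotopy that is allowed rel tangent spaces)''; no such isotopy is allowed, so each slide is a change of vertex and must be realized by actual edges of $\natp{S_{g}^1}$. Consequently the forgetful map to the ordinary arc graph has enormous fibers --- one vertex for every pair of endpoints on $\partial S_{g}^1$ and every choice of which boundary subarc furnishes the $C^k$ extension --- and connectivity of the ordinary arc graph gives you no control over these fibers. Connecting vertices within a fiber, and connecting vertices whose endpoint pairs are disjoint, overlapping, or linked on the boundary circle, is exactly where the paper does its work; your proposal supplies no substitute. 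Relatedly, your closing suggestion that everything reduces to ``a finite, explicit check'' is unsupported without a transitive group action to quotient by --- that reduction is precisely what the Putman trick provides and what your sketch omits. The remaining issues (arranging condition (iii), restoring joint-nonseparation with an interpolating arc) are real but more routine; you correctly flag (iii) as the delicate coupling between the two arcs, but flagging it is not the same as resolving it.
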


\begin{proof}

 We let $\sim$ denote the equivalence relation on $\natp{S_{g}^1}$ that identifies the two isotopy classes of arcs with the same endpoints. Then the mapping class group $\mcg(S_{g}^1)$ acts on the quotient graph $\natp{S_{g}^1}/{\sim}$.  The proof happens in two steps.  In the first step, we show that this quotient graph is connected.  We then show that the identified isotopy classes are connected in $\natp{S_{g}^1}$, resulting in the arc graph being connected.
 
\begin{figure}[h]
\centering
\includegraphics[angle=90, width=1in]{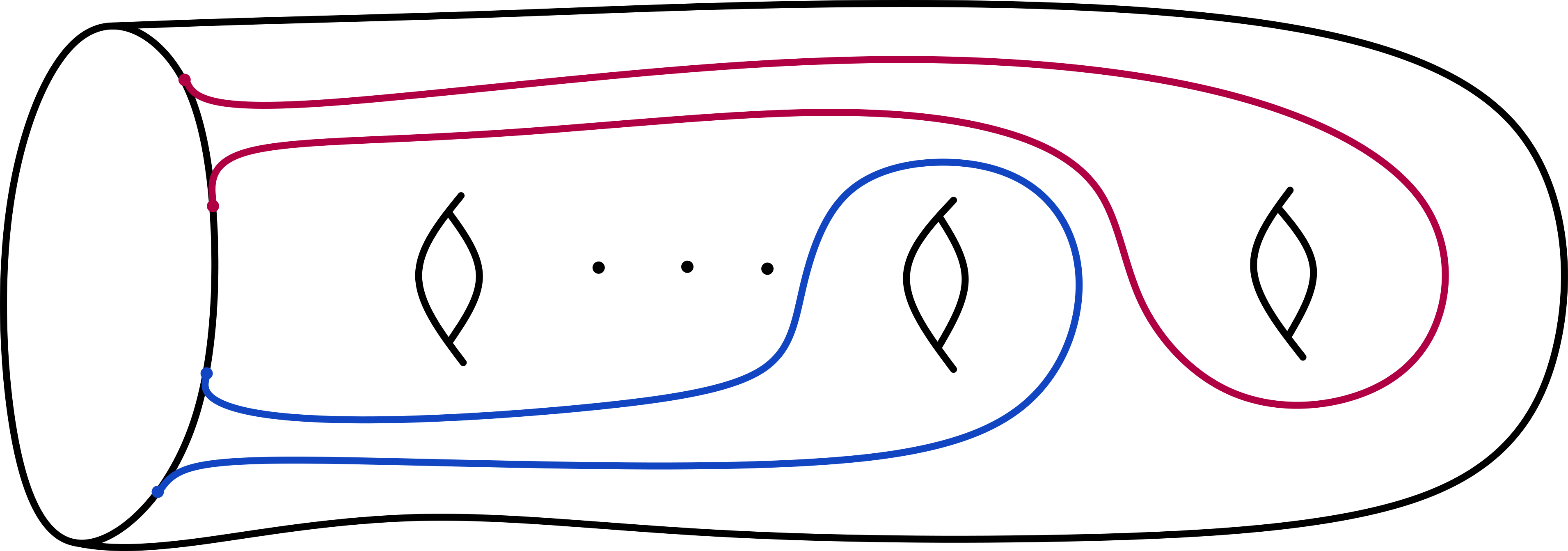} \hspace{.5in}
\includegraphics[angle=90, width=1in]{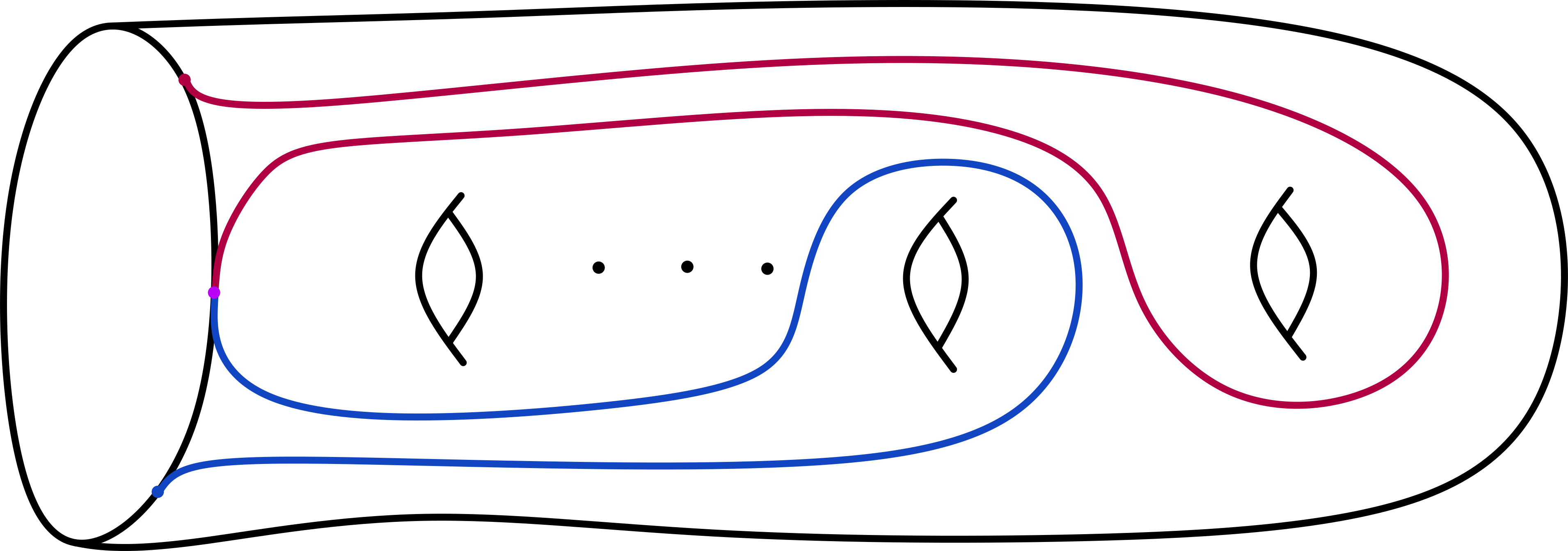} \hspace{.5in}
\includegraphics[angle=90, width=1in]{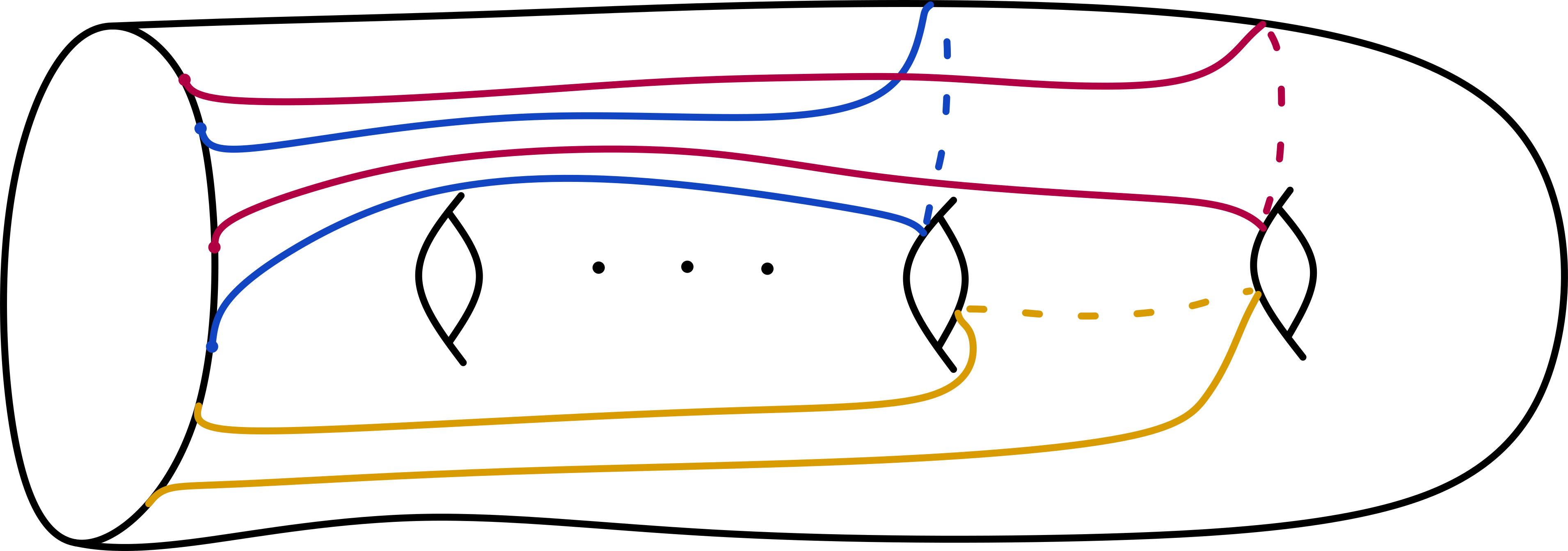}
\caption{The three possible configurations for endpoints from two different classes of arcs in the quotient graph.}
\end{figure}

\medskip

\noindent \textit{Step 1:  } To show that the quotient graph is connected, we use the Putman trick.  
 \cite[Lemma 2.1]{Putman} This method utilizes the action of a group on a simplicial complex to split connectivity into two parts.  First, we show that the group action forces the orbit of any vertex to the connected component of any other vertex.  We can then show that for any generator of the group, there is a path from a fixed vertex to the generator's image of that vertex.

 Since $\mcg(S_{g}^1)$ acts on the quotient graph, then using the change of coordinates principle \cite[Section 1.3]{Primer},   $\mcg(S_{g}^1)$ acts transitively on the set of vertices with the same endpoints. 

There are three possible configurations of pairs of endpoints in the boundary for arcs in different classes.  The endpoints can be paired,  overlapped,  or linked.  In all three situations, there exists either an edge or a path of length 2 that connects arcs in these configurations.  Thus all arcs are connected in $\natp{S_{g}^1}/\sim$ to an arc with any given pair of endpoints. 

Now consider a fixed pair of points in the boundary of $S_{g}^1$.  Let $\{h_1, h_2, \ldots, h_{2g+1}\}$ be the Humphries generators of $S_{g}^1$ and let $v$ be the arc given in Figure~\ref{fig-trickbefore} whose endpoints are the points that were fixed in the boundary.   

\begin{figure}[h!]
\centering
\begin{tikzpicture}
\small
    \node[anchor=south west, inner sep = 0] at (0,0){\includegraphics[width=4.5in]{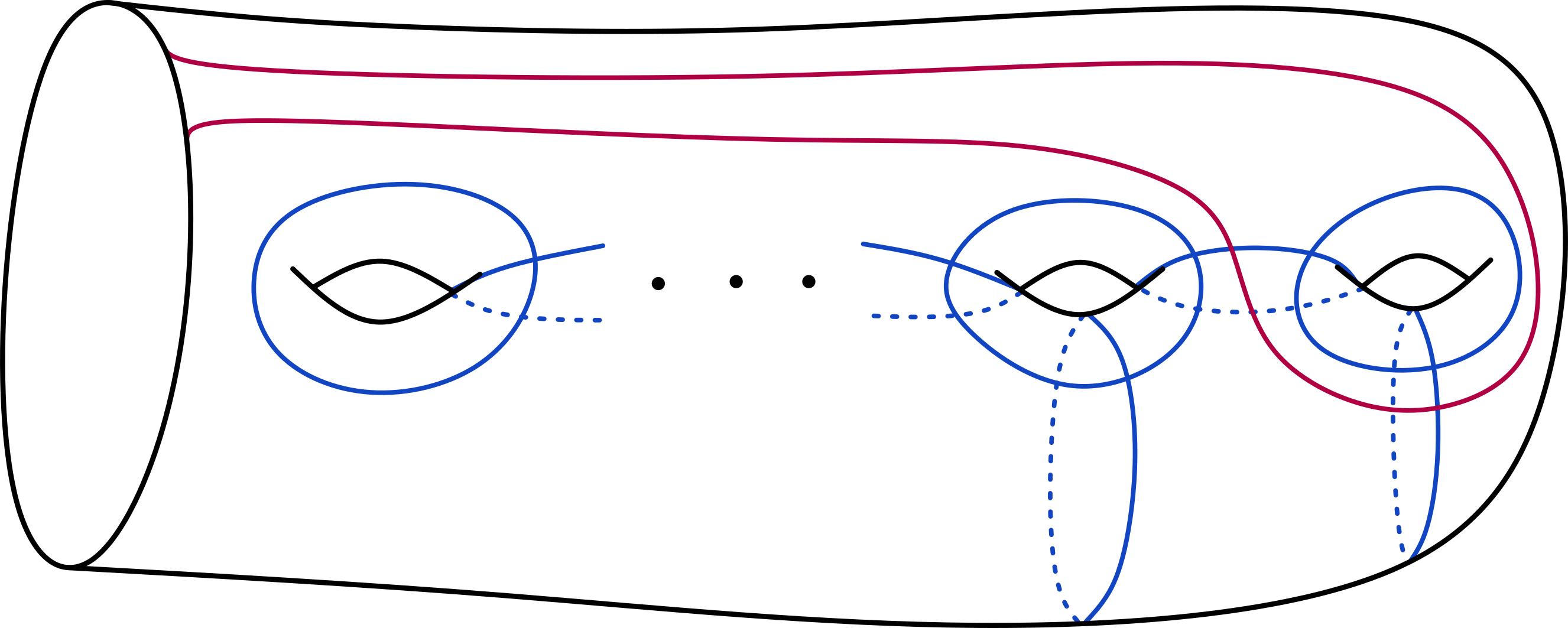}};
    \node at (9.6,3.85) {\color{purple} $v$};
    \node at (8.5,.5) {\color{blue} $h_2$};
    \node at (10.73, 1.13) {\color{blue} $h_1$};
    \node at (9.35, 3) { \color{blue} $h_4$};
    \node at (10.7, 3) { \color{blue} $h_3$};
    \node at (7.4,1.75) { \color{blue} $h_5$};
    \node at (6.65, 3) { \color{blue} $h_6$};
    \node at (4.4, 3) { \color{blue} $h_{2g}$};
    \node at (3, 1.5) { \color{blue} $h_{2g+1}$};
\end{tikzpicture}
\caption{The Humphries generators $h_i$ of $S^1_g$ with the arc $v$. }
\label{fig-trickbefore}
\end{figure}

So $h_iv = v$ when $i \neq 1,4$ and are thus trivially connected to $v$ in the graph. When $i=1$ or $4$, then the arc $u$ in $\natp{S_{g}^1}$ given in Figure~\ref{fig-trickafter} is adjacent to $v$,  $h_1v$,  and $h_4v$.  So $v$ is connected by a path in $\natp{S_{g}^1}$ to both $h_1v$ and $h_4v$.

\begin{figure}[h!]
\centering
\begin{tikzpicture}
\small
    \node[anchor=south west, inner sep = 0] at (0,0){\includegraphics[width=2.75in]{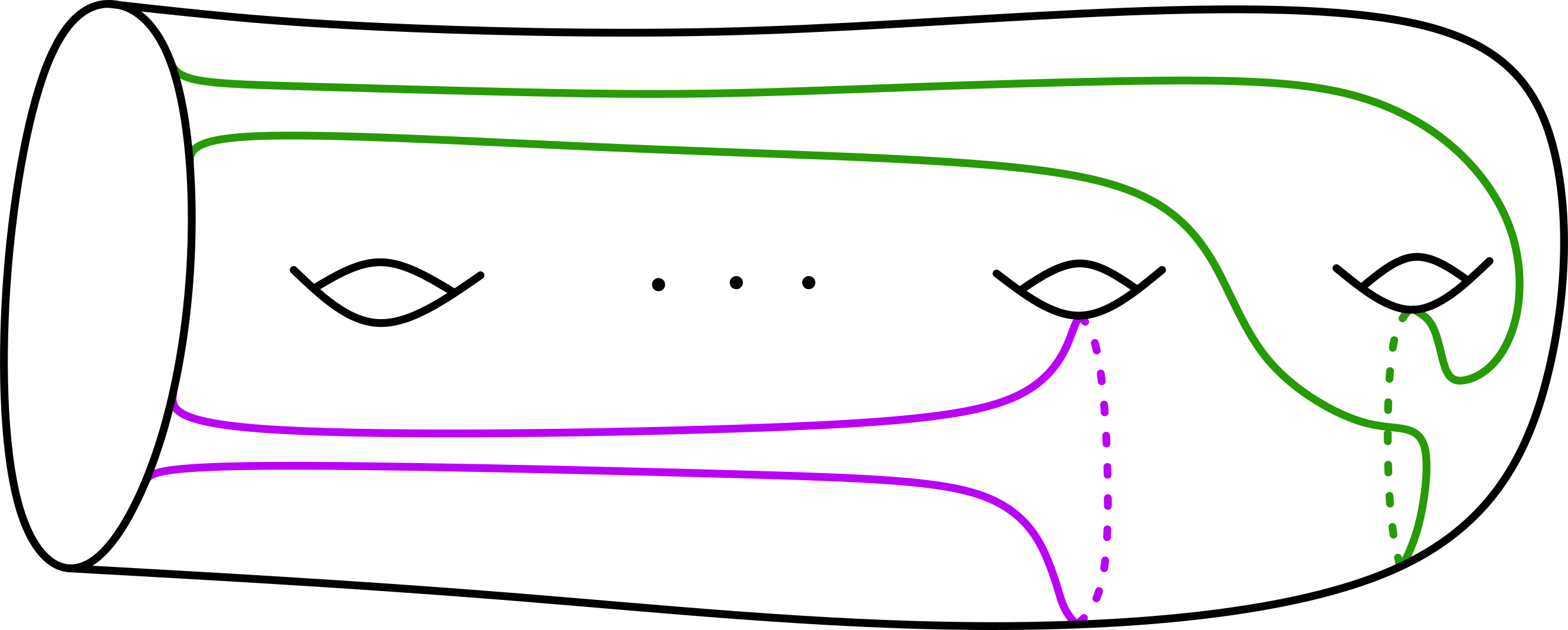}};
    \node[anchor=south west, inner sep = 0] at (8,0){\includegraphics[width=2.75in]{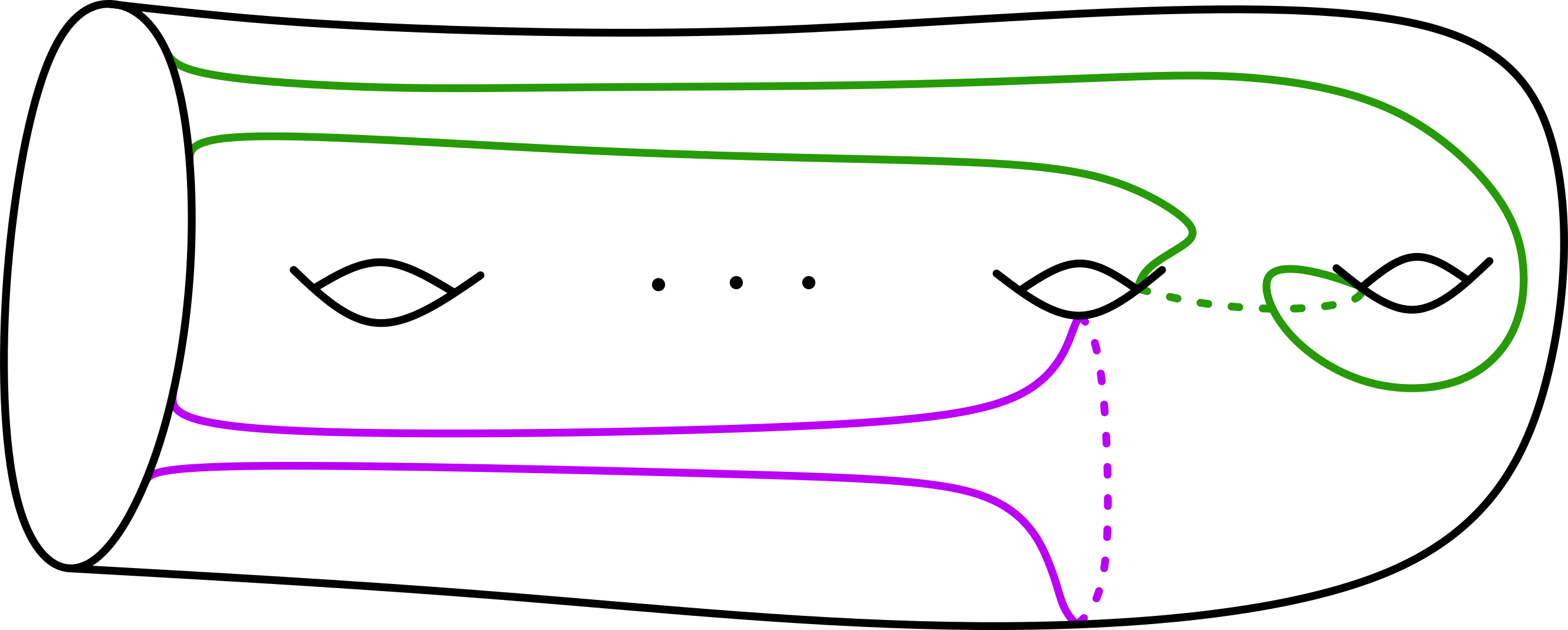}};
    \node at (3,.5) {\color{violet} $u$};
    \node at (11,.5) {\color{violet} $u$};
    \node at (5.7, 2.2) {\color{OliveGreen} $h_1v$};
    \node at (14.2, .88) { \color{OliveGreen} $h_4v$};
\end{tikzpicture}
\caption{The arcs $h_1v$ (left) and $h_4v$ (right) with arc $u$ disjoint to $v$. }
\label{fig-trickafter}
\end{figure}

So $v$ is connected by a path in $\natp{S_{g}^1}/{\sim}$ to any other arc with the same endpoints.  Since the connected components of any arc includes an arc with any given pair of endpoints, then by the Putman trick,  the graph $\natp{S_{g}^1}/{\sim}$ is connected.

\medskip

\noindent \textit{Step 2:  } Since the quotient graph is connected, then it is sufficient to show that $\natp{S_{g}^1}$ is connected between any identified elements in the same equivalence class.  Notice that each class consists of exactly two isotopy classes,  relative to the boundary and tangent spaces on the boundary.  Specifically,  there is one isotopy class for each subarc of the boundary between the endpoints. Let $a$ and $a'$ be disjoint representatives of the same quotient class.  As shown in Figure~\ref{fig-homconnect}, there exists a path in the graph $\natp{S_{g}^1}$ between $a$ and $a'$. 

\begin{figure}[h!]
\centering
\begin{tikzpicture}
\small
    \node[anchor=south west, inner sep = 0] at (0,0){\includegraphics[width=4in]{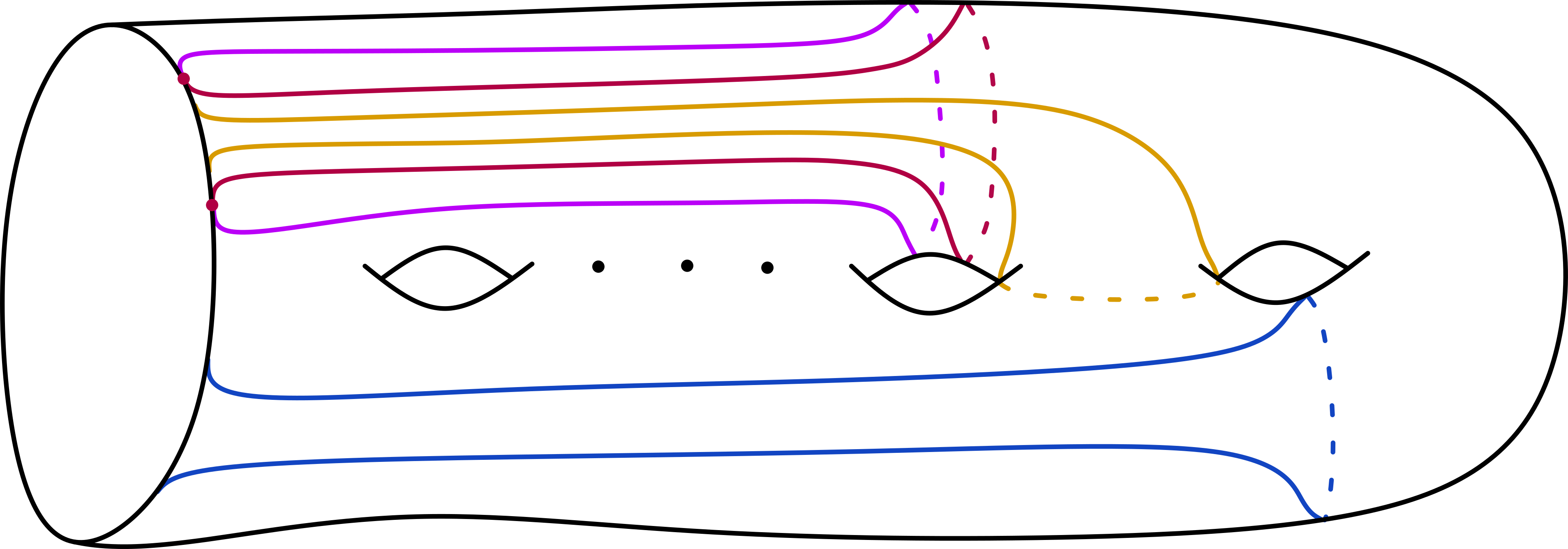}};
    \node at (4.2,2.05) {\color{violet} $a'$};
    \node at (6.65,3.3) {\color{purple} $a$};
    \node at (6.8, .48) {\color{blue} $b$};
    \node at (7.9, 2.3) { \color{brown} $c$};
\end{tikzpicture}
\caption{The path $\{a, b, c, a'\}$ in $\natp{S_{g}^1}$. }
\label{fig-homconnect}
\end{figure}

By the change of coordinates principle,  this path also exists for any other element of $\natp{S_{g}^1}$ that shares the same endpoints as $a$.  Since the choice of endpoints was arbitrary, then there is a path between any two isotopy classes from the same quotient class.  
\end{proof}

\medskip

\noindent \textit{A fine arc graph.  } We now consider the fine version of the previously defined arc graph. We define the \emph{fine paired $k$-smooth nonseparating arc graph}, denoted $\fnatp{S_{g}^1}$. The vertices of this graph consist of arcs in $S_{g}^1$ with the following properties:
\begin{enumerate}[label=(\alph*)]
\item the arcs are essential and nonseparating
\item the endpoints of each arc are distinct
\item the interior of the arcs are disjoint from any boundaries
\item there is an extension of each arc by a subarc of the boundary such that this extension is a $C^k$ curve on $S_{g}^1$
\end{enumerate}
There are edges between vertices when the corresponding arcs have the following properties:
\begin{enumerate}[label=(\roman*)]
\item the interiors of the arcs are disjoint
\item the arcs are jointly nonseparating
\item each extension given in property (e) contains both endpoints of the other arc
\end{enumerate}
These are the same conditions given for the non-fine arc graph, but the vertices now consist of arcs instead of isotopy classes of arcs. We will now show that this fine arc graph is also connected.

\begin{lemma}\label{finenonseppairedarcconnected}
Let $S_{g}^1$, with $g \geq 2$.  Then $\fnatp{S_{g}^1}$ is connected.
\end{lemma}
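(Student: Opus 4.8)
The plan is to compare $\fnatp{S_{g}^1}$ with the non-fine graph $\natp{S_{g}^1}$, which is connected by Lemma~\ref{nonseppairedarcgraphconnected}, via the natural projection $\pi : \fnatp{S_{g}^1} \to \natp{S_{g}^1}$ sending an arc to its isotopy class rel boundary and tangent spaces. It suffices to establish two facts. First, $\pi$ lifts edges: every edge of $\natp{S_{g}^1}$ is the image of an edge of $\fnatp{S_{g}^1}$, and $\pi$ carries adjacent vertices to adjacent-or-equal vertices. Second, every fiber $\pi^{-1}([a])$ is connected as a subgraph of $\fnatp{S_{g}^1}$. Granting these, given arcs $a, a'$ one takes a path $[a] = [a_0], \dots, [a_n] = [a']$ in $\natp{S_{g}^1}$, lifts each edge to an edge of $\fnatp{S_{g}^1}$, and splices consecutive lifts together inside the fibers over the $[a_i]$, producing a path from $a$ to $a'$.

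The edge-lifting statement is essentially a restatement of definitions: an edge of $\natp{S_{g}^1}$ between $[a_i]$ and $[a_{i+1}]$ is by definition witnessed by representative arcs whose interiors are disjoint, which are jointly nonseparating, and whose boundary-extensions each contain the other's endpoints --- but these are exactly the adjacency conditions for those representatives viewed as vertices of $\fnatp{S_{g}^1}$. Likewise, adjacent arcs of $\fnatp{S_{g}^1}$ project to isotopy classes that are equal or adjacent in $\natp{S_{g}^1}$.

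The real content is the connectivity of the fibers. Fix two arcs $b, b'$ in the same isotopy class and, using the isotopy extension theorem, realize the isotopy by an ambient $C^k$ isotopy $\{\phi_t\}_{t \in [0,1]}$ with $\phi_0 = \mathrm{id}$ and $\phi_1(b) = b'$, which we may take supported in an arbitrarily small neighborhood of the trace of the isotopy. Subdivide $[0,1]$ into $0 = t_0 < \cdots < t_m = 1$ finely enough that, writing $b_i = \phi_{t_i}(b)$, consecutive arcs $b_i$ and $b_{i+1}$ together lie in a common embedded disk $D_i$ (a thin rectangular neighborhood of $b_i$) whose closure contains the support of $\phi_t$ for $t \in [t_i, t_{i+1}]$ and meets $\partial S_{g}^1$ in a small interval. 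I then produce a common neighbor $c_i \in \fnatp{S_{g}^1}$ of $b_i$ and $b_{i+1}$: take $c_i$ essential and nonseparating with interior in the complement of $D_i$, routed through the genus of $S_{g}^1$ so that it is jointly nonseparating with $b_i$ (hence also with $b_{i+1}$, since $\phi_{t_{i+1}}$ restricts to a homeomorphism of the complement of $D_i$ carrying $b_i$ to $b_{i+1}$ and fixing $c_i$), and with endpoints on the boundary straddling the small interval $D_i \cap \partial S_{g}^1$ that contains all endpoints of $b_i$ and $b_{i+1}$. A boundary subarc equal to that interval extends $c_i$ to a curve that can be smoothed to be $C^k$ via the argument of Lemma~\ref{smoothingarcsatboundary}, and it contains the endpoints of both $b_i$ and $b_{i+1}$; conversely, slightly enlarged boundary subarcs extend $b_i$ and $b_{i+1}$ to $C^k$ curves containing the two endpoints of $c_i$. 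Thus $c_i$ is adjacent to both $b_i$ and $b_{i+1}$, and $\{b = b_0, c_0, b_1, c_1, \dots, c_{m-1}, b_m = b'\}$ is the desired path; by the change of coordinates principle the endpoint configuration and routing can always be arranged, so this works for every pair in the fiber.

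The main obstacle is the simultaneous control of all three edge conditions in the construction of $c_i$, in particular joint nonseparation: two parallel copies of a nonseparating arc are jointly \emph{separating}, so $b_i$ and $b_{i+1}$ are themselves non-adjacent and the detour through $c_i$ is genuinely necessary. One must verify that $c_i$ can be routed through the handles of $S_{g}^1$, away from $D_i$, so as to be jointly nonseparating with $b_i$ while its boundary-extension still captures the clustered endpoints of $b_i$ and $b_{i+1}$ and, reciprocally, the extensions of $b_i$ and $b_{i+1}$ capture the endpoints of $c_i$. This is where $g \geq 2$ enters: cutting $S_{g}^1$ along $b_i$ leaves a connected surface of positive genus with enough room to embed such a $c_i$.
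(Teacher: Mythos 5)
Your proposal is correct and follows essentially the same route as the paper: reduce to the connectivity of $\natp{S_{g}^1}$ from Lemma~\ref{nonseppairedarcgraphconnected}, prove each fiber (isotopy class) is connected by subdividing the isotopy so consecutive arcs lie in a thin ribbon and finding a common neighbor routed through the genus of the complement (this is exactly where $g\geq 2$ enters in the paper as well), and then splice lifted edges together through the fibers. The only quibbles are cosmetic: a regular neighborhood of such an arc meets $\partial S_{g}^1$ in two intervals rather than one, and the boundary-extension condition is arranged by choosing the endpoints and tangencies of $c_i$ appropriately rather than by smoothing after the fact, but neither affects the argument.
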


\begin{proof}
Let $a,b$ be arcs in $\fnatp{S_{g}^1}$.  By Lemma \ref{nonseppairedarcgraphconnected},  the isotopy classes $[a]$ and $[b]$ are connected by a path $\{ [c_0]=[a], [c_1], [c_2], \ldots,  [c_{n-1}], [c_n]=[b]\}$.  We prove this lemma in two steps, first showing that each isotopy class is connected and then use this fact to get a path from $a$ to $b$.

\medskip

\noindent \textit{Step 1:  } We claim that each isotopy class relative to the boundary is connected. To prove this, let $p,q$ be two arcs in the same isotopy class in $\fnatp{S_{g}^1}$.  Then there is an isotopy $H : I \times I \rightarrow S_{g}^1$ relative boundary and tangent spaces on the boundary such that each arc $c_t(s) = H(s, t)$ is an vertex of $\fnatp{S_{g}^1}$.  Note that each of the $c_t(s)$ has the same tangent direction at its endpoints as $p$ and $q$.  

It is possible to partition $I$ into small enough pieces $\{t_0 =0, t_1, \ldots, t_{n-1}, t_n=1\}$ such that consecutive $c_{t_{i}}$, $c_{t_{i+1}}$ live in a small nonseparating ribbon $R$ on $S_{g}^1$ with ends in $\partial S_{g}^1$.  Moreover, since $c_{t_{i}}$ and $c_{t_{i+1}}$ have the same endpoints, then $R$ can be made sufficiently narrow to ensure that there are two components of $\partial S_{g}^1 \setminus \overline{R}$ with nonempty interiors.  In particular, any $C^k$ extension of the $c_t$ will contain exactly one of these components.  Denote this component of $\partial S_{g}^1 \setminus \overline{R}$ by $d$.  

Since $S_{g}^1 \setminus R$ is a surface with two boundaries and $g-1$ genus and $g \geq 2$, then there is a nonseparating $C^k$ arc $e_i$ on $S_{g}^1 \setminus R$ with endpoints on $d$ that is disjoint to both $c_{t_i}$ and $c_{t_{i+1}}$.   Note that since $d$ has a nonempty interior, then there is a nonempty open subset of $S_{g}^1 \setminus R$ that contains the interior of $d$.  Thus $e_i$ can be chosen such that it is compatible with either orientation on $\partial S_{g}^1$.  In particular,  this allows us to pick an $e_i$ such that the $C_k$ extension intersects both endpoints of the $c_t$.  Since $e_i$ is nonseparating on $S_{g}^1 \setminus R$, then both pairs $(e_i, c_{t_i})$ and $(e_i, c_{t_{i+1}})$ are jointly nonseparating.  So $e_i$ is adjacent to $c_{t_i}$ and $c_{t_{i+1}}$ in $\fnatp{S_{g}^1}$.  Thus $$\{c_{t_0} = p, e_0, c_{t_1}, \ldots, c_{t_{n-1}}, e_{n-1}, c_{t_n} = q\}$$ is a path in $\fnatp{S_{g}^1}$ between $p$ and $q$.  Since the choice of $p$ and $q$ was arbitrary,  then any isotopy class is connected in $\fnatp{S_{g}^1}$.

\medskip

\noindent \textit{Step 2:  } Since $\{ [c_0]=[a], [c_1], [c_2], \ldots,  [c_{n-1}], [c_n]=[b]\}$ is the path between $[a]$ and $[b]$, then there are representatives $c_i'$ and $c_i''$ for each $i$ such that $c_{i}''$ is disjoint from $c_{i+1}'$.  By step 1,  there are paths $\{c_i' = d_0^i , d_1^i, \ldots d_{k_i}^i = c_i''\}$ within each isotopy class $[c_i]$ that connect $c_i'$ and $c_i''$. 
Thus 
\begin{flalign*}
&\{a = c_0'  = d_0^0, d_1^0, \ldots, d_{k_0}^0 = c_0'', c_1 ' = d_0^1, \ldots \\
& \qquad \qquad \ldots,   d_{k_{i-1}}^{i-1} = c_{i-1}'',  c_i' = d_0^i , d_1^i, \ldots d_{k_i}^i = c_i'', c_{i+1}' = d_0^{i+1}, \ldots \\
& \qquad \qquad \qquad \qquad  \qquad \qquad \qquad \qquad \ldots,  d_{k_{n-1}}^{n-1} = c_{n-1}'', c_n' = d_0^n, \ldots, d_{k_n}^n = c_n'' = b\}
\end{flalign*}
is a path from $a$ to $b$.  Since $a$ and $b$ were chosen arbitrarily,  then $\fnatp{S_{g}^1}$ is connected.
\end{proof}

\subsection{Kernels of simple $\mathbf{k}$-smooth pairs}\label{subsectionkernel}
In this section, we build up an equivalence relation on simple $k$-smooth pairs based on the inessential curves that they define. We then use this equivalence relation to show that any automorphism of the $C^k$-curve graph is well-defined on inessential curves.  

\medskip

\noindent \textit{Kernel curves.  } Given any simple $k$-smooth pair $(a, b)$ that determines the curve $d = a_1 \cup b_1 \cup c_1 \cup c_2$, there is a unique inessential curve $e = a_2 \cup b_2 \cup c_1 \cup c_2$. We call $e$ the \emph{kernel curve} of $(a,b)$. 

Note that the kernel curve of any simple $k$-smooth pair will be a vertex in $\efcn{k}{S_g}$.  Our goal is to show that this curve $e$ is well-defined under any automorphism of $\fcn{k}{S_g}$. To achieve this, we use an equivalence relation on the set of simple $k$-smooth pairs in $S_g$.

\medskip

We define the equivalence relation $\!\!{\kereq}$, called \textit{kernel equivalence},  on the set of simple $k$-smooth pairs in $S_g$ by the transitive closure of the following relation: $(a,b) \kereq (b,u)$ if there exists a $b' \in \fcn{k}{S_g}$ homotopic to $b$ and disjoint from $a$, $b$,  and $u$ such that for any $\gamma \in \fcn{k}{S_g}$  contained in the annulus between $b$ and $b'$,  then $\gamma$ intersects $a$ if and only if $\gamma$ intersects $u$.  This generating relation is detecting whether the subarcs $a_2$ and $u_2$ are the same.  

\begin{figure}[h!]
\centering
\begin{tikzpicture}
\small
    \node[anchor=south west, inner sep = 0] at (0,0){\includegraphics[width=4.5in]{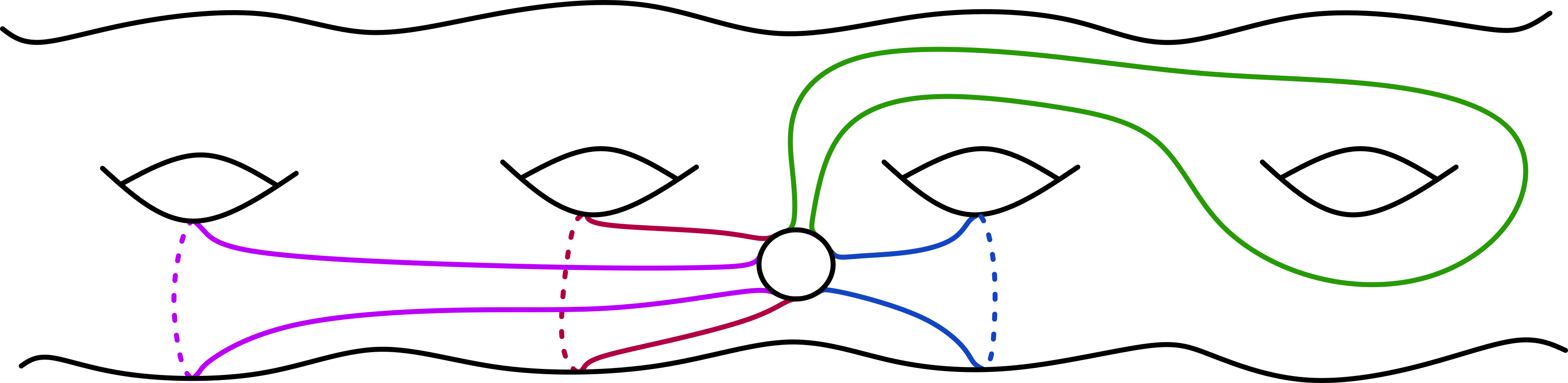}};
    \node at (2.5,1.1) {\color{violet} $v$};
    \node at (5.2,1.3) {\color{purple} $a$};
    \node at (6.8, .8) {\color{blue} $b$};
    \node at (9.5, 2.4) { \color{OliveGreen} $u$};
\end{tikzpicture}
\caption{A couple of simple $k$-smooth pairs with kernel equivalence.}
\end{figure}

Note that $(a,b) = (b,a)$ since simple $k$-smooth pairs are unordered.  Our goal is to show that two simple $k$-smooth pairs are equivalent exactly when they have the same kernel curve.  We start with the following lemma.

\begin{lemma}\label{equivsamecurve}
Let $S_g$ be a surface with $g \geq 2$.  Let $(a,b)$ and $(p,q)$ be simple $k$-smooth pairs on $S_g$. If $(a,b) \kereq (p, q)$, then they have the same kernel curve.
\end{lemma}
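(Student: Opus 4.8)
The plan is to reduce to the single generating relation, since kernel equivalence is defined as its transitive closure and "having the same kernel curve" is itself an equivalence relation. So it suffices to show: if $(a,b) \kereq (b,u)$ via a single application of the generating relation, then $(a,b)$ and $(b,u)$ have the same kernel curve. Let $b'$ be the curve provided by the definition: $b'$ is homotopic to $b$, disjoint from $a$, $b$, and $u$, and every $\gamma \in \fcn{k}{S_g}$ in the annulus $A$ between $b$ and $b'$ intersects $a$ if and only if it intersects $u$. Write $d_{(a,b)} = a_1 \cup b_1 \cup c_1 \cup c_2$ for the determined curve of $(a,b)$ and $e_{(a,b)} = a_2 \cup b_2 \cup c_1 \cup c_2$ for its kernel curve, with $a_2 = a \setminus (a_1 \cup c_1 \cup c_2)$, $b_2 = b \setminus (b_1 \cup c_1 \cup c_2)$; similarly $d_{(b,u)} = b_1' \cup u_1 \cup c_1' \cup c_2'$ and $e_{(b,u)} = b_2' \cup u_2 \cup c_1' \cup c_2'$ for the pair $(b,u)$ (note $b$ plays the role of the first curve here). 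The intent of the generating relation, as the text says, is that the "complementary arc" $a_2$ of $a$ relative to $(a,b)$ equals the complementary arc $u_2$ of $u$ relative to $(b,u)$.

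The key steps I would carry out are: \emph{(1)} Use the annulus $A$ between $b$ and $b'$ to localize where $a$ and $u$ can differ. Since $b'$ is disjoint from $a$ and $u$, both $a$ and $u$ lie in one of the components of $S_g \setminus (b \cup b')$; the arcs of $a$ and $u$ that can enter $A$ are controlled by the "intersects $a$ iff intersects $u$" condition for curves $\gamma$ sitting inside $A$ (these $\gamma$ are all homotopic to $b$). \emph{(2)} Translate that condition into the statement that $a$ and $u$ have the same trace on a neighborhood of $b$: a curve $\gamma$ parallel to $b$ inside $A$ meets $a$ exactly at the arcs of $a$ that cross $b$'s side of $A$, and likewise for $u$; making $\gamma$ sweep across $A$ forces $a \cap A = u \cap A$ as subsets, i.e. the portions of $a$ and $u$ near $b$ coincide. \emph{(3)} Combine this with the fact that, by the $k$-smooth pair structure, $a$ decomposes as $a_1 \cup c_1 \cup a_2 \cup c_2$ with $a_1 \subset d_{(a,b)}$ sitting against $b_1$, and $u$ decomposes as $u_1 \cup c_1' \cup u_2 \cup c_2'$ with $u_1 \subset d_{(b,u)}$ sitting against $b_1'$. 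The "same trace near $b$" conclusion identifies the overlap arcs $c_1, c_2$ with $c_1', c_2'$ and, crucially, identifies $a_2$ with $u_2$ as a subarc of $S_g$. \emph{(4)} Also identify $b_2$ with $b_2'$: both are the complementary arc of $b$ determined by the \emph{same} overlap arcs $c_1 = c_1'$, $c_2 = c_2'$ on the curve $b$, hence they are literally the same subarc of $b$. \emph{(5)} Conclude $e_{(a,b)} = a_2 \cup b_2 \cup c_1 \cup c_2 = u_2 \cup b_2' \cup c_1' \cup c_2' = e_{(b,u)}$.

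The main obstacle I anticipate is Step (2)–(3): rigorously extracting "$a$ and $u$ agree near $b$" from the purely graph-theoretic condition about which curves in the annulus $A$ meet $a$ versus $u$. The subtlety is that "intersects $a$ iff intersects $u$" for \emph{all} $\gamma \subset A$ is a weaker-looking condition than set equality of the arcs, so I need to argue that the overlap arcs $c_1,c_2$ lie on $b$ (so they are "seen" from within $A$) and that any discrepancy between $a$ and $u$ in a neighborhood of $b$ would produce a concrete $\gamma$ in $A$ witnessing the failure — this uses openness of $A$ and the $C^k$-curve existence arguments (in the style of Lemma \ref{smoothingarcsatboundary}) already available in the paper. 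The other steps are essentially bookkeeping about the arc decompositions of simple $k$-smooth pairs, together with the observation that $b_2$ is determined by $b$ and the pair of overlap arcs alone. One should also handle the degenerate case where $c_1$ or $c_2$ is a single point, but the same argument goes through verbatim.
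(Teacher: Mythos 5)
Your skeleton matches the paper's: reduce to the generating relation $(a,b)\kereq(b,u)$, work inside the annulus $B$ bounded by $b$ and $b'$, and turn any discrepancy between $a$ and $u$ into a witness curve $\gamma\subset B$ violating the defining condition. But the central deduction in your Steps (2)--(3) does not go through as stated. The hypothesis only records, for each essential $\gamma$ in $B$, \emph{whether} $\gamma$ meets $a$ and whether it meets $u$; it cannot detect the actual sets $a\cap B$ and $u\cap B$. Concretely, every $\gamma\in\fcn{k}{S_g}$ contained in $B$ is a core curve of $B$, so if a point of $u$ lies inside the disk $D_e$ bounded by the kernel of $(a,b)$ (which sits in $\overline{B}$, cut off by the arc $a_2$ and an arc of $b$), then \emph{every} essential $\gamma$ in $B$ passing near that point must exit $D_e$ through $a_2$ and hence also meets $a$ --- no witness exists, and "sweeping" round curves only sees a depth profile, not set equality. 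So you cannot conclude $a\cap B=u\cap B$ directly. What the hypothesis does give is a one-sided containment: $B\setminus a$ has exactly two components, the disk $D_e$ and a subannulus with $b'$ on its boundary, and if $u$ entered the interior of that subannulus you could build an essential $\gamma\in\fcn{k}{S_g}$ there meeting $u$ but not $a$. Hence $u\cap B\subset\overline{D_e}$, so $D_f\subset D_e$; by the symmetry of the defining relation in $a$ and $u$, also $D_e\subset D_f$, and equality of the disks gives equality of the kernel curves. That mutual-containment-of-disks argument is the repair your "main obstacle" needs, and it is exactly how the paper proceeds.

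A secondary issue: your Steps (3)--(4) assert that the overlap arcs get identified, $c_1=c_1'$ and $c_2=c_2'$, and hence $b_2=b_2'$. This is false in general and, fortunately, not needed. The curve $u$ may overlap $b$ along strictly longer (or shorter) arcs than $a$ does while still producing the same kernel curve: one then has $c_1'\supsetneq c_1$ and $b_2'\subsetneq b_2$ with $c_1'\cup b_2'\cup c_2'=c_1\cup b_2\cup c_2$ as subsets of $b$, so the kernels agree even though the decompositions do not. Once you argue at the level of the disks $D_e$ and $D_f$ rather than the labelled arc decompositions, this bookkeeping disappears.
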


\begin{proof}
Since kernel equivalence is a transitive closure, then it is sufficient to show that the pairs in the defining relation $(a,b) \kereq (b,u)$ have the same kernel curve.  Denote the kernel of $(a,b)$ by $e$ and the kernel of $(b,u)$ by $f$. 

Since $(a,b) \kereq (b,u)$, then there exists some $b' \in \fcn{k}{S_g}$ homotopic to $b$ and disjoint from $a$, $b$,  and $u$.  
Note that both $e$ and $f$ are contained in the closure of the annulus bounded by $b$ and $b'$.  Denote this annulus by $B$ and the disks bounded by $e$ and $f$ by $D_e$ and $D_f$.  So $B \setminus a$ will have two components, $D_e$ and an annulus whose boundaries are $b'$ and a second curve made from an arc of $b$ and an arc of $a$.  If $u$ has any point in the interior of this subannulus, then it would be possible to find an curve $\gamma \in \fcn{k}{S_g}$ in this annulus that intersects $u$ but not $a$.  By the defining relation,  this cannot happen. So $u \cap B$ must be contained in the closure of $D_e$.  But this will mean that $D_f \subset D_e$.  Since $a$ and $c$ are symmetric in the defining relation, then $D_e \subset D_f$ as well. Thus $D_e = D_f$, which means that $e = f$. 
\end{proof}

In order to show the other direction, we need to introduce a new graph,  called a line graph.  We then give the connection between the line graph and kernel equivalence.

\medskip

\noindent \textit{Line graphs.  }
For any graph $\Gamma$, define $L(\Gamma)$ to be the \emph{line graph} of $\Gamma$,  which has a vertex for each edge of $\Gamma$ and an edge whenever two edges of $\Gamma$ share a vertex. 

Observe that for any simple $k$-smooth pair $(a,b)$ in $S_g$ with kernel $e$, there exists an open disk $D_e$ whose boundary is the curve $e$.  Set $a'$ and $b'$ to be the closures of the open arcs $a \setminus e$ and $b \setminus e$ in $S_g \setminus D_e$.  Note that $a'$ and $b'$ determine an edge in $\fnatp{S_g \setminus D_e}$, denote this edge by $\overline{ab}$. 

\begin{lemma}\label{equivreltoarcgraph}
Let $S_g$ be a surface with $g \geq 2$.  Let $(a,b)$ and $(p,q)$ be simple $k$-smooth pairs on $S_g$ with the same kernel $e$.  Let $D_e$ be an open disk whose boundary is $e$. Then $(a,b) \kereq (p,q)$ if and only if $\overline{ab}$ and $\overline{pq}$ are connected in $L(\fnatp{S_g \setminus D_e})$.
\end{lemma}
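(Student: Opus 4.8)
The statement is an iff between two equivalence relations on simple $k$-smooth pairs with fixed kernel $e$: kernel equivalence $\kereq$ on the pair side, and ``connected in the line graph $L(\fnatp{S_g \setminus D_e})$'' on the arc side. The natural strategy is to set up a dictionary between simple $k$-smooth pairs with kernel $e$ and edges of $\fnatp{S_g \setminus D_e}$, via the map $(a,b) \mapsto \overline{ab}$ described just before the lemma, and then show that the generating relation for $\kereq$ corresponds exactly to the adjacency relation in the line graph (sharing a vertex). Once that correspondence is established, taking transitive closures on both sides gives the iff immediately, since ``connected in $L(\Gamma)$'' is by definition the transitive closure of the adjacency relation in $L(\Gamma)$.

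First I would make the dictionary precise. Given a simple $k$-smooth pair $(a,b)$ with kernel $e$, cutting along $e$ (removing the open disk $D_e$) turns $a$ into the arc $a' = \overline{a \setminus e}$ and $b$ into $b' = \overline{b \setminus e}$, and these two arcs have the properties (a)--(e) and (i)--(iii) defining $\fnatp{S_g \setminus D_e}$ (this was already observed in \S\ref{subsectionpairedarcgraph} and right before the lemma). Conversely, an edge $\{a', b'\}$ of $\fnatp{S_g \setminus D_e}$ reconstitutes a simple $k$-smooth pair on $S_g$ by gluing the disk back: the arcs $a', b'$ together with their boundary extensions recover curves $a, b$ whose union with $\partial D_e$ realizes the $k$-smooth pair data $a_2 \cup b_2 \cup c_1 \cup c_2$. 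I would note that this pair-to-edge correspondence is a bijection (or at least surjective with a well-understood fiber), so that ``same kernel'' pairs biject with edges of the arc graph. The key point to verify carefully here is that the extension-by-boundary-subarc condition (property (e)/(iii)) is exactly what encodes the ``determines $d$'' data of the smooth pair, and that the arc $c_1 \cup c_2$ (the overlap) becomes the two points of $\partial D_e$ where $a'$ and $b'$ attach.

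Next, the heart of the argument: the generating relation $(a,b) \kereq (b,u)$ holds if and only if $\overline{ab}$ and $\overline{bu}$ share a vertex in $\fnatp{S_g \setminus D_e}$ — namely the vertex corresponding to the arc $b' = \overline{b \setminus e}$. The forward direction: the defining relation says there is $b'' \simeq b$ disjoint from $a, b, u$ such that curves in the annulus between $b$ and $b''$ meet $a$ iff they meet $u$; as the proof of Lemma~\ref{equivsamecurve} extracts, this forces the arcs $a_2$ and $u_2$ to agree, i.e.\ the arc of $a$ ``outside'' $b$ and the arc of $u$ ``outside'' $b$ are isotopic rel endpoints in the cut surface, so $\overline{ab}$ and $\overline{bu}$ are two edges at the common vertex $[b]$ in the line graph, hence adjacent in $L$. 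The reverse direction: if $\overline{ab}$ and $\overline{bu}$ share the vertex $b'$ in $\fnatp{S_g \setminus D_e}$, then $a, b, u$ can be taken disjoint in the appropriate configuration, a parallel push-off $b''$ of $b$ exists bounding the right annulus, and the edge conditions (i)--(iii) guarantee the ``$\gamma$ meets $a$ iff $\gamma$ meets $u$'' condition on that annulus — so $(a,b) \kereq (b,u)$. I would also need the small observation that \emph{every} simple $k$-smooth pair sharing a curve $b$ with $(a,b)$ arises as an edge at vertex $[b]$, and conversely every edge of $\fnatp{S_g \setminus D_e}$ through $[b]$ comes from such a pair; this is where the connectedness results of \S\ref{subsectionpairedarcgraph} are used in spirit (to know the arc graph is the right object), though for this lemma we only need the local edge-vertex dictionary, not global connectedness.

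The main obstacle I anticipate is the reverse direction of the generating-relation correspondence: producing, from a shared vertex $b'$ in the arc graph, an actual curve $b'' \in \fcn{k}{S_g}$ homotopic to $b$, disjoint from $a, b, u$, bounding an annulus with $b$ on which the ``meets $a$ iff meets $u$'' dichotomy holds. This requires a careful local analysis near the overlap arcs $c_1, c_2$ (equivalently near $\partial D_e$) to ensure the $C^k$ smoothness of the push-off and that the annulus is narrow enough to exclude spurious intersections, much in the flavor of Lemma~\ref{smoothingarcsatboundary}. I would handle it by first choosing $b''$ in a regular neighborhood of $b$ that stays outside $a, u$ except along the controlled overlap region, then arguing that any $\gamma$ in the resulting annulus must, by the Jordan curve theorem inside the cut surface, intersect $a_2$ precisely when it intersects $u_2$, using that $a' $ and $u'$ cut $S_g \setminus D_e$ along the same isotopy class rel endpoints. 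Once both directions of the generating-relation-vs-adjacency correspondence are in hand, the transitive-closure step is purely formal and completes the proof.
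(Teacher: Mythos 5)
Your overall architecture is the same as the paper's: translate pairs with kernel $e$ into edges of $\fnatp{S_g \setminus D_e}$, match the generating relation of $\kereq$ with adjacency in the line graph (two edges sharing a vertex, namely the arc of the common curve), and then let the transitive closures on both sides do the rest. The forward direction is handled exactly as in the paper (via Lemma~\ref{equivsamecurve} to keep all intermediate pairs on the same kernel), and the paper's reverse direction is likewise a walk along a path of arcs, converting each consecutive edge into a generating relation.

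The one place your plan goes astray is the construction for the reverse direction. You propose taking $b''$ ``in a regular neighborhood of $b$,'' i.e.\ a thin collar push-off of the shared curve. That is not the right annulus: in a thin collar the portions of $a$ and $u$ that a core curve $\gamma$ can see are the small excursions of $a_1$ and $u_1$ across $b$ near the overlap arcs, and since the overlap arcs of $(a,b)$ and $(b,u)$ need not coincide (only $\overline{e\setminus b}$ is forced to be common to $a$ and $u$), the condition ``$\gamma$ meets $a$ iff $\gamma$ meets $u$'' can fail there. What the paper does instead is push the shared curve off \emph{around the disk}: working in $\overline{S_g\setminus(D_e\cup a')}$ it takes a push-off $\alpha$ of the boundary component $a'\cup(e\setminus a)$, so that the annulus between the shared curve and $\alpha$ contains $\overline{D_e}$ and a collar thin enough to miss the other two curves. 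Then the only portion of either non-shared curve inside the annulus is the common arc $e\setminus a$, and the biconditional is immediate. Relatedly, your justification ``using that $a'$ and $u'$ cut $S_g\setminus D_e$ along the same isotopy class rel endpoints'' is not correct: $a'$ and $u'$ are generically non-isotopic vertices of $\fnatp{S_g\setminus D_e}$ both adjacent to $b'$ (if they were isotopic the path would be degenerate). The object that is actually shared is the arc $\overline{e\setminus b}$ lying on both $a$ and $u$; the verification must be arranged so that this arc is all of $a$ and all of $u$ that the annulus contains, which is exactly what the around-the-disk push-off accomplishes.
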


\begin{proof}
We first prove the forward direction.  The first goal is to show that any generating relation will be connected in $L(\fnatp{S_g \setminus D_e})$.  Let $(a, b) \kereq (b, u)$ be the generating relation.  By Lemma \ref{equivsamecurve},  $(a,b)$  and $(b,u)$ will have the same kernel $e$.  So both  $\overline{ab}$ and $\overline{bu}$ are edges in $\fnatp{S \setminus D_e}$ and will be connected to $b$.  So the vertices $\overline{ab}$ and $\overline{bu}$ in the line graph $L(\fnatp{S \setminus D_e})$ are connected by an edge.  Since the relation $\kereq$ is a transitive closure, then any two simple $k$-smooth pairs that are equivalent will be able to be connected by a path  in $L(\fnatp{S \setminus D_e})$.

Next, we prove the backward direction.  If $\overline{ab}$ and $\overline{pq}$ are connected in $L(\fnatp{S \setminus D_e})$, then there is a path in $\fnatp{S \setminus D_e}$ from the arcs $a'$ to $p'$ (this path may, or may not,  pass through $b'$ and $q'$).  Denote the vertices of this path by $\{u_0 = a', u_1, \ldots, u_{n-1}, u_n = p'\}$.   Since $u_1$ is an arc in $S_g \setminus D_e$, it can be extended to a $C^k$ curve $u_1'$ in $S_g$ by a subarc of $e$.  The edge condition of $\fnatp{S \setminus D_e}$ will imply that $(a,u_1')$ is a simple $k$-smooth pair in $S_g$.  Notably, then $e \setminus a$ is an arc contained in $u_1'$ and $b$. 

Consider the surface $\overline{S_g \setminus (D_e \cup a')}$.  Since $b'$ and $u_1$ are disjoint from $a'$ and closed subsets, then the boundary component $a' \cup (e \setminus a)$ has an open neighborhood that does not intersect $b'$ or $u_1$.  So there exists $\alpha \in \fcn{k}{S_g}$ in this neighborhood that are disjoint from $a, b$, and $u_1'$.  Moreover,  $\alpha$ will be homotopic to $a$ since $e$ is inessential.  Since $e \setminus a$ is the only portion of $b$ and $u_1'$ that is contained in the annulus between $a$ and $\alpha$.  So any $\gamma \in \fcn{k}{S_g}$ that is contained in this annulus will intersect $b$ if and only if it intersects $e \setminus a$ if and only if it intersects $u_1'$.  Thus $(a,b) \kereq (a, u_1')$.  

Since $(u_1', u_2')$ is a simple $k$-smooth pair such that $(a, u_1') \kereq,(u_1', u_2')$, then this process can then be continued for each step of the path. Thus by transitivity of the relation $\kereq$,  $(a,b) \kereq (p, q)$. 
\end{proof}

\noindent Utilizing the above result, we get following lemma, which is the reverse of Lemma \ref{equivsamecurve}.

\begin{lemma}\label{samecurveequiv}
Let $S_g$ be a surface with $g \geq 2$.  Let $(a,b)$ and $(p,q)$ be simple $k$-smooth pairs on $S_g$.  If $(a,b)$ and $(p,q)$ have the same kernel curve, then $(a,b) \kereq (p,q)$.
\end{lemma}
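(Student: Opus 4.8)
The plan is to reduce the statement to the two connectivity results just proved. Fix the common kernel curve $e$ of the simple $k$-smooth pairs $(a,b)$ and $(p,q)$, and let $D_e$ be an open disk whose boundary is $e$; such a disk exists because $e$ is an inessential $C^k$ curve, and since $D_e$ is a disk the surface $S_g \setminus D_e$ is homeomorphic to $S_g^1$. Thus Lemma \ref{finenonseppairedarcconnected} applies to the fine arc graph $\fnatp{S_g \setminus D_e}$.

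Next I would invoke the construction preceding Lemma \ref{equivreltoarcgraph}: the closures $a'$, $b'$, $p'$, $q'$ of the open arcs $a \setminus e$, $b \setminus e$, $p \setminus e$, $q \setminus e$ in $S_g \setminus D_e$ are vertices of $\fnatp{S_g \setminus D_e}$, and $\{a',b'\} = \overline{ab}$, $\{p',q'\} = \overline{pq}$ are edges of that graph. By Lemma \ref{finenonseppairedarcconnected} there is a path $a' = u_0, u_1, \ldots, u_n = p'$ in $\fnatp{S_g \setminus D_e}$. Each consecutive pair $\{u_{i-1}, u_i\}$ is an edge of this arc graph, hence a vertex of the line graph $L(\fnatp{S_g \setminus D_e})$, and consecutive such edges share the vertex $u_i$, so they are adjacent in the line graph. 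Moreover $\overline{ab} = \{a',b'\}$ is adjacent (or equal) to $\{u_0, u_1\}$ since both contain $a' = u_0$, and likewise $\overline{pq}$ is adjacent (or equal) to $\{u_{n-1}, u_n\}$ since both contain $p' = u_n$. Chaining these adjacencies shows $\overline{ab}$ and $\overline{pq}$ lie in the same connected component of $L(\fnatp{S_g \setminus D_e})$.

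Finally, Lemma \ref{equivreltoarcgraph} translates this connectivity back into the desired conclusion $(a,b) \kereq (p,q)$. The only point that needs any care — and the place an argument could slip — is the passage from a path in the arc graph to a chain of adjacencies in the line graph: one must remember that the endpoints $a'$ and $p'$ of the path are themselves shared with $\overline{ab}$ and $\overline{pq}$, so no separate step is needed to "enter" or "leave" the path, and the degenerate case $a' = p'$ (or $u_1 = b'$, etc.) is covered by allowing "adjacent or equal". Beyond this bookkeeping I expect no genuine obstacle, since the substantive work has already been packaged into Lemmas \ref{equivreltoarcgraph} and \ref{finenonseppairedarcconnected}.
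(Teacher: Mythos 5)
Your proposal is correct and follows essentially the same route as the paper: invoke Lemma~\ref{equivreltoarcgraph} to translate the statement into connectivity of $\overline{ab}$ and $\overline{pq}$ in $L(\fnatp{S_g \setminus D_e})$, then deduce that connectivity from Lemma~\ref{finenonseppairedarcconnected}. The only difference is that you spell out the standard step that a path in a connected arc graph yields a chain of adjacent edges in the line graph, which the paper asserts without elaboration.
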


\begin{proof}
Since $(a,b)$ and $(p,q)$ have the same kernel curve $e$, then by Lemma \ref{equivreltoarcgraph}, they will be in the same equivalence class exactly if they are connected in the line graph $L(\fnatp{S \setminus D_e})$. But by Lemma \ref{finenonseppairedarcconnected}, $\fnatp{S \setminus D_e}$ is connected when its genus is at least 2. Thus its line graph must also be connected and so $(a,b) \kereq (p,q)$. 
\end{proof}

\noindent We are now ready to state and prove the final result of this section.

\begin{lemma}\label{autopreservesrelation}
Let $S_g$ be a surface with $g \geq 2$.  Let $(a,b)$ and $(p,q)$ be simple $k$-smooth pairs on $S_g$ and let $\alpha \in \aut \fcn{k}{S_g}$.  Then $(a,b)$ and $(p,q)$ have the same kernel curve if and only if $(\alpha(a), \alpha(b))$ and $(\alpha(p), \alpha(q))$ have the same kernel curve. 
\end{lemma}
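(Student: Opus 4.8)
The plan is to reduce this statement to the combinatorial characterization of kernel equivalence already established, namely Lemma~\ref{equivsamecurve} together with Lemma~\ref{samecurveequiv}, which jointly say that two simple $k$-smooth pairs have the same kernel curve if and only if they are kernel equivalent. So it suffices to prove that kernel equivalence itself is preserved by any $\alpha \in \aut \fcn{k}{S_g}$: that is, $(a,b) \kereq (p,q)$ if and only if $(\alpha(a),\alpha(b)) \kereq (\alpha(p),\alpha(q))$. Since $\kereq$ is defined as the transitive closure of a single generating relation, and $\alpha^{-1}$ is also an automorphism, it is enough to check that $\alpha$ carries the generating relation into (a consequence of) the generating relation; applying the same argument to $\alpha^{-1}$ gives the reverse implication.

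First I would recall the generating relation: $(a,b) \kereq (b,u)$ precisely when there is a curve $b' \in \fcn{k}{S_g}$ homotopic to $b$ and disjoint from $a$, $b$, and $u$, such that every $\gamma \in \fcn{k}{S_g}$ contained in the annulus between $b$ and $b'$ intersects $a$ if and only if it intersects $u$. Now assume $(a,b) \kereq (b,u)$ via such a $b'$. By Proposition~\ref{c1preservessimplesmoothpair}, $(\alpha(a),\alpha(b))$ and $(\alpha(b),\alpha(u))$ are again simple $k$-smooth pairs. By Lemma~\ref{autpreservesephomandhull}, $\alpha(b')$ is homotopic to $\alpha(b)$, and it is disjoint from $\alpha(a)$, $\alpha(b)$, $\alpha(u)$ since $\alpha$ preserves disjointness. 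It remains to identify the curves ``contained in the annulus between $\alpha(b)$ and $\alpha(b')$'' combinatorially and check the biconditional on intersections; the intersection condition ``$\gamma$ meets $a$ iff $\gamma$ meets $u$'' transfers directly because $\alpha$ preserves intersection, so the content is the annulus condition.

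The main obstacle is therefore to show that ``$\gamma$ is contained in the annulus bounded by $b$ and $b'$'' is an automorphism-invariant notion. I would handle this with the hull machinery: the closed annulus between $b$ and $b'$ is the hull of $\{b, b'\}$ (two disjoint homotopic curves jointly bound an annulus, which is a disk-free complementary region, so the hull is exactly that annulus together with its boundary), and by Lemma~\ref{curvesinhull} together with Lemma~\ref{autpreservesephomandhull} the property ``$\gamma$ lies in the hull of $\{b,b'\}$'' is preserved by $\alpha$ — indeed Lemma~\ref{autpreservesephomandhull} explicitly states hulls of curves are preserved, and one checks $\alpha(\mathrm{hull}\{b,b'\}) = \mathrm{hull}\{\alpha(b),\alpha(b')\}$, which is the annulus between $\alpha(b)$ and $\alpha(b')$. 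Hence $\gamma$ lies in the $b$--$b'$ annulus iff $\alpha(\gamma)$ lies in the $\alpha(b)$--$\alpha(b')$ annulus, and since every curve in the latter annulus is $\alpha(\gamma)$ for some curve $\gamma$ in the former, the biconditional transfers. This shows $(\alpha(a),\alpha(b)) \kereq (\alpha(b),\alpha(u))$, completing the generating step.

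Finally I would assemble the pieces: taking transitive closures, $\alpha$ maps $\kereq$-related pairs to $\kereq$-related pairs; applying the identical argument to $\alpha^{-1}$ gives the converse, so $(a,b) \kereq (p,q)$ iff $(\alpha(a),\alpha(b)) \kereq (\alpha(p),\alpha(q))$. Combining with Lemmas~\ref{equivsamecurve} and~\ref{samecurveequiv} — ``same kernel curve $\iff$ kernel equivalent'' on both sides, which is legitimate since $\alpha$ preserves simple $k$-smooth pairs — yields that $(a,b)$ and $(p,q)$ share a kernel curve if and only if $(\alpha(a),\alpha(b))$ and $(\alpha(p),\alpha(q))$ do. I expect the only subtlety to be the careful verification that the hull of two disjoint homotopic curves is precisely the bounded annulus (so that ``in the annulus'' is genuinely equivalent to ``in the hull''), but this is exactly the change-of-coordinates picture used in the proof of Lemma~\ref{disjointisotoptograph}.
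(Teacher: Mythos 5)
Your overall strategy is the same as the paper's: translate ``same kernel curve'' into kernel equivalence via Lemmas~\ref{equivsamecurve} and~\ref{samecurveequiv}, check that $\alpha$ preserves the generating relation of $\kereq$, take transitive closures, and run the argument with $\alpha^{-1}$ for the converse. That skeleton is exactly what the paper does, and your handling of the homotopy and disjointness conditions on $b'$ is fine.

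However, the step you yourself single out as the crux contains a genuine error. You claim that the closed annulus between the disjoint homotopic curves $b$ and $b'$ is the hull of $\{b,b'\}$, so that ``$\gamma$ lies in the annulus'' becomes automorphism-invariant via Lemma~\ref{curvesinhull} and Lemma~\ref{autpreservesephomandhull}. But the hull, as defined in Section~\ref{sectioncurvepairs}, consists of the curves together with only the \emph{disk} components of the complement. For two disjoint, homotopic, essential curves on $S_g$ with $g\geq 2$, the complement $S_g\setminus\{b,b'\}$ has the open annulus between them as one component and one or two components carrying genus as the others; none of these is a disk, so $\mathrm{hull}\{b,b'\}=b\cup b'$, not the annulus. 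Your invariance argument therefore does not establish what you need. The correct tool is the separate combinatorial characterization of containment in an annulus, Lemma~\ref{containedinannulus}, whose invariance under $\alpha$ is recorded in Lemma~\ref{autpreservessubsurfaces}; this is precisely the lemma the paper invokes (``containment in annuli \ldots by Lemmas~\ref{autpreservesephomandhull} and~\ref{autpreservessubsurfaces}''). With that substitution the rest of your argument goes through unchanged.
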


\begin{proof}
Suppose that $(a,b)$ and $(p,q)$ have the same kernel curve.  Then by Lemma \ref{samecurveequiv},  $(a,b) \kereq (p,q)$.  Since the generating relation for kernel equivalence depends on homotopies, intersections, and containment in annuli, then by Lemmas \ref{autpreservesephomandhull} and \ref{autpreservessubsurfaces} the equivalence relation will be preserved by $\alpha$.  Thus $(\alpha(a),\alpha(b)) \kereq (\alpha(p),\alpha(q))$.  By Lemma \ref{equivsamecurve},  $(\alpha(a),\alpha(b))$ and $(\alpha(p),\alpha(q))$ will have the same kernel curve.  

Since $\alpha$ is an automorphism, then the reverse direction comes from using the automorphism $\alpha^{-1}$ with the exact same reasoning. 
\end{proof}

\subsection{Inessential curves}\label{subsectioninessential}
In this section, we give some ways that inessential curves and their intersections can be detected in both $\efcn{k}{S_g}$ and $\fcn{k}{S_g}$.  This is necessary in the proof of Proposition \ref{ec1toc1iso}.  Recall from Lemma \ref{sepcurveinextgraph}, that any curve in $\efcn{k}{S}$ is separating if and only if its link is a join.

\begin{lemma} \label{inessentialingraph}
Let $S_g$ be a compact surface with $g \geq 1$ and no boundary.  A curve $c \in \efcn{k}{S_g}$ is inessential if and only if $c$ is separating and one side of the join contains only separating curves. 
\end{lemma}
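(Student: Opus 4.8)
The plan is to characterize inessential curves inside the class of separating curves by looking at the two parts of the join structure given by Lemma~\ref{sepcurveinextgraph}. Recall that for a separating curve $c$, each side of $S_g \setminus c$ gives rise to one part of the join $link(c)$: a vertex $u$ lies in the part corresponding to a component $C$ of $S_g \setminus c$ exactly when $u \subset C$. So the content of the statement is: one of these two components contains only separating curves if and only if that component is a disk, i.e. $c$ bounds a disk and is therefore inessential.

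First I would prove the forward direction. Suppose $c$ is inessential, so it bounds a disk $D$ in $S_g$. Any curve in $\efcn{k}{S_g}$ that is contained in $D$ is itself nullhomotopic in $S_g$, hence separating; so the part of the join corresponding to $D$ contains only separating curves. (One must note this part is nonempty: since $S$ is oriented, $c$ can be pushed off itself into $D$, giving an inessential hence separating curve in that part.) That handles the ``only if'' side.

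For the converse, suppose $c$ is separating but \emph{not} inessential, so neither component $C_1$, $C_2$ of $S_g \setminus c$ is a disk. I want to find a nonseparating curve in each component, which will show that \emph{neither} part of the join consists only of separating curves. Fix a component $C_i$; it is a compact surface with one boundary component and, since it is not a disk, it has either positive genus or — but with a single boundary curve on a closed orientable surface a planar component with one boundary \emph{is} a disk, so in fact $C_i$ must have genus at least $1$. A genus-one-with-one-boundary subsurface contains an essential nonseparating simple closed curve (a standard homology/change-of-coordinates fact, cf.\ \cite{Primer}), and since $C_i$ is open we can take a $C^k$ representative of it in the interior of $C_i$. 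This curve is nonseparating in $S_g$ because it is nonseparating in $C_i$ and $C_i$ is glued to $C_{3-i}$ along a single circle, so a curve that fails to separate $C_i$ also fails to separate $S_g$. Thus each part of the join contains a nonseparating curve, contradicting the hypothesis that one part contains only separating curves. Therefore if one side of the join contains only separating curves, $c$ must bound a disk, i.e.\ $c$ is inessential.

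I expect the main (modest) obstacle to be the bookkeeping in the converse: making precise why a component of $S_g \setminus c$ that is not a disk must carry a nonseparating curve of $S_g$, and confirming that the two parts of $link(c)$ correspond exactly to the two complementary components (so that ``one side of the join'' is literally ``one complementary component''). Both points are genuinely easy once one invokes the classification of surfaces and the observation that curves in distinct components of $S_g\setminus c$ are automatically disjoint; the argument is essentially parallel to the proof of Lemma~\ref{sepcurveinextgraph} and to the genus counts already used in Lemmas~\ref{disjointisotoptograph} and \ref{oneholedtorusingraph}.
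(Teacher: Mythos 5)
Your proposal is correct and follows essentially the same route as the paper: in the forward direction, curves contained in the disk bounded by $c$ are inessential and hence separating, and in the converse, each complementary component of an essential separating curve has positive genus and therefore carries a nonseparating curve, so neither part of the join is all-separating. Your version is slightly more explicit about why a non-disk component must have genus at least one and about the correspondence between join parts and complementary components, but the argument is the same.
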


\begin{proof}
Every inessential curve bounds a disk and separates this disk from the rest of the surface.  So $c$ is a separating curve and one side of the join will be all the curves contained in this disk. But these must also be inessential curves and so are all separating curves. 

If $c$ is a separating curve that is essential in $S_g$, then there must be at least one genus in each component of $S_g \setminus c$. But then there will be nonseparating curves in every part of the join. 
\end{proof}

The lemma above guarantees that the restriction map given in Proposition \ref{ec1toc1iso} is well-defined on vertices.  We now identify the graph structures associated to the different possibilities for edges.

\begin{lemma} \label{inessentialandessentialintersect}
Let $S_g$ be a surface with $g \geq 2$.  Let $f$ be an essential curve and $e$ be an inessential curve in $\efcn{k}{S_g}$. Then $f$ and $e$ are disjoint if and only if there exists a simple $k$-smooth pair $(a,b)$ with kernel $e$ such that $a$ is disjoint and not homotopic to $f$ and there is a curve $a' \in \fcn{k}{S_g}$ homotopic to $a$ and disjoint from $a, b, e, $ and $f$. 
\end{lemma}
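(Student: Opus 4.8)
The plan is to prove the two implications separately: the ``only if'' part is a construction and the ``if'' part is a push-off argument. In both I would start from the observation that, since $e$ is inessential it bounds a unique disk $D_e$, and since $f$ is essential it cannot be contained in $D_e$; hence ``$f$ disjoint from $e$'' is equivalent to ``$f$ disjoint from $\overline{D_e}$''.

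I would do the ``if'' direction first. Suppose we are given a simple $k$-smooth pair $(a,b)$ with kernel $e$ and a curve $a'$ homotopic to $a$ and disjoint from $a,b,e,f$. Being part of a $k$-smooth pair, $a$ is nonseparating, so the disjoint homotopic curves $a$ and $a'$ cobound a unique annulus $A$, and $S_g\setminus(a\cup a')$ has exactly two components, the open annulus $\mathring A$ and a connected surface $\mathring\Sigma'$. Since $f$ is disjoint from $a\cup a'$ it lies in one of these; if it lay in $\mathring A$ it would be homotopic to $a$, which is excluded, so $f\subset\mathring\Sigma'$. The crux is then to show $\overline{D_e}\subset\overline A$. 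Writing $d=a_1\cup b_1\cup c_1\cup c_2$ and $e=a_2\cup b_2\cup c_1\cup c_2$ in the usual notation, the open disk $\mathring D_e$ is disjoint from $a$ (it lies in the interior of $P_d$ while $a_1\subset\partial P_d$ and $a_2,c_1,c_2\subset e$) and disjoint from $a'$ (a curve disjoint from $e=\partial D_e$ lies inside or outside $\mathring D_e$, and $a'$ is essential), so $\overline{D_e}$ lies on one side of $a\cup a'$. Now $a'$ is homotopic to $a\sim\gamma_1$ and disjoint from $a\cup b$, so by Lemma~\ref{notmanyannulicomponents} it lies in a collar of $\gamma_1$ in $S_g\setminus(a\cup b)$; consequently the annulus cobounded by $a$ and $a'$ contains the annulus from $a$ to $\gamma_1$, which equals $A_1\cup_{b_2}\overline{D_e}$ (the $A_1$ here being the annular component of $P_d\setminus(a\cup b)$ adjacent to $\gamma_1$, as in Lemma~\ref{notmanyannulicomponents}). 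Hence $\overline{D_e}\subset\overline A$, so $e=\partial D_e\subset\overline A$, and since $\overline A\cap\mathring\Sigma'=\varnothing$ while $f\subset\mathring\Sigma'$, we get $e\cap f=\varnothing$.

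For the ``only if'' direction, assume $e\cap f=\varnothing$, so $f\cap\overline{D_e}=\varnothing$. Let $\Sigma_0$ be the component of $(S_g\setminus\mathring D_e)\setminus f$ whose closure contains $e$. A short Euler-characteristic count shows $\Sigma_0$ has positive genus: if $f$ separates $S_g$ then $\partial\Sigma_0=e\sqcup f$, and genus $0$ would make $\Sigma_0$ an annulus, forcing $e\sim f$ and contradicting that $f$ is essential; if $f$ is nonseparating then $\Sigma_0$ is all of $(S_g\setminus\mathring D_e)\setminus f$, of genus $g-1\ge1$. Inside $\Sigma_0$ I would choose a $C^k$ arc $a_1$ with distinct endpoints on $e$, tangent to $e$ at those endpoints, whose interior is disjoint from $f$, and such that closing it up along a subarc $a_2$ of $e$ yields a curve $a$ that is essential, nonseparating in $S_g$, and not homotopic to $f$; this is possible because $\Sigma_0$ has genus at least $1$, and because $a$ lies on the $D_e$-side of $f$ it cannot be homotopic to $f$. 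Then $a_1$ is a vertex of the arc graph $\fnatp{S_g\setminus D_e}$, which is connected by Lemma~\ref{finenonseppairedarcconnected}; pick a neighbour $b_1$ of $a_1$ in it (the partner arc $b_1$, and hence $b$, may cross $f$---the statement constrains only $a$). The edge $\overline{a_1b_1}$ corresponds to a simple $k$-smooth pair $(a,b)$ with kernel $e$ in which $a$ is disjoint from and not homotopic to $f$. Finally, take $a'$ to be a sufficiently thin push-off of $a$ into the annular component $A_1$ of $P\setminus(a\cup b)$ adjacent to the boundary $\gamma_1\sim a$: since $a$ and $e$ are both disjoint from $f$ and a thin enough such push-off stays in a small neighbourhood of $a\cup e$, it is disjoint from $a$, $b$, $e$, and $f$, and can be taken $C^k$. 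This supplies all of the required data.

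The hard part is the ``only if'' direction, and specifically the tension that the kernel disk $D_e$ is necessarily engulfed by the pair of pants $P$ of the simple $k$-smooth pair, while $P$ itself may be forced to meet $f$ (for instance if $f$ cuts off a genus-one piece containing $D_e$, no nonseparating pair of pants containing $D_e$ avoids $f$). The way around this is to use that only $a$, not $b$ or $P$, is required to be disjoint from $f$: confine the ``$a$-arc'' $a_1$ to the $f$-avoiding, positive-genus component $\Sigma_0$ and let the partner arc $b_1$ wander freely. The delicate point in the ``if'' direction is pinning down the location of $a'$ precisely enough---via Lemma~\ref{notmanyannulicomponents}---to conclude that the annulus it cobounds with $a$ swallows $D_e$. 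Everything else (existence of the arcs and pairs of pants, and $C^k$-ness of thin push-offs) is the same kind of routine surface topology used throughout the preceding sections.
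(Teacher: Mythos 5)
Your overall strategy coincides with the paper's: in the forward direction you build the curve $a$ inside the $f$-avoiding component near $e$, complete it to a simple $k$-smooth pair, and push off along $a_1\cup b_2$ to get $a'$; in the reverse direction you trap $f$ in the non-annular component of $S_g\setminus(a\cup a')$ and try to show the annulus $\overline A$ swallows $\overline{D_e}$. The forward direction is essentially the paper's argument (the paper constructs the partner $b$ directly rather than by appealing to connectivity of $\fnatp{S_g\setminus D_e}$, but both are at the same level of detail), and the conclusion you aim for in the reverse direction is the right one.

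The gap is in how you establish $\overline{D_e}\subset\overline A$. You invoke Lemma~\ref{notmanyannulicomponents} to conclude that $a'$ ``lies in a collar of $\gamma_1$ in $S_g\setminus(a\cup b)$,'' but that lemma only classifies the components of $P_d\setminus(a\cup b)$, and nothing forces $a'$ to lie in $P_d$: it is merely homotopic to $a$ and disjoint from $a$, $b$, $e$, $f$, so it may sit entirely outside $P$ (for instance as a push-off of $\gamma_1$ away from $P$), inside the single non-disk component of $S_g\setminus(a\cup b)$, which contains the $\gamma_1$-collar, the $\gamma_2$-collar, \emph{and} everything outside $P_d$. Worse, your next claim is inconsistent with the one case your citation does cover: if $a'$ lies in the $\gamma_1$-collar of $P_d\setminus(a\cup b)$, then the annulus cobounded by $a$ and $a'$ does \emph{not} contain the annulus from $a$ to $\gamma_1$ --- it is contained in it --- so the stated containment fails exactly where the cited lemma applies. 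What actually has to be ruled out is that $\overline A$ attaches to $a$ on the side away from $D_e$, and neither of your assertions addresses this. The paper handles it via the two arcs of $b\setminus a$: since $a$ and $b$ cross at $c_1$ and $c_2$ (Lemma~\ref{smoothbigonscross}), the two arcs leave $a$ on opposite sides, so exactly one of them lies in $\overline A$ and cuts off a disk there together with a subarc of $a$; the four candidate boundary curves are $d$, $e$, and curves homotopic to $a$ and to $b$, of which only $e$ is inessential, so the arc in $\overline A$ must be $b_2\subset e$ and the disk must be $D_e$. Your proof can be repaired by running this dichotomy (or by an honest case analysis on the location of $a'$), but as written the key containment is not established.
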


\begin{proof}
Suppose that $f$ and $e$ are disjoint. Since $f$ is essential, then the subsurface $S_g \setminus f$ will have some genus,  and so there are essential nonseparating curves in $\fcn{k}{S_g}$ in the same component as $e$ that do not intersect $f$.  One of these can be smoothly isotoped to loop around $e$ while still remaining disjoint from $f$.  Let this curve be $a$.  Note that $a$ will not be homotopic to $f$ since it was essential in $S_g \setminus f$.  Since $g \geq 2$, it will be possible to find another nonseparating curve $b \in \fcn{k}{S_g}$ such that $(a,b)$ form a simple $k$-smooth pair with kernel $e$.  Since $e$ and $f$ do not intersect, then $a$ can be homotoped through $e$ to make a new curve isotopic to $a$ and consisting of arcs of $a$ and $e \setminus a$. Since $S_g \setminus (f \cup b)$ is open in $S_g$, then this curve can be pushed off of $a$ and $e$ into a neighborhood and smoothed to make a curve $a' \in \fcn{k}{S_g}$ that has the desired properties. 

On the other hand, suppose that we have a simple $k$-smooth pair $(a,b)$ with kernel $e$ such that $a$ and $a'$ have the given properties in the lemma statement.  Since $a$ and $a'$ are disjoint and homotopic, then they must bound an annulus in $S_g$.  Since $b$ is disjoint from $a'$,  and $b \setminus a$ is two arcs with endpoints on opposite sides of $a$, then the annulus between $a$ and $a'$ must contain one of these arcs.  But $e$ is inessential and $b$ is not homotopic to $a$, so it must be the arc that is contained in $e$.  So $e$ is completely contained in the closed annulus between $a$ and $a'$. Since $f$ is essential, not homotopic to $a$ and does not intersect $a$ or $a'$, then it cannot intersect any curves that are contained in the annulus. Thus $e$ and $f$ do not intersect.
\end{proof}

 \begin{figure}[h!]
\centering
\begin{tikzpicture}
\small
    \node[anchor=south west, inner sep = 0] at (0,0){
\includegraphics[width=1.75in]{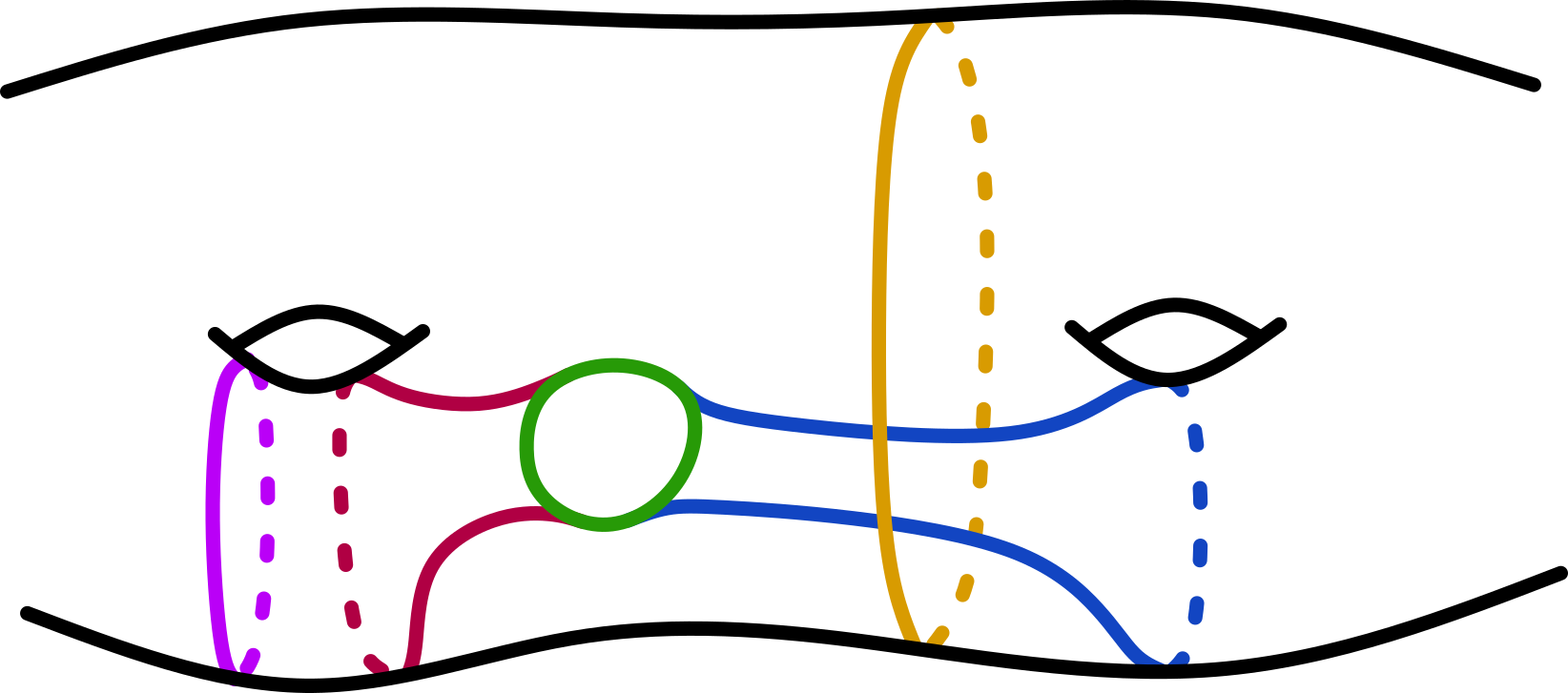} };
    \node[anchor=south west, inner sep = 0] at (5,0) {\includegraphics[width=1.75in]{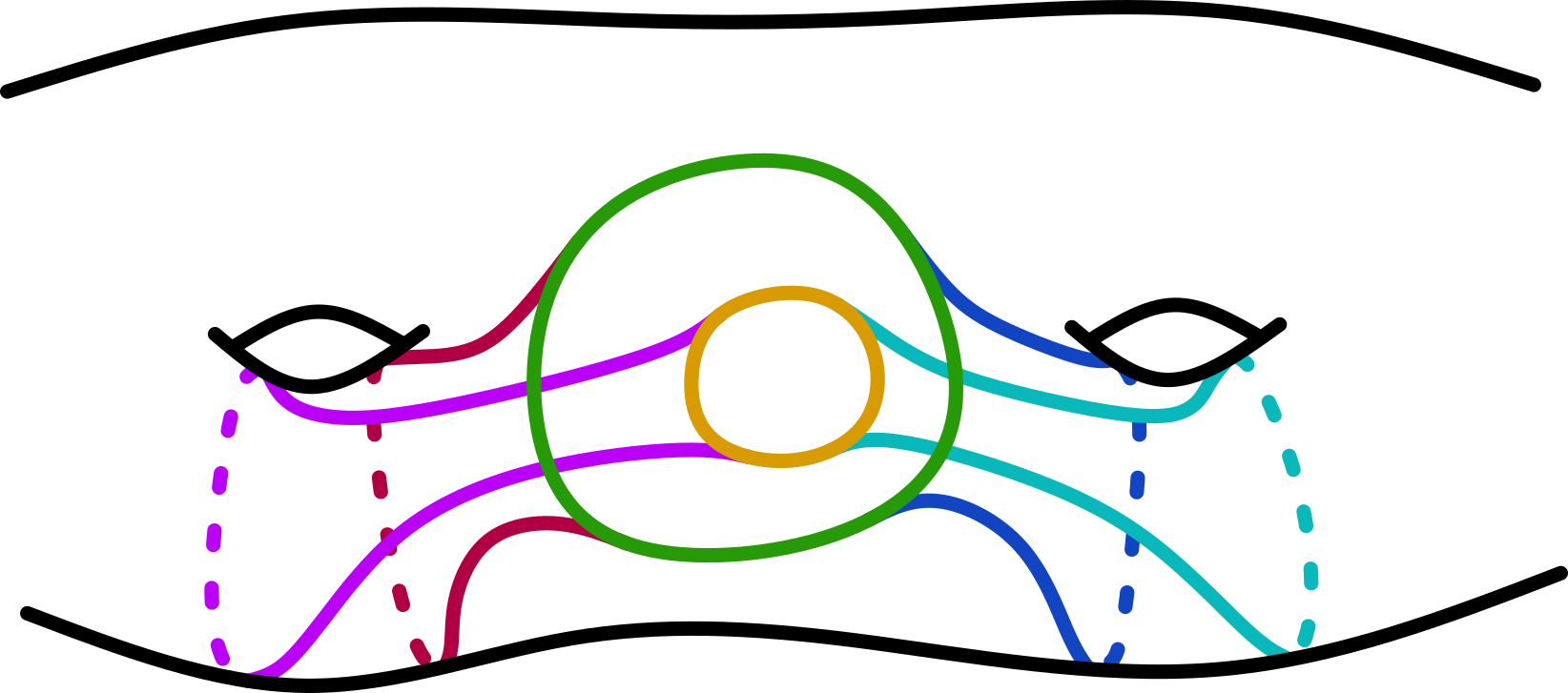}};
    \node[anchor=south west, inner sep = 0] at (10,0) {\includegraphics[width=1.75in]{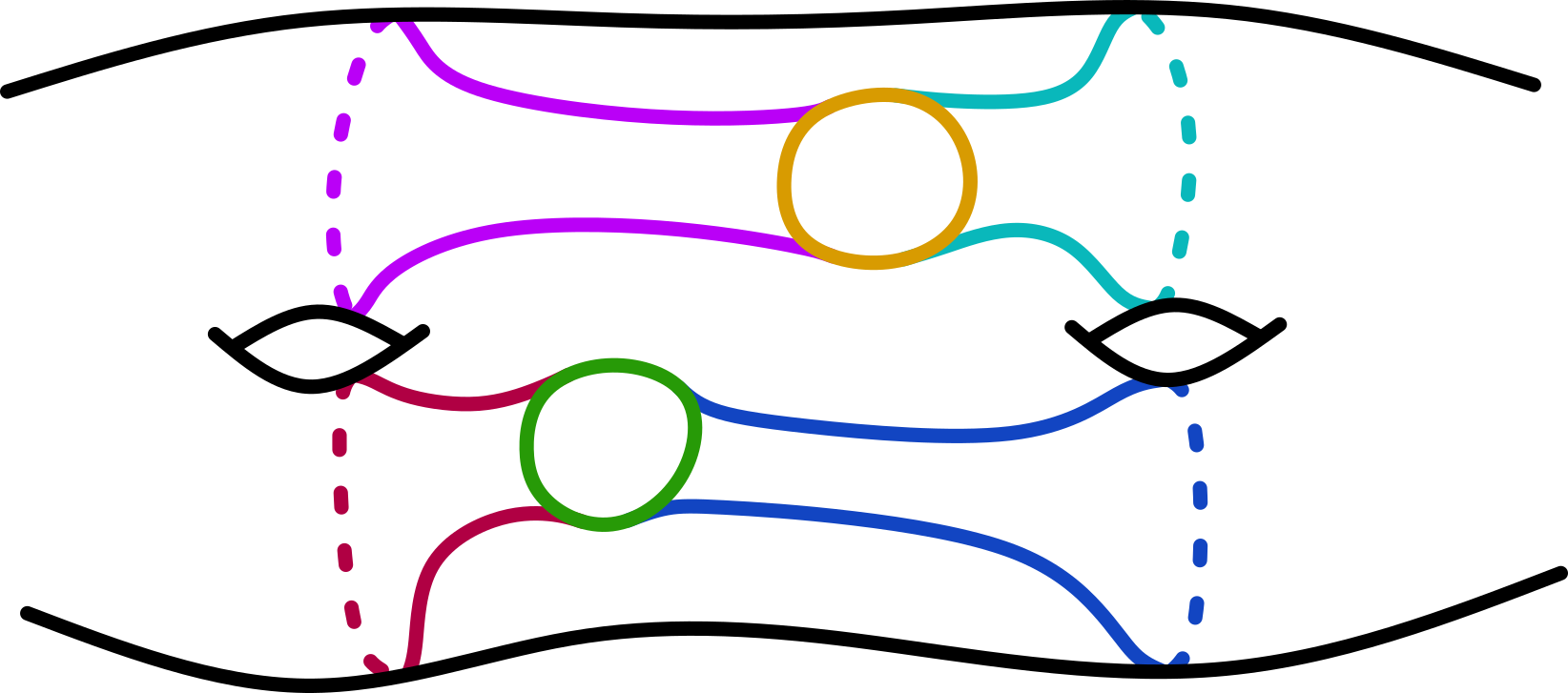}};
\end{tikzpicture}
\caption{The three possible edges given in Lemmas~\ref{inessentialandessentialintersect} (left),  \ref{twoinessentialintersectnest} (middle),  and \ref{twoinessentialintersectnotnest} (right).}
\label{fig-extedges}
\end{figure}

In the proof of \cite[Lemma 5.1]{LMPVY},  Long--Margalit--Pham--Verberne--Yao give a classification of disjoint inessential curves using bigon pairs that has a similar flavor to Lemma \ref{twoinessentialintersectnest} below.  They claim that their classification works for both nested and unnested inessential curves, but in actuality only applies to the nested case.  Instead, the classification for the unnested case is that there will exist two disjoint bigon pairs,  similar to the one given in Lemma \ref{twoinessentialintersectnotnest}.

\begin{lemma} \label{twoinessentialintersectnest}
Let $S_g$ be a surface with $g \geq 2$.  Let $e$ and $f$ be inessential curves in $\efcn{k}{S_g}$.  If necessary, switch the names so that $D_e \not\subset D_f$.  Then $e$ and $f$ are disjoint and nested if and only if for any simple $k$-smooth pair $(a,b)$ with kernel $e$, there exists a simple $k$-smooth pair $(p,q)$ with kernel $f$ such that $p$ is disjoint from $a$ and $q$ is disjoint from $b$.
\end{lemma}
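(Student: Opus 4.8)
The plan is to prove the two implications separately, using throughout the combinatorial characterizations of separating, homotopic, and hull data (Lemma~\ref{autpreservesephomandhull}), of annuli and pants (Lemma~\ref{autpreservessubsurfaces}), and the arc–smoothing technology of Lemma~\ref{smoothingarcsatboundary}, together with the fact that for $g\ge 2$ every inessential curve is the kernel of at least one simple $k$-smooth pair. I will write $a = c_1\cup a_1\cup c_2\cup a_2$ and $b = c_1\cup b_2\cup c_2\cup b_1$ for a simple $k$-smooth pair $(a,b)$ with kernel $e = a_2\cup b_2\cup c_1\cup c_2 = \partial D_e$, so that $a$ and $b$ meet $\overline{D_e}$ only along $e$, and $a$ contains all of $e$ except the single open arc $\mathring b_2\subset b$ (and symmetrically for $b$).

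For the forward direction, assume $e$ and $f$ are disjoint with $D_f\subset\mathring D_e$, and let $(a,b)$ be an arbitrary simple $k$-smooth pair with kernel $e$. Then $A:=\overline{D_e}\setminus\mathring D_f$ is an embedded closed annulus with boundary $e\sqcup f$. The idea is to \emph{transport} the pair $(a,b)$ across $A$: replace the arcs $a_2,b_2,c_1,c_2\subset e$ by parallel copies on $f$, and join them to small pushoffs $a_1'$ of $a_1$ and $b_1'$ of $b_1$ by short arcs running through the interior of $A$. Because $A$ is an annulus, the resulting curves $p$ and $q$ are homotopic to $a$ and $b$; a further small pushoff into $\mathring D_e$ makes $p$ disjoint from $a$ and $q$ disjoint from $b$; and $p\cap q$ is exactly the two transported arcs on $f$, along which $p$ and $q$ cross (mirroring Lemma~\ref{smoothbigonscross}). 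Using Lemma~\ref{smoothingarcsatboundary} and the $C^k$-regularity of $f$, every corner introduced is smoothed, so $p,q\in\fcn{k}{S_g}$. Transporting the nonseparating pair of pants containing $a\cup b$ along the same isotopy produces a nonseparating pair of pants containing $p\cup q$ with determined curve $d'\simeq d$, and since $d\not\simeq a,b$ while $p\simeq a$, $q\simeq b$, the curve $d'$ is homotopic to neither; checking the remaining defining conditions (which are preserved by the isotopy) shows $(p,q)$ is a simple $k$-smooth pair whose kernel curve is exactly the transport of $e$, namely $f$.

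For the reverse direction I would argue the contrapositive: assuming $e$ and $f$ are not disjoint-and-nested (with $D_e\not\subset D_f$, and hence $e\ne f$), produce a simple $k$-smooth pair $(a,b)$ with kernel $e$ admitting no simple $k$-smooth pair $(p,q)$ with kernel $f$ such that $p$ is disjoint from $a$ and $q$ is disjoint from $b$. The intersecting case is clean: since $e\ne f$, the set $e\cap f$ is a proper closed subset of $e$, so one may choose $(a,b)$ with kernel $e$ whose two private arcs $\mathring a_2,\mathring b_2$ avoid $e\cap f$; then $e\setminus(\mathring a_2\cup\mathring b_2)=c_1\cup c_2\subset a\cap b$, so every point of $e\cap f$ lies on both $a$ and $b$, and also on $f = p_2\cup q_2\cup c_1'\cup c_2'\subset p\cup q$, hence on $p$ or on $q$; either way it lies on $a\cap p$ or on $b\cap q$, contradicting the assumption on $(p,q)$. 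For the remaining case $e\cap f=\varnothing$ and $D_e\cap D_f=\varnothing$, the key structural fact is that $a\cup b$ separates $S_g$ into exactly two pieces, $\mathring D_e$ and a connected region $W$, so $f$ (being disjoint from $e$ and not inside $D_e$, as that would force $D_f\subset\mathring D_e$) lies in $W$; one then chooses $(a,b)$ whose determined curve $d$ is routed so that $a$ and $b$ link around $D_f$, and uses the Jordan Curve Theorem inside the disk $\overline{D_f}$ to show that the traces $a\cap f$ and $b\cap f$ cannot be separated on $f$ by the two overlap arcs of any simple $k$-smooth pair with kernel $f$, forcing $p$ to meet $a$ or $q$ to meet $b$.

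I expect the last case of the reverse direction to be the main obstacle. Away from a neighborhood of $D_e$ the curves $a$ and $b$ are quite unconstrained, so it takes care to select $(a,b)$ so that $a\cup b$ genuinely obstructs every $(p,q)$ with kernel $f$ when $D_f$ lies outside $\overline{D_e}$; the delicate point is a planarity argument ruling out all the ways the two overlap arcs of $(p,q)$ could slip between the arcs in which $a$ and $b$ pass through $\overline{D_f}$, which must be organized via the Jordan Curve Theorem together with the fact that $a$ and $b$ cross only along arcs of $e$ and hence never inside $\overline{D_f}$.
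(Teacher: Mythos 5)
Your forward direction and your treatment of the intersecting case of the reverse direction are in substance the same as the paper's: the paper likewise obtains $(p,q)$ by isotoping $(a,b)$ across the annulus between $e$ and $f$ inside $D_e$ and then pushing off to achieve disjointness, and in the case $e\cap f\neq\varnothing$ it likewise chooses $(a,b)$ so that a point of $e\cap f$ lies in the overlap $c_1\cup c_2\subset a\cap b$, whence that point lies on $a\cap p$ or on $b\cap q$. (The paper only arranges a single point $x\in e\cap f$ to lie in $c_1\cup c_2$, which is a little easier than covering all of $e\cap f$; your stronger version still goes through because $e\setminus f$ is a nonempty open subset of $e$ and so contains two disjoint open arcs to serve as $\mathring a_2$ and $\mathring b_2$.)

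The genuine gap is the disjoint, non-nested case, which you leave as an acknowledged ``main obstacle'' with only a sketch of a Jordan Curve Theorem argument inside $\overline{D_f}$. The missing idea is not a delicate planarity analysis but an elementary consequence of the definition of a simple $k$-smooth pair: if $(p,q)$ is simple with kernel $f=p_2\cup q_2\cup c_1'\cup c_2'$, then $p\cap f=c_1'\cup p_2\cup c_2'$ and $q\cap f=c_1'\cup q_2\cup c_2'$ are each a \emph{single arc} of $f$, so $f\setminus p=\mathring q_2$ and $f\setminus q=\mathring p_2$ are disjoint open arcs. Now choose $(a,b)$ with kernel $e$ so that $a$ and $b$ each cross $f$ at least twice, with the intersection points alternating $a,b,a,b$ in the cyclic order on $f$; this is possible because $\overline{D_e}\cap\overline{D_f}=\varnothing$, so the arcs $a_1$ and $b_1$ may be routed through $\overline{D_f}$ freely. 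If $p$ is disjoint from $a$ then $a\cap f\subset f\setminus p=\mathring q_2$, and if $q$ is disjoint from $b$ then $b\cap f\subset\mathring p_2$; but no two disjoint arcs of the circle $f$ can respectively contain two point sets that alternate around $f$. This finite combinatorial contradiction on the circle is exactly how the paper closes the case, and without it (or an equivalent) your reverse implication is unproved precisely in the case that separates ``nested'' from ``unnested.'' I would also note that your proposed routing criterion (``$a$ and $b$ link around $D_f$'') is not yet a precise condition, whereas the alternation condition above is both precise and easy to realize.
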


\begin{proof}
Suppose that $e$ and $f$ are disjoint and nested.  Since $D_e \not \subset D_f$, then $f$ is contained in $D_e$.  Let $(a,b)$ be any simple $k$-smooth pair with kernel $e$.  Then $a$ and $b$ can be smoothly isotoped within $D_e$ to make a simple $k$-smooth pair $(p,q)$ with kernel $f$. By smoothly pushing $p$ off of $a$ outside of $D_e$,  it is possible to make the curve $p$ be disjoint from $a$.  Similarly, $q$ can be made disjoint from $b$.  So the desired simple $k$-smooth pair exists. 

To prove the other direction, first suppose that $e$ and $f$ are not disjoint.  Let $x \in e \cap f$.  Let $(a,b)$ be a simple $k$-smooth pair with kernel $e$ such that $x$ is contained in one of the overlap sections $c_1$ or $c_2$.  Thus $f$ will intersect both $a$ and $b$.  So any simple $k$-smooth pair $(p,q)$ with kernel $f$ must have either $p$ or $q$ contain $x$.  But then that curve will intersect both $a$ and $b$ and thus we cannot have the characterization given in the lemma. 

Now suppose that $e$ and $f$ are disjoint and not nested.  Then it is possible to find a simple $k$-smooth pair $(a,b)$ with kernel $e$ such that $a$ and $b$ both intersect $f$ at least twice non-consecutively with respect to an orientation on $f$.  By the construction of simple $k$-smooth pairs,  for any simple $k$-smooth pair $(p,q)$ with kernel $f$,  both $p \cap f$ and $q \cap f$ are both a single arc.  If $p$ is disjoint from $a$, then $q$ must contain any points in $f \cap a$.  Similarly, if $q$ is disjoint from $b$, then $p$ contains all the points in $f \cap b$.  But then if both of these conditions hold, $p \cap f$ and $q \cap f$ must each have at least two arcs, and thus $(p,q)$ cannot be a simple $k$-smooth pair. 
\end{proof}

\begin{lemma} \label{twoinessentialintersectnotnest}
Let $S_g$ be a surface with $g \geq 2$.  Let $e$ and $f$ be inessential curves in $\efcn{k}{S_g}$.  Then $e$ and $f$ are disjoint and not nested if and only if there exist disjoint simple $k$-smooth pairs $(a,b)$ with kernel $e$ and $(p,q)$ with kernel $f$.
\end{lemma}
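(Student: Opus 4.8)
I would prove the two implications of this equivalence separately; the $(\Leftarrow)$ direction is short, while $(\Rightarrow)$ carries the real content. For $(\Leftarrow)$, assume disjoint simple $k$-smooth pairs $(a,b)$ with kernel $e$ and $(p,q)$ with kernel $f$ are given, so that $(a\cup b)\cap(p\cup q)=\emptyset$. Since the kernel curve $e=a_2\cup b_2\cup c_1\cup c_2$ lies in $a\cup b$, and likewise $f\subset p\cup q$, this gives $e\cap f=\emptyset$. To rule out nesting, suppose toward a contradiction that $D_e\subset D_f$ (the reverse inclusion is symmetric). Because $e\cap f=\emptyset$, this improves to $D_e\subset\mathring{D}_f$. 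Now $a=a_1\cup a_2\cup c_1\cup c_2$ contains the arc $a_2\subset e\subset\mathring{D}_f$, so the connected set $a$ meets the open disk $\mathring{D}_f$; and $a$ is disjoint from $f=\partial D_f$ since $a\subset a\cup b$ while $f\subset p\cup q$. Hence $a$ is entirely contained in $\mathring{D}_f$, so $a$ is inessential, contradicting $a\in\fcn{k}{S_g}$. Therefore $e$ and $f$ are not nested.

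For $(\Rightarrow)$, assume $e$ and $f$ are disjoint and not nested. I would first observe that these hypotheses force $D_e\cap D_f=\emptyset$: since $e\cap f=\emptyset$, $f$ lies either in $\mathring{D}_e$ (whence $D_f\subset D_e$, contradicting non-nestedness) or in $S_g\setminus D_e$, and applying the same dichotomy with the roles of $e$ and $f$ reversed then yields $D_e\cap D_f=\emptyset$. The plan is then to produce the two smooth pairs inside disjoint subsurfaces. I would invoke the change of coordinates principle, carried out by a $C^k$ diffeomorphism of $S_g$: since $e$ and $f$ are disjoint $C^k$ curves bounding disjoint disks there is a $C^k$ diffeomorphism taking the pair $(e,f)$ to any fixed standard configuration, and a $C^k$ diffeomorphism carries simple $k$-smooth pairs to simple $k$-smooth pairs with the corresponding kernels, so it suffices to exhibit disjoint pairs for one convenient model. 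Using $g\geq 2$, fix a genus-$2$ subsurface $\Sigma\subset S_g$ with a ``theta-type'' pants decomposition $\Sigma=P\cup_{c_1,c_2,c_3}P'$ in which both pants $P$ and $P'$ have all three of $c_1,c_2,c_3$ as boundary; in such a decomposition the $c_i$ are pairwise non-homotopic nonseparating curves and $P,P'$ are nonseparating pairs of pants. Taking the model with $D_e\subset\mathring{P}$ and $D_f\subset\mathring{P'}$, I would build a simple $k$-smooth pair $(a,b)$ with kernel $e$ entirely inside $\mathring{P}$ (with $a,b$ isotopic to two of the $c_i$, arranged as in the definition and with kernel disk exactly $D_e$), and symmetrically a simple $k$-smooth pair $(p,q)$ with kernel $f$ inside $\mathring{P'}$ — the configuration drawn in the right-hand frame of Figure~\ref{fig-extedges}. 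Since $\mathring{P}$ and $\mathring{P'}$ are disjoint, the pairs are disjoint, as required.

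I expect the main obstacle to be precisely this last construction. A simple $k$-smooth pair requires a \emph{nonseparating} pair of pants two of whose boundary curves are non-homotopic, and a one-holed torus contains no such pants; so in the tight case $g=2$ the two pants $P$ and $P'$ must exactly tile a genus-$2$ subsurface, which is why I would use a theta-type decomposition rather than splitting off two one-holed tori, and why change of coordinates is the cleanest way to place $D_e$ and $D_f$ in the two distinct pants. Beyond this, the work is the routine bookkeeping of the arcs $a_1,a_2,b_1,b_2,c_1,c_2$ and checking the kernels are as claimed, of the same nature as in Lemmas~\ref{smoothpairingraph} and~\ref{twoinessentialintersectnest}.
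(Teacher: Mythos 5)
Your proof is correct and follows essentially the same route as the paper's: the reverse direction uses that the essential curves $a,b,p,q$ cannot enter the complementary disks $D_e,D_f$, and the forward direction places the two kernels in disjoint nonseparating pairs of pants. The paper's version is much terser (its forward direction is a single sentence asserting the two pants exist), so your theta-decomposition and change-of-coordinates reduction is just a fleshed-out instance of the same idea.
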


\begin{proof}
Suppose that $e$ and $f$ are inessential curves that are not nested. Since each pair of simple closed curves lives on a nonseparating pair of pants,  it is possible to find disjoint simple $k$-smooth pairs for $e$ and $f$ on a surface $S_g$ with genus $g \geq 2$. 

To prove the reverse direction, suppose there exist disjoint simple $k$-smooth pairs $(a,b)$ with kernel $e$ and $(p,q)$ with kernel $f$. Thus $e$ and $f$ must be disjoint.  Since the simple $k$-smooth pairs are comprised of essential curves, then $a,  b \not \subset D_f$ and $p,  q \not \subset D_e$.  Thus no part of $a \cup b$ is in $D_f$ and no part of $p \cup q$ is in $D_e$. So $e$ and $f$ are not nested.
\end{proof}

\begin{lemma}\label{c1preservesintersectionsinec1}
Let $e, f \in \efcn{k}{S_g}$ with $g \geq2$ and $\alpha \in \aut \fcn{k}{S_g}$.  Then 
\begin{enumerate}[noitemsep,topsep=0pt, label=$(\roman*)$]
\item if $e$ inessential and $f$ essential,  and $(a,b)$ is any simple $k$-smooth pair with kernel $e$, then $e$ and $f$ are disjoint if and only if the kernel of $(\alpha(a), \alpha(b))$ is disjoint from $\alpha(f)$. 
\item if $e$ and $f$ are both inessential,  and $(a,b)$ is any simple $k$-smooth pair with kernel $e$ and $(p,q)$ is any simple $k$-smooth pair with kernel $f$, then $e$ and $f$ are disjoint if and only if the kernel of $(\alpha(a), \alpha(b))$ is disjoint from the kernel of $(\alpha(p), \alpha(q))$. 
\end{enumerate}
\end{lemma}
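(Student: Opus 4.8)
The plan is to bootstrap from the combinatorial characterisations of disjointness already established (Lemmas~\ref{inessentialandessentialintersect}, \ref{twoinessentialintersectnest}, and \ref{twoinessentialintersectnotnest}), after first installing the bookkeeping that lets $\alpha$ act on inessential curves through their kernels. Concretely: given an inessential curve $e$, choose any simple $k$-smooth pair $(a,b)$ with kernel $e$; by Proposition~\ref{c1preservessimplesmoothpair}, $(\alpha(a),\alpha(b))$ is again a simple $k$-smooth pair, hence has a kernel curve, which I will write $\bar e$. Lemma~\ref{autopreservesrelation} guarantees that any two simple $k$-smooth pairs with the same kernel are carried by $\alpha$ to pairs with a common kernel, so $\bar e$ depends only on $e$; running the construction for $\alpha^{-1}$ produces an inverse assignment (since $\alpha^{-1}\alpha$ fixes every curve, hence every kernel). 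The payoff is a transfer dictionary: $(a,b)$ is a simple $k$-smooth pair with kernel $e$ if and only if $(\alpha(a),\alpha(b))$ is a simple $k$-smooth pair with kernel $\bar e$. Combined with the facts that $\alpha$ preserves disjointness tautologically and preserves homotopy by Lemma~\ref{autpreservesephomandhull}, every hypothesis appearing in Lemmas~\ref{inessentialandessentialintersect}--\ref{twoinessentialintersectnotnest} is translated verbatim by $\alpha$ once $e$ is replaced by $\bar e$ (and $f$ by $\bar f$ or $\alpha(f)$).

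For part $(i)$, observe that $\bar e$ is exactly the kernel of $(\alpha(a),\alpha(b))$ for the pair $(a,b)$ named in the statement. Lemma~\ref{inessentialandessentialintersect} characterises ``$e$ disjoint from $f$'' by the existence of a simple $k$-smooth pair with kernel $e$ subject to conditions phrased only in terms of disjointness and homotopy of curves. By the transfer dictionary this condition holds for $(e,f)$ exactly when the corresponding condition holds for $(\bar e,\alpha(f))$; applying Lemma~\ref{inessentialandessentialintersect} in the reverse direction then gives that $\bar e$ and $\alpha(f)$ are disjoint. The converse implication follows by rerunning the argument with $\alpha^{-1}$, $\bar e$, and $\alpha(f)$ (whose images under $\alpha^{-1}$ are $e$ and $f$).

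For part $(ii)$, I would split ``$e$ disjoint from $f$'' into ``disjoint and nested'' and ``disjoint and not nested''. The not-nested case is governed by Lemma~\ref{twoinessentialintersectnotnest} through the existence of \emph{disjoint} simple $k$-smooth pairs with kernels $e$ and $f$; since $\alpha$ preserves disjointness of curves and, by the transfer dictionary, carries these to disjoint simple $k$-smooth pairs with kernels $\bar e$ and $\bar f$, this case transfers immediately. The nested case is governed by Lemma~\ref{twoinessentialintersectnest}, and here the one delicate point is the normalisation $D_e\not\subset D_f$: the forward direction of that lemma uses it, but its backward direction --- that the quantified condition ``for any pair with kernel $e$ there is a pair with kernel $f$ so that $p$ is disjoint from $a$ and $q$ from $b$'' already forces $e,f$ to be disjoint and nested --- does not. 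So I would relabel $e,f$ on the source side to arrange $D_e\not\subset D_f$, transfer the quantified condition to $(\bar e,\bar f)$ (the ``for any pair with kernel $e$'' quantifier becomes ``for any pair with kernel $\bar e$'' precisely because $\alpha$ restricts to a bijection between the two families of pairs), and then invoke the backward direction of Lemma~\ref{twoinessentialintersectnest} for $(\bar e,\bar f)$. Thus ``$\bar e$ and $\bar f$ disjoint'' follows from ``$e$ and $f$ disjoint'', and the converse is obtained once more by passing through $\alpha^{-1}$.

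I do not expect any genuinely new difficulty in this lemma: the substantive content is all in the combinatorial characterisations (Lemmas~\ref{inessentialandessentialintersect}--\ref{twoinessentialintersectnotnest}) and, underpinning the transfer dictionary, the well-definedness statement Lemma~\ref{autopreservesrelation}, which itself rests on connectivity of the fine paired arc graph (Lemma~\ref{finenonseppairedarcconnected}). The only thing that will demand real care is matching the quantifiers correctly under the dictionary --- in particular the ``for any simple $k$-smooth pair'' quantifier in Lemma~\ref{twoinessentialintersectnest} and the nesting-orientation asymmetry there --- rather than proving anything new.
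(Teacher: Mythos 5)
Your proposal is correct and follows essentially the same route as the paper: both arguments use Lemma~\ref{autopreservesrelation} to make $\alpha$ well-defined on kernels and then transfer the combinatorial characterizations of disjointness from Lemmas~\ref{inessentialandessentialintersect}, \ref{twoinessentialintersectnest}, and \ref{twoinessentialintersectnotnest} via Proposition~\ref{c1preservessimplesmoothpair} and Lemma~\ref{autpreservesephomandhull}. Your treatment is in fact somewhat more careful than the paper's terse version, notably in handling the converse implications via $\alpha^{-1}$ and in matching the ``for any simple $k$-smooth pair'' quantifier and the nesting normalization in Lemma~\ref{twoinessentialintersectnest}.
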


\begin{proof}
By Lemma~\ref{autopreservesrelation},  $\alpha$ will be well-defined on kernel curves of simple $k$-smooth pairs.  Thus the choice of the simple $k$-smooth pair $(a,b)$ does not affect the resulting kernel of $(\alpha(a), \alpha(b))$. 

Let $f$ be an essential curve and $e$ be an inessential curve.  By Lemma~\ref{inessentialandessentialintersect},  $e$ and $f$ being disjoint depends only on simple $k$-smooth pairs, intersections, and homotopies.  So by Lemma~\ref{autpreservesephomandhull} and Proposition~\ref{c1preservessimplesmoothpair}, these structures are preserved by $\alpha$.  Thus the kernel of $(\alpha(a), \alpha(b)) $ and $\alpha(f)$ will still have the same relations that will force them to be disjoint. 

Now,  consider the case where $e$ and $f$ are two inessential curves.   By Lemmas~\ref{twoinessentialintersectnest} and \ref{twoinessentialintersectnotnest},  $e$ and $f$ are disjoint, depending characterizations using simple $k$-smooth pairs and intersections.  By Proposition~\ref{c1preservessimplesmoothpair},  this characterization will be preserved.  Thus the kernels of $(\alpha(a), \alpha(b))$ and $(\alpha(p), \alpha(q))$ will also be disjoint.  \end{proof}

\subsection{Proof of proposition}\label{subsectionpropauts}

Finally, we complete this section by showing that restriction to the essential curves gives an isomorphism between the automorphisms of the extended $C^k$-curve graph to the automorphism of the $C^k$-curve graph.

\begin{proof}[Proof of Proposition~\ref{ec1toc1iso}]
Let $\beta \in \aut ( \efcn{k}{S_g})$.  By Lemma~\ref{inessentialingraph}, $\beta$ must send inessential curves to inessential curves and send essential curves to essential curves.  Since $\fcn{k}{S_g}$ is a subgraph of $\efcn{k}{S_g)}$, then $\xi(\beta)$ will be a well-defined restriction of $\beta$ to only the essential curves.  Note that $\xi$ is a homomorphism since the restriction has no affect on the action of $\beta$ on the essential curves. 

Suppose that $\xi(\beta)$ is the identity automorphism.  Then each essential curve in $\efcn{k}{S_g}$ will be mapped to itself.  But for each inessential curve $e$, there is a simple $k$-smooth pair $(a,b)$ of essential curves with $e$ as their kernel.  Since $(a,b)$ will be sent to itself and determines a unique inessential curve, then $e$ must also be sent to itself by $\beta$. Thus $\beta$ is the identity. So $\xi$ is injective. 

Now let $\alpha \in \aut \fcn{k}{S_g}$.  Consider an inessential curve $e \in \efcn{k}{S_g}$.  Pick a simple $k$-smooth pair $(a,b)$ that determines a curve $d$ and whose kernel is $e$.  From Proposition~\ref{c1preservessimplesmoothpair},  $(\alpha(a), \alpha(b))$ must be a simple $k$-smooth pair that determines $\alpha(d)$. Thus it must also have a kernel curve.  Define an extension of $\alpha$, denoted $\widehat{\alpha}$,  to $\efcn{k}{S_g}$ by sending $e$ to the kernel of $(\alpha(a), \alpha(b))$.  By Lemma~\ref{autopreservesrelation},  $\widehat{\alpha}$ is well-defined on inessential curves. 

To check is that $\widehat{\alpha}$ is an automorphism of $\efcn{k}{S_g}$, it needs to preserve the edges between all possible combinations of essential and inessential curves.  On pairs of essential curves, $\widehat{\alpha}$ is exactly automorphism $\alpha$ of $\fcn{k}{S_g}$,  and so it will preserve edges between essential curves.  By Lemma~\ref{c1preservesintersectionsinec1},  $\widehat{\alpha}$ preserves the intersections between inessential and essential curves and the intersections between pairs of essential curves. So $\widehat{\alpha}$  is an automorphism of $\efcn{k}{S_g}$.  

In addition, $\xi ( \widehat{\alpha}) = \alpha$ by construction and so $\xi$ is surjective.  Thus $\xi$ is a bijective homomorphism,  which is an isomorphism.
\end{proof}

\section{Finishing the proof}\label{sectionfinal}
We can now complete Step 3 from the proof outline given in the introduction.  We show that the natural homomorphism $\eta$ between $\thomeo(S_g)$ and $\aut\fcn{k}{S_g}$ is actually an isomorphism. 

\begin{proof}[Proof of Main Theorem] If the composed map $$\textstyle \eta \circ \rho \circ \xi^{-1} : \aut\fcn{k}{S_g} \rightarrow \aut \fcn{k}{S_g}$$ is the identity map,  then $\eta$ will be surjective.  

Let $\alpha \in \aut \fcn{k}{S_g}$. Then $(\eta \circ \rho \circ \xi^{-1})(\alpha)$ is the action on $ \fcn{k}{S_g}$ induced by the homeomorphism $( \rho \circ \xi^{-1})( \alpha)$.  By Proposition \ref{ec1tohomeo1},  this homeomorphism induces the action on $ \efcn{k}{S_g}$ corresponding to $\xi^{-1} (\alpha)$.  But then $\eta \circ \rho$ is the restriction of the action of $\xi^{-1}(\alpha)$ on $ \efcn{k}{S_g}$ to $ \fcn{k}{S_g}$, which by Proposition \ref{ec1toc1iso} is exactly the map $\xi$. So $\eta \circ \rho \circ \xi^{-1}$ is the identity map.  

To show that $\eta$ is injective, consider a homeomorphism $f$ that induces the same action on $\aut \fcn{k}{S_g}$ as the identity map.  For any $x \in S_g$, consider a torus pair $(a,b)$ that intersects exactly at the point $x$. Then $(f(a), f(b)) = (a, b)$ is still a torus pair that intersects only at the point $x$. Thus $f$ must fix the point $x$. Since this holds for every $x \in S_g$, then $f$ must be the identity map on $S_g$. 

Since $\eta$ is a homomorphism and bijective, then it is an isomorphism. 
\end{proof}

\section{Example elements from $\mathbf{\bhomeo^1(S)}$} \label{examples}
We now give some explicit examples of maps that are elements of $\thomeo(S)$ that are not in $\diff^k(S)$.  These examples are given explicitly on $\mathbb{R}^2$.  These non-differentiable points can then be added to any diffeomorphism on a surface using local charts and bump functions.  

\medskip

\subsection*{Example 1: Losing linearity with Le~Roux--Wolff} This first example is based on one given by Le Roux--Wolff \cite[Section 5]{LRW}.  The goal of their example was to give an element of $\homeo^{\infty}(S)$ (which they denote by $\homeo_{\infty}(S)$) that is not in $\diff(S)$.  This is accomplished by adjusting the angles of curves through a point to prevent the induced map between tangent spaces from being linear.  Our example is nearly identical to theirs with only small adjustments to the justifications to account for the use of $C^k$ curves instead of $C^{\infty}$ curves. 

\medskip

Let $f: \mathbb{R} \rightarrow \mathbb{R}$ be a $C^k$ diffeomorphism such that $f(x) = x$ when $x$ is outside $[1/2, 2]$, but $f(1) \neq 1$. Define $F: \mathbb{R}^2 \rightarrow \mathbb{R}^2$ by 
$$F(x,y) = \left\{ \begin{array}{ll} (x,x \, f(y/x)) & \mbox{if } x \neq 0 \\  (x, y) & \mbox{if } x =0 \end{array} \right.$$
The reasoning that Le Roux--Wolff uses  to show that $C^{\infty}$ curves are mapped to $C^{\infty}$ curves relies on the next level of differentiabiliy. So their reasoning will only give that $C^{k}$ curves are mapped to $C^{k-1}$ curves.  Fortunately,  it can be shown by direct computation that the image of any $C^k$ curve under $F$ will also be a $C^k$ curve with nonzero tangent vectors.  But $F$ is not a diffeomorphism. 

 \begin{figure}[h!]
\centering
\begin{tikzpicture}
\small
    \node[anchor=south west, inner sep = 0] at (0,0){
\includegraphics[width=1.5in]{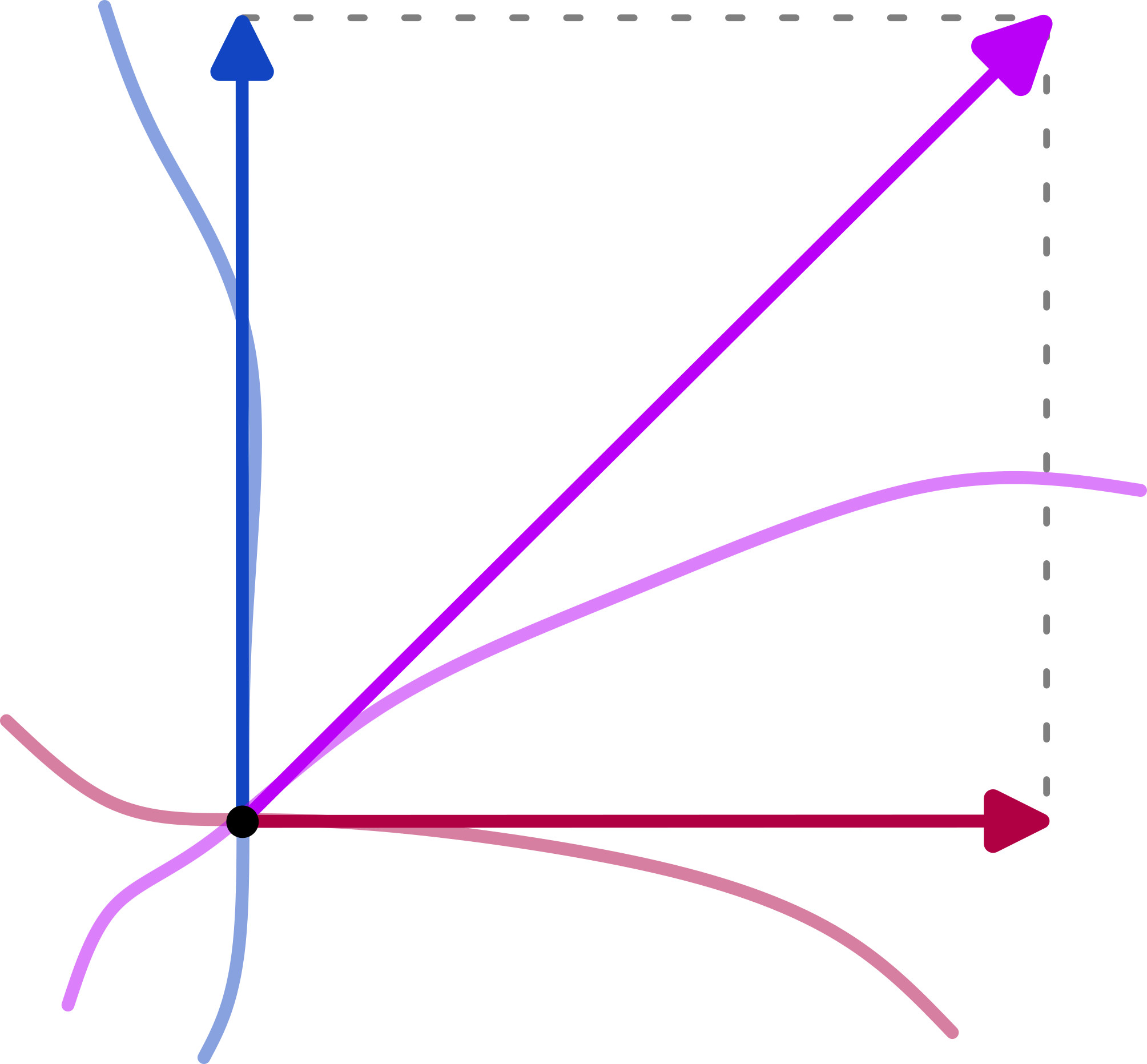} };
    \node at (5,2) {\Large $\xrightarrow{dF_0}$};
    \node[anchor=south west, inner sep = 0] at (6,0) {\includegraphics[width=1.5in]{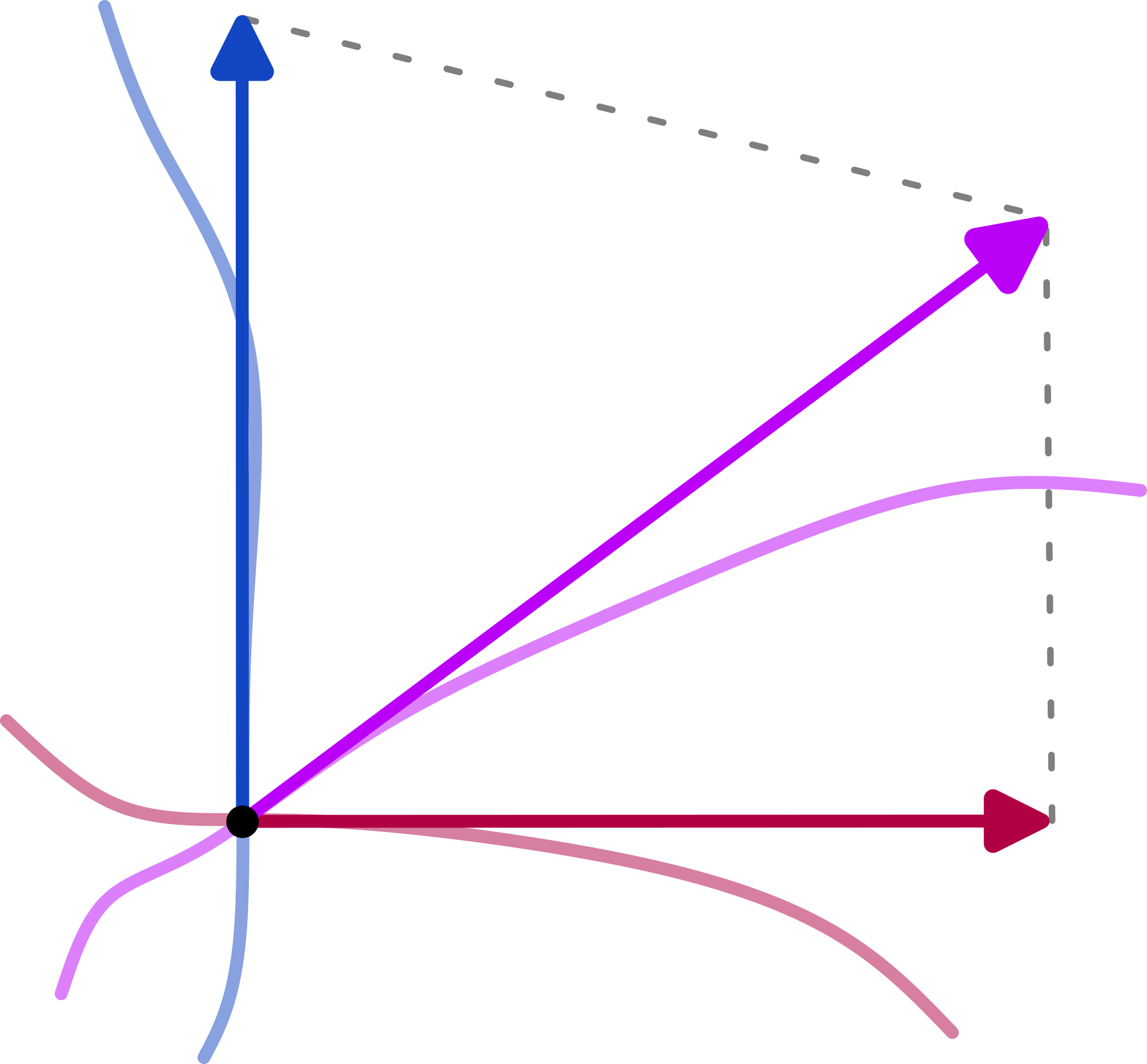}};
\end{tikzpicture}
\caption{The induced map on tangent vectors at the origin. }
\label{fig-ex1LRW}
\end{figure}

The images of unit parameterized curves on the $x$- and $y$-axes with tangent vectors $(0,1)$ and $(1,0)$ at the origin will have those same tangent vectors, while the image of a curve with tangent vector $(1,1)$ at the origin will have the tangent vector of $(1, f(1)) \neq (1,1)$. Thus the induced map on the tangent spaces cannot be linear.

\medskip

\subsection*{Example 2: Containment of $\mathbf{\bhomeo^1(S)}$}
A natural question to ask is if there are any containment relationships between the $\homeo^n(S)$.  The following example shows that $\homeo^1(S) \not\subset \homeo^k(S)$, for any $2 \leq k \leq \infty$. 

To accomplish this, we consider is whether an element of $\homeo^1(S)$ must also map any $C^2$ curve to another $C^2$ curve.  A curve can be determined to be $C^2$ by the existence of continuous curvature, which is independent of the parameterization of the curve.

Let 
$$ m(x, y) = \left\{ \begin{array}{ll} \displaystyle \frac x {\sqrt{x^2+y^2}}+2 & \mbox{if } (x,y) \neq (0,0) \\
 2 & \mbox{if } (x,y) = (0,0) \end{array} \right.$$ Then define the map  $$M(x,y) = \left( x m(x, y), y m(x, y) \right)$$  
 
 \begin{figure}[h!]
\centering
\begin{tikzpicture}
\small
    \node[anchor=south west, inner sep = 0] at (0,0){
\includegraphics[width=2in]{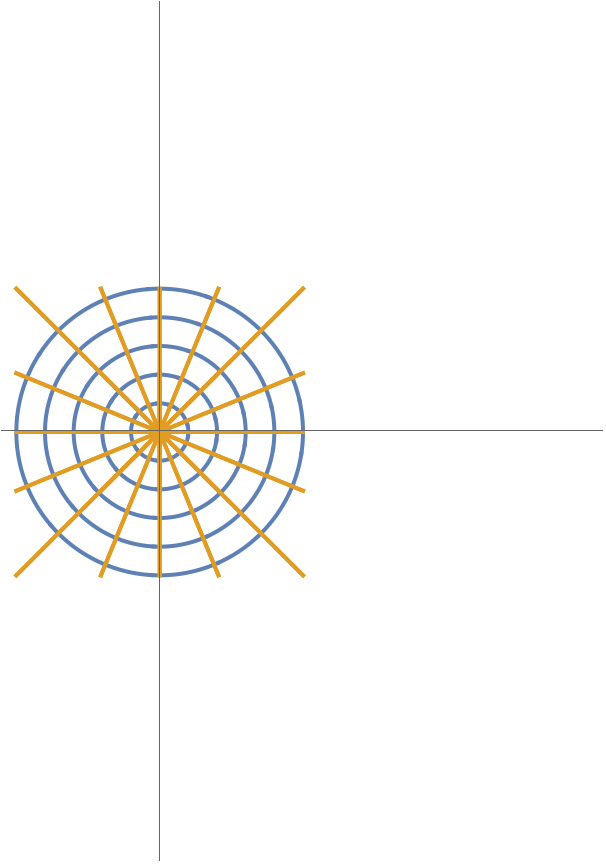} };
    \node at (6,3.8) {\Large $\xrightarrow{M}$};
    \node[anchor=south west, inner sep = 0] at (7,0) {\includegraphics[width=2in]{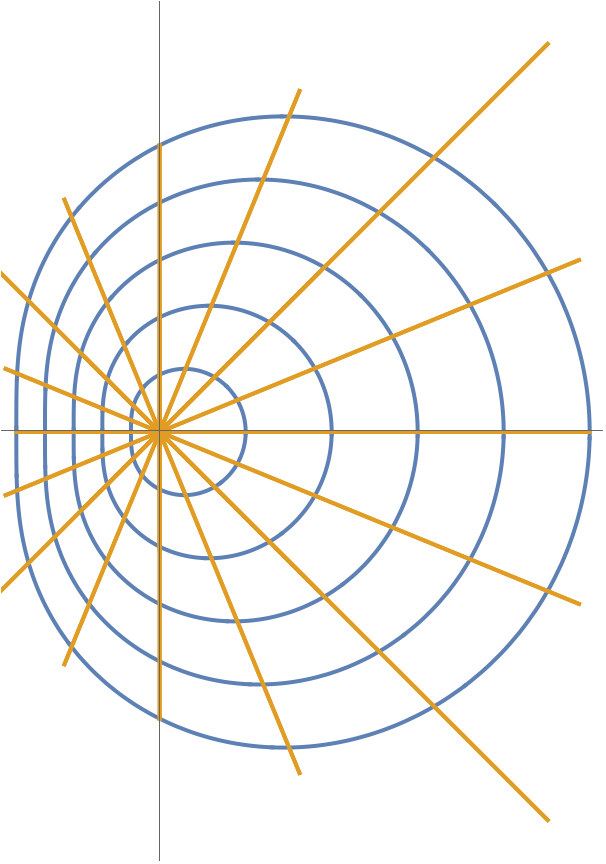}};
\end{tikzpicture}
\caption{The action of the map $M$ on a polar grid.}
\label{fig-ex2}
\end{figure}

Note that the limit of $m(x, y)$ is 1 when approaching the origin along the negative $x$-axis but when approaching from the positive $x$-axis, the limit of $m(x, y)$ is 3.  This difference causes a jump discontinuity in the curvature at the origin for the image under $M$ of some $C^{\infty}$ curves.  

 \begin{figure}[h!]
\centering
\begin{tikzpicture}
\small
    \node[anchor=south west, inner sep = 0] at (0,0){
\includegraphics[width=2.25in]{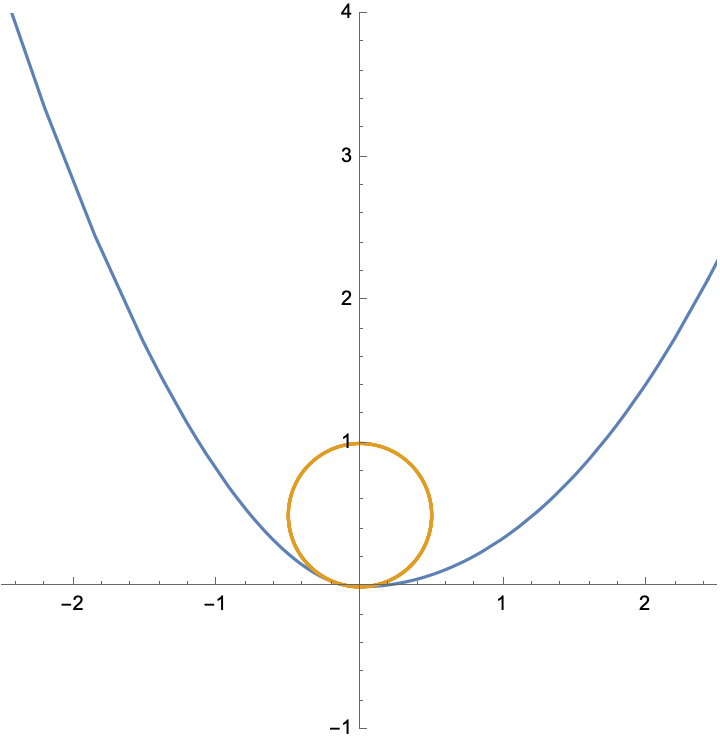} };
    \node[anchor=south west, inner sep = 0] at (7,0) {\includegraphics[width=2.25in]{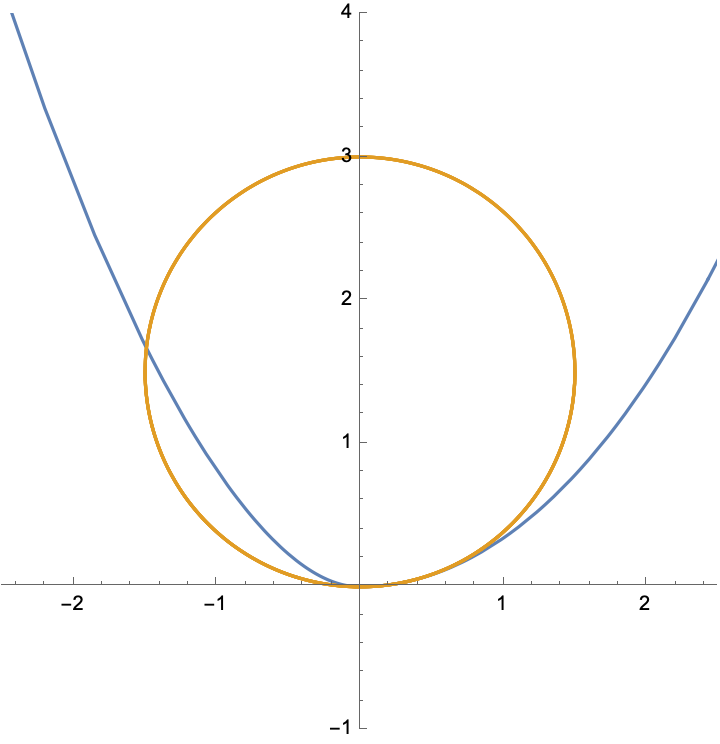}};
\end{tikzpicture}
\caption{Osculating circles for the image of $y=x^2$, showing that $\kappa(0^-)   = 2$ and $\kappa(0^+)  = \frac{2}{3} $.}
\label{fig-curvature}
\end{figure}

Since any $C^{m}$ curve is also a $C^k$ curve, whenever $0 \leq k \leq m \leq \infty$, then $M$ is not an element of $\homeo^k(\mathbb{R}^2)$, for any $2 \leq k \leq \infty$.  But $M$ is an element of $\homeo^1(\mathbb{R}^2)$, and so we have that $\homeo^1(S) \not\subset \homeo^k(S)$ for all $k \geq 2$. 

\subsection*{Example 3: A cautionary non-example for $\mathbf{\bhomeo^2(S)}$}
For the next example, we introduce a homeomorphism $N$ of $\mathbb{R}^2$ that  appears to be an element of $\homeo^2(\mathbb{R}^2)$ but not an element of $\homeo^1(\mathbb{R}^2)$.  This map sends every $C^2$ curve to another $C^2$ curve, but takes some $C^1$ curves to curves that have corners, which are not $C^1$.  

Before explaining how the map $N$ works, we recall some facts about $C^2$ curves.  First, every $C^2$ curve in $\mathbb{R}^2$ has finite curvature at every point.  Thus all $C^2$ curves with horizontal tangent at the origin must lie in a neighborhood of the origin between the curves $y=|x|^{3/2}$ and $y=-|x|^{3/2}$ which have infinite curvature at the origin.  The map $N$ leaves this region unchanged, while squeezing curves that lie between $y=|x|^{3/2}$ and $y=|x|$ towards $y=|x|$ as the curve approaches the origin.  Figure~\ref{fig-nonex2} shows how $N$ preserves $y=x^2$, but forces the image of $y=\frac{10}{9}|x|^{3/2}$ to have a corner at the origin.

\begin{figure}[h!]
\centering
\begin{tikzpicture}
\small
    \node[anchor=south west, inner sep = 0] at (0,0){
\includegraphics[width=2.5in]{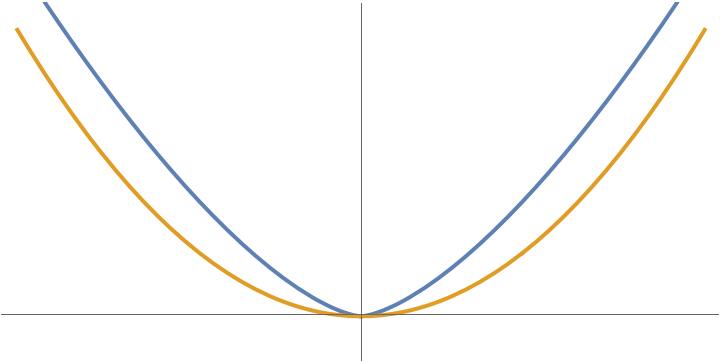} };
    \node at (7.25,2) {\Large $\xrightarrow{N}$};
    \node[anchor=south west, inner sep = 0] at (8,0) {\includegraphics[width=2.5in]{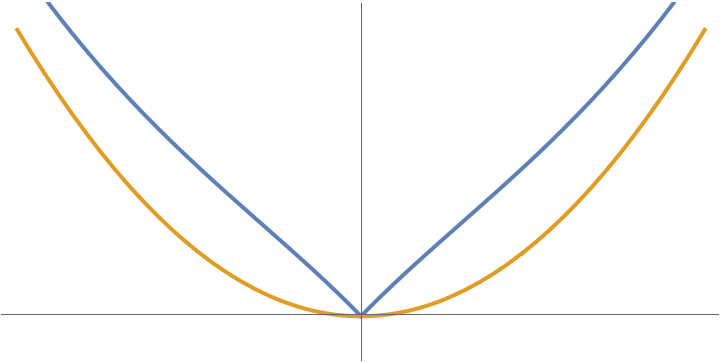}};
\end{tikzpicture}
\caption{The action of the map $N$ on $y=x^2$ and $y=\frac{10}{9}|x|^{3/2}$}
\label{fig-nonex2}
\end{figure}

The squeezing is then also done symmetrically across the $x$-axis so that any $C^2$ curves with non-horizontal tangents are squeezed towards $y=x$ or $y=-x$ with curvature 0, which forces their images to be $C^2$ as well. 

Where $N$ fails to be an example from $\homeo^2(\mathbb{R}^2)$ is that it forces the image of too many curves to have curvature 0 at the origin.  For example, the image of the curve $y=sgn(x)\, |x|^{3/2}$ will have curvature 0 at the origin and thus be $C^2$.  So the action of $N$ on $\fcn{2}{\mathbb{R}^2}$ fails to be invertible.

\bibliographystyle{alpha}
\bibliography{mybib2}

\end{document}